\newcommand*{\rom}[1]{\expandafter\@slowromancap\romannumeral #1@}
\newcommand{\BA}{{\mathbb {A}}}
\newcommand{\BC}{{\mathbb {C}}}
\newcommand{\BK}{{\mathbb {K}}}
\newcommand{\BR}{{\mathbb {R}}}
\newcommand{\BZ}{{\mathbb {Z}}}
\newcommand{\CC}{{\mathcal {C}}}
\newcommand{\CE}{{\mathcal {E}}}
\newcommand{\CN}{{\mathcal {N}}}
\newcommand{\CO}{{\mathcal {O}}}
\newcommand{\CS}{{\mathcal {S}}}
\newcommand{\CT}{{\mathcal {T}}}
\newcommand{\RM}{{\mathrm {M}}}
\newcommand{\RO}{{\mathrm {O}}}
\newcommand{\RU}{{\mathrm {U}}}
\newcommand{\Ad}{{\mathrm{Ad}}}
\newcommand{\GL}{{\mathrm{GL}}}
\newcommand{\GSp}{{\mathrm{GSp}}}
\newcommand{\Hom}{{\mathrm{Hom}}}
\renewcommand{\Im}{{\mathrm{Im}}}
\newcommand{\Ind}{{\mathrm{Ind}}}
\newcommand{\ind}{{\mathrm{ind}}}
\newcommand{\Irr}{{\mathrm{Irr}}}
\newcommand{\Ker}{{\mathrm{Ker}}}
\newcommand{\Lie}{{\mathrm{Lie}}}
\renewcommand{\Re}{{\mathrm{Re}}}
\newcommand{\SL}{{\mathrm{SL}}}
\newcommand{\SO}{{\mathrm{SO}}}
\newcommand{\Sym}{{\mathrm{Sym}}}
\newcommand{\Span}{{\mathrm{Span}}}
\newcommand{\Ima}{\operatorname{Im}}
\newcommand{\Tr}{\operatorname{Tr}}
\newcommand{\sgn}{\operatorname{sgn}}
\newcommand{\oH}{\operatorname{H}}
\newcommand{\g}{\mathfrak g}
\newcommand{\gC}{{\mathfrak g}_{\C}}
\renewcommand{\k}{\mathfrak k}
\newcommand{\h}{\mathfrak h}
\newcommand{\p}{\mathfrak p}
\newcommand{\n}{\mathfrak n}
\renewcommand{\u}{\mathfrak u}
\renewcommand{\v}{\mathfrak v}
\renewcommand{\l}{\mathfrak l}
\renewcommand{\t}{\mathfrak t}
\newcommand{\s}{\mathfrak s}
\newcommand{\gl}{\mathfrak g \mathfrak l}
\newcommand{\fS}{\mathfrak S}
\newcommand{\C}{\mathbb{C}}
\newcommand{\be}{\begin {equation}}
\newcommand{\ee}{\end {equation}}
\newcommand{\bee}{\begin {equation*}}
\newcommand{\eee}{\end {equation*}}
\newcommand{\Fre}{{Fr\'{e}chet \,}} 
\newcommand{\fre}{{Fr\'{e}chet }}
\newcommand{\kun}{{K\"{u}nneth }}
\newcommand{\smod}{\CS\mathrm{mod}}
\renewcommand{\mid}{\,:\,}
\newtheorem{thm}{Theorem}[section]
\newtheorem{cort}[thm]{Corollary}
\newtheorem{lemt}[thm]{Lemma}
\newtheorem{prpt}[thm]{Proposition}
\newtheorem{dfnt}[thm]{Definition}
\theoremstyle{remark}
\newtheorem*{example}{Example}
\theoremstyle{remark}
\newtheorem*{rremark}{Remark}
\numberwithin{equation}{section}
\begin{document}
\title{On the existence of twisted Shalika periods: the Archimedean case}
\date{\today}
\author[Z. Geng]{Zhibin Geng}
\address{Academy of Mathematics and Systems Science, Chinese Academy of Science, No. 55 Zhongguancun East Road, Beijing, 100190, China.}
\email{gengzhibin@amss.ac.cn}

\keywords{Twisted Shalika periods, L-parameters of symplectic type, Schwartz homology}
\subjclass[2020]{22E50, 11F70, 20G20}

\begin{abstract}
    Let $\BK$ be an archimedean local field. We investigate the existence of the twisted Shalika functionals on irreducible admissible smooth representations of $\GL_{2n}(\BK)$ in terms of their L-parameters. 
    As part of our proof, we establish a Hochschild-Serre spectral sequence for nilpotent normal subgroups and a \kun formula in the framework of Schwartz homology.
    We also prove the analogous result for twisted linear periods using theta correspondence. 
    The existence of twisted Shalika functionals on representations of $\GL_{2n}^{+}(\BR)$ is also studied, which is of independent interest.
\end{abstract}

\maketitle
\tableofcontents

\section{Introduction}

\subsection{Main results}
Let $\BK$ be a local field of characteristic zero. The Shalika subgroup of the general linear group $\GL_{2n}(\BK)\ (n\geq1)$ is defined to be
\[
S_{2n}(\BK) := \left\{ 
\begin{bmatrix} 
g & 0 \\ 0 & g 
\end{bmatrix} \cdot 
\begin{bmatrix} 
1_n & b \\ 0 & 1_n 
\end{bmatrix} \mid g \in \GL_n(\BK),\ b \in \RM_n(\BK) \right\},
\]
where $\RM_n$ indicates the algebra of $n \times n$ matrices.
Let $\eta$ be a character of $\BK^{\times}$, and let $\psi$ be a non-trivial unitary character of $\BK$. Define a character $\xi_{\eta,\psi}$ on $S_{2n}(\BK)$ by
\[
\xi_{\eta,\psi} \left( \begin{bmatrix} g & 0 \\ 0 & g \end{bmatrix} \cdot \begin{bmatrix} 1_n & b \\ 0 & 1_n \end{bmatrix}\right) := \eta(\det (g))\psi(\Tr(b)).
\]
\par
For an admissible smooth representation $\pi$ of $\GL_{2n}(\BK)$, we say that $\pi$ has a non-zero \textbf{$(\eta,\psi)$-twisted Shalika period} if $$\Hom_{S_{2n}(\BK)}(\pi,\xi_{\eta,\psi}) \neq \{0\}.$$ 
When $\eta$ is the trivial character, we shall refer to it simply as the \textbf{Shalika period}.
Here and henceforth, for the archimedean local field $\BK$, by an admissible smooth representation of $\GL_n(\BK)$, we mean a Casselman-Wallach representation of it. Recall that a representation of a real reductive group is called a Casselman-Wallach representation if it is Fr\'{e}chet, smooth, admissible, and of moderate growth. The readers may consult \cite[Chapter 11]{Wallach1992Real} or \cite{Bernstein2014Smooth} for details. It is proved in \cite{chen2020uniqueness} that the space $\Hom_{S_{2n}(\BK)}(\pi,\xi_{\eta,\psi})$ is at most one-dimensional if $\pi$ is irreducible. 
\par
A natural question is when the space $\Hom_{S_{2n}(\BK)}(\pi,\xi_{\eta,\psi})$ is non-zero, which is commonly referred to as the distinction problem.
In the framework of the relative Langlands program, this question is related to the BZSV quadruple $$(\GL_{2n} \times \GL_1, \GL_n, 0, \iota: \GL_n \times \SL_2 \to \GL_{2n} \times \GL_1),$$
where 
$$
\begin{array}{rccc}
\iota :& \GL_n \times \SL_2 &\to &\GL_{2n} \times \GL_1 \\
&(A,B)& \mapsto &(A \otimes B, \det A),
\end{array}
$$
while the dual BZSV quadruple of it is 
$$(\GL_{2n} \times \GL_1, \GSp_{2n}, 0, (i,\lambda): \GSp_{2n} \to \GL_{2n} \times \GL_1 ),$$
where $i: \GSp_{2n} \to \GL_{2n}$ is the inclusion,
$\lambda: \GSp_{2n} \to \GL_1$ is the similitude character of $\GSp_{2n}$. The local aspect of this framework suggests that the dual quadruple is related to the distinction problem, which will be confirmed by our main result.
\par
From now on, we will assume that $\BK$ is an archimedean local field. Let $W_{\BK}$ be the Weil group of $\BK$.
\begin{dfnt}
    For an L-parameter $\phi: W_{\BK} \to \GL_{2n}(\mathbb{C})$, its \textbf{$\eta$-extension} is defined to be 
    $$
    \begin{array}{rccrclc}
    \phi^{(\eta)} :& W_{\BK} &\to &\GL_{2n}(\mathbb{C})&  \times & \GL_1(\mathbb{C})& \\
    &x& \mapsto & ( \ \phi(x) \ \ &  , &  \eta(r(x)) ) &.
    \end{array}
    $$
    where $r : W_{\BK} \to \BK^{\times}$ is the reciprocity map (will be recalled in Subsection \ref{Subsec: LLC}).
    \par
    An L-parameter $\phi$ is said to be \textbf{of $\eta$-symplectic type}, if its $\eta$-extension $\phi^{(\eta)}$ factors through the above map $(i,\lambda) : \GSp_{2n}(\BC) \to \GL_{2n}(\mathbb{C})  \times   \GL_1(\mathbb{C})$.
\end{dfnt}

The main result of this paper is the following. 

\begin{thm}\label{Main thm}
    Let $\pi$ be an irreducible Casselman-Wallach representation of $\GL_{2n}(\BK)$. 
    \begin{enumerate}[label=(\Alph*), ref=\ref{Main thm}\Alph*]
        \item $(\S \ref{Subsec: thmA})$
        If $\pi$ has a non-zero $(\eta,\psi)$-twisted Shalika period, then its L-parameter is of $\eta$-symplectic type. \label{Main thmA}
        \item $(\S \ref{Sec: thmB})$
        Assume that $\pi$ is generic. If the L-parameter of $\pi$ is of $\eta$-symplectic type, then $\pi$ admits a $(\eta,\psi)$-twisted Shalika period. \label{Main thmB}
    \end{enumerate}
\end{thm}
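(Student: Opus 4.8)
The plan is to reduce both parts to a single computation: the determination of $\Hom_{S_{2n}(\BK)}\bigl(\Ind_P^{\GL_{2n}(\BK)}(\Delta_1\boxtimes\cdots\boxtimes\Delta_k),\,\xi_{\eta,\psi}\bigr)$, where the $\Delta_i$ are essentially square-integrable representations of smaller general linear groups over $\BK$ (for archimedean $\BK$, each $\Delta_i$ is a character of $\GL_1(\BK)$ or, when $\BK=\BR$, an essentially square-integrable representation of $\GL_2(\BR)$). For part~\ref{Main thmA} I would feed into this with $I:=\Ind_P^{\GL_{2n}(\BK)}(\Delta_1\boxtimes\cdots\boxtimes\Delta_k)$ the standard module whose unique irreducible quotient is $\pi$: a nonzero element of $\Hom_{S_{2n}(\BK)}(\pi,\xi_{\eta,\psi})$ pulls back along $I\twoheadrightarrow\pi$ to a nonzero element of $\Hom_{S_{2n}(\BK)}(I,\xi_{\eta,\psi})$, and $I$ has the same L-parameter $\phi=\bigoplus_i\phi_{\Delta_i}$ as $\pi$. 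For part~\ref{Main thmB} I would use that an irreducible generic Casselman-Wallach representation of $\GL_{2n}(\BK)$ is an irreducible parabolic induction from essentially square-integrable data, so that there $\pi$ itself is of this form.

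To carry out the computation I would run the Bruhat-Mackey machine within the Schwartz-homology formalism: restricting $\Ind_P^{\GL_{2n}(\BK)}\sigma$ to $S_{2n}(\BK)$ and filtering by $S_{2n}(\BK)$-orbits on $\GL_{2n}(\BK)/P$ produces a finite filtration whose graded pieces are, up to modulus twists, Schwartz inductions from the orbit stabilizers. The orbits reflect how the $P$-flag meets the decomposition $\BK^{2n}=\BK^n\oplus\BK^n$ underlying $S_{2n}(\BK)=\GL_n(\BK)\ltimes N$ with $N=\{u(b)\}$, and two features drive the argument: (i) there is a distinguished orbit attached to a (generally non-standard) parabolic $Q$ with Levi $\prod_j\GL_{2m_j}(\BK)$, on which $S_{2n}(\BK)\cap Q$ surjects onto $\prod_j S_{2m_j}(\BK)$ with unipotent kernel on which $\xi_{\eta,\psi}$ is trivial and through which $\xi_{\eta,\psi}$ factors as $\boxtimes_j\xi_{\eta,\psi}$; and (ii) for a Levi factor $\GL_d(\BK)\times\GL_d(\BK)\subset\GL_{2d}(\BK)$ carrying data $\tau_1\boxtimes\tau_2$ there is an orbit, namely the graphs of isomorphisms $\BK^d\to\BK^d$, whose $S_{2d}(\BK)$-stabilizer is the diagonal $\GL_d(\BK)$ on which $\xi_{\eta,\psi}$ restricts to $\eta\circ\det$, so its graded piece carries a nonzero $\xi_{\eta,\psi}$-functional exactly when $\tau_2\cong\tau_1^{\vee}\otimes(\eta\circ\det)$ (once the modulus characters, which are balanced in the relevant cases, cancel). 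The \kun formula is what identifies the contribution of orbit (i) with $\bigotimes_j\Hom_{S_{2m_j}(\BK)}(\pi_j,\xi_{\eta,\psi})$, and the Hochschild-Serre spectral sequence for $N\triangleleft S_{2n}(\BK)$ is what lets one pass between $\Hom_{S_{2n}(\BK)}(-,\xi_{\eta,\psi})$ and $\Hom_{\GL_n(\BK)}$ of a $\psi$-twisted Schwartz homology along $N$, so as to see that the remaining graded pieces do not interfere.

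For part~\ref{Main thmB} this becomes a construction. Writing the $\eta$-symplectic parameter $\phi$ as a sum of two-dimensional irreducibles with determinant $\eta$ together with blocks $\rho\oplus(\rho^{\vee}\otimes\eta)$ with $\rho$ irreducible, I regroup, by induction in stages, $\pi\cong\Ind_Q^{\GL_{2n}(\BK)}(\pi_1\boxtimes\cdots\boxtimes\pi_r)$ with each $\pi_j$ an irreducible generic representation of a smaller $\GL_{2m_j}(\BK)$ of one of these two shapes. For the first shape $\BK=\BR$, $2m_j=2$, and since $S_2(\BR)$ is the product of its center and its maximal unipotent, $\Hom_{S_2(\BR)}(\pi_j,\xi_{\eta,\psi})\neq\{0\}$ holds because $\pi_j$ is generic with central character matching $\eta$ (equivalently $\det\phi_{\pi_j}=\eta$). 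For the second shape $\pi_j\cong\Ind_{\GL_d\times\GL_d}^{\GL_{2d}(\BK)}\bigl(\Delta\boxtimes(\Delta^{\vee}\otimes(\eta\circ\det))\bigr)$ with $\Delta$ essentially square-integrable, and orbit (ii) above supplies the functional, built from the canonical $\GL_d(\BK)$-invariant pairing between $\Delta$ and $\Delta^{\vee}$; that this ``matrix coefficient'' period converges and defines an element of $\Hom_{S_{2d}(\BK)}(\pi_j,\xi_{\eta,\psi})$ is exactly what the Schwartz-homology formalism is there to provide. Finally, in a block configuration of the above type, orbit (i) realizes $S_{2n}(\BK)\cap Q\twoheadrightarrow\prod_j S_{2m_j}(\BK)$ with balanced moduli, so its graded piece contributes, through the \kun formula, a subspace $\bigotimes_j\Hom_{S_{2m_j}(\BK)}(\pi_j,\xi_{\eta,\psi})$ of $\Hom_{S_{2n}(\BK)}(\pi,\xi_{\eta,\psi})$, which is nonzero by the block-wise results.

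For part~\ref{Main thmA} one reads the same dictionary backwards: a nonzero functional in $\Hom_{S_{2n}(\BK)}(I,\xi_{\eta,\psi})$ is visible on some orbit of the filtration, and the task is to show that any orbit able to support such a functional forces the multiset $\{\Delta_i\}$ (with their unramified twists) to match into two-dimensional determinant-$\eta$ pieces and into pieces paired with their contragredients twisted by $\eta$ --- which is precisely the condition that $\phi^{(\eta)}$ factors through $(i,\lambda):\GSp_{2n}(\BC)\to\GL_{2n}(\BC)\times\GL_1(\BC)$, i.e.\ that $\phi$ is of $\eta$-symplectic type. (In the generic case this could alternatively be argued through the archimedean Jacquet-Shalika integral and the pole at $s=1$ of $L(s,\pi,\Lambda^2\otimes\eta^{-1})$, but the orbit analysis stays inside the Schwartz-homology framework developed here.) The main obstacle, in both directions, is the same: controlling the non-distinguished orbits --- showing, via the Hochschild-Serre spectral sequence and the vanishing of the pertinent higher Schwartz-homology groups, that their modulus characters cannot cancel unless the Levi data match as above, so that in part~\ref{Main thmA} the symplectic constraint is genuinely forced and in part~\ref{Main thmB} the functional built on the distinguished orbit survives the reassembly of the filtration --- together with the bookkeeping of modulus characters along the distinguished orbits themselves.
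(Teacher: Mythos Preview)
Your high-level strategy matches the paper's: pass to the standard module (Part~A) or use that generic irreducibles equal their standard modules (Part~B), filter the parabolic induction by $S_{2n}(\BK)$-orbits on the partial flag variety, and analyze each piece via Shapiro's lemma, the \kun formula, and the Hochschild--Serre spectral sequence. Your ``orbits (i) and (ii)'' are the paper's \emph{matching orbits}, and your pairing condition $\tau_2\cong\tau_1^\vee\otimes(\eta\circ\det)$ is precisely Proposition~\ref{Prop: symplectic L parameter and homology}.

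There is, however, a genuine gap. You state that the orbit filtration has graded pieces which ``are, up to modulus twists, Schwartz inductions from the orbit stabilizers.'' This is the $p$-adic geometric lemma; over an archimedean field it is false. The graded piece attached to a non-open orbit $\CO\subset U$ is $\Gamma^{\CS}_{\CO}(U,\CE)$, and by Borel's lemma this is an \emph{inverse limit} along the normal-derivative filtration, with $k$-th graded piece $\Gamma^{\CS}(\CO,\Sym^k(\CN^*_{\CO})\otimes\CE)$. One must therefore show, for every non-matching orbit \emph{and for every $k\geq 0$}, that the twisted Shalika coinvariants vanish, and for matching orbits that the $k>0$ terms vanish as well. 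This is exactly what Lemmas~\ref{Lem: unip orbit vanishing} and~\ref{Lem: der um vanish} do; the latter requires an infinite-descent argument tracking how the adjoint action on $\Sym^k(\CN^*)$ interacts with the exponents of the inducing data. Moreover, even to pass from $\Gamma^{\CS}_{\CO}(U,\CE)$ to the inverse limit of homologies one needs the finiteness input of Lemma~\ref{Lem: orbitwise homological finiteness} in order to invoke Proposition~\ref{Prop: homology borel lem}. Without these steps neither direction goes through: in Part~A you cannot exclude a functional supported entirely on normal derivatives of an unmatching orbit, and in Part~B you cannot show that the open-orbit functional extends.

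A smaller divergence concerns how Part~B is assembled. The paper does not combine blockwise functionals via your orbit~(i) and the \kun formula; instead it quotes Theorem~\ref{Thm: shalika parabolic} (from \cite{chen2020archimedean}, proved by an explicit integral construction) to reduce to the $\GL_2(\BK)$ and $\GL_4(\BR)$ building blocks, and only for $\GL_4(\BR)$ (Theorem~\ref{Thm: shalika GL4}) does it run the full orbit plus Borel's-lemma analysis to show all non-open contributions vanish in every degree. Your route is plausible but would require running the normal-derivative analysis on the larger flag variety, which the paper's citation sidesteps.
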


To summarize, we have the following equivalence between the existence of the twisted Shalika period and the type of L-parameter.

\begin{thm}\label{thmC}
    For an irreducible generic Casselman-Wallach representation $\pi$ of $\GL_{2n}(\BK)$, it has a non-zero $(\eta,\psi)$-twisted Shalika period if and only if its L-parameter is of $\eta$-symplectic type.
\end{thm}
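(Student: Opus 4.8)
Theorem~\ref{thmC} is simply the conjunction of the two halves of Theorem~\ref{Main thm}: the ``only if'' direction is Theorem~\ref{Main thmA} (which does not use genericity) and the ``if'' direction is Theorem~\ref{Main thmB}. So a proof amounts to establishing parts~(A) and~(B), and I outline the approach to each.

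For part~(A), the plan is to propagate the hypothesis $\Hom_{S_{2n}(\BK)}(\pi,\xi_{\eta,\psi})\neq 0$ through the Langlands classification. Since $\det$ is a square on $S_{2n}(\BK)$, this space depends only on the restriction of $\pi$ to $\GL_{2n}^{+}(\BK)$ (which is all of $\GL_{2n}(\BK)$ when $\BK=\BC$), so one first analyzes twisted Shalika functionals for representations of $\GL_{2n}^{+}(\BR)$ --- the study flagged as being of independent interest. Then realize $\pi$ as the Langlands quotient of a standard module $I=\pi_1\times\cdots\times\pi_r$ with each $\pi_i$ essentially square-integrable on some $\GL_{n_i}(\BK)$ (so $n_i\le 2$ when $\BK=\BR$); since $I\twoheadrightarrow\pi$, one gets $\Hom_{S_{2n}(\BK)}(I,\xi_{\eta,\psi})\neq 0$. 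The core is a Mackey-type analysis of the double coset space $S_{2n}(\BK)\backslash\GL_{2n}(\BK)/P$, with $P$ the inducing parabolic: filtering $I$ along orbit closures and applying Frobenius reciprocity in the Schwartz homology framework produces a spectral sequence whose $E_1$-terms are Schwartz homology groups of relative functional spaces attached to the individual orbits and to the twisted tensor product $\pi_1\otimes\cdots\otimes\pi_r$ --- these one computes with the Hochschild--Serre spectral sequence for the abelian unipotent radical $N\cong\RM_n(\BK)$ of $S_{2n}(\BK)$, on which $\xi_{\eta,\psi}$ restricts to $\psi\circ\Tr$, together with the \kun formula. Non-vanishing of the total space forces a nonzero relative functional on some orbit, hence a linkage among the $\pi_i$; pushing this through an induction on $n$ should show that the summands of the L-parameter $\phi=\bigoplus_i\phi_i$ organize into blocks that are themselves of $\eta$-symplectic type together with pairs $\{\sigma,\sigma^{\vee}\otimes\eta\}$, which is precisely the assertion that $\phi$ is of $\eta$-symplectic type.

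For part~(B), assume $\pi$ is generic, so $\pi=\pi_1\times\cdots\times\pi_r$ is an irreducible parabolic induction of essentially tempered representations. Unwinding the $\eta$-symplectic condition shows that the $\phi_i$ --- and, after rearranging the irreducible induction, the $\pi_i$ --- organize into $\eta$-symplectic blocks and pairs $\{\sigma,\sigma^{\vee}\otimes\eta\}$, and I would build the twisted Shalika functional blockwise. For a pair $\sigma\times(\sigma^{\vee}\otimes\eta)$ on $\GL_{2n_\sigma}(\BK)$ one produces the functional either by a period integral of Friedberg--Jacquet type built from the Whittaker functions of $\sigma$, or by transferring the more tractable twisted linear period through the theta correspondence --- the route behind the companion result mentioned in the abstract. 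An $\eta$-symplectic block is a representation of some $\GL_{n_i}(\BK)$ with $n_i$ even, which over $\BR$ forces $n_i=2$; then $\pi_i$ is an essentially discrete series of $\GL_2(\BR)$, the $\eta$-symplectic condition reduces to its central character being $\eta$, and in that case its twisted Shalika period is essentially a twisted Whittaker functional. Finally, the big-cell term of the same Mackey analysis glues the blockwise functionals into one on the full induced representation.

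The common engine in both halves is the Schwartz homology formalism developed in the paper: the Hochschild--Serre spectral sequence for nilpotent normal subgroups and the \kun formula are what make the orbit bookkeeping in~(A) and the blockwise gluing in~(B) rigorous in the archimedean setting. I expect the principal obstacle in~(A) to be ruling out contributions from the ``non-symplectic'' double cosets --- that is, showing the spectral sequence degenerates to exactly the stated parameter constraint --- and in~(B) to be the precise identification, for the Friedberg--Jacquet pairs, of the existence of the functional with the analytic behaviour of the twisted exterior-square $L$-factor $L(s,\pi,\wedge^2\otimes\eta^{-1})$, equivalently with the L-parameter being of $\eta$-symplectic type.
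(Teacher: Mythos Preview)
Your high-level decomposition into parts~(A) and~(B) is correct, and for~(A) your orbit-filtration/Schwartz-homology sketch is essentially the paper's argument in \S\ref{Sec: thmA}. Two corrections, however. First, the $\GL_{2n}^{+}(\BR)$ analysis you insert at the start of~(A) plays no role in the proof of Theorem~\ref{Main thmA}; that study is a separate result (Theorem~\ref{thmE}, \S\ref{Sec: restrict to GL+}) and should be removed from your outline. Second, there is no induction on $n$: the paper classifies the $S$-orbits on $P\backslash G_{2n}$ once and for all (\S\ref{Sec: Orbit decomposition}), singles out the \emph{matching} orbits, and shows directly (Theorem~\ref{Thm: kill unmatching orbits} together with Proposition~\ref{Prop: symplectic L parameter and homology}) that only matching orbits can contribute to $\oH_0^{\CS}$ and that any such contribution already forces the $\eta$-symplectic shape of the parameter.

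For~(B) your plan genuinely diverges from the paper's. You propose to treat a pair $\sigma\dot{\times}(\sigma^{\vee}\otimes\eta)$ either by a Friedberg--Jacquet integral or by pulling a twisted linear period through theta. The paper does neither. The theta link (\S\ref{Sec: linear model}) is established \emph{after} and \emph{using} Theorem~\ref{thmC}, so invoking it here is circular unless you supply an independent proof that $\eta$-twisted linear periods exist --- which you do not. Instead the paper reduces~(B), via the parabolic-induction step Theorem~\ref{Thm: shalika parabolic}, to the $\GL_2$ case (your central-character remark is exactly right) \emph{and} a new $\GL_4(\BR)$ computation, Theorem~\ref{Thm: shalika GL4}: for $D_{k,\lambda}\dot{\times}D_{k,z_0-\lambda}$ with neither factor individually Shalika-distinguished, one reruns the same orbit machinery as in~(A) on $P\backslash\GL_4(\BR)$, now showing that \emph{every} non-open orbit contributes zero in all homological degrees, whence
\[
\Hom_{S_4}(D_{k,\lambda}\dot{\times}D_{k,z_0-\lambda},\xi_{\eta,\psi})\cong \Hom_{\GL_2(\BR)}(D_{k,\lambda}\widehat{\otimes}D_{k,z_0-\lambda},\eta_{\GL_2(\BR)}),
\]
which is one-dimensional. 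No exterior-square $L$-factor appears anywhere. Your Friedberg--Jacquet suggestion might be a viable alternative, but as written it is only a gesture; you would have to carry out the archimedean convergence and nonvanishing analysis, which is exactly the sort of analytic input the paper's homological route is designed to avoid.
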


\begin{rremark}
    The generic condition cannot be removed without additional assumptions. For example, the trivial representation of $\GL_2(\BR)$ doesn't have Shalika periods, while its L-parameter is of symplectic type.
\end{rremark}

When $\pi$ is an irreducible essentially tempered cohomological representation, Theorem \ref{thmC} has been proved in \cite{chen2020archimedean} for $\BK = \BR$ and \cite{lin2020archimedean} for $\BK = \BC$. When $\pi$ is a principal series representation, \cite{Jiang2024twisted} also explores the existence of the twisted Shalika periods under certain conditions.
When $\eta$ is a trivial character, Theorem \ref{thmC} is proved in \cite{matringe2017shalika} for the non-archimedean case and \cite{anandavardhanan2024sign} for the archimedean case. 
\par

All the aforementioned works (except \cite{chen2020archimedean} and \cite{Jiang2024twisted}) study the analogous properties of twisted linear periods, then transfer them to the twisted Shalika periods using the analytic method. By comparison, Our proof provides a direct analysis of the twisted Shalika periods and has no restriction on $\eta$.
\par

The theory of theta correspondence gives a way to relate different kinds of periods. Let $H_{2n}(\BK) := \GL_n(\BK) \times \GL_n(\BK)$, and view it as a subgroup of $\GL_{2n}(\BK)$ via the diagonal embedding. For an admissible smooth representation $\pi$ of $\GL_{2n}(\BK)$, we say that $\pi$ has a non-zero \textbf{$\eta$-twisted linear period} if $$\Hom_{H_{2n}(\BK)}(\pi,(\eta \circ \det) \boxtimes \BC) \neq \{0\}.$$
Using theta correspondence, it is shown in \cite{Gan2019} that Shalika periods are related to linear periods over non-archimedean local fields. The following theorem is an Archimedean analog of it.

\begin{thm}[Theorem \ref{Thm: archimedean theta shalika linear}]\label{thmD}
     Let $\BK$ be an archimedean local field. Let $\pi$ be an irreducible Casselman-Wallach representation of $\GL_{2n}(\BK)$ and $\chi$ be a character of $\GL_{n}(\BK)$. Denote by $\Theta(\pi)$ the big theta lift of a representation $\pi$, one has
    $$
    \Hom_{\GL_n^{\Delta}(\BK)\ltimes U} (\Theta(\pi), \chi \boxtimes \psi) \cong \Hom_{\GL_n(\BK)\times \GL_n(\BK)}(\pi^{\vee}, \chi \boxtimes \BC),
    $$
    where $S_{2n}(\BK) = \GL_n^{\Delta}(\BK)\ltimes U$.
    Moreover, for generic $\pi$ we have
    $$
    \Hom_{\GL_n^{\Delta}(\BK)\ltimes U} (\pi, \chi \boxtimes \psi) \cong \Hom_{\GL_n(\BK)\times \GL_n(\BK)}(\pi, \chi \boxtimes \BC)
    $$
\end{thm}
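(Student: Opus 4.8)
The plan is to run the standard see-saw argument that ties Shalika periods to linear periods, adapted to the archimedean setting. Throughout put $G = \GL_{2n}(\BK)$, write the Shalika subgroup as $S_{2n}(\BK) = \GL_n^{\Delta}(\BK) \ltimes U$ as in the statement, with $U$ the abelian unipotent radical carrying the character $b \mapsto \psi(\Tr b)$, and let $\omega$ be the Weil representation of the dual pair attached to $\Theta$, whose other member is a second copy of $G$ containing $S_{2n}(\BK)$. First I would realize the big theta lift as coinvariants, $\Theta(\pi) \cong \bigl(\omega \,\widehat{\otimes}\, \pi^{\vee}\bigr)_{G}$ (the Hausdorff $G$-coinvariants, equivalently the degree-zero Schwartz homology $\RH_0\bigl(G, \omega \,\widehat{\otimes}\, \pi^{\vee}\bigr)$), viewed as a Casselman-Wallach representation of the theta partner; equivalently, for irreducible $\pi$ this lift is characterized by $\Hom_{H}(\Theta(\pi),\sigma) \cong \Hom_{G\times H}(\omega,\pi\boxtimes\sigma)$. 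Since $S_{2n}(\BK)$ lies in the theta partner and $G$ acts trivially on $\chi \boxtimes \psi$, adjunction between coinvariants and inflation yields
\[
\Hom_{S_{2n}(\BK)}\bigl(\Theta(\pi), \chi \boxtimes \psi\bigr) \;\cong\; \Hom_{G \times S_{2n}(\BK)}\bigl(\omega,\ \pi \boxtimes (\chi \boxtimes \psi)\bigr),
\]
so it remains to evaluate the right-hand side.

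\textbf{Step 2: the twisted Jacquet module of the Weil representation.} Since $U$ is normal in $S_{2n}(\BK)$ with quotient $\GL_n^{\Delta}(\BK)$, I would take $(U,\psi)$-coinvariants first, turning the right-hand side into $\Hom_{G \times \GL_n^{\Delta}(\BK)}\bigl(\omega_{U,\psi},\ \pi \boxtimes \chi\bigr)$, where $\omega_{U,\psi}$ is the twisted Jacquet module of $\omega$ along $(U,\psi)$. The heart of the argument is that this Jacquet module is again, up to an explicit normalizing character, the Weil representation of a smaller dual pair: on the Schr\"odinger model $U$ acts by translations along an affine direction, so passing to $(U,\psi)$-coinvariants amounts to a partial Fourier transform followed by restriction to the affine locus cut out by $b \mapsto \psi(\Tr b)$. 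Carrying this out identifies $\omega_{U,\psi}$, as a $G \times \GL_n^{\Delta}(\BK)$-module, with a space of Schwartz functions on an explicit homogeneous space through which $\GL_n^{\Delta}(\BK)$ maps into $G$ via the block-diagonal $\GL_n(\BK) \times \GL_n(\BK)$; in this degenerate $\GL$-by-$\GL$ situation the Weil representation realizes $\pi$ and $\pi^{\vee}$ on its two sides — morally $\omega_{U,\psi}$ decomposes, in the sense of Peter--Weyl, as $\bigoplus_{\sigma}\sigma\boxtimes\sigma^{\vee}$ — so taking $G$-coinvariants against $\pi$ produces the linear model of $\pi^{\vee}$, whence
\[
\Hom_{G \times \GL_n^{\Delta}(\BK)}\bigl(\omega_{U,\psi},\ \pi \boxtimes \chi\bigr) \;\cong\; \Hom_{\GL_n(\BK) \times \GL_n(\BK)}\bigl(\pi^{\vee},\ \chi \boxtimes \BC\bigr).
\]
Chaining the isomorphisms of Steps 1 and 2 proves the first assertion.

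\textbf{Step 3: the generic case.} For the second isomorphism I would invoke the known fact that, in the normalization used here, the big theta lift of an irreducible \emph{generic} Casselman-Wallach representation $\sigma$ of $G$ is its contragredient, $\Theta(\sigma) \cong \sigma^{\vee}$ (on generic representations this correspondence restricts to the contragredient involution; it is detected by a nonzero Whittaker functional and can be pinned down via the local zeta integral). Applying the first assertion to $\pi^{\vee}$ in place of $\pi$ — legitimate since $\pi$ is generic if and only if $\pi^{\vee}$ is — and using reflexivity $(\pi^{\vee})^{\vee}\cong\pi$ gives $\Hom_{S_{2n}(\BK)}(\Theta(\pi^{\vee}),\chi\boxtimes\psi) \cong \Hom_{\GL_n(\BK)\times\GL_n(\BK)}(\pi,\chi\boxtimes\BC)$; substituting $\Theta(\pi^{\vee})\cong\pi$ yields the claim. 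The genericity enters precisely here: for non-generic $\pi$ the big theta lift of $\pi^{\vee}$ need not coincide with $\pi$.

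\textbf{Main obstacle.} The representation-theoretic skeleton above is routine; the difficulty is archimedean functional analysis. One must check that the $G$- and $U$-coinvariants above are Hausdorff and compute as in the smooth non-archimedean setting, that the adjunction in Step 1 is valid in the Casselman-Wallach (equivalently Fr\'echet, or Schwartz) category, that the higher Schwartz homology feeding into these coinvariants vanishes, and — most technically — that the Fourier-analytic identification of $\omega_{U,\psi}$ with a smaller Weil representation, with all $\abs{\det}$-powers and normalizing characters matched, is an isomorphism of topological $G \times \GL_n^{\Delta}(\BK)$-modules. This is exactly where the Schwartz-homology formalism, with its exactness, K\"unneth, and spectral-sequence properties, is needed, and making it precise is the bulk of the work.
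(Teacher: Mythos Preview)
Your proposal is correct and follows essentially the same route as the paper: realize $\Theta(\pi)$ as $G$-coinvariants of $\Omega\widehat{\otimes}\pi^{\vee}$, swap the order of coinvariants, compute the $(U,\psi)$-twisted Jacquet module of the Weil representation as Schwartz sections on the locus $\{(T,X):XT=B\}$, recognize this as an induced module and apply Shapiro, then dualize; the generic case is handled exactly as you say via $\Theta(\sigma)\cong\sigma^{\vee}$.

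Two points where the paper is sharper than your sketch and which address precisely your ``main obstacle''. First, the computation of $\omega_{U,\psi}$ is not done by an ad hoc partial Fourier transform but by invoking a lemma of Aizenbud--Gourevitch (their Lemma~6.2.2): if a Nash map $\phi:X\to V^{*}$ has $0$ as a regular value, then the associated character action of $V$ on $\CS(X)$ has vanishing higher homology and $\oH_0\cong\CS(\phi^{-1}(0))$. One then only needs to check the regular-value condition for $(T,X)\mapsto XT-B$, which is elementary. This simultaneously gives Hausdorffness and the vanishing of higher $U$-homology that you flagged. Second, to pass from the chain of isomorphisms on zeroth Schwartz homology to the claimed $\Hom$-isomorphism one must know the spaces are finite-dimensional before taking continuous duals; the paper supplies this via its earlier Corollary that $\oH_0^{\CS}(S,\pi\otimes\xi_{\eta,\psi}^{-1})$ is finite-dimensional for any irreducible $\pi$. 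Your Peter--Weyl heuristic is unnecessary: once $\CS(\CO_B)$ is identified with a Schwartz induction from the stabilizer $H_B$, Shapiro's lemma directly produces the linear period side.
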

Combining Theorem \ref{thmC} and Theorem \ref{thmD}, we get the following corollary.
\begin{cort}
    Let $\BK$ be an archimedean local field. Let $\pi$ be an irreducible generic Casselman-Wallach representation of $\GL_{2n}(\BK)$. The following are equivalent: \par
    \begin{itemize}
        \item [(1)]
        The L-parameter of $\pi$ is of $\eta$-symplectic type.
        \item [(2)]
        $\pi$ has a $(\eta,\psi)$-twisted Shalika periods.
        \item [(3)]
        $\pi$ has a $\eta$-twisted linear periods.
    \end{itemize}
\end{cort}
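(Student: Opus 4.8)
The plan is to obtain the three-way equivalence purely formally from the two preceding theorems, with no new input required. First, the equivalence $(1)\Leftrightarrow(2)$ is exactly Theorem~\ref{thmC}: by hypothesis $\pi$ is an irreducible generic Casselman-Wallach representation of $\GL_{2n}(\BK)$, so that theorem applies verbatim and asserts that $\Hom_{S_{2n}(\BK)}(\pi,\xi_{\eta,\psi})\neq\{0\}$ if and only if the L-parameter of $\pi$ is of $\eta$-symplectic type.

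Next I would deduce $(2)\Leftrightarrow(3)$ from Theorem~\ref{thmD}. Recall the decomposition $S_{2n}(\BK)=\GL_n^{\Delta}(\BK)\ltimes U$ used there, where $\GL_n^{\Delta}$ is the diagonal copy of $\GL_n$ and $U$ the unipotent radical; under this identification the character $\xi_{\eta,\psi}$ of $S_{2n}(\BK)$ is precisely $(\eta\circ\det)\boxtimes\psi$, the $\psi$-factor being the character $u\mapsto\psi(\Tr(b))$ of $U$. Taking $\chi:=\eta\circ\det$ in the second (``moreover'') isomorphism of Theorem~\ref{thmD}, which is available because $\pi$ is generic, yields
$$\Hom_{S_{2n}(\BK)}(\pi,\xi_{\eta,\psi})\;\cong\;\Hom_{\GL_n(\BK)\times\GL_n(\BK)}\bigl(\pi,(\eta\circ\det)\boxtimes\BC\bigr).$$
By definition the left-hand space is nonzero exactly when $\pi$ has a nonzero $(\eta,\psi)$-twisted Shalika period, and the right-hand space is nonzero exactly when $\pi$ has a nonzero $\eta$-twisted linear period; hence $(2)\Leftrightarrow(3)$. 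Combining with $(1)\Leftrightarrow(2)$ closes the cycle and proves the corollary.

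There is no genuine obstacle here; the only points to verify are bookkeeping. One must confirm that the character on $U$ implicit in Theorem~\ref{thmD} is indeed $\psi\circ\Tr$, so that $(\eta\circ\det)\boxtimes\psi$ really is the character $\xi_{\eta,\psi}$ from the introduction, and one must ensure the genericity hypothesis — needed both for the ``moreover'' clause of Theorem~\ref{thmD} and for Theorem~\ref{thmC} — is in force, which it is by assumption. Note that the first isomorphism of Theorem~\ref{thmD}, involving the big theta lift $\Theta(\pi)$ and the contragredient $\pi^{\vee}$, plays no role in this corollary; only the direct statement for generic $\pi$ is used.
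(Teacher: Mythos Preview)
Your proposal is correct and matches the paper's approach exactly: the paper simply states that the corollary follows by combining Theorem~\ref{thmC} (giving $(1)\Leftrightarrow(2)$) and Theorem~\ref{thmD} (whose ``moreover'' clause for generic $\pi$ gives $(2)\Leftrightarrow(3)$), and your write-up spells out precisely this deduction along with the relevant bookkeeping checks.
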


Twisted Shalika periods are important in the study of L-functions of symplectic type. 
In \cite{jiang2024period}, they study the period relations for the critical values of the standard L-functions for an irreducible regular algebraic cuspidal automorphic representations of $\GL_{2n}(\BA)$ of symplectic type. During their proof, a constant $\epsilon_{\pi}$ has been defined (and will be recalled in Section \ref{Sec: restrict to GL+}, \eqref{Def: epsilon pi}), which reflects the existence of the twisted Shalika periods on the representations of $\GL^{+}_{2n}(\BR)$.
The following theorem provides the calculation of $\epsilon_{\pi}$.
\begin{thm}[Theorem \ref{Thm: restrict Shalika support}]\label{thmE}
    The notation is as shown in Section \ref{Sec: restrict to GL+}.
    Let $\pi$ be an irreducible generic Casselman-Wallach representation of $\GL_{2n}(\BR)$. 
    For $a \in \BR^{\times}$, denote 
    $$\psi_a : \BR \to \BC^{\times}, x \mapsto \exp(2\pi ax\sqrt{-1}).$$
    Assume $\pi$ admits a non-zero $(\eta,\psi_a)$-twisted Shalika period and $\pi|_{\GL^{+}_{2n}(\BR)}$ is reducible, then $\pi$ has form
    $$
    D_{k_1, \lambda_1} \dot{\times} \cdots \dot{\times} D_{k_n, \lambda_n},\ k_i \in \BZ_{\geq 0},\ \lambda_i \in \BC.
    $$
    Let
    $
    p := \#\{1 \leq i \leq n | D_{k_i, \lambda_i} \text{ has } (\eta,\psi_a)\text{-twisted Shalika period} \}
    $
    and $q := \frac{n-p}{2}$, which are both integers since $\phi_{\pi}$ is of $\eta$-symplectic type. Then
        $$
        \epsilon_{\pi} = (\sgn\ a)^p \cdot (-1)^{\frac{p(p-1)}{2}+q}.
        $$
\end{thm}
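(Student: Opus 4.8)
The plan is to reduce $\pi$ to an explicit induced representation, translate the definition of $\epsilon_\pi$ into a question about which component of $\pi|_{\GL^{+}_{2n}(\BR)}$ carries the period, and then compute that answer block by block.

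First I would determine the structure of $\pi$. A generic irreducible Casselman--Wallach representation of $\GL_{2n}(\BR)$ is the (irreducible) full parabolic induction $\tau_1\dot{\times}\cdots\dot{\times}\tau_r$ of essentially square-integrable representations $\tau_i$, and since $\GL_m(\BR)$ has no essentially square-integrable representations for $m\geq 3$, each $\tau_i$ is a character of $\GL_1(\BR)$ or a discrete series of $\GL_2(\BR)$. Because $S_{2n}(\BR)\subseteq\GL^{+}_{2n}(\BR)$, reducibility of $\pi|_{\GL^{+}_{2n}(\BR)}$ is equivalent to $\pi\cong\pi\otimes(\sgn\circ\det)$; the two-dimensional blocks are fixed by this twist, whereas a $\GL_1(\BR)$-character $\chi$ is sent to $\sgn\cdot\chi\neq\chi$, so the $\GL_1$-factors must occur in pairs $\{\chi,\sgn\cdot\chi\}$, and bundling each pair into the corresponding irreducible principal series $D_{0,\lambda}$ of $\GL_2(\BR)$ yields $\pi\cong D_{k_1,\lambda_1}\dot{\times}\cdots\dot{\times}D_{k_n,\lambda_n}$ with $k_i\in\BZ_{\geq 0}$. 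By Theorem \ref{Main thmA} the L-parameter $\phi_\pi=\bigoplus_i\phi_{D_{k_i,\lambda_i}}$ is of $\eta$-symplectic type; since each $\phi_{D_{k,\lambda}}$ is two-dimensional, it is itself of $\eta$-symplectic type exactly when $\det\phi_{D_{k,\lambda}}=\eta$ --- equivalently, by the $\GL_2(\BR)$ case of Theorem \ref{thmC}, when $D_{k,\lambda}$ carries its own $(\eta,\psi_a)$-twisted Shalika period --- and the remaining blocks must pair off into ``hyperbolic'' pairs $\{D_{k,\lambda},\,D_{k,\lambda}^\vee\otimes\eta\}$. This identifies $p$ with the number in the statement and shows $n-p$ is even, so $q=(n-p)/2\in\BZ$.

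Next I would reinterpret $\epsilon_\pi$. Write $\pi|_{\GL^{+}_{2n}(\BR)}=\pi^{+}\oplus\pi^{-}$ (two irreducible summands, exchanged by conjugation by any element of negative determinant). Since $S_{2n}(\BR)\subseteq\GL^{+}_{2n}(\BR)$, the space $\Hom_{S_{2n}(\BR)}(\pi,\xi_{\eta,\psi_a})$ decomposes as $\Hom_{S_{2n}(\BR)}(\pi^{+},\xi_{\eta,\psi_a})\oplus\Hom_{S_{2n}(\BR)}(\pi^{-},\xi_{\eta,\psi_a})$, and as the left-hand side is one-dimensional (by the multiplicity-one result of \cite{chen2020uniqueness}), exactly one summand $\pi^{\epsilon}$ carries the twisted Shalika period; against the definition \eqref{Def: epsilon pi}, determining $\epsilon_\pi$ amounts to determining which one. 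Using the explicit construction of the twisted Shalika functional from the proof of Theorem \ref{Main thmB} --- realized through the Bruhat/orbit geometry of $S_{2n}(\BR)$ on the relevant flag variety, together with uniqueness --- one writes the functional on $D_{k_1,\lambda_1}\dot{\times}\cdots\dot{\times}D_{k_n,\lambda_n}$ as a regularized integral that factors into: a $\psi_a$-twisted Whittaker functional on each self-symplectic block $D_{k_i,\lambda_i}$, together with an intertwining-type functional on each hyperbolic pair. Keeping track of how each factor chooses a $\GL^{+}$-component, $\epsilon_\pi$ becomes a product of three kinds of local signs: for each self-symplectic $\GL_2(\BR)$-block, the sign $\sgn a$ (the $\psi_a$-twisted Whittaker functional on $D_{k,\lambda}$ is supported on the component $D_{k,\lambda}^{\sgn a}$ of $D_{k,\lambda}|_{\GL^{+}_2(\BR)}=D_{k,\lambda}^{+}\oplus D_{k,\lambda}^{-}$, since a holomorphic discrete series --- or limit of discrete series, when $k=0$ --- of $\SL_2(\BR)$ is $\psi_a$-generic for only one sign of $a$); for each hyperbolic pair, the sign $-1$, extracted from the (twisted) period on the induced $\GL_4(\BR)$-representation and the action of an element of negative determinant exchanging the two $\GL^{+}$-components; and the combinatorial sign $(-1)^{p(p-1)/2}$ incurred in reassembling the $p$ self-symplectic block-functionals, which I expect to be the sign of the intertwining operator carrying the given ordering of those $p$ blocks to the one adapted to the Shalika functional (the long Weyl element on them). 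Multiplying gives $\epsilon_\pi=(\sgn a)^p\cdot(-1)^{p(p-1)/2}\cdot(-1)^q$, with the extreme cases $p=n$ and $p=0$ handled directly.

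The hard part will be the computation just sketched: proving that the globally normalized twisted Shalika functional, restricted to $\GL^{+}_{2n}(\BR)$, genuinely factors through the block decomposition with exactly the asserted signs. Concretely one must (a) establish the multiplicativity of $\epsilon_\pi$ over direct sums of $\GL_2(\BR)$-blocks compatibly with \eqref{Def: epsilon pi}; (b) pin down the hyperbolic-pair sign $-1$, presumably through a careful local zeta-integral or intertwining-operator computation on $\GL_4(\BR)$; and (c) identify the reassembly sign as $(-1)^{p(p-1)/2}$, which requires precise bookkeeping of the action of $\GL_{2n}(\BR)/\GL^{+}_{2n}(\BR)$ on the Bruhat cells and on the normalizing factors. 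An alternative that may streamline (b) and (c) is to transfer the problem to twisted linear periods via Theorem \ref{thmD} and interpret $\epsilon_\pi$ in terms of archimedean local root numbers, whose behaviour under direct sums and change of additive character reproduces the factors $(\sgn a)^p$ and $(-1)^q$; even so, one still has to match this interpretation with the representation-theoretic definition of $\epsilon_\pi$.
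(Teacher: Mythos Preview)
Your overall architecture matches the paper's: reduce to building blocks, compute $\epsilon$ for each $\GL_2$ self-symplectic block (Lemma~\ref{Lem: restrict GL2}, giving $\sgn a$) and for each $\GL_4$ hyperbolic pair (Lemma~\ref{Lem: restrict GL4}, giving $-1$), and then assemble. The structural analysis of $\pi$ and the reinterpretation of $\epsilon_\pi$ are also as in Section~\ref{Sec: restrict to GL+}.

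The one place where your mechanism is off is item~(c), the source of $(-1)^{p(p-1)/2}$. It is \emph{not} an intertwining-operator sign or a long-Weyl-element sign from rearranging the blocks. In the paper it falls out of a twisted multiplicativity statement, Lemma~\ref{Lem: restrict parabolic ind}, which is the $\GL^{+}$-analogue of Theorem~\ref{Thm: shalika parabolic}. The point is that the Weyl element $\omega_s'$ used in \cite[Theorem 2.1]{chen2020archimedean} to construct the Shalika functional on $\pi_1\dot{\times}\pi_2$ has determinant $(-1)^{m_1m_2}$ (with $n_i=2m_i$), so it lies in $\GL^{+}$ only when $n_1\equiv 0$ or $n_2\equiv 0\pmod 4$. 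When $n_1\equiv n_2\equiv 2\pmod 4$ one must correct $\omega_s'$ by a diagonal matrix of determinant $-1$, and a direct calculation shows that this forces the input functional on $\pi_1$ to be with respect to $\psi^{-1}$ rather than $\psi$. Since $D_{k,\lambda}^{\pm}$ carries the $(\eta,\psi_a)$-Whittaker functional exactly on the component $D_{k,\lambda}^{\sgn a}$, replacing $\psi_a$ by $\psi_a^{-1}=\psi_{-a}$ flips that sign. Running the induction on $p$ (adding one $\GL_2$ block at a time to a $\GL_{2(p-1)}$ piece), this flip occurs precisely when $p-1$ is odd, and the accumulated product is $(\sgn a)^p(-1)^{\binom{p}{2}}$. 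Adding the $q$ hyperbolic $\GL_4$ pairs is then straight multiplicativity (case~(1) of Lemma~\ref{Lem: restrict parabolic ind}, since $4\equiv 0\pmod 4$), contributing $(-1)^q$.

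So your items (a) and (c) are really a single lemma, with (c) built into the \emph{form} of (a) rather than arising as a separate bookkeeping step; this is what makes the paper's induction clean. Your item~(b) is exactly Lemma~\ref{Lem: restrict GL4}, proved by the same Schwartz-homology orbit analysis as Theorem~\ref{Thm: shalika GL4} but on $P^\circ\backslash\GL_4^{+}(\BR)$. The root-number alternative you suggest is not used.
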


\subsection{Outline of the proof}\label{Subsec: outline}
We outline a general framework for addressing such problems as Theorem \ref{Main thmA} and Theorem \ref{Main thmB}, and specialize it to our case at the end. We shall use the terminology in Schwartz homology developed in \cite{CHEN2021108817}, which will be reviewed in Section \ref{Sec: schwartz homology}. The proof of Theorem \ref{Main thmA} is inspired by  \cite{suzuki2023epsilon}. 
\par
Let $G$ be an almost linear Nash group and let $H$ be a Nash subgroup of $G$. Denote by $\CS\mathrm{mod}_G$ the category of smooth \fre representations of $G$ of moderate growth.
Let $X$ be a $G$-Nash manifold and let $\CE$ be a tempered $G$-vector bundle on $X$. Denote by $\Gamma^{\varsigma}(X, \CE)$ the Schwartz sections of the tempered bundle $\CE$ over $X$, then $\Gamma^{\varsigma}(X, \CE) \in \smod_G$. Take $x \in X$, denote by $\CE_x$ the fiber of $\CE$ at $x$ and $H_x$ the stabilizer in $H$ at $x$. Let $\chi$ be a character of $H$. 
Note that $\Hom_H(\Gamma^{\varsigma}(X, \CE), \chi)$ is the continuous linear dual of the zeroth Schwartz homology $\oH^\CS_{0}(H, \Gamma^{\varsigma}(X, \CE) \otimes \chi^{-1})$. 
In the following, we investigate the relationship between $\oH^\CS_{*}(H, \Gamma^{\varsigma}(X, \CE) \otimes \chi^{-1})$ and $\oH^\CS_{*}(H_x, \CE_x \otimes \chi^{-1})$.
\par
Assume that $X$ admits a finite decreasing sequence of open submanifolds 
$$U_1 := X \supsetneq U_2 \supsetneq ... \supsetneq U_{r} \supsetneq U_{r+1} := \varnothing,$$
such that for $i \in \{1, \dots, r \}$, $O_{i} := U_i\backslash U_{i+1}$ is an $H$-orbit in $X$. For each $i \in \{1, \dots, r\}$, we have the following short exact sequence
\begin{equation}
0 \to \Gamma^{\varsigma}(U_{i+1}, \CE) \to \Gamma^{\varsigma}(U_i, \CE) \to \Gamma^{\varsigma}_{O_{i}}(U_i,\CE) \to 0.
\end{equation}
After twisting the character $\chi^{-1}$ of $H$ and taking Schwartz homology with respect to $H$, we get the long exact sequence
\begin{multline}\label{Eq: outline first LES}
\cdots\to \oH^\CS_{j}(H, \Gamma^{\varsigma}(U_{i+1}, \CE) \otimes \chi^{-1}) \to \oH^\CS_{j}(H, \Gamma^{\varsigma}(U_i, \CE) \otimes \chi^{-1}) \to \\
\oH^\CS_{j}(H, \Gamma^{\varsigma}_{O_{i}}(U_i,\CE) \otimes \chi^{-1}) \to \cdots.
\end{multline}
\par
We may encounter two different situations in practice. In the first case (corresponding to the case of Theorem \ref{Main thmA}), we know that $\oH^\CS_{0}(H, \Gamma^{\varsigma}(X, \CE) \otimes \chi^{-1}) \neq 0$, and we want to deduce expected properties on the representation $\CE_x$ of $H_x$. Following from the long exact sequence (\ref{Eq: outline first LES}), we have
$$
\dim \oH^\CS_{j}(H, \Gamma^{\varsigma}(X, \CE) \otimes \chi^{-1}) \leq \sum_{i=1}^{r} \dim \oH^\CS_{j}(H, \Gamma^{\varsigma}_{O_{i}}(U_i,\CE) \otimes \chi^{-1}).
$$
Thus we know there exist an $l \in \{1, \dots, r \}$ such that
$$
\dim \oH^\CS_{0}(H, \Gamma^{\varsigma}_{O_{l}}(U_l,\CE) \otimes \chi^{-1}) \neq 0.
$$
In the second case (corresponding to the case of Theorem \ref{Main thmB}), we know that $\oH^\CS_{0}(H, \Gamma^{\varsigma}(\CO_r, \CE) \otimes \chi^{-1}) \neq 0$ for the open orbit $\CO_r \subset X$, and we want to prove that the map induced by extension by zero is an isomorphism on the zeroth homology, i.e.
$$
\oH^\CS_{0}(H, \Gamma^{\varsigma}(\CO_r, \CE) \otimes \chi^{-1}) \cong \oH^\CS_{0}(H, \Gamma^{\varsigma}(X, \CE) \otimes \chi^{-1}).
$$
Then it suffices to prove that for $i \in \{1, \dots, r-1\},$
$$
\oH^\CS_{0}(H, \Gamma^{\varsigma}_{O_{i}}(U_i,\CE) \otimes \chi^{-1}) =
\oH^\CS_{1}(H, \Gamma^{\varsigma}_{O_{i}}(U_i,\CE) \otimes \chi^{-1}) = 0.
$$
To summarize, in both cases, we reduce the problem to the homological property of $\Gamma^{\varsigma}_{O_{i}}(U_i,\CE)$.
\par
Using Borel's lemma, $\Gamma^{\varsigma}_{O_{i}}(U_i,\CE)$ admits a decreasing filtration $\Gamma^{\CS}_{O_{i}}(U_i,\CE)_k,\ k = 0,1,2,...$, such that
$$
\Gamma^{\CS}_{O_{i}}(U_i,\CE) \cong  \varprojlim_k \, \Gamma_{O_{i}}^{\CS}(U_i, \CE)/\Gamma_{{O_{i}}}^{\CS}(U_i, \CE)_k ,
$$
with the graded pieces
$$
 \Gamma_{{O_{i}}}^{\CS}(U_i, \CE)_k/\Gamma_{{O_{i}}}^{\CS}(U_i, \CE)_{k+1}\cong \Gamma^{\CS}({O_{i}}, \Sym^k(\CN^{\ast}_{O_{i}}(U_i))\otimes \CE), \quad (k\geq 0).
$$
Write $\Gamma_{i,k}^{\CS} := \Gamma_{{O_{i}}}^{\CS}(U_i, \CE)_k,\ k = 0,1,2,...$ for short. We have a short exact sequence
$$
0 \to \Gamma_{{i,k}}^{\CS} / \Gamma_{{i,k+1}}^{\CS} \to \Gamma_{i,0}^{\CS}/\Gamma_{{i,k+1}}^{\CS} \to \Gamma_{i,0}^{\CS}/\Gamma_{{i,k}}^{\CS}\to 0,
$$
and hence a long exact sequence
\begin{multline}\label{Eq: second LES outline}
\cdots \to \oH^\CS_{j}(H, (\Gamma_{{i,k}}^{\CS} / \Gamma_{{i,k+1}}^{\CS}) \otimes \chi^{-1}) \to \oH^\CS_{j}(H, (\Gamma_{i,0}^{\CS}/\Gamma_{{i,k+1}}^{\CS}) \otimes \chi^{-1}) \to \\
\oH^\CS_{j}(H, (\Gamma_{i,0}^{\CS}/\Gamma_{{i,k}}^{\CS}) \otimes \chi^{-1}) \to \cdots.
\end{multline}
\par
Assume that
\begin{equation}\label{Eq: homological finiteness}
\dim \oH^\CS_{j+1}(H, (\Gamma_{i,0}^{\CS}/\Gamma_{{i,k}}^{\CS}) \otimes \chi^{-1}) < \infty, \quad \forall\ k\geq 0. 
\end{equation}
According to Proposition \ref{Prop: homology borel lem}, we have
\[
\oH_{j}^\CS(H, \Gamma^{\CS}_{\CO_{i}}(U_i,\CE) \otimes \chi) \cong \varprojlim_{k}\oH_{j}^\CS(H, (\Gamma_{i,0}^{\CS}/\Gamma_{{i,k}}^{\CS})\otimes \chi).
\]
Using Borel's lemma and the long exact sequence (\ref{Eq: second LES outline}), we may reduce the problem to the calculation of  
$$
\oH^\CS_{j}(H, \Gamma^{\CS}({O_{i}}, \Sym^k(\CN^{\ast}_{O_{i}}(U_i))\otimes \CE) \otimes \chi^{-1}),
$$
which, using Shapiro's lemma, is related to the homological property of $\CE_x$ with respect to $H_x$ for $x \in \CO_i$.
\par
Now we return to our case. Let $X:= P\backslash \GL_{2n}(\BK)$ be a partial flag manifold, $\CE$ the tempered vector bundle on $X$ associated with a standard module, $S:= S_{2n}(\BK)$ the Shalika subgroup, and $\chi:= \xi_{\eta,\psi}$.
\par
To prove Theorem \ref{Main thmA}, we first describe the $S$-orbit decomposition of $X$ in Section \ref{Sec: Orbit decomposition}, and give the definition of matching orbits. Then we prove in Lemma \ref{Lem: orbitwise homological finiteness} that the homology groups occurring in \eqref{Eq: homological finiteness} are finite-dimensional. Finally, using various tools of the Schwartz homology and the calculation of the Jacquet module, we prove that 
\begin{equation*}
    \oH^\CS_{0}(S, \Gamma^{\varsigma}_{O_{i}}(U_i,\CE) \otimes \xi_{\eta,\psi}^{-1})
    \cong
    \left\{ 
        \begin{array}{ll}
        \oH^\CS_{0}(S, \Gamma^{\varsigma}(\CO_i,\CE) \otimes \xi_{\eta,\psi}^{-1}), & \textrm{if $\CO_i$ is a matching orbit};\\
        0, &  \textrm{otherwise}.
        \end{array}
    \right.
\end{equation*}
and deduce Theorem \ref{Main thmA} from this using Proposition \ref{Prop: symplectic L parameter and homology}.
\par
The existence of the twisted Shalika functional on the parabolic induced representation has been studied in \cite[Theorem 2.1]{chen2020archimedean}. Using this, we reduce the proof of Theorem \ref{Main thmB} to the case for $\GL_4(\BR)$. Using the framework described above, we prove that, in this case, for all non-open orbits $\CO_i$ and $\forall j \in \BZ$,
$$
\oH^\CS_{j}(S_4(\BR), \Gamma^{\varsigma}_{O_{i}}(U_i,\CE) \otimes \xi^{-1}_{\eta,\psi}) = 0,
$$
and then deduce the result from the long exact sequence (\ref{Eq: outline first LES}).
\par
The proof of Theorem \ref{thmD} is similar to the non-archimedean case as in \cite{Gan2019}. However, the calculation of the coinvariance space of the Weil representation is more involved in the archimedean case. We use \cite[Lemma 6.2.2]{aizenbud2015twisted} to show that the result is just the same as in the non-archimedean case.
\par
For proving Theorem \ref{thmE}, we study the restriction of certain parabolic induced representations to $\GL^{+}_{2n}(\BR)$, then the similar arguments as in Section \ref{Sec: thmB} can be applied.

\subsection{Structure of the paper}
We now describe the contents and the organization of this paper. In Section \ref{Sec: preliminaries}, we give the necessary preliminaries on the local Langlands correspondence for $\GL_n(\BK)$ and introduce basic properties of nuclear \Fre spaces. \par
In Section \ref{Sec: schwartz homology}, we review basic knowledge about Schwartz homology and present the propositions which are needed in our subsequent proofs. Certain homology vanishing criterion is proved in Subsection \ref{Subsec: vanishing criterion}.
\par
Section \ref{Sec: Orbit decomposition} is devoted to describing the orbit decomposition of the partial flag manifold under the action of Shalika subgroups. Furthermore, we calculate the conormal bundle and the modular characters which will occur in the proof of Theorem \ref{Main thmA}.\par
The complete proof of Theorem \ref{Main thmA} will be given in Section \ref{Sec: thmA}, where we also proved some homological finiteness results. Theorem \ref{Main thmB} is proved in Section \ref{Sec: thmB}. In Section \ref{Sec: linear model}, we related twisted Shalika periods to twisted linear periods using theta correspondence. 
In Section \ref{Sec: restrict to GL+}, we study the existence of the twisted Shalika periods on the representations of $\GL_{2n}^{+}(\BR)$.

\subsection{Conventions}\label{Subsec: conventions}
The notation $G_m\ (m \in \BZ_{\geq 1})$ stands for $\GL_m(\BK)$ through out the paper. When it is the situation that we should distinguish the case of $\BK = \BC$ and $\BK = \BR$, we shall use $\GL_m(\BC)$ and $\GL_m(\BR)$ instead.
\par
Lie groups are denoted by capital letters, and the modular character of Lie group G is denoted by $\delta_G$. Lie algebras of Lie groups are denoted by the corresponding Gothic letter, such as $\g = \Lie\ G$. Denote by $\gC := \g \otimes_{\BR} \BC$ the complexified Lie algebra of $G$. In general, for a $\BK$-vector space $V$, we use $V_{\BC} := V \otimes_{\BR} \BC$ to denote its complexification. 
\par
For a subgroup $H\subset G$ and a representation $\pi$ of $H$, take $g \in G$, we denote $H^g := g^{-1}Hg$, and $\pi^g$ a representation of $H^g$, defined by $\pi^g(g^{-1}hg) := \pi(h)$.
\par
For a Casselman-Wallach representation $\pi$, we write $\pi^{\vee}$ for the contragrediant Casselman-Wallach representation.
\par
Topological vector spaces are assumed to be Hausdorff unless otherwise specified.
\par
\subsection*{Acknowledgments}
The author would like to express his gratitude to his advisor, Prof. Binyong Sun, for providing invaluable guidance and offering continued support throughout the research. 
He is also grateful to Rui Chen, Hao Ying, and Zhihang Yu for fruitful discussions. He also thanks Wee Teck Gan, Dihua Jiang, Yubo Jin, Dongwen Liu, Hengfei Lu, and Ruichen Xu for their useful remarks.

\section{Preliminaries and notation}\label{Sec: preliminaries}
In this section, we recall some preliminary material and fix the notation to be used in the paper.

\subsection{Local Langlands correspondence}\label{Subsec: LLC}
In this subsection, we recall basic facts about the local Langlands correspondence for $\GL_{m}(\BK)\ (m \in \BZ_{\geq 1})$. For a general reference, see \cite{Knapp1994}, \cite{Moeglin1997}, \cite[Appendix]{Jacquet2009RankinSelberg}.
\par
We first review the Langlands classification of irreducible representations of $\GL_{m}(\BK)$. The characters of $\BC^{\times}$ can be uniquely written as 
$$\chi_{k,\lambda}^{\BC}(z) := (\frac{z}{|z|})^k |z|^{\lambda}, \quad k \in \BZ,\ \lambda \in \BC,$$ 
the characters of $\BR^{\times}$ can be uniquely written as 
$$\chi_{k,\lambda}^{\BR}(t) := \sgn^k(t) |t|^{\lambda}, \quad k \in \{0,1\},\ \lambda \in \BC.$$
For each character $\chi_{k,\lambda}^{\BK}$, we define its \textbf{exponent} by $$\exp(\chi_{k,\lambda}^{\BK}) := \Re\ \lambda.$$
We shall omit the superscript $\BK$ when the context is clear and use these notations for characters of $\BK^{\times}$ throughout the paper. 
\par
For each positive integer $k \in \BZ_{\geq 1}$, we write $D_k$ for the discrete series of $\GL_2(\BR)$ with central character $\sgn^{k+1}$, whose minimal $K$-type has highest weight $k+1$. Then for $\lambda \in \BC$, we denote $$D_{k,\lambda} := D_k \otimes |\det\nolimits_{\GL_2}|^{\lambda}.$$
The \textbf{exponent} of $D_{k,\lambda}$ is defined to be 
$$\exp(D_{k,\lambda}) := \Re\ \lambda.$$
Denote the set of relative discrete series of $\GL_2(\BR)$ by 
$$\Irr_{rd} (\GL_2(\BR)) := \{D_{k,\lambda}|\ k \in \BZ_{\geq 1}, \lambda \in \BC \}.$$
\par
Denote by $\Irr (\GL_m(\BK))$ the set of equivalence classes of irreducible Casselman-Wallach representations of $\GL_m(\BK)$. Define 
    $$
    \Irr_{rd}^{\BK} :=
    \left\{
    \begin{array}{ll} 
    \Irr (\GL_1(\BC)) & \text{if } \BK = \BC; \\
    \Irr (\GL_1(\BR)) \amalg \Irr_{rd} (\GL_2(\BR)) & \text{if } \BK = \BR.
    \end{array}
    \right.
    $$
\par
For $\pi_i \in \Irr_{rd}^{\BK}$, $i = 1,\dots, r$, the corresponding normalized parabolic induced representation 
$$\pi_1 \dot{\times} \cdots \dot{\times} \pi_r := \Ind^{G_m}_P \pi_1 \widehat{\otimes} \cdots \widehat{\otimes} \pi_r$$
is called an \textbf{generalized principal series representation}, where $P \subset G_m$ is a suitable parabolic subgroup. A generalized principal series representation is called a \textbf{induced representation of Langlands type} (or a \textbf{standard module}) if it satisfied 
$$\exp(\pi_1) \geq \dots \geq \exp(\pi_r).$$
The celebrated Langlands classification states that, for each irreducible representation $\pi$, there exists a unique standard module $\pi_1 \dot{\times} \cdots \dot{\times} \pi_r$ (up to isomorphism) such that $\pi$ is the unique irreducible quotient of $\pi_1 \dot{\times} \cdots \dot{\times} \pi_r$. If this is the case, we denote 
$$\pi =: \pi_1 \boxplus \cdots \boxplus \pi_r.$$ 
\par
Next, we recall the classification of irreducible representations of the Weil group $W_{\BK}$ of $\BK$. Recall that $W_{\BC} = \BC^{\times}$, and 
$W_{\BR} = \BC^{\times} \amalg \BC^{\times}j$ with $j^2 = -1$, $jzj^{-1} = \bar{z}$ for $z \in \BC^{\times}$. Since $\BC^{\times}$ is abelian, the irreducible representation of $W_{\BC}$ is just a character. As to $W_{\BR}$, consider the reciprocity map
$$
r:W_{\BR} \to \BR^{\times}
$$
defined by 
$$ j \mapsto -1, z \mapsto z\bar{z}. $$
It is surjective, and the kernel equals the derived subgroup of $W_{\BR}$. Thus we can view any one-dimensional representation of $W_{\BR}$ as a character of $\BR^{\times}$.
Any irreducible representation of $W_{\BR}$ is at most 2-dimensional. The 2-dimensional irreducible representation of $W_{\BR}$ can be uniquely written as 
$$\sigma_{k,\lambda} := \Ind_{\BC^{\times}}^{W_{\BR}} \chi^{\BC}_{k,2\lambda}, \quad k \in \BZ_{\geq 1},\ \lambda \in \BC.$$
\par
For $m \in \BZ_{\geq 1}$, denote by $\Phi(W_{\BK},m)$ the set of equivalence classes of $m$-dimensional semisimple complex representation of $W_{\BK}$. An element in $\Phi(W_{\BK},m)$ should be called an \textbf{L-parameter} for $\GL_{m}(\BK)$.
\par
Each irreducible representation of $\GL_m(\BK)$ is associated with an L-parameter.
Define a map $L$ from $\Irr (\GL_m(\BK))$ to $\Phi(W_{\BK},m)$ as following. For $\chi \in \Irr (\GL_1(\BK))$, define $L(\chi) \in \Phi(W_{\BK},1)$ via $\BC^{\times} = \BC^{\times}$ and the reciprocity map $r: W_{\BR} \to \BR^{\times}$. For $D_{k,\lambda} \in \Irr_{rd} (\GL_2(\BR))$, define $L(D_{k,\lambda}) := \sigma_{k,\lambda}$. Then using the Langlands classification, we define in general
$$L(\pi_1 \boxplus \cdots \boxplus \pi_r) := L(\pi_1) \oplus \cdots \oplus L(\pi_r).$$
The local Langlands correspondence says that $L$ is a bijection between $\Irr (\GL_m(\BK))$ and $\Phi(W_{\BK},m)$, with many other favorable properties.
\par
The following lemma describes the form for L-parameters of $\eta$-symplectic type. 
\begin{lemt}\label{Lem: symplectic parameter}
    An L-parameter $\phi: W_{\BK} \to \GL_{2n}(\mathbb{C)}$ is of $\eta$-symplectic type if and only if it has form 
    $$\phi = \sum_{1 \leq i \leq a} \phi_i + \sum_{a+1 \leq j \leq b} (\phi_j + \phi_j^{\vee} \cdot \eta),$$
    where $\phi_i\ (1 \leq i \leq a)$ are the two-dimensional representations of $\eta$-symplectic type, $\phi_j\ (a+1 \leq j \leq b)$ aren't of $\eta$-symplectic type.
\end{lemt}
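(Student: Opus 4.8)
The plan is to unwind the definition of $\eta$-symplectic type and classify the possible summands. First I would recall that, for $\phi: W_{\BK} \to \GL_{2n}(\BC)$, the $\eta$-extension is $\phi^{(\eta)} = (\phi, \eta\circ r)$, and that this factors through $(i,\lambda): \GSp_{2n}(\BC) \to \GL_{2n}(\BC) \times \GL_1(\BC)$ precisely when there is a nondegenerate bilinear form $B$ on $\BC^{2n}$ such that $\phi(x)$ preserves $B$ up to the similitude factor $\eta(r(x))$, i.e. $B(\phi(x)v, \phi(x)w) = \eta(r(x))\,B(v,w)$, and (crucially, since we land in $\GSp$ rather than $\GO$) $B$ is \emph{alternating}. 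Equivalently, there is a nonzero $W_{\BK}$-equivariant alternating map $\phi \otimes \phi \to \eta\circ r$, or what amounts to the same thing, an isomorphism $\phi^{\vee}\cdot\eta \cong \phi$ realized by an antisymmetric pairing. So the whole statement becomes a question about self-duality (twisted by $\eta$) of a semisimple representation, together with a sign/parity bookkeeping of which irreducible constituents carry the alternating form themselves.

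Next I would decompose $\phi = \bigoplus_k m_k \tau_k$ into isotypic pieces with $\tau_k$ distinct irreducibles. An $\eta$-twisted alternating form on $\phi$ exists iff the constituents pair up correctly: if $\tau_k^{\vee}\cdot\eta \not\cong \tau_k$, then $\tau_k$ and its "partner" $\tau_k^{\vee}\cdot\eta$ must both occur with the same multiplicity, and each such pair $\tau \oplus \tau^{\vee}\eta$ always carries a (split) alternating $\eta$-form — these contribute the terms $\phi_j + \phi_j^{\vee}\cdot\eta$ in the statement, with $\phi_j$ not of $\eta$-symplectic type. If $\tau_k^{\vee}\cdot\eta \cong \tau_k$, then $\tau_k$ is either $\eta$-symplectic or $\eta$-orthogonal (the Frobenius–Schur-type dichotomy for the twisted self-duality); in the orthogonal case $m_k$ copies can only give an $\eta$-symplectic form if $m_k$ is even, and then again one packages them as pairs — but here the "partner" is $\tau_k$ itself, and over $W_{\BK}$ with $\BK$ archimedean an $\eta$-orthogonal irreducible is at most $1$-dimensional, so such a pair is $\chi \oplus \chi$ with $\chi^2 = \eta$, which is itself a $2$-dimensional representation of $\eta$-symplectic type (it lies in $\SL_2$-type $\begin{psmallmatrix} \chi & 0 \\ 0 & \chi\end{psmallmatrix}$ preserving $\begin{psmallmatrix} 0 & 1 \\ -1 & 0\end{psmallmatrix}$ up to $\chi^2 = \eta$); so it gets absorbed into the $\phi_i$ list. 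In the $\eta$-symplectic case, each single copy of $\tau_k$ already carries its own alternating $\eta$-form; over archimedean $W_{\BK}$ such $\tau_k$ is $2$-dimensional (it cannot be a character, since a character $\chi$ with $\chi^2=\eta$ is $\eta$-orthogonal, not symplectic — the form $\chi\otimes\chi$ on a $1$-dimensional space is symmetric), so these are exactly the two-dimensional $\phi_i$ of $\eta$-symplectic type. Assembling: $\phi = \sum_i \phi_i + \sum_j(\phi_j + \phi_j^{\vee}\eta)$ as claimed, and conversely any representation of that shape visibly carries an alternating $\eta$-form (direct sum of the forms on the pieces), hence is of $\eta$-symplectic type.

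The main obstacle, and the step requiring the most care, is the Frobenius–Schur dichotomy in the \emph{twisted} setting over $W_{\BK}$: I need to verify that a self-$\eta$-dual irreducible $\tau$ is either $\eta$-symplectic or $\eta$-orthogonal but not both, and pin down which irreducibles fall in which class for $\BK = \BR$ and $\BK = \BC$. For $\BK = \BC$ all irreducibles are characters and the form on a line is symmetric, so there is nothing symplectic of dimension one — the entire content is the pairing-up of $\chi$ with $\chi^{-1}\eta$; thus for $\BK=\BC$, $a$ counts pairs $\chi,\chi$ with $\chi^2=\eta$ only. For $\BK = \BR$ the only subtlety is the $2$-dimensional $\sigma_{k,\lambda} = \Ind_{\BC^{\times}}^{W_\BR}\chi^{\BC}_{k,2\lambda}$: I would compute $\sigma_{k,\lambda}^{\vee}$ explicitly (it is $\Ind\,(\chi^{\BC}_{k,2\lambda})^{-1} = \sigma_{k,-\lambda}$ after identifying $\chi^{\BC}_{-k,-2\lambda}\cong\chi^{\BC}_{k,-2\lambda}$ under the $\Gal(\BC/\BR)$-action) and determine the symmetry type of the invariant $\eta$-form using that $\Ind$ from an index-two subgroup of an induced character is symplectic exactly when the inducing character does not itself extend — concretely $\sigma_{k,\lambda}$ is symplectic (i.e.\ $1$-dimensional-form type) when $k$ is odd and orthogonal when $k$ is even, and its $\eta$-twisted self-duality with the right parity is exactly the condition defining "$\phi_i$ of $\eta$-symplectic type." Once this dichotomy and its archimedean incarnation are nailed down, the combinatorial reassembly above is routine, and I would present it as a short case analysis on the isotypic components.
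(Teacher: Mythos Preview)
Your approach is correct in outline but takes a genuinely different route from the paper's. The paper gives a short inductive argument directly on the symplectic form: pick any irreducible summand $\phi_1$ that is not of $\eta$-symplectic type, observe that $\dim\phi_1\le 2$ forces the alternating $\eta$-form to vanish on $\phi_1\times\phi_1$ (for $\dim 1$ this is automatic; for $\dim 2$ it is because $\wedge^2\phi_1=\det\phi_1\ne\eta$), then choose a $W_{\BK}$-subrepresentation $\phi_2$ complementary to $\phi_1^\perp$, so that the pairing $\phi_1\times\phi_2\to\eta$ is nondegenerate and hence $\phi_2\cong\phi_1^\vee\cdot\eta$; peel off this hyperbolic plane and iterate. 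No isotypic bookkeeping or Frobenius--Schur dichotomy is invoked. Your route, by contrast, organises everything through the isotypic decomposition and the twisted Frobenius--Schur sign; this is more systematic and makes the structure of the decomposition fully explicit, at the cost of a longer case analysis.

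One genuine slip to fix: your claim that ``over $W_{\BK}$ with $\BK$ archimedean an $\eta$-orthogonal irreducible is at most $1$-dimensional'' is false in general. For $\BK=\BR$ and $\eta=\chi^{\BR}_{m_0,z_0}$, the self-$\eta$-dual $\sigma_{k,\lambda}$ (i.e.\ $2\lambda=z_0$) is $\eta$-symplectic when $k+m_0$ is odd and $\eta$-orthogonal when $k+m_0$ is even, so two-dimensional $\eta$-orthogonal irreducibles do occur (and your stated parity ``$k$ odd/even'' is only the special case $m_0=0$). This does not break your argument: such a $\tau$ still forces even multiplicity, and the pair $\tau\oplus\tau=\tau\oplus\tau^\vee\eta$ legitimately goes into the second sum with $\phi_j=\tau$, which is indeed not of $\eta$-symplectic type. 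You should simply drop the dimension-one claim and route all $\eta$-orthogonal isotypic blocks, regardless of dimension, into the $\phi_j+\phi_j^\vee\eta$ list.
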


\begin{proof}
    The direction of right to left is direct. As to the other direction, denote by $<,>$ the symplectic form on $\phi$. Let $\phi_1$ be an irreducible subrepresentation of $\phi$, which isn't of $\eta$-symplectic type. Since $\dim \phi_1 \leq 2$, we have $<,>|_{\phi_1 \times \phi_1} = 0$. Let $\phi_1^{\perp} := \{v \in \phi | <v,w> = 0, \forall w \in \phi_1\}$. Since $\phi$ is semisimple, there exist subrepresentation $\phi_2$ such that $\phi = \phi_1^{\perp} \oplus \phi_2$, also $\dim \phi_2 = \dim \phi_1$. Then $<,>|_{\phi_1 \times \phi_2}$ is non-degenerate, and $\phi_2 \cong \phi_1^{\vee} \otimes \eta$.
\end{proof}

We further recall the following criterion for generic representations, which will be used in the proof of Theorem \ref{Main thmB}.

\begin{lemt}[{\cite[Theorem 6.2]{Vogan1978gelfand}}, {\cite[Lemma 4.8]{fang2018godement}}]
    Every irreducible generic Casselman-Wallach representation of $\GL_n(\BK)$ is isomorphic to its standard module.
\end{lemt}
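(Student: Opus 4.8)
This lemma is quoted verbatim from \cite{Vogan1978gelfand} and \cite{fang2018godement}; the plan is to recall the strategy behind it. By the Langlands classification reviewed above, $\pi$ is the unique irreducible quotient of a standard module $M = \pi_1 \dot{\times} \cdots \dot{\times} \pi_r$ with $\pi_i \in \Irr_{rd}^{\BK}$ and $\exp(\pi_1) \geq \cdots \geq \exp(\pi_r)$. Since $\pi$ is in particular a subquotient of $M$, the assertion is equivalent to the irreducibility of $M$: once $M$ is irreducible we get $\pi \cong M$. So the whole content is to show that the standard module attached to a generic irreducible representation is already irreducible.

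The first ingredient is that $M$ itself is generic with a one-dimensional space of Whittaker functionals. Each $\pi_i$ is a character of $\GL_1(\BK)$ or a relative discrete series of $\GL_2(\BR)$, hence carries a (unique) Whittaker functional; transporting these up along the Jacquet integral --- which converges in the closed positive chamber cut out by $\exp(\pi_1)\geq\cdots\geq\exp(\pi_r)$ and is interpreted by meromorphic continuation elsewhere --- shows $M$ is generic, while the uniqueness of Whittaker models for $\GL_{m}(\BK)$ gives $\dim \operatorname{Wh}(M) = 1$. Next I would bring in Vogan's theorem, the reason \cite{Vogan1978gelfand} is cited: an irreducible Casselman--Wallach representation of $\GL_{m}(\BK)$ is generic if and only if it has maximal Gelfand--Kirillov dimension $\binom{m}{2}$. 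Computing the Gelfand--Kirillov dimension of $M$ through parabolic induction (it adds $\dim N_P$, and each $\pi_i$ has maximal Gelfand--Kirillov dimension $\binom{n_i}{2}$ inside its own factor) gives $\operatorname{GKdim}(M) = \binom{m}{2}$. Since $\pi$ is assumed generic, $\operatorname{GKdim}(\pi) = \binom{m}{2} = \operatorname{GKdim}(M)$, and pulling back the Whittaker functional of $\pi$ along $M \twoheadrightarrow \pi$ and comparing one-dimensional spaces shows that the unique Whittaker functional of $M$ factors through the cosocle $\pi$; equivalently, the standard intertwining operator $M \to M^{-}$ (where $M^{-} := \pi_r \dot{\times} \cdots \dot{\times} \pi_1$ has $\pi$ as its unique irreducible submodule) is nonzero on $\operatorname{image} = \pi$, via compatibility of the Jacquet integral with that operator up to the nonzero local coefficient.

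It then remains to rule out a nonzero kernel $M' := \ker(M \to \pi)$. Since $\pi$ occurs in $M$ with multiplicity one and is its cosocle, every composition factor of $M'$ is distinct from $\pi$, hence non-generic, hence of Gelfand--Kirillov dimension $< \binom{m}{2}$ by Vogan's criterion, so $\operatorname{GKdim}(M') < \operatorname{GKdim}(M)$. To upgrade this to $M' = 0$ I would invoke the reducibility theory of $\GL_{m}$-standard modules: irreducibility of $M$ is detected pairwise on the $\pi_i \dot{\times} \pi_j$ ($i<j$), and whenever such a pair is reducible its Langlands quotient is one of the ``small'' non-generic constituents (a finite-dimensional representation, a Speh-type constituent, or a twist of a relative discrete series of strictly smaller Gelfand--Kirillov dimension), which by the multiplicativity of reducibility forces the cosocle $\pi$ of the whole $M$ to be non-generic --- contradicting the hypothesis. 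Hence every $\pi_i \dot{\times} \pi_j$ is irreducible, $M$ is irreducible, and $\pi \cong M$.

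I expect the last step to be the real obstacle. The bound $\operatorname{GKdim}(M') < \operatorname{GKdim}(M)$ is not by itself enough to conclude $M' = 0$ --- a finite-length module can surject onto an irreducible quotient with a nonzero kernel of smaller Gelfand--Kirillov dimension --- so one genuinely needs the structural input that a reducible archimedean $\GL_{m}$-standard module has a non-generic cosocle. That rests on the explicit reducibility analysis of standard modules (Speh's construction, translation functors, or the type-$A$ Kazhdan--Lusztig combinatorics, in the spirit of Zelevinsky's $p$-adic classification and Casselman--Shahidi), which is precisely what the cited references package; the Gelfand--Kirillov computation and the Whittaker-multiplicity bookkeeping above are routine by comparison.
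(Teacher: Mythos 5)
The paper offers no proof of this lemma --- it is stated purely as a citation of \cite{Vogan1978gelfand} and \cite{fang2018godement} --- so there is nothing internal to compare against; your reconstruction faithfully follows the strategy of those sources (genericity $=$ largeness in the sense of maximal Gelfand--Kirillov dimension, genericity of the standard module via the Jacquet integral, and the reduction of irreducibility to the pairwise products $\pi_i \dot{\times} \pi_j$ together with the fact that a reducible pair has non-large Langlands quotient). You also correctly identify that the Gelfand--Kirillov inequality alone does not kill the kernel and that the real content lives in the reducibility analysis packaged by the citations; the only quibble is that the maximal Gelfand--Kirillov dimension should be $\binom{m}{2}[\BK:\BR]$ rather than $\binom{m}{2}$ when $\BK=\BC$, a normalization that does not affect the argument.
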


\subsection{Nuclear \fre  spaces}
In this subsection, we briefly recall some standard facts about nuclear \Fre spaces. See \cite[Appendix A]{Casselman2000Bruhat} for more details.
\par
Let $T: V \to W$ be a morphism of topological vector space, it is called \textbf{strict} if the induced map $V / \ker T \to \Ima T$ is a topological isomorphism.
\begin{dfnt}
    We call a sequence of (not necessarily Hausdorff) topological vector spaces
    \[
    \cdots \xrightarrow{} V_{i+1}  \xrightarrow{d_{i+1}} V_i  \xrightarrow{d_i} V_{i-1} \xrightarrow{} \cdots
    \]
    a \textbf{weakly exact sequence}, if the sequence is exact as vector space and all the morphisms are continuous. 
    Moreover, if all the morphisms are strict, we'll call the sequence a \textbf{strictly exact sequence}.
\end{dfnt}

\par
In this paper, we mainly consider nuclear \fre spaces, NF-spaces for short. If $W$ is a NF-space, $V \subset W$ is a closed subspace, then $V$ and $W/V$ are both NF-spaces. Furthermore, all surjective morphisms between \fre spaces are open.
\par
For two topological vector spaces $V$ and $W$, denote by $V \widehat{\otimes} W$ the completed projective tensor product of $V$ and $W$. If $V$ and $W$ are both NF-spaces, so is $V \widehat{\otimes} W$. The following lemma is useful in practice.

\begin{lemt}[{\cite[Lemma A.3]{Casselman2000Bruhat}}]\label{Lem: NF tensor exact}
    Let 
    \[
    0 \xrightarrow{} E_1  \xrightarrow{i} E_2  \xrightarrow{p} E_3 \xrightarrow{} 0
    \]
    be a strictly exact sequence of NF-spaces. Let $F$ be an NF-space. Then 
    \[
    0 \xrightarrow{} E_1 \widehat{\otimes} F \xrightarrow{i\widehat{\otimes} 1_F} E_2 \widehat{\otimes} F  \xrightarrow{p\widehat{\otimes} 1_F} E_3 \widehat{\otimes} F\xrightarrow{} 0
    \]
    is also a strictly exact sequence.
\end{lemt}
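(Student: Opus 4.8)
The plan is to reduce the statement to two classical facts about completed tensor products of Fréchet spaces, glued together by the nuclearity of $F$. First observe that $E_1\widehat{\otimes}F$, $E_2\widehat{\otimes}F$ and $E_3\widehat{\otimes}F$ are again NF-spaces, hence in particular Fréchet. Therefore it is enough to check that the displayed sequence is exact as a sequence of vector spaces with continuous maps: strictness is then automatic, since a continuous injection between Fréchet spaces with closed image is a topological embedding, and a continuous surjection between Fréchet spaces is open, both by the open mapping theorem. So the three things to verify are: (a) $i\widehat{\otimes}1_F$ is injective with closed image, (b) $\ker(p\widehat{\otimes}1_F)=\operatorname{im}(i\widehat{\otimes}1_F)$, and (c) $p\widehat{\otimes}1_F$ is surjective.

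For (c) I would invoke the right-exactness of the functor $X\mapsto X\widehat{\otimes}F$ on the category of Fréchet spaces: applying it to $0\to E_1\to E_2\to E_3\to 0$ shows that $p\widehat{\otimes}1_F$ is surjective (indeed strict) and that $\ker(p\widehat{\otimes}1_F)$ is the closure of $\operatorname{im}(i\widehat{\otimes}1_F)$; here one uses that $E_3$ is metrizable so that completion commutes with passing to the quotient $E_2/E_1$. For (a) I would use that $F$ is nuclear: for every Fréchet space $E$ the canonical map from $E\widehat{\otimes}F$ (projective) to the injective completed tensor product $E\widehat{\otimes}_{\varepsilon}F$ is a topological isomorphism, and the injective tensor product is left-exact in the strong sense that a closed topological embedding $E_1\hookrightarrow E_2$ induces a closed topological embedding $E_1\widehat{\otimes}_{\varepsilon}F\hookrightarrow E_2\widehat{\otimes}_{\varepsilon}F$. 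Transporting this through the isomorphism gives (a). Finally (b) follows by combining (a) and (c): the closure of $\operatorname{im}(i\widehat{\otimes}1_F)$ appearing in (c) now equals $\operatorname{im}(i\widehat{\otimes}1_F)$ itself by (a), so the sequence is exact in the middle, and $p\widehat{\otimes}1_F$ induces a topological isomorphism $(E_2\widehat{\otimes}F)/\operatorname{im}(i\widehat{\otimes}1_F)\xrightarrow{\ \sim\ }E_3\widehat{\otimes}F$.

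The main obstacle is conceptual rather than computational: neither functor is exact by itself — $X\mapsto X\widehat{\otimes}F$ is only right-exact and $X\mapsto X\widehat{\otimes}_{\varepsilon}F$ only left-exact on short exact sequences of Fréchet spaces — so the heart of the argument is to use the nuclearity of $F$ to identify the projective and injective completed tensor products, and thereby upgrade right-exactness to full exactness and, simultaneously, to closedness of the image, which is exactly what fails for a non-nuclear $F$. One must also take care that the right-exactness statement used in (c) genuinely depends on the metrizability of the spaces involved, which is why it is essential that everything takes place in the category of NF-spaces; this, together with the open mapping theorem, is what converts algebraic exactness into strict exactness.
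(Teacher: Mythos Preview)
The paper does not supply its own proof of this lemma; it is stated with a bare citation to \cite[Lemma A.3]{Casselman2000Bruhat} and used as a black box. Your outline is the standard argument for this fact and is correct: right-exactness of $-\widehat{\otimes}F$ on Fr\'echet spaces gives surjectivity of $p\widehat{\otimes}1_F$ and identifies $\ker(p\widehat{\otimes}1_F)$ with the closure of the image of $i\widehat{\otimes}1_F$; nuclearity lets you pass to the injective tensor product, where closed embeddings are preserved, so $i\widehat{\otimes}1_F$ already has closed image; and the open mapping theorem upgrades algebraic exactness to strict exactness. This is essentially the argument one finds in Casselman's appendix (or in standard references such as Tr\`eves or Grothendieck), so there is nothing to compare against in the present paper.
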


\section{Schwartz homology}\label{Sec: schwartz homology}
The theory of Schwartz homology is developed in \cite{CHEN2021108817}.
In this section, we review basic knowledge about Schwartz homology and present the propositions that will be used in our subsequent proofs. We refer readers to \cite{CHEN2021108817} for more details.

\subsection{Borel's lemma}

Let $M$ be a Nash manifold, and let $\CE$ be a tempered vector bundle over $M$. Denote $\Gamma^{\CS}(M,\CE)$ to be the Schwartz section of $\CE$ over $X$, which is a \fre space. Suppose that $U$ is an open Nash submanifold of $M$. Extension by zero yields a closed embedding $\Gamma^{\CS}(U,\CE) \hookrightarrow \Gamma^{\CS}(M,\CE)$. \par
Denote $Z:= M \backslash U$ and define  
$$\Gamma^{\CS}_Z(M,\CE) := \Gamma^{\CS}(M,\CE)/\Gamma^{\CS}(U,\CE).$$
Denote by $\CN_Z^{\ast}(M)$ the complexified conormal bundle of $Z$ in $M$. We have the following description of $\Gamma^{\CS}_Z(M,\CE)$.

\begin{prpt}[Borel's Lemma, {\cite[Proposition 8.2, 8.3]{CHEN2021108817}}, {\cite[Proposition 2.5]{Xue20Bessel}}]
There is a decreasing filtration on $\Gamma^{\CS}_Z(M,\CE)$, denoted by $\Gamma^{\CS}_Z(M,\CE)_k,\ k \in \BZ_{\geq0}$, such that
$$
\Gamma^{\CS}_Z(M,\CE) \cong  \varprojlim_k \, \Gamma_Z^{\CS}(M, \CE)/\Gamma_{Z}^{\CS}(M, \CE)_k
$$
as topological vector space, with the graded pieces
$$
 \Gamma_{Z}^{\CS}(M, \CE)_k/\Gamma_{Z}^{\CS}(M, \CE)_{k+1}\cong \Gamma^{\CS}(Z, \Sym^k(\CN^{\ast}_Z(M))\otimes \CE), \quad \forall k \in \BZ_{\geq0}.
$$
Moreover, let $G$ be an almost linear Nash group. If $M$ is a $G$-Nash manifold, $Z$ is stable under the action of $G$ and $\CE$ is a tempered $G$-bundle, then the filtration is stable under the action of $G$.
\end{prpt}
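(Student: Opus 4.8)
The plan is to realize this as the classical Borel lemma on Taylor expansions, carried out in the category of Schwartz functions on Nash manifolds (Aizenbud--Gourevitch). First I would define the filtration intrinsically: for $k \in \BZ_{\geq 0}$ let $F_k \subset \Gamma^{\CS}(M,\CE)$ be the closed subspace of Schwartz sections whose normal $(k-1)$-jet along $Z$ vanishes, so that $F_0 = \Gamma^{\CS}(M,\CE)$ and $F_k$ is the kernel of the continuous jet map into the Schwartz jet bundle along $Z$. By the Aizenbud--Gourevitch description of extension by zero, the image of $\Gamma^{\CS}(U,\CE)$ inside $\Gamma^{\CS}(M,\CE)$ is precisely the space of sections vanishing to infinite order along $Z$; hence $\Gamma^{\CS}(U,\CE)\subset F_k$ for every $k$ and $\bigcap_k F_k=\Gamma^{\CS}(U,\CE)$. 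One then sets $\Gamma^{\CS}_Z(M,\CE)_k:=F_k/\Gamma^{\CS}(U,\CE)$; this is a decreasing chain of closed subspaces of $\Gamma^{\CS}_Z(M,\CE)$ with $\bigcap_k\Gamma^{\CS}_Z(M,\CE)_k=0$. Since $F_k$ is defined without reference to coordinates, if $G$ stabilizes $Z$ and acts on the tempered bundle $\CE$ then it stabilizes $U$, hence $\Gamma^{\CS}(U,\CE)$ and each $F_k$, which gives the $G$-stability asserted in the last sentence of the proposition.

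Next I would identify the graded pieces. For each $k$ the aim is the short exact sequence of NF-spaces
\[
0 \to F_{k+1} \to F_k \xrightarrow{\ c_k\ } \Gamma^{\CS}(Z,\Sym^k(\CN^{\ast}_Z(M))\otimes\CE) \to 0,
\]
where $c_k$ sends a section of normal vanishing order $\geq k$ to its leading (degree-$k$) normal Taylor coefficient along $Z$. That $c_k$ is well defined, continuous, and has kernel $F_{k+1}$ is formal: restriction and normal differentiation preserve the Schwartz property together with its moderate-growth estimates, and a section of order $\geq k$ whose $k$-th coefficient vanishes has order $\geq k+1$. Surjectivity is proved by localizing: one chooses a Nash tubular neighborhood of $Z$ in $M$ together with a Schwartz partition of unity subordinate to it and to $U$, trivializes $\CE$ and $\CN_Z(M)$ locally over $Z$, and is reduced to the model $M=\Omega\times\BR^d$, $Z=\Omega\times\{0\}$ with $\CE$ trivial (here $d$ is the codimension of $Z$ in $M$), in which a prescribed coefficient, regarded fibrewise as a homogeneous degree-$k$ polynomial $g(z)(x)$ in the normal variables $x$, is realized by $(z,x)\mapsto \phi(x)\,g(z)(x)$ for a fixed Schwartz function $\phi$ on $\BR^d$ with $\phi(0)=1$. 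Because $\Gamma^{\CS}(U,\CE)\subset F_{k+1}$, quotienting this sequence by $\Gamma^{\CS}(U,\CE)$ gives $\Gamma^{\CS}_Z(M,\CE)_k/\Gamma^{\CS}_Z(M,\CE)_{k+1}\cong \Gamma^{\CS}(Z,\Sym^k(\CN^{\ast}_Z(M))\otimes\CE)$, and the isomorphism, being built from the intrinsic map $c_k$, is $G$-equivariant in the equivariant setting.

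For the inverse-limit statement, injectivity of the natural map $\Gamma^{\CS}_Z(M,\CE)\to\varprojlim_k \Gamma^{\CS}_Z(M,\CE)/\Gamma^{\CS}_Z(M,\CE)_k$ is exactly the equality $\bigcap_k F_k=\Gamma^{\CS}(U,\CE)$ from the first step. For surjectivity I would lift a compatible system of classes to genuine Schwartz sections $s_k\in\Gamma^{\CS}(M,\CE)$; compatibility forces $s_{k+1}-s_k\in F_k$, so the normal Taylor coefficients of the $s_k$ along $Z$ stabilize to a sequence $g_j\in\Gamma^{\CS}(Z,\Sym^j(\CN^{\ast}_Z(M))\otimes\CE)$, and the full (sequential) Borel lemma for Schwartz functions produces a single $s\in\Gamma^{\CS}(M,\CE)$ with $c_j(s)=g_j$ for all $j$; then $s-s_k\in F_k$ for every $k$, so the image of $s$ maps onto the given system. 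Finally $\Gamma^{\CS}_Z(M,\CE)$ is an NF-space, being a quotient of one, and $\varprojlim_k \Gamma^{\CS}_Z(M,\CE)/\Gamma^{\CS}_Z(M,\CE)_k$ is an NF-space, being a closed subspace of a countable product of such; so the continuous linear bijection between them is automatically a topological isomorphism by the open mapping theorem.

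The step I expect to be the main obstacle is not the filtration bookkeeping but the two analytic inputs borrowed from the theory of Schwartz functions on Nash manifolds: first, that $\Gamma^{\CS}(U,\CE)$ coincides with the sections flat along $Z$, which rests on {\L}ojasiewicz-type inequalities ensuring that multiplication by cut-offs concentrated near $Z$ keeps one inside the Schwartz class; and second, the sequential Borel extension theorem producing a Schwartz section with prescribed normal Taylor data, which requires Taylor-remainder estimates uniform both in the jet order and over the possibly non-compact base $Z$. Once these are in hand, the reductions via Nash tubular neighborhoods and Schwartz partitions of unity, and the permanence properties of NF-spaces, are routine.
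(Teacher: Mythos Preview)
The paper does not supply its own proof of this proposition; it is quoted verbatim from \cite[Proposition 8.2, 8.3]{CHEN2021108817} and \cite[Proposition 2.5]{Xue20Bessel}. Your sketch is correct and is exactly the argument carried out in those references (ultimately resting on Aizenbud--Gourevitch's theory of Schwartz functions on Nash manifolds), so there is nothing to compare: you have reconstructed the cited proof, including its two genuine analytic inputs --- the identification of $\Gamma^{\CS}(U,\CE)$ with the space of sections flat along $Z$, and the Schwartz-category Borel extension theorem.
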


\subsection{Schwartz homology}
Let $G$ be an almost linear Nash group. Denote by $\CS\mathrm{mod}_G$ the category of smooth \fre representations of $G$ of moderate growth. \par
For $V \in \CS\mathrm{mod}_G$, consider the $G$-coinvariance $V_G := V/(\sum_{g \in G}(g-1)\cdot V)$, which is given the quotient topology and becomes a locally convex (not necessarily Hausdorff) topological vector space. The coinvariant functor $V \mapsto V_{G}$ is a right exact functor from $\CS\mathrm{mod}_G$ to the category of locally convex (not necessarily Hausdorff) topological vector spaces. In \cite{CHEN2021108817}, the Schwartz homology $\oH^\CS_i(G, -)$ is introduced to be the derived functor of $V \mapsto V_{G}$, which processes the following favorable properties.\par

\begin{prpt}[Shapiro's Lemma, {\cite[Theorem 7.5]{CHEN2021108817}}]\label{Prop: Shapiro Lem}
    Let $H$ be a Nash subgroup of an almost linear Nash group  $G$, and $V \in \CS\mathrm{mod}_H$.  Then there is an identification 
$$
\oH_{i}^\CS(G,  (\ind_H^G (V \otimes \delta_H))\otimes \delta_G^{-1})=\oH_{i}^\CS(H, V),\quad \forall\ i\in \mathbb Z, 
$$
of topological vector spaces, where $\ind^G_H$ denotes the Schwartz induction. 
When $G/H$ is compact, the Schwartz induction coincides with the unnormalized parabolic induction.
\end{prpt}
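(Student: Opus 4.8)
\subsection*{Proof proposal}

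The plan is to treat $\oH^\CS_{*}(G,-)$ as the (left) derived functor of the coinvariants functor $V\mapsto V_G$ on $\smod_G$, and to deduce the statement from three ingredients: exactness of the transfer functor $\CF:=\ind_H^G\big((-)\otimes\delta_H\big)\otimes\delta_G^{-1}:\smod_H\to\smod_G$, a Frobenius reciprocity identity $\big(\CF(V)\big)_G\cong V_H$ at the level of $H_0$ that is natural in $V$, and acyclicity of $\CF$ applied to projective objects. Granting these, the conclusion is the usual comparison of derived functors: choose a projective resolution $P_\bullet\to V$ in $\smod_H$; by exactness $\CF(P_\bullet)\to\CF(V)$ is a resolution, its terms are $(-)_G$-acyclic, hence it computes $\oH^\CS_{*}(G,\CF(V))$; applying $(-)_G$ and invoking the $H_0$ identity turns this complex into $(P_\bullet)_H$, whose homology is by definition $\oH^\CS_{*}(H,V)$.

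For exactness I would use that Schwartz induction realizes $\ind_H^G(W)$ as the Schwartz sections of the tempered $G$-bundle $G\times_H W$ over $G/H$, so that a strictly exact sequence of NF-spaces $0\to W_1\to W_2\to W_3\to 0$ in $\smod_H$ is carried to a strictly exact sequence of Schwartz-section spaces; here one uses that closed subspaces and Hausdorff quotients of NF-spaces are again NF and that surjections of \fre spaces are open (Section \ref{Sec: preliminaries}), and tensoring by the one-dimensional $\delta_G^{-1}$ preserves strictness. For the $H_0$ identity, the twists by $\delta_H$ and $\delta_G$ are precisely the correction making fiber integration / push-forward to a point along $G/H$ a $G$-equivariant topological surjection $\CF(V)\twoheadrightarrow V_H$ with kernel $\sum_{g\in G}(g-1)\CF(V)$; this is the Frobenius reciprocity built into the Schwartz-induction formalism of \cite{CHEN2021108817}, which I would either cite or reprove from the bundle description, noting naturality in $V$ is evident.

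The remaining point, which I expect to be the actual work, is acyclicity on projectives. Here I would use that $\smod_G$ has enough projectives, the free modules $\CS(G)\,\wh\otimes\,E$ (for $E$ an NF-space, $G$ acting by right translation) being projective and every projective a direct summand of such a one — and likewise for $H$. Transitivity of Schwartz induction, $\ind_H^G\circ\ind_{\{e\}}^H\cong\ind_{\{e\}}^G$, together with the modular-character bookkeeping, identifies $\CF\big(\CS(H)\,\wh\otimes\,E\big)$ with an object of the form $\CS(G)\,\wh\otimes\,E$, hence projective in $\smod_G$ and in particular $(-)_G$-acyclic; passing to direct summands, $\CF$ carries all projectives to acyclic objects. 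The delicate part is making these identifications rigorous inside the quasi-abelian category of NF-spaces with strict morphisms: correctly matching the characters $\delta_H$ and $\delta_G$ and verifying the transitivity isomorphism with its topology, after which the whole argument assembles formally. Finally, when $G/H$ is compact every tempered section is an ordinary smooth section, so $\ind_H^G$ coincides with unnormalized smooth (parabolic) induction, which gives the last assertion.
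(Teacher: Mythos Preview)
The paper does not supply its own proof of this proposition: it is quoted verbatim as \cite[Theorem 7.5]{CHEN2021108817} and used as a black box. So there is nothing in the paper to compare your argument against.

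That said, your outline is the standard derived-functor proof and matches the strategy in \cite{CHEN2021108817}. Two small points worth tightening. First, in $\smod_G$ the operative notion is \emph{relatively} projective, and resolutions are \emph{strong} (i.e.\ split as complexes of topological vector spaces); you should say this rather than ``projective resolution,'' since that is what makes both the exactness of $\CF$ on resolutions and the acyclicity argument go through in the quasi-abelian setting. Second, your step ``$\CF$ sends projectives to acyclic objects'' can in fact be upgraded to ``$\CF$ sends relatively projectives to relatively projectives,'' which is what \cite{CHEN2021108817} establishes (via the adjunction between Schwartz induction and restriction, or equivalently via your transitivity-of-induction computation on free modules $\CS(H)\,\widehat{\otimes}\,E$); this is cleaner than merely acyclic and is what actually holds. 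With those adjustments your proposal is correct and essentially the argument of the cited source.
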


\begin{prpt}[{\cite[Theorem 7.7]{CHEN2021108817}}, {\cite[Lemma 3.1]{suzuki2023epsilon}}]\label{Prop: homology comparison}
For every representation $V$ in the category $\CS\mathrm{mod}_{G}$, there is an identification
\[
\oH^{\CS}_{i}(G, V) = \oH_{i}(\gC, K; V) = \oH_{i}(\gC, K; V^{K\text{-fin}}), \quad \forall i\in \BZ,
\]
of topological vector spaces. Here and henceforth, for $V \in \CS\mathrm{mod}_{G}$, $\oH_{*}(\gC, K; V)$ denotes the homology groups of the Koszul complex
\[
\cdots \rightarrow (\wedge^{l+1}(\gC/\k_{\BC}) \otimes V)_{K}
\rightarrow (\wedge^{l}(\gC/\k_{\BC}) \otimes V)_{K}
\rightarrow (\wedge^{l-1}(\gC/\k_{\BC}) \otimes V)_{K}
\rightarrow\cdots
\]
\end{prpt}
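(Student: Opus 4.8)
The plan is to reduce the statement to the two ingredients whose names the authors already invoke: Shapiro's lemma (Proposition 3.4) and the Hochschild–Serre / Koszul machinery for $(\g,K)$-homology of Casselman–Wallach representations. First I would recall that for $V \in \smod_G$ the Schwartz homology $\oH^\CS_*(G,V)$ is by definition the left derived functor of $V \mapsto V_G$ on $\smod_G$, and that $\smod_G$ has enough projectives, given concretely by objects of the form $\Gamma^\CS(G,\BC)\widehat\otimes W$ (Schwartz functions on $G$ valued in a Fréchet space $W$), which are acyclic for coinvariants. So the first step is to fix a projective resolution $P_\bullet \to V$ in $\smod_G$ by such induced-type modules and observe that $\oH^\CS_*(G,V) = H_*((P_\bullet)_G)$.

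Next I would compare this with the Koszul side. The key point is that for $G$ a real reductive (almost linear Nash) group with maximal compact $K$, the functor of $K$-finite vectors $V \mapsto V^{K\text{-fin}}$ from $\smod_G$ to the category of $(\g,K)$-modules is exact (Casselman–Wallach globalization is an equivalence onto its essential image, and $K$-finite vectors of a short exact sequence of Casselman–Wallach representations is short exact), and it carries the chosen projectives $\Gamma^\CS(G,\BC)\widehat\otimes W$ to modules that are acyclic for the Koszul complex computing $\oH_*(\gC,K;-)$ — indeed these globalize $C^\infty_c$-type induced modules whose $(\g,K)$-homology is computed by Shapiro's lemma from the trivial group. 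Therefore applying $(-)^{K\text{-fin}}$ to $P_\bullet$ yields a resolution of $V^{K\text{-fin}}$ by Koszul-acyclic $(\g,K)$-modules, and one gets
\[
\oH_i(\gC,K;V^{K\text{-fin}}) \cong H_i\big((P_\bullet^{K\text{-fin}})\text{ Koszul-ified}\big).
\]
It remains to identify this with $H_i((P_\bullet)_G)$: on each projective $P_j$ one must check that the natural map from the Koszul complex of $P_j^{K\text{-fin}}$ to $(P_j)_G$ (i.e. the edge map $(\wedge^\bullet(\gC/\k_\BC)\otimes P_j)_K \to (P_j)_G$) is a topological-homology isomorphism; since $P_j$ is acyclic on both sides this is the statement that $\oH^\CS_0(G,P_j) = \oH_0(\gC,K;P_j) = \oH_0(\gC,K;P_j^{K\text{-fin}})$, which one verifies directly for $\Gamma^\CS(G,\BC)\widehat\otimes W$ by an explicit computation of coinvariants. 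Finally, the middle equality $\oH_i(\gC,K;V) = \oH_i(\gC,K;V^{K\text{-fin}})$ — where the left side means Koszul homology of the Casselman–Wallach model with its topology — follows because the Koszul differentials are continuous, the chain spaces $(\wedge^l(\gC/\k_\BC)\otimes V)_K$ are Fréchet, and passing to $K$-finite vectors commutes with taking the $K$-isotypic decomposition, so the inclusion of complexes $V^{K\text{-fin}} \hookrightarrow V$ induces an isomorphism on homology (this uses automatic continuity / that the relevant maps are strict, so the homology inherits its Hausdorff topology).

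The main obstacle I anticipate is the \emph{topological} bookkeeping rather than the algebra: one must ensure that all the quotients appearing (coinvariants of projectives, images and cokernels of Koszul differentials) are Hausdorff and that the comparison maps are strict morphisms of Fréchet spaces, so that "isomorphism of homology" is an isomorphism of topological vector spaces as claimed. This is exactly where nuclearity and the Casselman–Wallach moderate-growth hypotheses are used, and where I would lean on \cite[Lemma A.3]{Casselman2000Bruhat} (exactness of $\widehat\otimes$ on strictly exact sequences of NF-spaces) and on the foundational results of \cite{CHEN2021108817}. In a streamlined write-up, though, the cleanest route is simply to cite \cite[Theorem 7.7]{CHEN2021108817} for the first equality and \cite[Lemma 3.1]{suzuki2023epsilon} for the reduction to $K$-finite vectors, since both are already in the literature; the paragraphs above are the sketch one would give if proving it from scratch.
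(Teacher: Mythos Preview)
The paper does not give its own proof of this proposition; it is stated as a direct citation of \cite[Theorem 7.7]{CHEN2021108817} and \cite[Lemma 3.1]{suzuki2023epsilon}, with no argument supplied. Your final paragraph correctly identifies this, so your proposal is in line with the paper's treatment; the preceding sketch is a reasonable outline of how those cited results are established but is extra relative to what the paper does.
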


\begin{prpt}[{\cite[Corollary 7.8]{CHEN2021108817}}]
Every short exact sequence 
$$0\rightarrow V_{1}\rightarrow V_{2}\rightarrow V_{3}\rightarrow 0$$
in the category 
$\CS\mathrm{mod}_{G}$ yields a long exact sequence
\[
\cdots \rightarrow\oH^{\CS}_{i+1}(G, V_{3})\rightarrow\oH^{\CS}_{i}(G, V_{1})\rightarrow\oH^{\CS}_{i}(G, V_{2})\rightarrow \oH^{\CS}_{i}(G, V_{3})\rightarrow\cdots
\]
of (not necessarily Hausdorff)  locally convex topological vector spaces.
\end{prpt}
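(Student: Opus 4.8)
The plan is to obtain this as a formal consequence of the construction of Schwartz homology as a derived functor; the only genuine subtlety is that the ambient categories are not abelian, only quasi-abelian, so ``short exact sequence'' in $\smod_G$ must be read in the strict sense and one works with a suitable relative projective class. Recall that $\oH^\CS_\bullet(G,-)$ is the left derived functor of the coinvariants functor $V\mapsto V_G$, computed by resolving objects of $\smod_G$ by the ``free'' modules $\ind_{\{e\}}^G W = \CS(G)\,\widehat{\otimes}\,W$, where $W$ ranges over nuclear \fre spaces: every $V\in\smod_G$ is a strict quotient of $\ind_{\{e\}}^G V$ through the action map, and each free module is acyclic for $(-)_G$ --- this last point follows from Shapiro's lemma (Proposition~\ref{Prop: Shapiro Lem}) applied with the trivial subgroup, since $\oH^\CS_i(\{e\},W)=0$ for $i>0$ and the twist by the invertible character $\delta_G^{-1}$ is harmless. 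These free modules are, moreover, relatively projective with respect to strict epimorphisms, which is exactly the input the horseshoe lemma needs.

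Given a strict short exact sequence $0\to V_1\to V_2\to V_3\to 0$ in $\smod_G$, I would first pick free resolutions $P^1_\bullet\to V_1$ and $P^3_\bullet\to V_3$ and then apply the horseshoe lemma to build a free resolution $P^2_\bullet\to V_2$ with $P^2_n = P^1_n\oplus P^3_n$ fitting into a short exact sequence of complexes
\[
0\longrightarrow P^1_\bullet\longrightarrow P^2_\bullet\longrightarrow P^3_\bullet\longrightarrow 0
\]
that is split in each degree, the splittings being the inclusion and projection of the direct summand and hence continuous. The additive functor $(-)_G$ preserves this degreewise splitting, so after applying it one gets a degreewise split short exact sequence of complexes of (possibly non-Hausdorff) locally convex topological vector spaces with continuous differentials. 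Since the free modules are $(-)_G$-acyclic, $\oH^\CS_n(G,V_i)$ is the $n$-th homology of $(P^i_\bullet)_G$; and because the sequence of complexes is split in each degree, the zig-zag lemma yields a long exact homology sequence in which the connecting map $\oH^\CS_{i+1}(G,V_3)\to\oH^\CS_i(G,V_1)$ is given by the standard recipe --- lift along the split surjection, apply the differential, pull back along the injection --- and is therefore continuous, while exactness itself is purely algebraic and is checked on underlying vector spaces. This is the asserted long exact sequence; the homology groups are only locally convex and generally non-Hausdorff because the differentials of $(P^i_\bullet)_G$ need not have closed image, exactly as the statement records.

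I expect the one real obstacle to be the quasi-abelian and topological bookkeeping rather than the homological algebra: pinning down the relative projective class, verifying the horseshoe lemma and the comparison theorem for resolutions in $\smod_G$ with respect to strict morphisms, proving the acyclicity of the free modules, and tracking continuity throughout --- in particular making sure the connecting homomorphism is continuous, which is what forces one to keep the resolving sequence split in each degree. (One could instead argue from Proposition~\ref{Prop: homology comparison} using the Chevalley--Eilenberg complex $\bigl((\wedge^\bullet(\gC/\k_{\BC})\otimes V)_K,\partial\bigr)$: each functor $V\mapsto(\wedge^l(\gC/\k_{\BC})\otimes V)_K$ is exact, being tensoring with a finite-dimensional space followed by $K$-coinvariants --- exact since averaging over the compact group $K$ is a continuous idempotent --- so a strict short exact sequence in $\smod_G$ gives a short exact sequence of complexes; but degreewise splitness, and with it continuity of the connecting map, is less transparent there, which is why the free-resolution picture is the cleaner route.)
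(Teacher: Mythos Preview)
The paper does not prove this proposition at all: it is simply quoted from \cite[Corollary 7.8]{CHEN2021108817} as background, with no argument given. Your sketch is essentially the proof that appears in that reference---the Schwartz homology is defined there precisely as the left derived functor of coinvariants with respect to the relative projective class generated by the Schwartz inductions $\CS(G)\,\widehat{\otimes}\,W$, and the long exact sequence follows from the horseshoe lemma exactly as you describe. So your proposal is correct and matches the intended argument; there is just nothing in the present paper to compare it to.
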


In order to use Borel's lemma to study the Schwartz homology, we also need the following proposition.
\begin{prpt}[{\cite[Lemma 8.4]{CHEN2021108817}}]\label{Prop: homology borel lem}
Suppose that  $Z$ is a $G$-stable closed Nash submanifold of a $G$-Nash manifold $M$. Fix a character $\chi$ of $G$.
Let $i\in \BZ$ and assume that $\oH_{i+1}^\CS(G;(\Gamma_{Z}^{\varsigma}(M, \mathsf E)/\Gamma_{Z}^{\varsigma}(M, \mathsf E)_k)\otimes\chi)$ is finite-dimensional for all $k\geq 0$. Then the canonical map
\[
\oH_{i}^\CS(G;\Gamma_{Z}^{\varsigma}(M, \mathsf E)\otimes\chi)\rightarrow\varprojlim_{k}\oH_{i}^\CS(G;(\Gamma_{Z}^{\varsigma}(M, \mathsf E)/\Gamma_{Z}^{\varsigma}(M, \mathsf E)_k)\otimes\chi)
\]
is a linear isomorphism.
\end{prpt}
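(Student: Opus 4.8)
The strategy I have in mind is the Milnor $\varprojlim$--$\varprojlim^{1}$ exact sequence, applied to the tower of quotients furnished by Borel's lemma, together with a vanishing of the relevant $\varprojlim^{1}$ term forced by the finiteness hypothesis. Write $\Gamma := \Gamma^{\varsigma}_{Z}(M,\CE)$, $F_{k} := \Gamma^{\varsigma}_{Z}(M,\CE)_{k}$ and $Q_{k} := \Gamma/F_{k}$. Each $Q_{k}$ lies in $\smod_{G}$, as it carries a finite filtration whose graded pieces $\Gamma^{\varsigma}(Z,\Sym^{j}(\CN^{\ast}_{Z}(M))\otimes\CE)$, $j < k$, are Schwartz sections of tempered $G$-bundles over the Nash manifold $Z$. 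The transition maps $\phi_{k}\colon Q_{k}\to Q_{k-1}$ are surjective since the filtration $F_{\bullet}$ is decreasing, and Borel's lemma gives $\Gamma \cong \varprojlim_{k}Q_{k}$ as topological vector spaces.

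First I would record the Milnor presentation of this inverse limit. A countable product of NF-spaces is an NF-space, and a countable product of objects of $\smod_{G}$ is again an object of $\smod_{G}$ (a continuous seminorm on a countable product involves only finitely many factors, so smoothness and moderate growth are inherited by the product). Hence the sequence
\[
0 \longrightarrow \Gamma \longrightarrow \prod_{k\ge 0} Q_{k} \xrightarrow{\ 1-\phi\ } \prod_{k\ge 0} Q_{k} \longrightarrow 0, \qquad (1-\phi)\big((x_{k})_{k}\big) := \big(x_{k} - \phi_{k+1}(x_{k+1})\big)_{k},
\]
is a sequence in $\smod_{G}$; its kernel equals $\varprojlim_{k}Q_{k} = \Gamma$ by construction, it is surjective because surjectivity of the transition maps of the tower $\{Q_{k}\}$ forces $\varprojlim^{1}_{k}Q_{k} = 0$, and strictness then follows from the open mapping theorem for Fr\'echet spaces, so it is a short exact sequence in $\smod_{G}$. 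As tensoring with a character is exact, I apply the long exact sequence of Schwartz homology to it after twisting by $\chi$.

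Next I would identify the terms. The Koszul complex computing $\oH^{\CS}_{\ast}(G,-) = \oH_{\ast}(\gC,K;-)$ of Proposition \ref{Prop: homology comparison} is assembled from the finite-dimensional exterior algebra $\wedge^{\bullet}(\gC/\k_{\BC})$ and the invariants functor for the compact group $K$ (for compact $K$ one may use $K$-invariant in place of $K$-coinvariant cochains), and invariants, tensoring with a finite-dimensional space, and passage to homology all commute with countable products; hence $\oH^{\CS}_{j}(G;\prod_{k}Q_{k}\otimes\chi)\cong\prod_{k}\oH^{\CS}_{j}(G;Q_{k}\otimes\chi)$ for every $j$. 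Under this identification the endomorphism induced by $1-\phi$ becomes ``identity minus shift'' on the tower $\{\oH^{\CS}_{j}(G;Q_{k}\otimes\chi)\}_{k}$, with kernel $\varprojlim_{k}\oH^{\CS}_{j}(G;Q_{k}\otimes\chi)$ and cokernel $\varprojlim^{1}_{k}\oH^{\CS}_{j}(G;Q_{k}\otimes\chi)$. Reading off the part of the long exact sequence around degree $i$ yields the short exact sequence
\[
0 \longrightarrow \varprojlim^{1}_{k}\,\oH^{\CS}_{i+1}(G;Q_{k}\otimes\chi) \longrightarrow \oH^{\CS}_{i}(G;\Gamma\otimes\chi) \longrightarrow \varprojlim_{k}\oH^{\CS}_{i}(G;Q_{k}\otimes\chi) \longrightarrow 0,
\]
whose right-hand arrow is the corestriction of the map induced by $\Gamma \hookrightarrow \prod_{k}Q_{k}$, hence exactly the canonical map of the statement. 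Finally, the hypothesis that each $\oH^{\CS}_{i+1}(G;Q_{k}\otimes\chi)$ is finite-dimensional makes $\{\oH^{\CS}_{i+1}(G;Q_{k}\otimes\chi)\}_{k}$ a tower of finite-dimensional vector spaces, whose transition images stabilize; it is therefore Mittag-Leffler, so $\varprojlim^{1}_{k}\oH^{\CS}_{i+1}(G;Q_{k}\otimes\chi) = 0$, and the canonical map is an isomorphism.

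The step I expect to be the main obstacle is the topological bookkeeping in the first half: checking carefully that $\prod_{k}Q_{k}$ is indeed an object of $\smod_{G}$, that the Milnor sequence is strictly --- not merely algebraically --- exact so that the excerpt's long exact sequence of Schwartz homology applies, and that Schwartz homology commutes with this countable product (the last point resting on the compactness of $K$, which allows the invariant form of the Koszul model). Granting these, the rest is formal homological algebra with $\varprojlim$ and $\varprojlim^{1}$ of towers of vector spaces.
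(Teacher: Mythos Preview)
The paper does not supply its own proof of this proposition; it is quoted directly from \cite[Lemma 8.4]{CHEN2021108817}, so there is no in-paper argument to compare against. Your Milnor $\varprojlim$--$\varprojlim^{1}$ approach is correct and is the natural route to such a statement: the short exact sequence $0\to\Gamma\to\prod_{k}Q_{k}\xrightarrow{1-\phi}\prod_{k}Q_{k}\to 0$ in $\smod_{G}$, the identification of $\oH^{\CS}_{\ast}$ of the product with the product of $\oH^{\CS}_{\ast}$ via the Koszul model, and the Mittag-Leffler vanishing of $\varprojlim^{1}$ for a tower of finite-dimensional spaces. The technical points you isolate at the end---that countable products remain in $\smod_{G}$, that the Milnor sequence is \emph{strictly} exact (open mapping theorem for Fr\'echet spaces), and that the Koszul complex commutes with countable products because $\wedge^{\bullet}(\gC/\k_{\BC})$ is finite-dimensional and $K$-coinvariants equal $K$-invariants for compact $K$---are exactly the substantive steps, and your treatment of them is sound.
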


It is important in practice to determine the Hausdorffness of the Schwartz homology. We have the following two propositions which are useful.

\begin{prpt}[{\cite[Theorem 5.9, Proposition 5.7]{CHEN2021108817}}]\label{Prop: projective hausdorff}
Let $G$ be an almost linear Nash group, and let $V$ be a relatively projective representation in $\CS\mathrm{mod}_G$.  Then the coinvariant space $V_G$ is a \Fre space.
Moreover, when $G$ is compact, every representation in $\CS\mathrm{mod}_G$ is relatively projective. 
\end{prpt}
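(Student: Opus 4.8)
The plan is to handle the two assertions separately, disposing of the compact case first since it is the quicker of the two. When $G$ is compact I would normalize the Haar measure to total mass one and prove directly that \emph{every} $V \in \smod_G$ is relatively projective, by showing that any admissible short exact sequence $0 \to V_1 \to V_2 \xrightarrow{p} V_3 \to 0$ in $\smod_G$ — that is, one admitting a continuous (not necessarily $G$-equivariant) linear section $s$ — in fact splits $G$-equivariantly. The equivariant section is produced by averaging, $\tilde s(v) := \int_G \pi_2(g)\, s\big(\pi_3(g^{-1})v\big)\, dg$; compactness of $G$ together with smoothness and moderate growth of the representations makes the integrand a continuous $V_2$-valued function on a compact set, so the integral converges and $\tilde s$ is continuous, while a change of variable shows $\tilde s$ is $G$-equivariant and $p\circ\tilde s = \mathrm{id}_{V_3}$. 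Granting the equivariant splitting, a lift of any morphism $\phi : V \to V_3$ along $p$ is obtained simply by composing $\phi$ with the section, so $V$ is relatively projective by the defining lifting property.

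For the first assertion I would start from the structural fact — part of the construction of enough relative projectives in \cite{CHEN2021108817} — that a relatively projective object of $\smod_G$ is exactly a topological direct summand of a ``free'' module $\CS(G)\,\widehat\otimes\,F \cong \ind_{\{e\}}^G F$, where $F$ runs over \fre spaces carrying the trivial action and $G$ acts on $\CS(G)$ through the regular representation. Since the coinvariant functor $V \mapsto V_G = \oH^\CS_0(G,-)$ is additive and right exact, it sends retracts to retracts; hence if $V$ is relatively projective then $V_G$ is a topological direct summand of $P_G$ with $P := \CS(G)\,\widehat\otimes\,F$, and it suffices to prove that $P_G$ is \fre. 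Here I would invoke Shapiro's lemma (Proposition \ref{Prop: Shapiro Lem}) with the trivial subgroup: since $\delta_{\{e\}}$ is trivial and $\CS(G)\,\widehat\otimes\,F$ is isomorphic to its twist by $\delta_G^{-1}$ (twisting the regular representation by a character of $G$ produces an isomorphic representation), Shapiro identifies $P_G$ with $\oH^\CS_0(\{e\},F) = F$, which is a \fre space. Finally a topological direct summand of a \fre space is \fre — being the image $\ker(1-p)$ of a continuous idempotent $p$, it is closed, and a closed subspace of a \fre space is \fre — and applying this to $V_G \subset P_G$ completes the proof.

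I expect the genuinely substantive ingredients, both of which I would import directly from \cite{CHEN2021108817}, to be (i) the identification of the relatively projective objects with the retracts of the modules $\CS(G)\,\widehat\otimes\,F$ — equivalently, that these modules are relatively projective and that $\smod_G$ has enough of them — and (ii) the statement that $\CS(G)\,\widehat\otimes\,F$ really coincides, as a topological $G$-module, with the Schwartz induction $\ind_{\{e\}}^G F$, so that Shapiro's lemma is applicable; both rest on the functional-analytic properties of the completed projective tensor product and of Schwartz sections. By contrast, the averaging argument in the compact case and the ``a topological summand of a \fre space is \fre'' step are routine and cause no trouble.
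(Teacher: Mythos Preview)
The paper does not prove this proposition; it simply cites \cite[Theorem~5.9, Proposition~5.7]{CHEN2021108817}. So there is no ``paper's own proof'' to compare against here.

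Your proposal is nonetheless sound and is essentially the argument one finds in the cited reference. Both parts are correct: the compact case is exactly the standard averaging argument (normalised Haar measure turns a continuous section of an admissible epimorphism into an equivariant one, so every admissible epi splits and every object is relatively projective), and the first assertion follows because a relatively projective $V$ is a topological retract of some $\CS(G)\,\widehat\otimes\,F$, whose coinvariants are identified with the Fr\'echet space $F$. One small remark: rather than routing through Shapiro's lemma and the twist by $\delta_G^{-1}$ (which works, via the isomorphism $f\mapsto\delta_G\cdot f$ you describe), it is slightly more direct to observe that integration against a right Haar measure gives a continuous surjection $\CS(G)\to\BC$ that kills $(g-1)\CS(G)$ and identifies $\CS(G)_G\cong\BC$, whence $(\CS(G)\,\widehat\otimes\,F)_G\cong F$ by the same density argument you use elsewhere. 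Either route is fine.
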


\begin{prpt}[{\cite[Lemma 3.4]{borel2000continuous}}, {\cite[Proposition 1.9]{CHEN2021108817}}]\label{Prop: finite hausdorff}
Let $G$ be an almost linear Nash group, and let $V \in \CS\mathrm{mod}_G$.  If $\oH_{i}^\CS(G; V)$ is finite-dimensional ($i\in \BZ$), then it is Hausdorff.  
\end{prpt}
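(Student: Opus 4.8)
The plan is to realize $\oH_i^{\CS}(G;V)$ as the homology, in degree $i$, of an explicit complex of Fréchet spaces with continuous differentials, and then to deduce Hausdorffness from the classical fact that a continuous linear map of Fréchet spaces whose image has finite codimension has closed image.

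First I would apply Proposition \ref{Prop: homology comparison} to identify $\oH_i^{\CS}(G;V)$, as a topological vector space, with the $i$-th homology of the Koszul complex $(C_\bullet, d_\bullet)$, where $C_l = (\wedge^{l}(\gC/\k_{\BC})\otimes V)_K$. The key preliminary observation is that each $C_l$ is a Fréchet space and the differentials are continuous: since $K$ is compact, the representation $\wedge^{l}(\gC/\k_{\BC})\otimes V$ (with the adjoint action on the first tensor factor) lies in $\smod_K$ and is relatively projective, so by Proposition \ref{Prop: projective hausdorff} its coinvariant space $C_l$ is Fréchet; and the differentials are continuous because they are assembled from the continuous action map $\gC\times V\to V$ together with algebraic wedge and contraction maps, then pushed to coinvariants. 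In particular $\ker d_i$ is a closed, hence Fréchet, subspace of $C_i$, and $\Ima d_{i+1}\subseteq\ker d_i$ has codimension $\dim\oH_i^{\CS}(G;V)$, which is finite by hypothesis.

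It then remains to prove the following purely topological lemma: if $u\colon E\to F$ is a continuous linear map of Fréchet spaces with $\dim(F/\Ima u)=d<\infty$, then $\Ima u$ is closed. I would prove it by choosing $w_1,\dots,w_d\in F$ whose images form a basis of $F/\Ima u$, forming the continuous surjection $\tilde u\colon E\oplus\BC^d\to F$, $(e,c_1,\dots,c_d)\mapsto u(e)+\sum_j c_j w_j$, which is open by the open mapping theorem for Fréchet spaces, hence a quotient map; then $\Ima u$ is closed because $\tilde u^{-1}(\Ima u)=E\oplus\{0\}$ is closed in $E\oplus\BC^d$. Applying this to the corestriction $d_{i+1}\colon C_{i+1}\to\ker d_i$ shows that $\Ima d_{i+1}$ is closed in $\ker d_i$, so $\oH_i^{\CS}(G;V)=\ker d_i/\Ima d_{i+1}$ is a quotient of a Fréchet space by a closed subspace, hence Fréchet, and in particular Hausdorff.

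The only genuine obstacle is the topological lemma of the third paragraph, whose proof rests on the open mapping theorem; this is exactly the point isolated in the cited results of Borel and of \cite{CHEN2021108817}, while the rest of the argument is a formal assembly of Propositions \ref{Prop: homology comparison} and \ref{Prop: projective hausdorff} with the smoothness of $V$. A small point to check with care is that the coinvariant functor $(-)_K$, applied to the finite graded modules $\wedge^{\bullet}(\gC/\k_{\BC})\otimes V$, really produces Fréchet spaces; this is precisely where compactness of $K$ and Proposition \ref{Prop: projective hausdorff} enter.
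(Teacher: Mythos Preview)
The paper does not supply its own proof of this proposition; it is stated with citations to \cite{borel2000continuous} and \cite{CHEN2021108817} and used as a black box. Your argument is correct and is essentially the standard proof one would expect those references to contain: realize $\oH_i^{\CS}(G;V)$ via the Koszul complex using Proposition~\ref{Prop: homology comparison}, use compactness of $K$ together with Proposition~\ref{Prop: projective hausdorff} to see that each term is Fr\'echet, and then invoke the open-mapping-theorem lemma that a continuous linear map of Fr\'echet spaces with finite-codimensional image has closed image. There is nothing to compare against in the paper itself, and your write-up fills in exactly the details that the citations are meant to cover.
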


Since $\CS\mathrm{mod}_G$ is not an Abelian category, the usual homological tools cannot be applied directly.
We prove the following two useful formulas for the Schwartz homology, with their proofs deferred to Appendix \ref{Sec: Appendix A}.

\begin{prpt}[\kun formula, Theorem \ref{Thm: kunneth Schwartz homology}]
    Let $G_1$ and $G_2$ be two almost linear Nash groups.
    Assume $V_i \in \CS\mathrm{mod}_{G_i}$, $i = 1,2$, are both NF-spaces. If $\forall\ j \in \BZ$, $\oH^{\CS}_{j}(G_i, V_i)$ are NF-spaces, then 
    $$
    \oH^{\CS}_m(G_1 \times G_2, V_1 \widehat{\otimes} V_2) \cong \bigoplus_{p+q=m} \oH^{\CS}_p(G_1, V_1) \widehat{\otimes} \oH^{\CS}_q(G_2, V_2), \quad \forall m \in \BZ,
    $$
    as topological vector spaces. In particular, $\oH^{\CS}_m(G_1 \times G_2, V_1 \widehat{\otimes} V_2)$ are NF-spaces. 
\end{prpt}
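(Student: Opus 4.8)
The plan is to pull the statement back, via Proposition~\ref{Prop: homology comparison}, to a topological Künneth theorem for bounded complexes of nuclear \fre spaces, and then to prove that by a dévissage whose only nontrivial input is Lemma~\ref{Lem: NF tensor exact}.

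\textbf{Reduction to Koszul complexes.} I would fix maximal compact subgroups $K_i\subset G_i$, put $\g_i=\Lie\, G_i$, and let $C^{(i)}_\bullet:=(\wedge^\bullet(\g_{i,\BC}/\k_{i,\BC})\otimes V_i)_{K_i}$ be the Koszul complex computing $\oH^\CS_*(G_i,V_i)$ (Proposition~\ref{Prop: homology comparison}). For $G_1\times G_2$ the relevant quotient is $\g_{1,\BC}/\k_{1,\BC}\oplus\g_{2,\BC}/\k_{2,\BC}$, and $\wedge^l(A\oplus B)\cong\bigoplus_{p+q=l}\wedge^p A\otimes\wedge^q B$ as $(K_1\times K_2)$-modules, under which the Koszul differential becomes the total differential $d^{(1)}\otimes 1\pm 1\otimes d^{(2)}$. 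Since for compact $K_i$ the coinvariance functor $(-)_{K_i}$ is exact on $\smod_{K_i}$ and $(M\widehat\otimes N)_{K_1\times K_2}\cong M_{K_1}\widehat\otimes N_{K_2}$ for NF-modules $M,N$ — supplied by the averaging projections together with Lemma~\ref{Lem: NF tensor exact} — the Koszul complex of $G_1\times G_2$ with coefficients in $V_1\widehat\otimes V_2$ is identified with the total complex $\mathrm{Tot}(C^{(1)}_\bullet\widehat\otimes C^{(2)}_\bullet)$. As the exterior algebras are finite-dimensional, all complexes in sight are bounded and termwise NF, and all direct sums occurring are finite.

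\textbf{Strictness from the hypothesis.} Write $H^{(i)}_j=\oH^\CS_j(G_i,V_i)$, $Z^{(i)}_j=\ker d^{(i)}_j$, $B^{(i)}_j=\Im d^{(i)}_{j+1}$. By assumption each $H^{(i)}_j$ is NF, hence Hausdorff, so $B^{(i)}_j$ is closed in $Z^{(i)}_j$, hence closed in the \fre space $C^{(i)}_j$; the open mapping theorem then makes all $d^{(i)}_j$ strict, all $Z^{(i)}_j,B^{(i)}_j$ NF, and both $0\to B^{(i)}_j\to Z^{(i)}_j\to H^{(i)}_j\to 0$ and $0\to Z^{(i)}_j\to C^{(i)}_j\to B^{(i)}_{j-1}\to 0$ strictly exact sequences of NF-spaces. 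Tensoring these with a fixed NF-space $W$ and invoking Lemma~\ref{Lem: NF tensor exact} shows that $\mathrm{Tot}(W\widehat\otimes C^{(2)}_\bullet)$ has homology $W\widehat\otimes H^{(2)}_*$; this is the base case of the dévissage below.

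\textbf{Dévissage.} I would prove the Künneth isomorphism for $\mathrm{Tot}(C^{(1)}_\bullet\widehat\otimes C^{(2)}_\bullet)$ by induction on the number of degrees in which $C^{(1)}_\bullet$ is nonzero, the base case being the single-degree computation just recorded. For the inductive step, use the stupid truncation $0\to\sigma_{>a}C^{(1)}_\bullet\to C^{(1)}_\bullet\to\sigma_{\le a}C^{(1)}_\bullet\to 0$: it is termwise split, hence strictly exact, and stupid truncation preserves the property "strict differentials $+$ NF homology" (the only new homology spaces are $Z^{(1)}_a$ and $C^{(1)}_{a+1}/B^{(1)}_{a+1}$, which are NF by the previous step). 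Tensoring with $C^{(2)}_\bullet$ (Lemma~\ref{Lem: NF tensor exact}) and passing to homology yields a long exact sequence in which, by the inductive hypothesis, the two outer terms at each spot are NF and carry the Künneth form, and the connecting homomorphisms are induced by the differential the truncation cut. Granting that these connecting maps are strict, the long exact sequence consists of NF-spaces and strict maps, which forces the middle term $H_n(\mathrm{Tot}(C^{(1)}_\bullet\widehat\otimes C^{(2)}_\bullet))$ to be NF and to split as $\bigoplus_{p+q=n}H^{(1)}_p\widehat\otimes H^{(2)}_q$. Combining with the first step gives the theorem, and NF-ness of the product homology follows since a finite direct sum of completed projective tensor products of NF-spaces is NF.

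\textbf{Main obstacle.} The homological skeleton is the classical Künneth dévissage; the difficulty is that $\smod_G$ and the category of NF-spaces are not abelian, so one must verify at every stage that cycles, boundaries, homology, and all the maps among them remain NF-spaces and strict morphisms. The NF-homology hypothesis is precisely the input that propagates strictness through the construction, and the single most delicate point is controlling the connecting homomorphisms in the dévissage — making precise that they, and hence the homology of the total complex, stay within the category of NF-spaces — which is where Lemma~\ref{Lem: NF tensor exact} and the open mapping theorem for \fre spaces must be applied with care.
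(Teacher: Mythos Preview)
Your reduction to Koszul complexes is correct and matches the paper: the identification of the $(G_1\times G_2)$-Koszul complex with the completed tensor product of the two Koszul complexes, via $(M\widehat\otimes N)_{K_1\times K_2}\cong M_{K_1}\widehat\otimes N_{K_2}$ for NF-modules over compact groups, is exactly the content of the paper's Lemma~\ref{Lem: kunneth trivial} together with the opening paragraph of the proof of Theorem~\ref{Thm: kunneth Schwartz homology}. The strictness paragraph is also fine and is precisely what the paper uses implicitly.

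Where you diverge is in the \kun formula for chain complexes. The paper does not induct on the length via stupid truncation; instead it uses the single short exact sequence
\[
0\longrightarrow Z(C^{(1)})_\bullet\longrightarrow C^{(1)}_\bullet\longrightarrow \Sigma B(C^{(1)})_\bullet\longrightarrow 0
\]
where $Z_\bullet,B_\bullet$ carry the \emph{zero} differential. After tensoring with $C^{(2)}_\bullet$ and passing to the long exact sequence (Lemma~\ref{Lem: short long seq}), both outer terms are computed by the ``zero-differential'' case, and the connecting map is the inclusion $B^{(1)}_p\hookrightarrow Z^{(1)}_p$ tensored with $H^{(2)}_q$. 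Because this connecting map is \emph{injective}, the long exact sequence breaks into short exact sequences, and one reads off $H_n(C^{(1)}\widehat\otimes C^{(2)})\cong\bigoplus H^{(1)}_p\widehat\otimes H^{(2)}_q$ directly from $0\to B^{(1)}_p\widehat\otimes H^{(2)}_q\to Z^{(1)}_p\widehat\otimes H^{(2)}_q\to H^{(1)}_p\widehat\otimes H^{(2)}_q\to 0$ and Lemma~\ref{Lem: homology open map}. No extension problem arises.

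Your stupid-truncation d\'evissage, by contrast, leaves a genuine gap at exactly the point you flag. First, a minor correction: for chain complexes with differential lowering degree, $\sigma_{\le a}$ is the subcomplex and $\sigma_{>a}$ is the quotient, the reverse of what you wrote. More substantively, the connecting map in your long exact sequence is induced by $d^{(1)}_{a+1}:C^{(1)}_{a+1}/B^{(1)}_{a+1}\to Z^{(1)}_a$ tensored with $H^{(2)}_q$, which is neither injective nor surjective; hence the middle term $H_n(C^{(1)}\widehat\otimes C^{(2)})$ sits in a short exact sequence between a cokernel and a kernel, and you must \emph{resolve that extension}. Saying that strictness ``forces'' the splitting is not a proof: short exact sequences of NF-spaces do not split in general. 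The honest way to close this is to introduce the natural \kun map $\bigoplus H^{(1)}_p\widehat\otimes H^{(2)}_q\to H_n(\mathrm{Tot})$, check its compatibility with the two long exact sequences (the one for $\mathrm{Tot}$ and the one obtained by tensoring the truncation LES of $C^{(1)}$ with each $H^{(2)}_q$ via Lemma~\ref{Lem: NF tensor exact}), and run a topological five-lemma. That works, but it is more laborious than the paper's route, whose single virtue is that the injective connecting map eliminates the extension step entirely.
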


\begin{rremark}
    This \kun formula cannot be derived from that of the relative Lie algebra homology, 
    as we do not assume the representations to be admissible.
\end{rremark}

\begin{prpt}[Hochschild-Serre spectral sequence for nilpotent normal subgroups, Corallary \ref{Cor: spec seq nilp}]
    Let $H = L \ltimes N$ be an almost linear Nash group with $N$ nilpotent normal subgroup. Consider $V \in \CS\mathrm{mod}_H$, if $\forall\ j \in \BZ$, $\oH^{\CS}_{j}(N, V) \in \CS\mathrm{mod}_L$, then there exists convergent first quadrant spectral sequences:
    $$
    E^2_{p,q} = \oH^{\CS}_{p}(L, \oH^{\CS}_{q}(N, V)) \implies \oH^{\CS}_{p+q}(H ,V).
    $$
\end{prpt}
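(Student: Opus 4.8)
The plan is to run the Lyndon--Hochschild--Serre argument---factor the $H$-coinvariants functor as the $L$-coinvariants of the $N$-coinvariants, resolve $V$ relatively projectively over $H$, and read off the spectral sequence from the resulting bicomplex---inside the non-abelian category $\CS\mathrm{mod}$, where essentially all the work lies in controlling topologies.

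First I would record the two facts about relatively projective modules that make this composition of functors work. Since $H = L\ltimes N$, the underlying Nash manifold of $H$ is $L\times N$ and $\CS(H)\cong\CS(L)\widehat{\otimes}\CS(N)$; for a relatively projective object $P=\CS(H)\widehat{\otimes}W$ of $\CS\mathrm{mod}_H$ (with $W$ an NF-space) I would check by a direct computation that $P|_N$ is relatively projective in $\CS\mathrm{mod}_N$ (so that $\oH^\CS_q(N,P)=0$ for $q>0$) and that $P_N$, which is $\CS(L)\widehat{\otimes}W$ up to a twist by modular characters of $N$ and $L$, is relatively projective in $\CS\mathrm{mod}_L$. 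Thus $(-)_H=(-)_L\circ(-)_N$ on objects, and relatively projective $H$-modules are $(-)_N$-acyclic with $L$-projective coinvariants.

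Next, using that $\CS\mathrm{mod}_H$ has enough relative projectives \cite{CHEN2021108817}, I would choose a relatively projective resolution $P_\bullet\to V$ with NF terms. By the previous step $(P_\bullet)_N$ is a bounded-below complex of relatively projective objects of $\CS\mathrm{mod}_L$ whose $q$-th homology is $\oH^\CS_q(N,V)$, which by hypothesis lies in $\CS\mathrm{mod}_L$, hence is Hausdorff. Since the $P_i$ are NF, the cycle spaces of $(P_\bullet)_N$ are automatically closed (hence NF), and Hausdorffness of the homology forces the boundary spaces to be closed as well, so $(P_\bullet)_N$ decomposes into strictly exact short exact sequences of NF-spaces relating cycles, boundaries and homology. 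I would then take a Cartan--Eilenberg resolution $Q_{\bullet,\bullet}\to(P_\bullet)_N$ by relatively projective $L$-modules and form the first-quadrant bicomplex $(Q_{p,q})_L$. Filtering by columns and invoking the $(-)_L$-acyclicity of each $(P_p)_N$ collapses one spectral sequence and identifies the total homology with $\oH_n\big((P_\bullet)_H\big)=\oH^\CS_n(H,V)$; filtering by rows gives $E^2_{p,q}=\oH^\CS_p\big(L,\oH^\CS_q(N,V)\big)\implies\oH^\CS_{p+q}(H,V)$, which is the asserted first-quadrant (hence convergent) spectral sequence. As an alternative route when $N$ is unipotent, one can instead combine the comparison $\oH^\CS_*(G,-)=\oH_*(\gC,K;-)$ of Proposition \ref{Prop: homology comparison} with the classical Hochschild--Serre spectral sequence for the ideal $\n_\BC\subset\l_\BC\ltimes\n_\BC$ relative to $K_L$.

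The hard part will be precisely this topological bookkeeping: because $\CS\mathrm{mod}_L$ is not abelian one cannot split complexes or build Cartan--Eilenberg resolutions for free, and comparing the two filtrations of the total complex requires that every cycle, boundary and homology space occurring in the argument stays Hausdorff---in fact NF. This is exactly where the nilpotence of $N$ is needed: for nilpotent $N$ the functors $\oH^\CS_*(N,-)$ are computed by a \emph{finite} Koszul-type complex of NF-spaces (via Borel's lemma and d\'evissage on the nilpotency class, the base cases being $N\cong\BR$ and $N\cong\BR^{\times}$), which keeps all intermediate objects inside the class of NF-spaces and legitimises the open-mapping-theorem arguments above. Once this is in place---together with the standing hypothesis that $\oH^\CS_q(N,V)\in\CS\mathrm{mod}_L$---the remainder is the standard Grothendieck/Cartan--Eilenberg formalism.
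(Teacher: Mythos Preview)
Your ``alternative route'' in the last sentence is in fact the paper's actual proof, and your main proposal is a genuinely different argument.

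The paper does not build the spectral sequence inside $\CS\mathrm{mod}$ at all. Instead it invokes Proposition~\ref{Prop: homology comparison} to replace $\oH^{\CS}_*(H,V)$ by $\oH_*(\h_\BC,M;V^{M\text{-fin}})$ (here $M$ is the maximal compact of $L$, which is also the maximal compact of $H$ because $N$ is nilpotent), and then simply quotes the classical Hochschild--Serre spectral sequence for $(\g,K)$-modules (Borel--Wallach, Knapp--Vogan). The only new ingredient is a short lemma (Lemma~\ref{Lem: nilp homology comparison}) showing that, under the hypothesis $\oH^{\CS}_j(N,V)\in\CS\mathrm{mod}_L$, taking $M$-finite vectors commutes with $\n_\BC$-homology: $\oH_j(\n_\BC,V^{M\text{-fin}})=\oH^{\CS}_j(N,V)^{M\text{-fin}}$. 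This is proved by a direct manipulation of the Koszul complex, using only that $(-)^{M\text{-fin}}$ is exact on strictly exact sequences in $\CS\mathrm{mod}_L$. With that lemma, the $E^2$-page of the algebraic spectral sequence is identified with $\oH^{\CS}_p(L,\oH^{\CS}_q(N,V))$, and the abutment with $\oH^{\CS}_{p+q}(H,V)$, finishing the proof in three lines.

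Your main route---a Grothendieck/Cartan--Eilenberg construction directly in $\CS\mathrm{mod}$---is more ambitious and could in principle yield a more general statement, but the gaps you flag are real: constructing Cartan--Eilenberg resolutions requires a horseshoe lemma, which in turn needs kernels and cokernels to exist and behave well, and none of this is available for free in $\CS\mathrm{mod}_L$. You would need to prove that every short strictly exact sequence of NF $L$-modules admits a simultaneous relatively projective resolution, and then control the topology on all the intermediate homologies of the double complex, not just on $\oH^{\CS}_q(N,V)$. The paper sidesteps all of this by passing to the purely algebraic category $\CC_{\h_\BC,M}$, where the spectral sequence is already in the literature; nilpotence of $N$ is used only to ensure that $M$ serves as maximal compact for both $L$ and $H$, and to compute $\oH^{\CS}_*(N,-)$ by the Lie algebra Koszul complex.
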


\subsection{Some vanishing criterion}\label{Subsec: vanishing criterion}
In this section, we state two vanishing criteria for the Schwartz homology groups, which will be frequently used in the subsequent sections.
\begin{lemt}\label{Lem: nilp vanish lem}
    Let $\n$ be a nilpotent complex Lie algebra, and let $\psi$ be a non-trivial character on $\n$. Denote by $V$ a (probably infinite-dimensional) trivial representation of $\n$. Then $\oH_{i}(\n, V\otimes \psi) = 0,\ \forall i \in \BZ$.
\end{lemt}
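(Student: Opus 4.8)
The plan is to compute $\oH_*(\n, V \otimes \psi)$ via the Chevalley--Eilenberg (Koszul) complex $\wedge^\bullet \n \otimes (V \otimes \psi)$ and exhibit an explicit contracting chain homotopy. Since $V$ is a trivial $\n$-module, the differential on $\wedge^\bullet \n \otimes V \otimes \psi$ is, up to the $V$-factor, just the differential of the complex $\wedge^\bullet \n \otimes \psi$ computing $\oH_*(\n, \psi)$ (a one-dimensional module); so the first reduction is to note that everything is $V$-linear and it suffices to treat $V = \BC$, i.e. to show $\oH_*(\n, \psi) = 0$ when $\psi$ is a nontrivial character. Here I must be slightly careful about the topological/infinite-dimensional flavor: the homotopy I build will be given by contraction/multiplication operators that act only on the $\wedge^\bullet \n$ and $\psi$ factors and are tensored with the identity on $V$, so they are automatically continuous, and the identity ``homotopy operator'' argument descends to the $V$-coefficient complex without change.

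First I would pick a linear functional realizing $\psi$: write $\psi = d\psi \in \n^*$ (the Lie algebra character; since $\n$ acts on the one-dimensional space $\psi$ by this functional, and $\psi$ nontrivial means $d\psi \neq 0$). Choose $X \in \n$ with $\psi(X) = 1$; such an $X$ exists precisely because $\psi \neq 0$. Define $h : \wedge^p \n \to \wedge^{p+1}\n$ by $h(\omega) = X \wedge \omega$ (exterior multiplication by $X$), extended $V\otimes\psi$-linearly. The key computation is then the standard Cartan-type identity for the Chevalley--Eilenberg boundary map $\partial$ on a module $M$: one checks that $\partial h + h \partial$, evaluated on $\wedge^p \n \otimes M$, equals the operator $\omega \otimes m \mapsto \omega \otimes (X \cdot m)$ coming from the action of $X$ on $M$, plus a term involving $\ad X$ acting on $\wedge^\bullet\n$ — but on homology the $\ad X$ contribution vanishes (it is the Lie-algebra action of $X$ on the complex, which is chain-homotopic to zero, the standard fact that inner automorphisms act trivially on Lie algebra homology). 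For $M = V \otimes \psi$ with $V$ trivial, $X$ acts as the scalar $\psi(X) = 1$. Hence $\partial h + h \partial$ is homotopic to the identity on the whole complex, so the complex is acyclic and $\oH_i(\n, V\otimes\psi) = 0$ for all $i$.

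The main obstacle — really the only subtle point — is bookkeeping the $\ad X$ term: the naive identity $\partial h + h\partial = \mathrm{act}(X)$ is not literally true on the nose on chains (there is a correction by the derivation $L_X$ induced by $\ad X$ on $\wedge^\bullet\n$), so I need either to invoke the standard lemma that $L_X$ is null-homotopic on the Chevalley--Eilenberg complex (via the contraction $\iota_X$, giving $L_X = \partial \iota_X + \iota_X \partial$ in the cochain picture, dualized appropriately), or, cleanly, to argue inductively on $\dim\n$: set $\z$ = center of $\n$ (nonzero since $\n$ is nilpotent), and split into the case $\psi|_\z \neq 0$, where choosing $X \in \z$ makes the $\ad X$ term disappear entirely and the homotopy argument above is exact and immediate; and the case $\psi|_\z = 0$, where $\psi$ descends to the strictly smaller nilpotent Lie algebra $\n/\z$ and one concludes by the Hochschild--Serre spectral sequence $\oH_p(\n/\z, \oH_q(\z, V\otimes\psi)) \Rightarrow \oH_{p+q}(\n, V\otimes\psi)$ together with the inductive hypothesis (noting $\oH_q(\z, V\otimes\psi) = \oH_q(\z, V)\otimes\psi$ as a module over $\n/\z$, still trivial away from the needed character, and that the induction applies since $\psi$ remains nontrivial on $\n/\z$). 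I would present the inductive version, as it sidesteps the homotopy bookkeeping and keeps everything manifestly compatible with the NF-space topology.
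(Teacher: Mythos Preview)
Your inductive approach via the center is correct in spirit and close to the paper's proof, but there is one genuine imprecision and one structural difference worth flagging.

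The imprecision is in Case~(b): you assert that $\oH_q(\z, V)\otimes\psi$ is ``still trivial away from the needed character'', i.e.\ that $\oH_q(\z,V)=\wedge^q\z\otimes V$ is a trivial $\n/\z$-module, so that the inductive hypothesis applies directly. That is false in general: $\n/\z$ acts on $\wedge^q\z$ via $\ad$, and this action need not vanish. What \emph{is} true is that, since $\n$ is nilpotent, this $\ad$-action is nilpotent, so $\wedge^q\z$ admits a finite filtration by $\n/\z$-submodules with trivial graded pieces; one then applies the inductive hypothesis to each graded piece and concludes by d\'evissage via the long exact sequence. This filtration step is exactly what the paper makes explicit (for its choice of normal subalgebra), and without it your Case~(b) does not close.

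The structural difference: the paper does not induct on $\dim\n$ through the center. It proceeds in three stages: (i) the $1$-dimensional case by direct inspection of the two-term Koszul complex; (ii) the abelian case, by choosing a $1$-dimensional ideal $\n_0\subset\n$ with $\psi|_{\n_0}\neq 0$ and applying the Hochschild--Serre spectral sequence; (iii) the general nilpotent case, by taking the derived subalgebra $\n^{(1)}=[\n,\n]$ as the normal ideal --- on which $\psi$ vanishes automatically, so no case split is needed --- and reducing via Hochschild--Serre to the abelian quotient $\n/\n^{(1)}$, invoking (ii) after the nilpotent-filtration trick on $\oH_j(\n^{(1)},\BC)$. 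Your Case~(a) contracting homotopy with $X\in\z$ (where indeed $\partial h+h\partial=\mathrm{id}$ on the nose, since $\ad X=0$) is a clean alternative to the paper's steps (i)--(ii); your Case~(b) is the analogue of step~(iii) with $\z$ in place of $\n^{(1)}$. The paper's choice of $\n^{(1)}$ buys the minor convenience that $\psi|_{\n^{(1)}}=0$ always holds, eliminating the dichotomy.
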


\begin{proof}
    When $\dim \n = 1$, then the homology group can be computed by
    \begin{equation*}
    \begin{array}{rlccl}
        0 \to & \n \otimes V \otimes \psi& \to &V \otimes \psi& \to 0\\
        &X\otimes v& \mapsto& \psi(X)v&
    \end{array}.
    \end{equation*}
    Since $\psi$ is nontrivial, the middle map is a bijective map. Thus 
    $$\oH_{i}(\n, V\otimes \psi) = 0,\ \forall i \in \BZ.$$
    \par
    Now we assume $\n$ is abelian, then there exist ideal $\n_0 \subset \n$, with $\dim \n_0 = 1$, s.t. $\psi|_{\n_0} \neq 0$. Thus we have $\oH_{i}(\n_0, V\otimes \psi) = 0,\ \forall i \in \BZ$. Following from spectral sequence, we have $\oH_{i}(\n, V\otimes \psi) = 0,\ \forall i \in \BZ$.\par
    For a nilpotent lie algebra $\n$, let $\n^{(1)} := [\n,\n]$, then $\n/\n^{(1)}$ is abelian and $\psi|_{\n^{(1)}} = 0$, thus $\psi$ can be seen as a nontrivial character on $\n/\n^{(1)}$. Note that
    $$
    \oH_{i}(\n/\n^{(1)}, \oH_{j}(\n^{(1)}, V\otimes \psi)) \cong  \oH_{i}(\n/\n^{(1)}, \oH_{j}(\n^{(1)}, \BC)\otimes V\otimes \psi)).
    $$
    Since $\n/\n^{(1)}$ acts on $\n^{(1)}$ algebraically, $\oH_{j}(\n^{(1)}, \BC)$ admits a finite filtration with each grading piece trivial action. Then using the abelian case, we have 
    $$\oH_{i}(\n/\n^{(1)}, \oH_{j}(\n^{(1)}, V\otimes \psi))=0,$$
    thus $\oH_{i}(\n, V\otimes \psi) = 0,\ \forall i \in \BZ$.
\end{proof}

\begin{lemt}\label{Lem: center vanish lem}
    Let $V \in \CS\mathrm{mod}_G$. Denote the center of $G$ by $C(G)$.
    Assume $\exists\ a \in C(G)$, which acts on $V$ by a scalar $c \neq 1$. Then $\oH^{\CS}_{i}(G, V) = 0,\ \forall\ i \in \BZ.$
\end{lemt}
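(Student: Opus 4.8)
The plan is to reduce the statement to the purely homological-algebra fact that multiplication by $(a-1)$ is, on the one hand, null-homotopic on the standard resolution computing $\oH^{\CS}_{*}(G,V)$ (because $a$ is central, it induces an endomorphism of the whole Schwartz homology functor applied to $V$), and on the other hand invertible on $\oH^{\CS}_{*}(G,V)$ (because $a$ acts on $V$ by the scalar $c\neq 1$, hence by the scalar $c$ on every functorial invariant of $V$). If a map is simultaneously zero and an isomorphism on a vector space, that space is $0$; this kills every degree at once.

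To make this precise I would argue as follows. First, for a fixed element $g\in G$, the action of $g$ on $V\in\smod_G$ is a morphism in $\smod_G$ (intertwining the $G$-action up to the inner automorphism $\mathrm{Ad}(g)$ of $G$), so it induces a well-defined operator on $\oH^{\CS}_{i}(G,V)$ for every $i$; when $g=a$ is \emph{central}, the inner automorphism is trivial, so this is genuinely an endomorphism of $\oH^{\CS}_{i}(G,V)$ as a topological vector space, and by naturality it is compatible with connecting maps, etc. Second, using the identification in Proposition \ref{Prop: homology comparison}, $\oH^{\CS}_{i}(G,V)=\oH_{i}(\gC,K;V)$, computed by the Koszul complex $(\wedge^{\bullet}(\gC/\k_{\BC})\otimes V)_K$; the operator induced by $a$ on this complex is $\mathrm{id}\otimes a$ followed by the (trivial, since $a$ is central) action on $\wedge^{\bullet}(\gC/\k_{\BC})$, hence it is literally multiplication by the scalar $c$ on the whole complex, thus multiplication by $c$ on homology. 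Third, and here is the other half, the operator induced by $a$ on $\oH^{\CS}_{i}(G,V)$ can also be computed via a relatively projective resolution $P_{\bullet}\to\BC$ in $\smod_G$, tensoring with $V$ and taking coinvariants: since $a$ acts trivially on $P_{\bullet}$ (it being a resolution of the trivial representation and $a$ central, one can choose the comparison map and see the induced self-map of $\oH^{\CS}_{*}(G,V)$ equals the one coming from the $G$-action on the complex $P_{\bullet}\otimes V$, which is trivial). More efficiently: the two operators "action of $a$ on $V$" and "identity" both lift the same element of $G$ acting on $\oH^{\CS}_{*}(G,\BC\text{-coefficients functorially})$, so by the standard "center acts trivially on (co)homology" lemma the endomorphism induced by any central $a$ on $\oH^{\CS}_{i}(G,V)$ equals the one induced by the \emph{trivial} automorphism, i.e. the identity.

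Putting the two computations together: on $\oH^{\CS}_{i}(G,V)$ the endomorphism induced by $a$ equals $c\cdot\mathrm{id}$ by the Koszul-complex computation, and equals $\mathrm{id}$ by the centrality argument. Hence $(c-1)\cdot\mathrm{id}=0$ on $\oH^{\CS}_{i}(G,V)$, and since $c\neq 1$ this forces $\oH^{\CS}_{i}(G,V)=0$ for all $i\in\BZ$, as desired.

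The routine parts are the naturality bookkeeping; the one genuinely delicate point is the assertion that a \emph{central} element of $G$ acts as the identity on $\oH^{\CS}_{*}(G,V)$. In the abstract-group or Lie-algebra setting this is classical (e.g. via a chain homotopy on the bar resolution, or via the fact that an inner automorphism acts trivially on group homology and a central element induces the inner automorphism it determines, namely the trivial one). In the Schwartz setting one must check the same homotopy can be carried out within $\smod_G$ with continuous maps; I expect this to be the main obstacle, and I would handle it by invoking the bar-type / relatively-projective resolution used in \cite{CHEN2021108817} to define $\oH^{\CS}_{*}(G,-)$ and noting that the usual conjugation chain homotopy is built from the $G$-action, hence automatically consists of morphisms in $\smod_G$. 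Alternatively — and this is probably the cleanest write-up — one sidesteps the subtlety entirely by comparing, via Proposition \ref{Prop: homology comparison}, with $(\gC,K)$-homology: there the action of $a\in C(G)$ on the Koszul complex is visibly multiplication by $c$ on $V$ and by the (trivial) adjoint action on $\wedge^{\bullet}(\gC/\k_{\BC})$, while the functoriality of $\oH_{i}(\gC,K;-)$ in the $K$-module-with-$\gC$-action $V$ shows the induced map on homology is just multiplication by $c$; and, separately, since $a$ is central the pair $(\mathrm{Ad}(a),\text{action on }V)$ is homotopic to $(\mathrm{id},\mathrm{id})$ in the standard way for $(\g,K)$-cohomology, giving the identity. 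Comparing gives $c=1$ on homology, contradiction unless the homology vanishes.
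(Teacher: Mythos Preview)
Your proposal is correct and follows essentially the same approach as the paper: compute the endomorphism of $\oH^{\CS}_{i}(G,V)$ induced by the central element $a$ in two ways---once as the scalar $c$ (since $\pi(a)$ is a $G$-morphism acting by $c$ on $V$), and once as the identity (since any group element acts trivially on coinvariants)---then conclude. The paper's write-up is more economical: it works directly with a relatively projective resolution $P_\bullet\twoheadrightarrow V$, observes that $\pi_\bullet(a):P_\bullet\to P_\bullet$ is a lifting of $\phi=\pi(a)$, and notes that each $\pi_i(a)$ descends to the identity on $(P_i)_G$; you arrive at the same conclusion but detour through the Koszul complex and discuss several alternative framings where one suffices.
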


\begin{proof}
    Since $a \in C(G)$, then $\phi := \pi(a) \in \Hom_G(V, V)$. Also $\phi(v) = c \cdot v$, thus $\phi : \oH^{\CS}_{i}(G, V) \to \oH^{\CS}_{i}(G, V)$ maps by the scalar $c$. On the other hand, consider $P_. \twoheadrightarrow V$ a relative strong projective resolution of $V$. Then $\pi_i(a): P_i \to P_i$ gives a lifting of $\phi$ to a morphism of chain complexes. Note that $\pi_i(a) : (P_i)_G \to (P_i)_G$ descent to identity maps. Thus $\phi$ induce identity map on $\oH^{\CS}_{i}(G, V)$. Since $c \neq 1$, we have $\oH^{\CS}_{i}(G, V) = 0,\ \forall\ i \in \BZ$.
\end{proof}

\section{Orbit decomposition}\label{Sec: Orbit decomposition}
In this section, we describe the orbit decomposition of the partial flag variety with respect to the Shalika subgroup. Furthermore, we establish certain numerical lemmas, preparing for the calculation in the following sections.
\par
We first give some general notations on root systems.
For $m \in \BZ_{\geq 1}$, let $\t_{m}$  be the Lie subalgebra of diagonal matrix in $\gl_{m}$. Define $e_i$ to be an element in $\t_{m}^*$ by
\[
e_i\left( 
\begin{bmatrix}
\begin{smallmatrix}
x_{1} & & \\ 
 & \ddots & \\
&& x_{m}
\end{smallmatrix}
\end{bmatrix} \right) 
= x_i
\]
and denote $\alpha_{i.j} := e_i - e_j$ for $1 \leq i \neq j \leq m$. Then the set of roots of $\gl_{m}$ with respect to $\t_m$ is
$$\Phi_{m} := \{\alpha_{i,j},\ 1 \leq i \neq j \leq m\},$$
with positive roots
$$\Phi_{m}^{+} := \{\alpha_{i,j},\ 1 \leq i < j \leq m\}.$$
We simply denote $\alpha_i := \alpha_{i,i+1}$ for $i \in \{1,\dots, m-1\}$, then $\Delta_m := \{\alpha_1, \alpha_2,...,\alpha_{m-1}\}$ is the set of simple roots. For each standard parabolic subgroup $Q \in G_{m}$, denote by $\Delta_Q$ the subset of $\Delta_m$ corresponding to $Q$. 
\par
Denote $W_{m}$ to be the Weyl group of $\gl_{m}$ with respect to $\t_m$, which is just the symmetric group $\fS_{m}$. We write the permutations in $\fS_m$ as 
$$
\omega = (i_1,\dots , i_m),
$$
where $i_k = \omega(k), k = 1,\dots,m$, and identify $\fS_m$ with the group of $m\times m$ permutation matrices such that
$$
\omega E_{j,k} \omega^{-1} = E_{i_j,i_k}, \quad j,k = 1,2,\dots,n,
$$
Where $E_{j,k}$ denotes the standard $m\times m$ elementary matrices. From now on, the size of $E_{j,k}$ will be $2n$ whenever we use this notation except in Section \ref{Sec: linear model}. For instance the following Weyl element $\omega_{\iota}$ in $\fS_{2n}$ corresponding to the matrix
$$
\omega_{\iota} = (n+1,n+2,\dots,2n,1,2,\dots,n) \leftrightarrow 
\begin{bmatrix}
0 & 1_n \\ 1_n & 0 
\end{bmatrix}.
$$
\par
For two standard parabolic subgroup $P_1, P_2$ of $G_m$, we define the following subset of $W_m$,
$$
\prescript{P_1}{}{W_{m}}^{P_2} := \left\{ \omega \in W_m \mid \omega(\Delta_{P_2}) \subset \Phi_{m}^+,\ \omega^{-1}(\Delta_{P_1}) \subset \Phi_{m}^+
    \right\},
$$
 which will occur in the relative Bruhat decomposition.
\par

Let $Q$ be the standard parabolic subgroup in $G_{2n}$ corresponding to the partition $(n,n)$, with Levi decomposition $Q =M\ltimes U$. Denote by $S$ the Shalika subgroup of $G_{2n}$. Note that $S$ is a subgroup of $Q$, with Levi decomposition $S = G^{\Delta}_n \ltimes U$. Here, $G^{\Delta}_n$ is the diagonal embedding of $G_n$ into $M = G_n \times G_n$.
\par

\begin{prpt}\label{Prop: orbit decomposition}
    Let $P$ be a standard parabolic subgroup in $G_{2n}$, and let $Q$ be as defined above. Then we have the following bijection
    $$P\backslash G_{2n} / S \xleftrightarrow{1:1} \Omega := \{\omega \in W_{2n} |\ \omega = \gamma \cdot \sigma, \gamma \in  \prescript{P}{}{W_{2n}}^Q, \sigma \in  \prescript{P_{1,\gamma}}{}{W_n}^{P_{2,\gamma}}\},$$
    where for $\gamma \in W_{2n}$, $P^{\gamma} \cap (G_n \times G_n) = P_{1,\gamma} \times P_{2,\gamma}$. Here, we viewed $W_n$ as a subgroup of $W_{2n}$ by interchanging the first $n$ variable. \par
\end{prpt}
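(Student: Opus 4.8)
The plan is to prove the bijection by a two-step relative Bruhat decomposition: first decompose $G_{2n}$ into $(P,Q)$-double cosets, and then, exploiting that the Shalika subgroup $S$ lies in $Q$, decompose each of them into $(P,S)$-double cosets. Since $G_{2n}=\GL_{2n}(\BK)$ has a $BN$-pair with Weyl group $W_{2n}$ and $P,Q$ are standard parabolic subgroups, the relative Bruhat decomposition gives $G_{2n}=\bigsqcup_{\gamma\in\prescript{P}{}{W_{2n}}^Q}P\gamma Q$, so that $P\backslash G_{2n}/S=\bigsqcup_{\gamma}\bigl(P\backslash P\gamma Q/S\bigr)$. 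It therefore suffices, for each fixed $\gamma\in\prescript{P}{}{W_{2n}}^Q$, to produce a bijection $P\backslash P\gamma Q/S\xleftrightarrow{1:1}\prescript{P_{1,\gamma}}{}{W_n}^{P_{2,\gamma}}$ under which $\sigma$ corresponds to the coset $P\gamma\sigma S$.

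First I would record the elementary identity $P\backslash P\gamma Q/S\cong(P^\gamma\cap Q)\backslash Q/S$ (with $P^\gamma:=\gamma^{-1}P\gamma$), realized by $q\mapsto P\gamma qS$: the relation $P\gamma q_1S=P\gamma q_2S$ amounts to $q_1(q_2s)^{-1}=\gamma^{-1}p\gamma$ for some $p\in P$, $s\in S$, and since the left-hand side lies in $Q$ this forces $\gamma^{-1}p\gamma\in P^\gamma\cap Q$; the converse is immediate. Next, using the Levi decompositions $Q=M\ltimes U$ and $S=G_n^{\Delta}\ltimes U$ with $U$ normal in $Q$ and $U\subseteq S$, one has $Q/S\cong M/G_n^{\Delta}$ with the $Q$-action factoring through $Q\twoheadrightarrow Q/U=M$, whence $(P^\gamma\cap Q)\backslash Q/S\cong\overline{P^\gamma\cap Q}\,\backslash\,M/G_n^{\Delta}$, where $\overline{P^\gamma\cap Q}$ denotes the image of $P^\gamma\cap Q$ in $M$.

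The crux — and the step I expect to be the main obstacle — is the identity
\[
\overline{P^\gamma\cap Q}=P^\gamma\cap M=P_{1,\gamma}\times P_{2,\gamma},
\]
to be proved as a \emph{standard} parabolic subgroup of $M=G_n\times G_n$, so that $\prescript{P_{1,\gamma}}{}{W_n}^{P_{2,\gamma}}$ makes sense. Here I would use that $P^\gamma$ and $Q$ both contain the diagonal torus $T$, so $P^\gamma\cap Q$ is generated by $T$ together with the root subgroups $U_\alpha$ for $\alpha\in\gamma^{-1}(\Phi_P)\cap\Phi_Q$ (the root sets of $P^\gamma$ and of $Q$); the defining condition $\gamma(\Delta_Q)\subseteq\Phi_{2n}^+$ yields $\gamma(\Phi_M^+)\subseteq\Phi_{2n}^+\subseteq\Phi_P$, so $P^\gamma\cap M$ contains the standard Borel $B\cap M$ of $M$ and is thus a standard parabolic $P_{1,\gamma}\times P_{2,\gamma}$; and $\gamma^{-1}(\Phi_P)\cap\Phi_Q$ splits as $\bigl(\gamma^{-1}(\Phi_P)\cap\Phi_M\bigr)\sqcup\bigl(\gamma^{-1}(\Phi_P)\cap\Phi_U\bigr)$ with the second piece stable under the first (since $\gamma^{-1}(\Phi_P)$ is closed and $\Phi_U$ is $M$-stable), so $P^\gamma\cap Q=(P^\gamma\cap M)\ltimes(P^\gamma\cap U)$ has image exactly $P^\gamma\cap M$ in $M=Q/U$. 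For $\GL_{2n}$ one can alternatively verify this by an explicit block-matrix computation, $\gamma$ being a permutation matrix; in either case the content is precisely that reducedness of $\gamma$ makes $P^\gamma\cap Q$ compatible with the Levi decomposition of $Q$.

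Finally I would identify $(P_{1,\gamma}\times P_{2,\gamma})\backslash M/G_n^{\Delta}$ with $P_{1,\gamma}\backslash G_n/P_{2,\gamma}$ via $(g_1,g_2)\mapsto g_1g_2^{-1}$, whose fibres are exactly the $G_n^{\Delta}$-cosets and which intertwines the two-sided action with $(p_1,p_2)\cdot x=p_1xp_2^{-1}$; the relative Bruhat decomposition for $\GL_n(\BK)$ then gives a bijection with $\prescript{P_{1,\gamma}}{}{W_n}^{P_{2,\gamma}}$. Chasing representatives, $\sigma\in\prescript{P_{1,\gamma}}{}{W_n}^{P_{2,\gamma}}$ corresponds to $(\sigma,1_n)\in M$, i.e.\ to the element of $W_{2n}$ that permutes the first $n$ coordinates by $\sigma$, hence to the double coset $P\gamma\sigma S$; and writing $q=mu$ with $u\in U\subseteq S$ shows every class in $P\backslash P\gamma Q/S$ is of this form. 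Assembling the four steps yields $P\backslash G_{2n}/S\xleftrightarrow{1:1}\bigsqcup_{\gamma}\prescript{P_{1,\gamma}}{}{W_n}^{P_{2,\gamma}}=\Omega$ with $\gamma\sigma\mapsto P\gamma\sigma S$, which is the assertion; the two Bruhat decompositions, the coset bookkeeping and $M/G_n^{\Delta}\cong G_n$ are all routine, the only real work being the structural identity above.
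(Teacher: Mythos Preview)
Your proof is correct and follows essentially the same two-step approach as the paper: first the relative Bruhat decomposition $P\backslash G_{2n}/Q\leftrightarrow\prescript{P}{}{W_{2n}}^Q$, then for each $\gamma$ the chain of bijections $(P^\gamma\cap Q)\backslash Q/S\cong(P^\gamma\cap M)\backslash M/G_n^\Delta\cong P_{1,\gamma}\backslash G_n/P_{2,\gamma}\cong\prescript{P_{1,\gamma}}{}{W_n}^{P_{2,\gamma}}$, with the same representative $(\sigma,1_n)$. If anything, you are more explicit than the paper about the key structural input $P^\gamma\cap Q=(P^\gamma\cap M)\ltimes(P^\gamma\cap U)$, which the paper simply asserts.
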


\begin{proof}
    Following the relative Bruhat decomposition, we have
    $$
    P\backslash G_{2n} / Q \xleftrightarrow{1:1} \prescript{P}{}{W_{n}}^Q.
    $$
    Since $\gamma \in W_{2n}$, $P^{\gamma} \cap (G_n \times G_n)$ is a parabolic subgroup in $G_n \times G_n$, denote it by $P^{\gamma} \cap (G_n \times G_n) = P_{1,\gamma} \times P_{2,\gamma}$.
    Then we have the following bijections
    $$
    \begin{aligned}
    P^{\gamma} \cap Q \backslash Q / S &\xleftrightarrow{1:1}
    (P^{\gamma} \cap (G_{n} \times G_{n}))\backslash G_{n} \times G_{n} / G_n^{\Delta}\\
    &\xleftrightarrow{1:1}
    P_{1,\gamma}\backslash G_{n} / P_{2,\gamma}\\
    &\xleftrightarrow{1:1} \prescript{P_{1,\gamma}}{}{W_n}^{P_{2,\gamma}}
    \end{aligned}.
    $$
    The first bijection follows from $P^{\gamma} \cap Q = (P^{\gamma} \cap M) \ltimes (P^{\gamma} \cap U)$, the third bijection follows from the relative Bruhat decomposition and the second bijection is induced from
    $$
    \begin{array}{rcl}
    P_{1,\gamma}\backslash G_{n} / P_{2,\gamma} &\to& (P^{\gamma} \cap (G_{n} \times G_{n}))\backslash G_{n} \times G_{n} / G_n^{\Delta}\\
    g &\mapsto&
    \begin{bmatrix} g & 0 \\ 0 & 1_n \end{bmatrix}
    \end{array}.
    $$
    \par
    The orbit decomposition follows from these two identifications.
\end{proof}

Since then, for each $\omega \in \Omega$, we denote $\CO_{\omega}$ the corresponding $S$-orbit in $P\backslash G_{2n}$.
\par
In the rest of this section, we specialize to the case when $P \subset G_{2n}$ is a standard cuspidal parabolic subgroup corresponding to the partition $(n_1, \dots, n_r)$ of $2n$. For $\BK = \BC$, we have $n_i = 1,\ \forall\ i \in \{1,\dots,r=2n\}$, thus $P$ is just the standard Borel subgroup in $\GL_{2n}(\BC)$. For $\BK = \BR$, we have $n_i \in \{1,2\} ,\ \forall\ i \in \{1,\dots,r\}$. Denote the Levi decomposition of $P$ by $P = L \ltimes N$. 

We shall present some specific calculations which will be used in the subsequent sections. Define the following subset of $\Phi_{2n}$: 
$$\Phi_n^{(1)} := \{\alpha_{i,j},\ 1 \leq i \neq j \leq n\},\ \Phi_n^{(2)} := \{\alpha_{n+i,n+j},\ 1 \leq i \neq j \leq n\},$$
and $\Phi_n^{(k),+} := \Phi_n^{(k)} \cap \Phi_{2n}^{+},\ k = 1,2$.

\begin{lemt}
    For $\gamma \in \prescript{P}{}{W_{2n}}^Q$, $P_{1,\gamma}$ $(\text{resp. } P_{2,\gamma})$ as in Proposition \ref{Prop: orbit decomposition} is a standard cuspidal paraboic subgroup in $G_n$, with corresponding simple roots $\gamma^{-1}(\Delta_P) \cap \Phi_n^{(1)} = \gamma^{-1}(\Delta_P) \cap \{\alpha_1,\dots,\alpha_{n-1}\}$ $(\text{resp. } \gamma^{-1}(\Delta_P) \cap \Phi_n^{(2)} = \gamma^{-1}(\Delta_P) \cap \{\alpha_{n+1},\dots,\alpha_{2n-1}\})$.  
\end{lemt}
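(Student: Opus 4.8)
The plan is to translate the statement into root–system combinatorics. For a parabolic subgroup $R$ containing $\t_{2n}$, write $\Phi(R)$ for its set of roots, and let $\Phi_L := \BZ\Delta_P \cap \Phi_{2n}$ be the root system of the Levi $L$ of $P$, so that $\Phi(P) = \Phi_{2n}^+ \cup \Phi_L$. Since $P^{\gamma} = \gamma^{-1}P\gamma$ one has $\Phi(P^{\gamma}) = \gamma^{-1}\Phi(P)$, and because $P_{1,\gamma}\times P_{2,\gamma} = P^{\gamma}\cap(G_n\times G_n)$, intersecting with the root system $\Phi_n^{(1)}\amalg\Phi_n^{(2)}$ of $M = G_n\times G_n$ and picking out the first block gives
\[
\Phi(P_{1,\gamma}) = \gamma^{-1}\bigl(\Phi_{2n}^+ \cup \Phi_L\bigr) \cap \Phi_n^{(1)}.
\]
The statements for $P_{1,\gamma}$ and $P_{2,\gamma}$ are interchanged by $\Phi_n^{(1)}\leftrightarrow\Phi_n^{(2)}$ and $\{\alpha_1,\dots,\alpha_{n-1}\}\leftrightarrow\{\alpha_{n+1},\dots,\alpha_{2n-1}\}$, so I will only treat $P_{1,\gamma}$.

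First I would show $P_{1,\gamma}$ is a standard parabolic of $G_n$. The membership $\gamma \in \prescript{P}{}{W_{2n}}^Q$ contains $\gamma(\Delta_Q) \subset \Phi_{2n}^+$, and by the standard fact that a Weyl element carrying the simple roots of a Levi into $\Phi^+$ carries all of its positive roots into $\Phi^+$, we get $\Phi_n^{(1),+}\amalg\Phi_n^{(2),+} \subset \gamma^{-1}(\Phi_{2n}^+)$; intersecting with $\Phi_n^{(1)}$ yields $\Phi_n^{(1),+}\subset\Phi(P_{1,\gamma})$, so $P_{1,\gamma}$ contains the standard Borel of $G_n$ and is standard. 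Next I would identify its Levi root system $\Phi(P_{1,\gamma})\cap\bigl(-\Phi(P_{1,\gamma})\bigr)$: since $\Phi_{2n}^+\cap(-\Phi_{2n}^+)=\varnothing$ while $\Phi_L=-\Phi_L$, a root $\beta\in\Phi_n^{(1)}$ lies in this set iff $\gamma(\beta)\in\Phi_L$, so the Levi root system is $\gamma^{-1}(\Phi_L)\cap\Phi_n^{(1)}$; by the standard dictionary between subsets of simple roots and standard Levi subsystems in type $A$, the simple roots of $P_{1,\gamma}$ are exactly $\{\alpha_i : 1\le i\le n-1,\ \gamma(\alpha_i)\in\Phi_L\}$.

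It remains to replace $\Phi_L$ by $\Delta_P$ in this last description, and this is where the second half of the condition $\gamma\in\prescript{P}{}{W_{2n}}^Q$, namely $\gamma^{-1}(\Delta_P)\subset\Phi_{2n}^+$, enters, together with the elementary observation that a simple root admits no nontrivial expression as a $\BZ_{\geq 0}$-combination of positive roots. If $1\le i\le n-1$ and $\gamma(\alpha_i)\in\Phi_L$, I would expand whichever of $\pm\gamma(\alpha_i)$ lies in $\Phi_L^+$ as a $\BZ_{\geq 0}$-combination of $\Delta_P$ and apply $\gamma^{-1}$; since $\gamma^{-1}$ sends $\Delta_P$ into $\Phi_{2n}^+$, this writes $\pm\alpha_i$ as a $\BZ_{\geq 0}$-combination of positive roots, forcing $\gamma(\alpha_i)\in\Delta_P$ (the sign $-\alpha_i$ being impossible). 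Conversely, if $\mu\in\gamma^{-1}(\Delta_P)\cap\Phi_n^{(1)}$ were not simple I would write $\mu=\alpha_k+(\alpha_{k+1}+\dots+\alpha_{l-1})$ with $k+1\le l-1\le n-1$; applying $\gamma$ and using that $\alpha_k,\dots,\alpha_{l-1}\in\Delta_Q$ are all sent into $\Phi_{2n}^+$, the image $\gamma(\mu)\in\Delta_P$ would be a sum of two positive roots — impossible. Together these give $\{\alpha_i:\gamma(\alpha_i)\in\Phi_L\}=\gamma^{-1}(\Delta_P)\cap\{\alpha_1,\dots,\alpha_{n-1}\}=\gamma^{-1}(\Delta_P)\cap\Phi_n^{(1)}$, as asserted. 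Finally, cuspidality descends from $P$: if $\alpha_i$ and $\alpha_{i+1}$ both lay in $\gamma^{-1}(\Delta_P)$, then $\gamma(\alpha_i),\gamma(\alpha_{i+1})$ would be distinct simple roots of $\Phi_{2n}$ whose sum is again a root, hence adjacent simple roots both in $\Delta_P$, so the relevant Levi block of $P$ would have size $\ge 3$ — contradiction; thus no Levi block of $P_{1,\gamma}$ has size exceeding $2$.

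The only genuinely non-formal point is the passage from $\Phi_L$ to $\Delta_P$: it requires using both minimal-length conditions packaged in $\gamma\in\prescript{P}{}{W_{2n}}^Q$ at the same time, plus the remark about simple roots and nonnegative combinations. Everything else — the reduction to root sets, the Borel-containment, the Levi identification, and the cuspidality count — is routine bookkeeping.
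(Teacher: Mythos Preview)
Your proof is correct and follows essentially the same approach as the paper: both reduce the question to root-system combinatorics for $\gamma \in \prescript{P}{}{W_{2n}}^Q$. The only difference is stylistic --- the paper works directly with the permutation interpretation (e.g., $\gamma(1)<\cdots<\gamma(n)$ forces $\alpha_{k,l}\in\gamma^{-1}(\Delta_P)\cap\Phi_n^{(1)}$ to have $l=k+1$), whereas you argue abstractly via the impossibility of expressing a simple root as a nontrivial nonnegative sum of positive roots; these are two phrasings of the same fact.
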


\begin{proof}
    We prove for $P_{1,\gamma}$ in the following, the case of $P_{2,\gamma}$ is similar. Let $\Gamma := \Phi_{2n}^{+} \amalg -\Delta_P$. Then the roots of $\p_{1,\gamma}$ is $\Gamma' := \gamma^{-1}(\Gamma) \cap \Phi^{(1)}_n$. Note that $\gamma^{-1}(\Phi_{2n}^{+}) \cap \Phi^{(1)}_n = \Phi^{(1),+}_n$, thus $P_{1,\gamma}$ is a standard parabolic subgroup.\par
    Assume $\alpha_{k,l} \in \gamma^{-1}(\Delta_P) \cap \Phi_n^{(1)}$, thus $k < l$ and $\gamma(l) = \gamma(k) + 1$. Since $\gamma(1) < \cdots < \gamma(n)$, we have $l = k+1$. Thus $\gamma^{-1}(\Delta_P) \cap \Phi_n^{(1)} = \gamma^{-1}(\Delta_P) \cap \{\alpha_1,\dots,\alpha_{n-1}\}$. 
    Assume $\alpha_a, \alpha_{a+1} \in \gamma^{-1}(\Delta_P) \cap \{\alpha_1,\dots,\alpha_{n-1}\}$ for some $1\leq a \leq n-2$, then $\alpha_{\gamma(a),\gamma(a+1)}, \alpha_{\gamma(a+1),\gamma(a+2)} \in \Delta_P$, contradict with $P$ is a cuspidal parabolic. Thus $P_{1,\gamma}$ is a standard cuspidal parabolic subgroup.    
\end{proof}

We classify these orbits into different types and treat them separately.

\begin{dfnt}
    An orbit $\CO_{\omega}$ is called \textbf{$\psi$-vanishing}, if $\xi_{\eta,\psi} |_{N^{\omega} \cap U} \neq Id $. It is called \textbf{$\psi$-unvanishing} otherwise. We shall say $\omega$ is $\psi$-vanishing (resp. $\psi$-unvanishing) for short.
\end{dfnt}

The following two lemmas give numerical conditions of the $\psi$-unvanishing orbits.

\begin{lemt}
    $\omega$ is $\psi$-unvanishing if and only if $\forall\ k \in \{1,\dots,n\}$, we have $\omega(k) > \omega(n+k)$ or $\alpha_{\omega(k),\omega(n+k)} \in \Delta_P$.
\end{lemt}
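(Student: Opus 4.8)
The statement to establish is a purely combinatorial criterion: for $\omega = \gamma\cdot\sigma \in \Omega$, the restriction of the character $\xi_{\eta,\psi}$ to $N^\omega \cap U$ is trivial if and only if for every $k \in \{1,\dots,n\}$ one has $\omega(k) > \omega(n+k)$ or $\alpha_{\omega(k),\omega(n+k)} \in \Delta_P$. The plan is to unwind both sides of this equivalence in terms of root data and compare.

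First I would make explicit the subgroup $N^\omega \cap U$ at the level of Lie algebras. Since $U$ is the unipotent radical of the $(n,n)$-parabolic $Q$, its Lie algebra $\u$ is spanned by the root vectors $E_{i,j}$ with $1 \le i \le n < j \le 2n$, i.e.\ the root spaces for $\alpha_{i,n+l}$ with $i,l \in \{1,\dots,n\}$. On the other hand $\n^\omega = \omega^{-1}\n\omega$ has root spaces indexed by $\omega^{-1}(\Phi_N)$, where $\Phi_N \subset \Phi_{2n}^+$ is the set of roots of $\n$; because $P$ is a standard parabolic with simple roots $\Delta_P$, a positive root $\alpha_{a,b}$ (with $a<b$) lies in $\Phi_N$ exactly when $\alpha_{a,b} \notin \Phi_{L}$, equivalently when the interval from $a$ to $b$ is not contained in a single block of the partition $(n_1,\dots,n_r)$. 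So $\n^\omega \cap \u$ is spanned by those $E_{k,n+l}$ for which $\alpha_{\omega(k),\omega(n+l)}$ is a root of $\n$ — in particular we need $\omega(k) \ne \omega(n+l)$, and when $\omega(k) < \omega(n+l)$ we additionally need $\alpha_{\omega(k),\omega(n+l)} \notin \Phi_L$.

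Next I would compute the restriction of $\xi_{\eta,\psi}$ to $U$: by definition $\xi_{\eta,\psi}$ restricted to $U$ is $\psi(\Tr(b)) = \psi\!\left(\sum_{l=1}^n b_{l,l}\right)$ where $b$ is the $n\times n$ block. Thus the differential of $\xi_{\eta,\psi}|_U$ is the linear functional on $\u$ that is nonzero exactly on the "diagonal" root vectors $E_{l,n+l}$, $l = 1,\dots,n$, and zero on $E_{k,n+l}$ for $k \ne l$. Consequently $\xi_{\eta,\psi}|_{N^\omega \cap U}$ is nontrivial if and only if some diagonal generator $E_{l,n+l}$ actually lies in $\n^\omega \cap \u$ — and is trivial otherwise. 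Re-indexing with $k = l$, the vector $E_{k,n+k}$ lies in $\n^\omega \cap \u$ precisely when $\alpha_{\omega(k),\omega(n+k)}$ is a root of $\n$, i.e.\ when $\omega(k) < \omega(n+k)$ and $\alpha_{\omega(k),\omega(n+k)} \notin \Phi_L$. Negating: $\xi_{\eta,\psi}|_{N^\omega \cap U}$ is trivial iff for every $k$, either $\omega(k) > \omega(n+k)$, or $\omega(k) < \omega(n+k)$ but $\alpha_{\omega(k),\omega(n+k)} \in \Phi_L$; and since for a cuspidal (hence "maximal among those with the given Levi blocks of size $\le 2$") parabolic the roots of $\l$ inside a pair of consecutive indices are exactly the simple roots $\alpha_{a,a+1} \in \Delta_P$, the condition $\alpha_{\omega(k),\omega(n+k)} \in \Phi_L$ with $\omega(n+k) = \omega(k)+1$ forced coincides with $\alpha_{\omega(k),\omega(n+k)} \in \Delta_P$. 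I would also note that $\alpha_{\omega(k),\omega(n+k)}$ being a \emph{simple} root already forces $\omega(n+k) = \omega(k)+1$, so the cuspidality of $P$ (blocks of size $\le 2$) is exactly what guarantees that a root of $\l$ between these two indices must be simple.

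The main obstacle, and the point requiring genuine care rather than bookkeeping, is the last step: justifying that for $x = \omega(k) < y = \omega(n+k)$ the condition "$\alpha_{x,y}$ is a root of $\l$" is equivalent to "$\alpha_{x,y} \in \Delta_P$." For a general standard parabolic this is false (a Levi block can contain many roots), but here $P$ is cuspidal so each block has size $1$ or $2$, and a root $\alpha_{x,y}$ of $\l$ must have $x,y$ in the same block, hence $y = x+1$ and $\alpha_{x,y} = \alpha_x \in \Delta_P$. I would spell this reduction out carefully, and then assemble the chain of equivalences above into the stated biconditional. Everything else — identifying $\n^\omega \cap \u$, differentiating the character — is routine once the root-theoretic dictionary is set up.
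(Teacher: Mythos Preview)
Your proposal is correct and follows exactly the same approach as the paper's own proof. The paper's argument is terse --- it simply notes that $\xi_{\eta,\psi}|_{N^\omega\cap U}=\mathrm{Id}$ is equivalent to $\alpha_{i,n+i}\notin\n^\omega$ for all $i$, and then appeals to ``the structure of the roots of $\n$'' --- while you have spelled out this root computation in full, including the observation (left implicit in the paper) that cuspidality of $P$ forces $\Phi_L^+=\Delta_P$, which is what turns the condition ``root of $\l$'' into the condition ``$\in\Delta_P$'' appearing in the statement.
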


\begin{proof}
    Note that $\xi_{\eta,\psi} |_{N^{\omega} \cap U} = Id $ is equivalent to $\forall\ 1 \leq i \leq n, \alpha_{i,n+i} \notin \n^{\omega}$. Then the lemma follows from the structure of the roots of $\n$.
\end{proof}

\begin{lemt}
    Assume $\omega$ is $\psi$-unvanishing. For $\beta \in \Delta_p$, if $\omega^{-1}(\beta) \in \{\alpha_{i,n+j}, 1 \leq i,j \leq n\}$, then $\omega^{-1}(\beta) \in \{\alpha_{i,n+j}, 1 \leq i \leq j \leq n\}$.
\end{lemt}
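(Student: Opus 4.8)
The plan is to use the decomposition $\omega = \gamma\sigma$ provided by Proposition~\ref{Prop: orbit decomposition}, where $\gamma \in \prescript{P}{}{W_{2n}}^Q$ and $\sigma$ lies in the copy of $W_n$ permuting the first $n$ coordinates and fixing the last $n$, together with the characterization of $\psi$-unvanishing orbits from the preceding lemma. First I would rewrite the hypothesis in terms of the values of $\omega$: since $\beta \in \Delta_P$ is simple, write $\beta = \alpha_c$, and then, with the convention $w(\alpha_{p,q}) = \alpha_{w(p),w(q)}$, one has $\omega^{-1}(\beta) = \alpha_{\omega^{-1}(c),\,\omega^{-1}(c+1)}$. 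Thus the assumption $\omega^{-1}(\beta) = \alpha_{i,n+j}$ with $1 \le i,j \le n$ means precisely $\omega^{-1}(c) = i$ and $\omega^{-1}(c+1) = n+j$, equivalently $\omega(i) = c$ and $\omega(n+j) = c+1$. The one structural fact I need is that the restriction of $\omega$ to the second block $\{n+1,\dots,2n\}$ is increasing: since $\sigma$ fixes that block, $\omega(n+k) = \gamma(n+k)$ for $1 \le k \le n$, and $\gamma \in \prescript{P}{}{W_{2n}}^Q$ forces $\gamma(n+1) < \cdots < \gamma(2n)$, because $\alpha_{n+1},\dots,\alpha_{2n-1} \in \Delta_Q$ and $\gamma(\Delta_Q) \subset \Phi_{2n}^+$.

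The key step is to rule out the possibility $i > j$, which I would do by a direct contradiction with $\psi$-unvanishing. Assume $i > j$; note $i \le n$, so $j \le n-1$ and $n+i \le 2n$. Since the restriction of $\omega$ to the second block is increasing and $\omega(n+j) = c+1$, we get $\omega(n+i) > \omega(n+j) = c+1$, hence $\omega(n+i) \ge c+2$. Now apply the $\psi$-unvanishing of $\omega$ at the index $k = i$: it requires $\omega(i) > \omega(n+i)$ or $\alpha_{\omega(i),\omega(n+i)} \in \Delta_P$. But $\omega(i) = c < c+2 \le \omega(n+i)$, so the first alternative fails; and $\alpha_{\omega(i),\omega(n+i)} = \alpha_{c,\omega(n+i)}$ with $\omega(n+i) \ge c+2$ is not a simple root at all — a positive root $\alpha_{p,q}$ is simple only if $q = p+1$ — so it cannot lie in $\Delta_P$ either. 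This contradiction forces $i \le j$, which is exactly the assertion $\omega^{-1}(\beta) \in \{\alpha_{i,n+j} : 1 \le i \le j \le n\}$.

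I expect the only delicate point to be the translation carried out in the first paragraph: one must be careful with the action convention of Weyl-group elements on roots, and must observe that the index $i$ appearing in $\omega^{-1}(\beta) = \alpha_{i,n+j}$ is automatically $\le n$ (it is, by hypothesis), so that the $\psi$-unvanishing condition may legitimately be invoked at $k = i$. Apart from that the argument is purely combinatorial and uses nothing about $P$ beyond the fact that $\beta$ is a simple root; in particular it does not use the cuspidality of $P$, nor the explicit description of $P_{1,\gamma}$ and $P_{2,\gamma}$, nor the position of $\sigma$ within $\prescript{P_{1,\gamma}}{}{W_n}^{P_{2,\gamma}}$ — only that $\sigma$ fixes the coordinates $n+1,\dots,2n$.
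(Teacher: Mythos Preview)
Your proof is correct and follows essentially the same route as the paper's own argument: both set $\beta=\alpha_c$ (the paper uses the letter $a$), translate the hypothesis into $\omega(i)=c$, $\omega(n+j)=c+1$, use that $\omega=\gamma\sigma$ is increasing on the second block to get $\omega(n+i)>c+1$ when $i>j$, and then contradict the $\psi$-unvanishing condition at the index $i$. Your exposition is if anything a bit more explicit about the root-action convention and about why the second alternative in the $\psi$-unvanishing dichotomy fails.
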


\begin{proof}
    Take $\beta = \alpha_a$ such that $\omega^{-1}(\alpha_a) \in \{\alpha_{i,n+j}, 1 \leq i,j \leq n\}$, thus $1 \leq \omega^{-1}(a) \leq n$, $\omega^{-1}(a+1) \geq n+1$. We prove the lemma by contradiction.
    
    If $\omega^{-1}(a) + n > \omega^{-1}(a+1)$, then $\omega(\omega^{-1}(a)+n) = \gamma(\omega^{-1}(a)+n) > \gamma(\omega^{-1}(a+1)) = \omega(\omega^{-1}(a+1)) =a+1$. On the other hand, since $\omega$ is $\psi$-unvanishing, then we have $a = \omega(\omega^{-1}(a)) > \omega(\omega^{-1}(a)+n)$ or $\omega(\omega^{-1}(a)+n) = a+1$. In the first case, we obtain $a > \omega(\omega^{-1}(a)+n) > a+1$, contradicts. In the second case, we obtain $\omega^{-1}(a) + n = \omega^{-1}(a+1)$, contradicts.
\end{proof}

For $\omega \in \Omega$ as in Proposition \ref{Prop: orbit decomposition}, we introduce the following notation, which will be helpful for the discussion below. \par
\begin{itemize}
    \item $\Psi_{P,\omega} := \omega^{-1}(\Delta_P)$,
    \item $\Psi^{ma}_{P,\omega} := \Psi_{P,\omega} \cap \omega_{\iota}(\Psi_{P,\omega})$, where $\omega_{\iota} = (n+1,n+2,\dots,2n,1,2,\dots,n)$,
    \item $\Psi^{wh}_{P,\omega} := \Psi_{P,\omega} \cap \{\alpha_{i,n+i},\ i\in\{1,\dots,n\}\}$, 
    \item $\Psi^{um}_{P,\omega} := \Psi_{P,\omega} \backslash (\Psi^{ma}_{P,\omega} \amalg \Psi^{wh}_{P,\omega})$,
    \item $\Delta^{*}_{P,\omega} := \omega(\Psi^{*}_{P,\omega})$, for $* \in \{ma,\ wh,\ um\}$
\end{itemize}

\begin{dfnt}\label{Def: matching orbit}
    A $\psi$-unvanishing orbit $\CO_{\omega}$ is called a \textbf{matching orbit}, if 
    $$\Psi^{um}_{P,\omega} = \varnothing.$$
    Otherwise, it is called an \textbf{unmatching orbit}. We shall say $\omega$ is matching (resp. unmatching) for short.
\end{dfnt}

We introduce the following notation which will be used in the rest of the paper. Define $a_1 := 1$. For $i \in \{2,3,\dots,r\}$, let $a_i := 1 + \sum_{j=1}^{i-1} n_j$. If $n_i = 2$, let $b_i := a_i +1$. Then we have $\{a_i\} \amalg \{b_i\} = \{1,\cdots,2n\}$.\par
We may briefly illustrate why the matching orbits are significant. For each matching orbit $\CO_{\omega}$, we can attach an element $s_{\omega}$ in $\fS_r$ as follows. If $\omega(k) = a_i$, $\omega(n+k) = a_j$, then we set $s_{\omega}(i) = j$, $s_{\omega}(j) = i$. If $\omega(k) = a_i$, $\omega(n+k) = b_i$, then we set $s_{\omega}(i) = i$ (in this case $n_i = 2$). Such $s_{\omega}$ is well-defined under the condition that $\omega$ is matching. It in fact gives a partition of $\{1,\dots,r\}$ into pairs and singleton, which corresponds to the symplectic condition of the L-parameters.
\par

In order to use the argument of spectral sequence, we need the following group decomposition of $P^{\omega} \cap S$, which is slightly different from the Levi decomposition of $P^{\omega} \cap S$.
\par
Since $\omega$ is a Weyl element, we have $P^{\omega} \cap Q = P^{\omega} \cap M \cdot P^{\omega} \cap U $. Thus $P^{\omega} \cap S = P^{\omega} \cap G_n^{\Delta} \cdot P^{\omega} \cap U $. Denote $P^{\omega} \cap M = P_{1,\omega} \times P_{2,\omega}$, then $P^{\omega} \cap G_n^{\Delta} = (P_{1,\omega} \cap P_{2,\omega})^{\Delta}$. Denote the Levi decomposition of $P_{1,\omega} \cap P_{2,\omega} := A_{\omega} \ltimes B_{\omega}$.
\par
Also note that $P^{\omega} \cap U =  L^{\omega} \cap U \ltimes  N^{\omega} \cap U$. Denote by $U^{\dag}$ the subgroup of $U$ generated by $\{1+E_{i,n+i}|\ i=1,\dots,n\}$, split $P^{\omega} \cap U$ into two parts $P^{\omega} \cap U =  L^{\omega} \cap U^{\dag} \ltimes  C_{\omega}$. Then we define $R_{\omega} := A_{\omega}^{\Delta} \cdot (L^{\omega} \cap U^{\dag})$, $V_{\omega} := B_{\omega}^{\Delta} \cdot C_{\omega}$.

\begin{lemt}\label{Lem: gp decomp}
For a $\psi$-unvanishing $S$-orbit $\CO_{\omega}$, we have
$$
P^{\omega} \cap S = R_{\omega} \ltimes V_{\omega}.
$$
\end{lemt}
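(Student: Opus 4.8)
The plan is to verify the claimed decomposition $P^{\omega} \cap S = R_{\omega} \ltimes V_{\omega}$ by first checking set-theoretically that $P^{\omega} \cap S$ is the product $R_{\omega} \cdot V_{\omega}$, then checking that $V_{\omega}$ is normal in $P^{\omega} \cap S$, and finally that $R_{\omega} \cap V_{\omega} = \{1\}$. The starting point is the identity established just before the statement, namely $P^{\omega} \cap S = P^{\omega} \cap G_n^{\Delta} \cdot (P^{\omega} \cap U)$, together with the refinements $P^{\omega} \cap G_n^{\Delta} = (A_{\omega} \ltimes B_{\omega})^{\Delta} = A_{\omega}^{\Delta} \cdot B_{\omega}^{\Delta}$ and $P^{\omega} \cap U = (L^{\omega} \cap U^{\dag}) \ltimes C_{\omega}$. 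Thus $P^{\omega} \cap S = A_{\omega}^{\Delta} \cdot B_{\omega}^{\Delta} \cdot (L^{\omega} \cap U^{\dag}) \cdot C_{\omega}$, and the content of the lemma is that we may regroup the four factors as $\bigl(A_{\omega}^{\Delta} \cdot (L^{\omega} \cap U^{\dag})\bigr) \cdot \bigl(B_{\omega}^{\Delta} \cdot C_{\omega}\bigr)$ with the first factor normalizing the second.

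First I would record the root-theoretic descriptions of all the pieces. Working inside $\t_{2n}$ with the standard roots $\alpha_{i,j}$, the group $L^{\omega} \cap U$ is the unipotent group with roots $\omega^{-1}(\Delta\text{-roots of }L) \cap (\text{roots of }\n)\cap(\text{roots of }U)$ — more precisely the roots $\alpha$ of $N^{\omega}\cap U$ that are "Levi-type" for $P^{\omega}$ — while $L^{\omega} \cap U^{\dag}$ is generated by those root subgroups $1 + t E_{i,n+i}$ with $\alpha_{i,n+i} \in \omega^{-1}(\Delta_P)$, i.e. exactly $\Psi^{wh}_{P,\omega}$; and $C_{\omega}$ is the complementary part of $P^{\omega}\cap U$. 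Similarly $A_{\omega}$ has roots $(P_{1,\omega}\cap P_{2,\omega})$-Levi roots and $B_{\omega}$ its unipotent radical. Since everything is built from $\t_{2n}$-root subgroups, commutators between two root subgroups for roots $\alpha,\beta$ land in the span of root subgroups for $\alpha+\beta$ (when a root) and lie in whichever of $U$, $N^{\omega}$, etc. is "larger"; this is what makes the regrouping possible. The key computations are: (i) $A_{\omega}^{\Delta}$ normalizes $B_{\omega}^{\Delta}$ (clear from $P_{1,\omega}\cap P_{2,\omega} = A_{\omega}\ltimes B_{\omega}$, applied diagonally); (ii) $A_{\omega}^{\Delta}$ normalizes $C_{\omega}$ and $L^{\omega}\cap U^{\dag}$ normalizes $C_{\omega}$ (both because $C_{\omega}\subset U$ is normal in $P^{\omega}\cap U$ by construction and because conjugating a diagonal Levi element of $M$ or a $U^{\dag}$-element preserves $U$ and the defining root condition); (iii) $L^{\omega}\cap U^{\dag}$ normalizes $B_{\omega}^{\Delta}$, or at least $B_{\omega}^{\Delta}\cdot C_{\omega}$ — here one commutes a root subgroup $1+tE_{i,n+i}$ past a diagonal element $\diag(u,u)$ with $u\in B_{\omega}$; the commutator is again a product of $E_{i,n+i}$-type and off-diagonal $U$-type terms, all of which lie in $V_{\omega}=B_{\omega}^{\Delta}\cdot C_{\omega}$. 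Finally $R_{\omega}\cap V_{\omega}=\{1\}$ follows because $R_{\omega}$ projects isomorphically onto a Levi of $P^{\omega}\cap M$-times-$U^{\dag}$ while $V_{\omega}$ lies in the complementary unipotent directions, so distinctness of the supporting root data (and the diagonal constraint on $M$) forces trivial intersection.

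The main obstacle I anticipate is step (iii): showing $R_{\omega}$ normalizes $V_{\omega}$, specifically that $L^{\omega}\cap U^{\dag}$ together with $A_{\omega}^{\Delta}$ normalizes $B_{\omega}^{\Delta}\cdot C_{\omega}$. This is where the $\psi$-unvanishing hypothesis is used: it guarantees (via the lemmas stated just above, characterizing $\psi$-unvanishing $\omega$ by $\omega(k)>\omega(n+k)$ or $\alpha_{\omega(k),\omega(n+k)}\in\Delta_P$, and constraining which $\omega^{-1}(\beta)$ can be of the form $\alpha_{i,n+j}$) that the root subgroups in $L^{\omega}\cap U^{\dag}$ sit in positions compatible with the splitting, so that their commutators with $B_{\omega}^{\Delta}$ do not escape into $R_{\omega}$. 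Concretely, one needs that for $\alpha_{i,n+i}\in\Psi^{wh}_{P,\omega}$ and an $A_{\omega}$-Levi root or $B_{\omega}$-root $\beta$ (sitting diagonally), the sum $\alpha_{i,n+i}+\beta_{\text{first copy}}$ or $+\beta_{\text{second copy}}$ is either not a root or is a root of $C_{\omega}$ rather than of $L^{\omega}\cap U^{\dag}$; this is a finite but slightly delicate check on the relative positions of the indices dictated by $\gamma$ and $\sigma$. Once the normalization is in hand, the semidirect-product structure $P^{\omega}\cap S = R_{\omega}\ltimes V_{\omega}$ follows formally. I would present the argument by first fixing notation for the root sets, then stating the three claims (product decomposition, normality, trivial intersection) as sub-steps, and dispatching the normality claim by the commutator-of-root-subgroups computation with the $\psi$-unvanishing condition invoked where needed.
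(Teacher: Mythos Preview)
Your proposal is correct and identifies the same key step as the paper: once the product decomposition $P^{\omega}\cap S = A_{\omega}^{\Delta}\cdot B_{\omega}^{\Delta}\cdot(L^{\omega}\cap U^{\dag})\cdot C_{\omega}$ is in hand, the only nontrivial point is that $L^{\omega}\cap U^{\dag}$ normalizes $V_{\omega}=B_{\omega}^{\Delta}\cdot C_{\omega}$. The paper dispatches this more quickly than your root-by-root commutator plan: it observes that $B_{\omega}$ consists of strictly block upper triangular matrices (for the block structure determined by the Levi $A_{\omega}$ of $P_{1,\omega}\cap P_{2,\omega}$), and that $L^{\omega}\cap U^{\dag}$ has the \emph{same} block type---so conjugating an element of $B_{\omega}^{\Delta}$ by an element of $L^{\omega}\cap U^{\dag}$ stays inside $V_{\omega}$ by a direct matrix check. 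Your careful invocation of the $\psi$-unvanishing hypothesis in step~(iii) is overcautious: the paper's argument does not explicitly call on it at that point (the hypothesis is used earlier in setting up the numerical lemmas and the structure of $L^{\omega}\cap U^{\dag}$, not in the normalization calculation itself).
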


\begin{proof}
    It suffices to prove that $L^{\omega} \cap U^{\dag}$ stable $V_{\omega}$. Note that $B_{\omega}$ consists of strictly block upper triangular matrices, and $L^{\omega} \cap U^{\dag}$ has the same block type with $B_{\omega}$. Then the lemma is a direct calculation.
\end{proof}

The following lemma shows that if $\alpha$ is a simple root of $\p$, then either $\omega^{-1}(\alpha)$ or $\omega^{-1}(\alpha) + \omega_{\iota}(\omega^{-1}(\alpha))$
is contained in roots of $\p^{\omega} \cap \s$.

\begin{lemt}\label{Lem: simple roots alive}
If $\alpha_{n+k} \in \omega^{-1}(\Delta_{P})$ for some $1 \leq k \leq n-1$, then $\omega(\alpha_{k}) \in \Phi_{2n}^{+}$.
\end{lemt}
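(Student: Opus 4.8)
The plan is to argue by contradiction: suppose $\alpha_{n+k} \in \omega^{-1}(\Delta_P)$ but $\omega(\alpha_k) \notin \Phi_{2n}^+$, i.e.\ $\omega(k) > \omega(k+1)$. Write $a := \omega(n+k)$ and $a+1 := \omega(n+k+1)$, so that $\alpha_a = \omega(\alpha_{n+k}) \in \Delta_P$. The key structural facts I would exploit are: (i) $\omega = \gamma\cdot\sigma$ with $\gamma \in \prescript{P}{}{W_{2n}}^Q$ and $\sigma \in \prescript{P_{1,\gamma}}{}{W_n}^{P_{2,\gamma}}$, so in particular $\gamma(1) < \cdots < \gamma(n)$ and $\gamma(n+1) < \cdots < \gamma(2n)$, and $\sigma$ permutes $\{1,\dots,n\}$ among themselves and $\{n+1,\dots,2n\}$ among themselves; (ii) $\omega$ is $\psi$-unvanishing, which by the earlier lemma means that for every $j \in \{1,\dots,n\}$ we have $\omega(j) > \omega(n+j)$ or $\alpha_{\omega(j),\omega(n+j)} \in \Delta_P$.

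**Extracting the numerical constraints.** Since $\sigma \in W_n \leq W_{2n}$ fixes the block structure and $\gamma$ is increasing on each block, $\omega$ restricted to $\{1,\dots,n\}$ and to $\{n+1,\dots,2n\}$ need not be monotone, but $\omega^{-1}$ composed with $\gamma$ is controlled. The cleaner route: apply the $\psi$-unvanishing condition to the index $j = k$ and to $j = k+1$. For $j=k$: either $\omega(k) > \omega(n+k) = a$, or $\alpha_{\omega(k),a} \in \Delta_P$. For $j = k+1$: either $\omega(k+1) > \omega(n+k+1) = a+1$, or $\alpha_{\omega(k+1),a+1} \in \Delta_P$. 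Combine these with the assumption $\omega(k) > \omega(k+1)$, and with the fact that $\gamma$ increasing on the second block forces $\omega(n+k) < \omega(n+k+1)$ to be consistent with $a < a+1$ only if $\sigma$ places $n+k$ before $n+k+1$ within the block ordering — I would track exactly how $\gamma$ and $\sigma$ interact here. The contradiction should emerge because $\alpha_a, \alpha_{a+1}$ cannot both lie in $\Delta_P$ adjacent to the same vertex pattern unless $P$ fails to be cuspidal (no two consecutive simple roots in $\Delta_P$), echoing the argument already used in the lemma identifying $P_{1,\gamma}$ as cuspidal. So I expect the endgame to reduce to: the two alternatives for $j=k$ and $j=k+1$ each force a simple root of $\Delta_P$ into a position incompatible either with $P$ cuspidal or with $\gamma$ being increasing on blocks.

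**Organizing the case analysis.** Concretely I would split into four cases according to which alternative holds in the $\psi$-unvanishing condition at $j=k$ and at $j=k+1$. In the "both inequalities" case ($\omega(k) > a$ and $\omega(k+1) > a+1$) together with $\omega(k) > \omega(k+1)$, I would derive a numerical clash using that $a+1$ is the value $\omega(n+k+1)$ and that $\gamma(n+k+1) > \gamma(n+k)$; the point is that $\omega(k+1) > a+1 = \omega(n+k+1)$ combined with $\sigma \in \prescript{P_{1,\gamma}}{}{W_n}^{P_{2,\gamma}}$ pins down relative positions that $\omega(k) > \omega(k+1)$ then violates. In the cases where $\alpha_{\omega(k),a} \in \Delta_P$ or $\alpha_{\omega(k+1),a+1} \in \Delta_P$, I would use that $\alpha_a \in \Delta_P$ already, and that $\Delta_P$ contains no two roots $\alpha_s, \alpha_{s+1}$ (cuspidality of $P$), to force $\omega(k)$ or $\omega(k+1)$ to equal $a-1$ or $a+2$ or similar, and then contradict $\omega(k) > \omega(k+1)$ or contradict $\gamma$ increasing on a block. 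The main obstacle I anticipate is bookkeeping: keeping straight which of $\gamma, \sigma, \omega$ one is applying and on which block, since the conclusion "$\omega(\alpha_k) \in \Phi_{2n}^+$" is a statement purely about the first block while the hypothesis is about the second block, and the bridge between them is precisely the $\psi$-unvanishing condition (which links index $j$ in the first block to index $n+j$ in the second) plus cuspidality of $P$. Once the right case split is fixed, each case should be a short arithmetic contradiction of the type already appearing in the preceding lemma's proof.
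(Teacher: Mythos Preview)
Your plan is overcomplicated and rests on a hypothesis that is not in the statement: the lemma does not assume $\omega$ is $\psi$-unvanishing, and the paper's proof makes no use of that condition (nor of cuspidality of $P$). The entire argument is a three-line direct computation from the definition of $\Omega$. Since $\sigma$ fixes $\{n+1,\dots,2n\}$ pointwise, the hypothesis $\alpha_{n+k}\in\omega^{-1}(\Delta_P)$ is equivalent to $\alpha_{n+k}\in\gamma^{-1}(\Delta_P)$, which says exactly that $\alpha_k$ (viewed via $\omega_\iota$) is a simple root of $P_{2,\gamma}$. The defining condition $\sigma\in\prescript{P_{1,\gamma}}{}{W_n}^{P_{2,\gamma}}$ then gives $\sigma(\alpha_k)\in\Phi_n^{(1),+}$, and since $\gamma$ is increasing on $\{1,\dots,n\}$ (because $\gamma\in\prescript{P}{}{W_{2n}}^Q$), this yields $\omega(\alpha_k)=\gamma\sigma(\alpha_k)\in\Phi_{2n}^+$. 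That is the whole proof.

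You actually brush against this: in your ``both inequalities'' case you write that $\sigma\in\prescript{P_{1,\gamma}}{}{W_n}^{P_{2,\gamma}}$ ``pins down relative positions that $\omega(k)>\omega(k+1)$ then violates.'' That single sentence \emph{is} the proof---unpacking it gives $\sigma(k)<\sigma(k+1)$, hence $\omega(k)<\omega(k+1)$, contradiction. Everything else in your outline (the fourfold case split on the $\psi$-unvanishing alternatives at $j=k$ and $j=k+1$, the appeal to no two consecutive simple roots in $\Delta_P$) is scaffolding around an argument that is already complete without it, and it is not clear those other cases close without eventually invoking the $\sigma$-condition anyway. Drop the contradiction framing and the extra hypotheses, and extract the one relevant line.
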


\begin{proof}
    Denote $\omega = \gamma \cdot \sigma \in \Omega$ and
    $\beta := \omega(\alpha_{n+k}) \in \Delta_{P}$, then $\gamma^{-1}(\beta) = \alpha_{n+k}$. Note that $\sigma$ satisfies $\sigma(\omega_{\iota}(\gamma^{-1}(\Delta_P) \cap \{\alpha_{n+1}, \dots, \alpha_{2n-1}\})) \in \Phi_n^{(1),+}$, thus $\sigma(\alpha_k) \in \Phi_n^{(1),+}$. Following the property of $\gamma \in \prescript{P}{}{W_{2n}}^Q$, we obtain $\omega(\alpha_k) \in \Phi_{2n}^{+}$. 
\end{proof}

The idea is rather simple, although the formulation is abstract. Let us present an example in $\GL_6(\BR)$ to illustrate the idea. Readers are encouraged to work through this example to gain a better understanding.
Note that for the case of $\GL_{2n}(\BC)$, we have $\Delta_P = \varnothing$. Thus matching orbits are just $\psi$-unvanishing $S$-orbits, and all the formulation above is almost trivial.

\begin{example}
    Let $P = LN$ be the standard parabolic subgroup of $\GL_6(\BR)$ with Levi subgroup $L = \GL_2(\BR) \times \GL_2(\BR) \times \GL_2(\BR)$. There are only 5 $\psi$-unvanishing $S$-orbits, which are represented by 
    $\{\sigma_1 = (1,3,5,2,4,6), \sigma_2 =(1,5,6,2,3,4), \sigma_3 =(3,4,5,1,2,6), \sigma_4 =(5,6,3,1,2,4), \sigma_5 =(3,5,6,1,2,4) \}$. One can check that, in this case, the only unmatching orbit is $\CO_{\sigma_5}$, while the other four orbits are all matching orbits. Now we use $\sigma_3$ and $\sigma_5$ to show what happens on group decomposition. For the case of $\CO_{\sigma_3}$, we have
    \[
    P^{\sigma_3} = 
    \begin{bmatrix} 
    \begin{smallmatrix}
    *&*&-&0&0&- \\
    *&*&-&0&0&- \\
    0&0&*&0&0&* \\
    -&-&-&*&*&- \\
    -&-&-&*&*&- \\
    0&0&*&0&0&*
    \end{smallmatrix}
    \end{bmatrix},\ \ 
    P^{\sigma_3} \cap S = 
    \begin{bmatrix}
    \begin{smallmatrix}
    a_1&a_2&n_1&0&0&- \\
    a_3&a_4&n_2&0&0&- \\
    0&0&b_1&0&0&* \\
    0&0&0&a_1&a_2&n_1 \\
    0&0&0&a_3&a_4&n_2 \\
    0&0&0&0&0&b_1
    \end{smallmatrix}
    \end{bmatrix}.
    \]
    Here and follows, both $*$ and $-$ denote arbitrary elements with $*$ contains in $L^{\sigma_i}$ and $-$ contains in $N^{\sigma_i}$. The letters $a_1,\dots,b_1,\dots,n_1,\dots$ denote arbitrary elements, but the two appearances of the same letter denote the same numbers.\par
    In this case the group decomposition in Lemma \ref{Lem: gp decomp} is just
    \[
    R_{\sigma_3} = 
    \begin{bmatrix} 
    \begin{smallmatrix}
    a_1&a_2&0&0&0&0 \\
    a_3&a_4&0&0&0&0 \\
    0&0&b_1&0&0&* \\
    0&0&0&a_1&a_2&0 \\
    0&0&0&a_3&a_4&0 \\
    0&0&0&0&0&b_1
    \end{smallmatrix}
    \end{bmatrix},\ \ 
    V_{\sigma_3}  = 
    \begin{bmatrix} 
    \begin{smallmatrix}
    1&0&n_1&0&0&- \\
    0&1&n_2&0&0&- \\
    0&0&1&0&0&0 \\
    0&0&0&1&0&n_1 \\
    0&0&0&0&1&n_2 \\
    0&0&0&0&0&1
    \end{smallmatrix}
    \end{bmatrix}.
    \]
    For the unmatching orbit $\CO_{\sigma_5}$, we have
    \[
    P^{\sigma_5} = 
    \begin{bmatrix} 
    \begin{smallmatrix}
    *&-&-&0&0&* \\
    0&*&*&0&0&0 \\
    0&*&*&0&0&0 \\
    -&-&-&*&*&- \\
    -&-&-&*&*&- \\
    *&-&-&0&0&*
    \end{smallmatrix}
    \end{bmatrix},\ \ 
    P^{\sigma_5} \cap S = 
    \begin{bmatrix} 
    \begin{smallmatrix}
    a_1&n_1&n_2&0&0&* \\
    0&a_2&n_3&0&0&0 \\
    0&0&a_3&0&0&0 \\
    0&0&0&a_1&n_1&n_2 \\
    0&0&0&0&a_2&n_3 \\
    0&0&0&0&0&a_3
    \end{smallmatrix}
    \end{bmatrix}.
    \]
    In this case the group decomposition in Lemma \ref{Lem: gp decomp} is just
    \[
    R_{\sigma_5} = 
    \begin{bmatrix} 
    \begin{smallmatrix}
    a_1&0&0&0&0&0 \\
    0&a_2&0&0&0&0 \\
    0&0&a_3&0&0&0 \\
    0&0&0&a_1&0&0 \\
    0&0&0&0&a_2&0 \\
    0&0&0&0&0&a_3
    \end{smallmatrix}
    \end{bmatrix},\ \ 
    V_{\sigma_5}  = 
    \begin{bmatrix} 
    \begin{smallmatrix}
    1&n_1&n_2&0&0&* \\
    0&1&n_3&0&0&0 \\
    0&0&1&0&0&0 \\
    0&0&0&1&n_1&n_2 \\
    0&0&0&0&1&n_3 \\
    0&0&0&0&0&1
    \end{smallmatrix}
    \end{bmatrix}.
    \]
\end{example}

In order to use Borel's lemma, we need the following two numerical lemmas about the representative elements of the conormal bundle.

\begin{lemt}
    Let $\CO_{\omega}$ be a $\psi$-unvanishing orbit.
    Denote 
    \begin{multline*}
    \CN_{\omega} := \Span_{\BK}(\{E_{n+j,i} | 1\leq i < j \leq n,\ \omega(i) < \omega(n+j),\ \alpha_{i, n+j} \notin \omega^{-1}(\Delta_{P})\} \\ 
    \amalg \{E_{j,i} | 1 \leq i < j \leq n,\ \omega(i) < \omega(j),\ \alpha_{i,j} \notin \omega^{-1}(\Delta_{P}),\ \alpha_{n+i,n+j} \notin \omega^{-1}(\Delta_{P})\}).
    \end{multline*}
    Then we have 
    $$
    \gl_{2n} = \CN_{\omega} \oplus (\p^{\omega} + \s).
    $$
\end{lemt}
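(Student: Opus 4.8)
The plan is to verify this as a vector-space direct sum decomposition of $\gl_{2n}$ by a root-space bookkeeping argument. First I would record the root-space decomposition of the three pieces in play. Writing $\gl_{2n} = \t_{2n} \oplus \bigoplus_{\alpha \in \Phi_{2n}} \g_\alpha$, observe that $\p^\omega = \omega^{-1}(\p)$ is spanned by $\t_{2n}$ together with the root spaces $\g_\beta$ for $\beta \in \omega^{-1}(\Phi_{2n}^+) \cup (-\omega^{-1}(\Delta_P))$ (recalling $\p = \l \oplus \n$ has roots $\Phi_{2n}^+ \cup (-\Delta_P)$, using that $P$ is the chosen parabolic). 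Meanwhile $\s = \g_n^\Delta \oplus \u$, whose roots (as a Lie subalgebra, though $\g_n^\Delta$ is not a sum of root spaces) must be handled slightly carefully: $\u$ contributes exactly $\{\alpha_{i,n+j} : 1 \le i,j \le n\}$, and $\g_n^\Delta$ contains the diagonal torus of $\gl_n$ embedded diagonally plus the ``diagonal'' copies $E_{i,j} + E_{n+i,n+j}$ for $1 \le i \ne j \le n$. So $\p^\omega + \s$ already contains $\t_{2n}$, all of $\u$, and for each $1 \le i \ne j \le n$ the vector $E_{i,j} + E_{n+i,n+j}$; on top of that it contains $\g_\beta$ for each $\beta \in \omega^{-1}(\Phi_{2n}^+) \cup (-\omega^{-1}(\Delta_P))$.

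Next I would count. The space $\CN_\omega$ is spanned by certain lower-triangular elementary matrices $E_{n+j,i}$ (roots $-\alpha_{i,n+j}$ with $i<j$) and $E_{j,i}$ (roots $-\alpha_{i,j}$ with $i<j$), subject to the stated inequalities on $\omega$ and the stated non-membership in $\omega^{-1}(\Delta_P)$. These all lie in $\gl_{2n}$, so it suffices to show (a) $\CN_\omega \cap (\p^\omega + \s) = 0$ and (b) $\dim \CN_\omega + \dim(\p^\omega + \s) = (2n)^2$; equivalently, since both sums land inside $\gl_{2n}$, I will show $\CN_\omega + (\p^\omega + \s) = \gl_{2n}$ and the sum is direct, which together force the dimension count. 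For the spanning statement, I would argue root space by root space: for each $\alpha \in \Phi_{2n}$ (and for the torus), show $\g_\alpha$ — or, in the diagonal-block cases, the relevant combination of two root spaces — lies in $\CN_\omega + \p^\omega + \s$. The torus is in $\p^\omega$. For $\alpha \in \omega^{-1}(\Phi_{2n}^+)$, $\g_\alpha \subset \p^\omega$; similarly for $-\omega^{-1}(\Delta_P)$. The only roots not yet covered are the $\beta \in \omega^{-1}(\Phi_{2n}^-)$ with $\beta \notin -\omega^{-1}(\Delta_P)$, i.e. $\beta = -\alpha_{p,q}$ with $\omega(p) > \omega(q)$ — wait, I should phrase it as: $\beta$ such that $\omega(\beta) \in \Phi_{2n}^-$ and $\omega(\beta) \notin -\Delta_P$. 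Splitting these remaining negative roots by block type ($-\alpha_{i,j}$ with $1\le i\ne j\le n$; $-\alpha_{n+i,n+j}$; $-\alpha_{i,n+j}$; $-\alpha_{n+i,j}$) and using $\u$ to kill the ``$E_{i,n+j}$'' directions and the diagonal vectors $E_{i,j}+E_{n+i,n+j}$ from $\g_n^\Delta$ to trade an upper-block root against its lower-block partner, one checks that everything not already supplied is exactly supplied by the listed generators of $\CN_\omega$; the inequality and non-membership conditions in the definition of $\CN_\omega$ are precisely what is needed so that a generator is included exactly when it is \emph{not} already available. The directness in (a) follows by the same casework: a nonzero element of $\CN_\omega$ has support on strictly lower-block-triangular positions with root labels that, by the defining conditions, avoid $\omega^{-1}(\Phi_{2n}^+)$, avoid $-\omega^{-1}(\Delta_P)$, and are not of the form $\alpha_{i,n+j}$ (so not hit by $\u$), and are not matched by a diagonal $\g_n^\Delta$ vector in a way that could cancel — so it cannot lie in $\p^\omega + \s$.

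The main obstacle will be the careful treatment of $\g_n^\Delta$, which is not a sum of root spaces of $\gl_{2n}$: the decomposition is genuinely ``twisted'' along the two diagonal $n\times n$ blocks, and one must argue with the two-dimensional spaces $\span\{E_{i,j}, E_{n+i,n+j}\}$ rather than single root spaces, showing that $\p^\omega + \s$ always contains the diagonal combination $E_{i,j}+E_{n+i,n+j}$ and determining, for each such pair, whether the \emph{anti-diagonal} direction is then supplied by $\p^\omega$ or must be taken into $\CN_\omega$ (this is where the double condition ``$\alpha_{i,j} \notin \omega^{-1}(\Delta_P)$ and $\alpha_{n+i,n+j} \notin \omega^{-1}(\Delta_P)$'' enters, and one uses the cuspidality of $P$ plus the structure of $\omega = \gamma\sigma$ from Proposition \ref{Prop: orbit decomposition}). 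I expect the $\psi$-unvanishing hypothesis to be used to guarantee that the ``Shalika'' roots $\alpha_{i,n+i}$ are handled consistently (so that $\u$ together with the diagonal really do cover what is claimed), and I would invoke the earlier lemmas on $\gamma^{-1}(\Delta_P)$ to keep the block structure under control. Once the casework is organized by block type the verification is mechanical, so I would present it compactly as a table of the four off-diagonal block types plus the diagonal blocks plus the torus.
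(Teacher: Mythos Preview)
Your proposal is correct and follows essentially the same approach as the paper: a block-by-block root-space analysis, handling the lower-left block $\bar{\u}$ via $\s \cap \bar{\u} = 0$ and the diagonal blocks via the pairs $\{E_{j,i}, E_{n+j,n+i}\}$ coming from $\g_n^\Delta$. The paper organizes it slightly more economically by directly characterizing, for each relevant elementary matrix, the condition ``$E_{a,b} \notin \p^\omega + \s$'' (using the decomposition $\bar{\u} = (\p^\omega \cap \bar{\u}) \oplus (\bar{\n}^\omega \cap \bar{\u})$ and the criterion $E_{j,i} \notin \p^\omega + \s \Leftrightarrow E_{j,i} \notin \p^\omega$ and $E_{n+j,n+i} \notin \p^\omega$), rather than proving spanning and directness separately; this collapses your two steps into one and makes the role of the $\psi$-unvanishing hypothesis (forcing $i<j$ in the $E_{n+j,i}$ family) more transparent.
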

\begin{proof}
    Denote by $\bar{\u}$ (resp. $\bar{\n}$) the transpose of $\u$ (resp. $\n$). For $i,j \in \{1,\dots,n\}$, if $E_{n+j,i} \in \p^{\omega} + \s$, then $E_{n+j,i} \in \p^{\omega} \cap \bar{\u}$. Note that $\bar{\u} = \p^{\omega} \cap \bar{\u} \oplus \bar{\n}^{\omega} \cap \bar{\u}$, thus $E_{n+j,i} \notin \p^{\omega} + \s \Leftrightarrow E_{n+j,i} \in \bar{\n}^{\omega} \cap \bar{\u} \Leftrightarrow E_{i,n+j} \in \n^{\omega} \cap \u$. Since $\omega$ is $\psi$-unvanishing, we can directly calculate that 
    $\n^{\omega} \cap \u = \Span_{\BK} \{E_{i,n+j} | 1 \leq i < j \leq n, \omega(i) < \omega(n+j), \omega(\alpha_{i,n+j}) \notin \Delta_P \} $.
    \par
    Since $E_{j,i} \notin p^{\omega} + \s \Leftrightarrow E_{j,i} \notin p^{\omega} \cap \gl_n^{(1)} \text{ and } E_{n+j,n+i} \notin p^{\omega} \cap \gl_n^{(2)}$, one can directly calculate that
    $E_{j,i} \notin p^{\omega} + \s \Leftrightarrow i<j, \omega(i) < \omega(j), \alpha_{i,j} \notin \omega^{-1}(\Delta_{P}), \alpha_{n+i,n+j} \notin \omega^{-1}(\Delta_{P})$.
    \par
    Then the lemma follows from the above two calculations.
\end{proof}

\begin{lemt}\label{Lem: Der associated positive exp}
    Let $\CO_{\omega}$ be a $\psi$-unvanishing orbit. 
    \begin{enumerate}
        \item If $E_{n+j,i} \in \CN_{\omega}$, then we have $\omega(i) < \omega(j)$, $\omega(n+i) < \omega(n+j)$.
        \item If $E_{j,i} \in \CN_{\omega}$, then we have $\omega(i) < \omega(j)$, $\omega(n+i) < \omega(n+j)$.
    \end{enumerate}
\end{lemt}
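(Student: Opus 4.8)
The plan is to unwind the definition of $\CN_{\omega}$ and translate each membership condition into an inequality between values of the permutation $\omega$, using that $\omega = \gamma\cdot\sigma$ with $\gamma \in \prescript{P}{}{W_{2n}}^Q$ controlling the block structure and $\sigma \in \prescript{P_{1,\gamma}}{}{W_n}^{P_{2,\gamma}}$ controlling the behavior within each $\GL_n$-block. The two cases are parallel, so I would carry them out in sequence; for the second (the $E_{j,i}$-case) I expect the argument to be essentially immediate from the defining conditions, while the first ($E_{n+j,i}$-case) requires a little more work because it mixes the two blocks.

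For part (2): if $E_{j,i}\in\CN_{\omega}$ with $1\le i<j\le n$, the definition already gives $\omega(i)<\omega(j)$ directly. For $\omega(n+i)<\omega(n+j)$, I would argue as follows. Write $\omega=\gamma\cdot\sigma$. Since $\gamma\in\prescript{P}{}{W_{2n}}^Q$ satisfies $\gamma(1)<\cdots<\gamma(n)$ and $\gamma(n+1)<\cdots<\gamma(2n)$, the relative positions within the second $\GL_n$-block are governed by $\sigma$, which permutes only the first $n$ coordinates. One checks that $\omega(n+i)<\omega(n+j)$ is equivalent to $\sigma(i)<\sigma(j)$ in the appropriate sense — but this is exactly $\omega(i)<\omega(j)$ read off through $\sigma$, since $\sigma$ acts identically on the two blocks of coordinates after the $\omega_\iota$-twist built into the definition of $\prescript{P_{1,\gamma}}{}{W_n}^{P_{2,\gamma}}$. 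So $\omega(i)<\omega(j)$ forces $\omega(n+i)<\omega(n+j)$. (Alternatively, one uses that $\alpha_{n+i,n+j}\notin\omega^{-1}(\Delta_P)$ is part of the hypothesis, and combines it with the structure of $\sigma$ to rule out $\omega(n+j)<\omega(n+i)$ by a direct index comparison.)

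For part (1): suppose $E_{n+j,i}\in\CN_{\omega}$ with $1\le i<j\le n$, so $\omega(i)<\omega(n+j)$ and $\alpha_{i,n+j}\notin\omega^{-1}(\Delta_P)$. I want $\omega(i)<\omega(j)$ and $\omega(n+i)<\omega(n+j)$. For the latter, since $i<j$ and $\gamma(n+i)<\gamma(n+j)$ (monotonicity of $\gamma$ on the second block) together with $\sigma$ acting monotonically-compatibly across the blocks, I get $\omega(n+i)<\omega(n+j)$ much as in part (2). For $\omega(i)<\omega(j)$: I would argue by contradiction. If $\omega(j)<\omega(i)$, then since $\omega(i)<\omega(n+j)$ we'd have $\omega(j)<\omega(i)<\omega(n+j)$; I then use the $\psi$-unvanishing hypothesis — which via the preceding lemma says for each index $k$ either $\omega(k)>\omega(n+k)$ or $\alpha_{\omega(k),\omega(n+k)}\in\Delta_P$ — applied to $k=j$: either $\omega(j)>\omega(n+j)$, contradicting $\omega(j)<\omega(n+j)$ (which follows from $\omega(j)<\omega(i)<\omega(n+j)$), or $\alpha_{\omega(j),\omega(n+j)}\in\Delta_P$, i.e. $\omega(n+j)=\omega(j)+1$; but then $\omega(j)<\omega(i)<\omega(j)+1$ is impossible for integers. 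Either way we reach a contradiction, so $\omega(i)<\omega(j)$.

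The main obstacle I anticipate is bookkeeping the precise compatibility between the action of $\sigma$ on the first $\GL_n$-block and on the "second" block of coordinates $\{n+1,\dots,2n\}$ — that is, making rigorous the slogan "$\sigma$ acts the same way on both blocks" that underlies the $\omega(i)<\omega(j)\Rightarrow\omega(n+i)<\omega(n+j)$ implications. This is encoded in the definition $\prescript{P_{1,\gamma}}{}{W_n}^{P_{2,\gamma}}$ and the identification of $W_n$ as a subgroup of $W_{2n}$ "by interchanging the first $n$ variables," and one has to be careful about whether $\sigma$ fixes or permutes the indices $n+1,\dots,2n$. Once that compatibility is pinned down, both parts reduce to short integer inequality arguments of the kind already used repeatedly in this section (e.g. in the proofs of the $\psi$-unvanishing lemmas), so I expect the remainder to be routine.
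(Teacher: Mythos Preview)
Your argument for $\omega(i)<\omega(j)$ in part (1) via the $\psi$-unvanishing condition at $k=j$ is correct and is exactly the paper's proof, just phrased contrapositively: the paper applies $\psi$-unvanishing at $k=j$ to get $\omega(j)>\omega(n+j)$ or $\omega(n+j)=\omega(j)+1$, and in either case $\omega(i)<\omega(n+j)$ forces $\omega(i)<\omega(j)$.

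Your handling of $\omega(n+i)<\omega(n+j)$ (in both parts) has a genuine gap, and it is precisely the one you flag. The slogan ``$\sigma$ acts the same way on both blocks'' is wrong: by the convention stated in Proposition~\ref{Prop: orbit decomposition}, $W_n$ is embedded in $W_{2n}$ by permuting only the first $n$ coordinates, so $\sigma$ \emph{fixes} $\{n+1,\dots,2n\}$ pointwise. Once you use this, your instinct is exactly right and the argument collapses to one line: $\omega(n+k)=\gamma(\sigma(n+k))=\gamma(n+k)$, and since $\gamma\in\prescript{P}{}{W_{2n}}^Q$ gives $\gamma(n+1)<\cdots<\gamma(2n)$, we get $\omega(n+i)<\omega(n+j)$ automatically for any $i<j$, with no further hypotheses needed. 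Your attempted coupling ``$\omega(n+i)<\omega(n+j)\Leftrightarrow\sigma(i)<\sigma(j)$'' and the alternative via $\alpha_{n+i,n+j}\notin\omega^{-1}(\Delta_P)$ are both unnecessary and, as written, not valid.

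For comparison, the paper does \emph{not} use this combinatorics-of-$\Omega$ argument for $\omega(n+i)<\omega(n+j)$ in part (1). Instead it applies $\psi$-unvanishing at $k=i$ (the mirror of what you did at $k=j$): either $\omega(n+i)<\omega(i)$ or $\omega(n+i)=\omega(i)+1$, and combining with the hypothesis $\omega(i)<\omega(n+j)$ yields $\omega(n+i)<\omega(n+j)$. So the paper proves both inequalities in part (1) by two symmetric applications of $\psi$-unvanishing, while your corrected route gets one of them for free from the structure of $\Omega$. Either approach works.
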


\begin{proof}
    The second assertion follows from the $\psi$-unvanishing condition. As to the first one, we have 
    $\omega(i) > \omega(n+i)$ or $\omega(i) + 1 = \omega(n+i)$. Since $\omega(i) < \omega(n+j)$ and $i<j$, we obtain $\omega(n+i) < \omega(n+j)$ in both case. Similar argument gives that $\omega(i) < \omega(j)$.
\end{proof}

In the last of this section, we study the modular characters that will occur in Shapiro's lemma in the calculation of the next section. Denote
\begin{itemize}
    \item $\Lambda^{ma}_{P,\omega} := \{1\leq i \leq n |\ \alpha_i \in \Psi^{ma}_{P,\omega} \}$, 
    \item $\Lambda^{um}_{P,\omega} := \{(i,j)|\ 1\leq i<j \leq n ,\ \alpha_{i,j}\ \text{or}\ \alpha_{i,n+j}\ \text{or}\ \alpha_{n+i,n+j}   \in \Psi^{um}_{P,\omega} \}$.
\end{itemize}
Let $T$ be the diagonal matrices subgroup of $G_{2n}$. Define a character on $T\cap S$ by
$$\prescript{\circ}{}{\chi}_{\omega}\left(
    \begin{bmatrix}
    \begin{smallmatrix}
    x_{1} & & &&& \\ 
     & \ddots & &&&\\
    && x_{n}&&&\\
    &&&x_{1} & & \\ 
     &&&& \ddots & \\
    &&&&& x_{n}
    \end{smallmatrix}
    \end{bmatrix}
\right) := \prod_{(i,j)\in \Psi^{um}} \left(\frac{x_i}{x_j}\right),$$ and $\chi_{\omega} := |\prescript{\circ}{}{\chi}_{\omega}|$.
\begin{lemt}\label{Lem: mod char}
    We have $\delta_{P^{\omega} \cap S} = (\delta_P^{\omega} \cdot \chi_{\omega})^{\frac{1}{2}}$ on $T\cap S$. In particular, if $\CO_{\omega}$ is a matching orbit, then we have $\delta_{P^{\omega} \cap S} = (\delta^{\omega}_P)^{\frac{1}{2}}$ on $R_{\omega}$
\end{lemt}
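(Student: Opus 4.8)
The plan is to compute both sides of the identity $\delta_{P^{\omega}\cap S} = (\delta_P^{\omega}\cdot\chi_{\omega})^{1/2}$ on the maximal torus $T\cap S$ by a root-count, using the group decomposition $P^{\omega}\cap S = R_{\omega}\ltimes V_{\omega}$ from Lemma \ref{Lem: gp decomp}. Since the modular character of a Nash group $H$ acts on an element $t$ in its Levi factor by $\delta_H(t) = |\det(\Ad(t)\mid \Lie H)|$, and since on a semidirect product $R_{\omega}\ltimes V_{\omega}$ we have $\delta_{P^{\omega}\cap S}(t) = \delta_{R_{\omega}}(t)\cdot|\det(\Ad(t)\mid\v_{\omega})|$, I would compute the contribution of each root space appearing in $\p^{\omega}\cap\s$ to the determinant. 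Concretely, for $t = \diag(x_1,\dots,x_n,x_1,\dots,x_n)\in T\cap S$, the eigenvalue of $\Ad(t)$ on a root vector $E_{a,b}$ (with $a,b$ indexing $\{1,\dots,2n\}$ and the diagonal entry in position $k$ and $n+k$ both equal to $x_k$) is $x_{\bar a}/x_{\bar b}$ where $\bar a\in\{1,\dots,n\}$ reduces $a$ mod $n$. So the whole computation is bookkeeping over which root spaces $E_{a,b}$ lie in $\p^{\omega}\cap\s$, organized by the three subsets $\Psi^{ma}_{P,\omega}$, $\Psi^{wh}_{P,\omega}$, $\Psi^{um}_{P,\omega}$ of $\Psi_{P,\omega} = \omega^{-1}(\Delta_P)$ and, more importantly, over the full root system of $\n^{\omega}\cap(\p^{\omega}\cap\s)$, not merely the simple roots.

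The key steps, in order, are: (1) Express $\delta_P^{\omega}$ on $T\cap S$ as the product over positive roots $\beta$ of $\n$ (the unipotent radical of $P$) of the character $t\mapsto|\omega^{-1}(\beta)(t)|$, restricted to $T\cap S$; many of these restrictions collapse because $\omega^{-1}(\beta)$ and $\omega_{\iota}(\omega^{-1}(\beta))$ restrict to the same character of $T\cap S$ (both reduce indices mod $n$). This is exactly where the factor $\frac12$ and the set $\Psi^{ma}$ (self-paired roots, counted once but contributing a square) versus $\Psi^{um}$ (the asymmetric leftover) enter. (2) Compute $\delta_{R_{\omega}}$ on $R_{\omega}$: here $R_{\omega} = A_{\omega}^{\Delta}\cdot(L^{\omega}\cap U^{\dag})$, and since $A_{\omega}^{\Delta}$ is (the diagonal of) a Levi it contributes the roots of $B_{\omega}$ doubled diagonally, while $L^{\omega}\cap U^{\dag}$ contributes the "white" directions $\alpha_{i,n+i}$; on $T\cap S$ a root and its $\omega_{\iota}$-conjugate again coincide, which is why $\delta_{R_{\omega}}$ already carries half of the symmetric part. (3) Compute $|\det(\Ad(t)\mid\v_{\omega})|$ where $\v_{\omega} = \b_{\omega}^{\Delta}\cdot\c_{\omega}$, and add. (4) Collate: the symmetric roots ($\Psi^{ma}$-related, and more generally roots $\beta$ of $\n$ with $\omega^{-1}(\beta)$ and $\omega_{\iota}\omega^{-1}(\beta)$ both in the relevant nilpotent radical) produce exactly $(\delta_P^{\omega})^{1/2}$ by the pairing, while the asymmetric leftovers produce precisely $\chi_{\omega}^{1/2} = |\prescript{\circ}{}{\chi}_{\omega}|^{1/2}$, matching the definition of $\chi_{\omega}$ as the product of $x_i/x_j$ over $\Psi^{um}$. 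Lemmas \ref{Lem: simple roots alive} and \ref{Lem: Der associated positive exp} are what guarantee that for each asymmetric simple root only one of the two conjugate directions survives in $\p^{\omega}\cap\s$, so the pairing is genuinely unbalanced exactly on $\Psi^{um}$.

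The main obstacle I expect is the bookkeeping in step (1) and (4): one must be careful that the identity is claimed only on $T\cap S$ (not on all of $T$), so every root-space contribution must be pushed forward along the reduction $\{1,\dots,2n\}\to\{1,\dots,n\}$ before pairing, and one must track which positive roots of $\n$ fall into $\n^{\omega}\cap\u$, which into $\n^{\omega}\cap\m$ (split via $\gl_n^{(1)},\gl_n^{(2)}$), and how the $\psi$-unvanishing hypothesis forces the "white" roots $\alpha_{i,n+i}$ to be absent from $\n^{\omega}$, eliminating a potential mismatch. Once the three pieces are laid out and the $\omega_{\iota}$-pairing on $T\cap S$ is applied, the symmetric part assembles into $(\delta_P^{\omega})^{1/2}$ and the leftover into $\chi_{\omega}^{1/2}$; the "in particular" clause is then immediate, since matching means $\Psi^{um}_{P,\omega}=\varnothing$, hence $\prescript{\circ}{}{\chi}_{\omega}$ is trivial and the restriction to $R_{\omega}$ kills any remaining discrepancy supported off $R_{\omega}$.
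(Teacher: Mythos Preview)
Your root-counting strategy is correct and would succeed, but it is organized more elaborately than the paper's own argument. The paper does \emph{not} pass through the semidirect product $P^{\omega}\cap S = R_{\omega}\ltimes V_{\omega}$, nor does it invoke Lemma~\ref{Lem: simple roots alive} or Lemma~\ref{Lem: Der associated positive exp}. Instead it computes $\prescript{\circ}{}{\delta}_{P^{\omega}\cap S}(t)$ directly for $t=\diag(x_1,\dots,x_n,x_1,\dots,x_n)\in T\cap S$ by listing the root spaces of $\p^{\omega}\cap\s$ in one shot: the diagonal part contributes $\prod_{(i,j)\in\Lambda_1}x_i/x_j$ with a correction $\prod_{i\in\Lambda^{ma}}(x_i/x_{i+1})^{-1}$ for double-counted $\GL_2$-blocks, and the $U$-part contributes $\prod_{(i,j)\in\Lambda_3}x_i/x_j$, where $\Lambda_1=\{i<j:\omega(i)<\omega(j)\}$ and $\Lambda_3=\{i<j:\omega(i)<\omega(n+j)\}$. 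It then writes $\prescript{\circ}{}{\delta}_P^{\omega}\cdot\prescript{\circ}{}{\chi}_{\omega}$ on $T\cap S$, partitions the index set $\{(i,j):\omega(i)<\omega(j),\ 1\le i,j\le 2n\}$ into four blocks according to whether each index lies in $\{1,\dots,n\}$ or $\{n+1,\dots,2n\}$, and observes that on $T\cap S$ (where $x_k=x_{n+k}$) this partition collapses the product to exactly the square of the previous expression. The ``in particular'' is then deduced by noting that characters of $R_{\omega}$ are determined on $T\cap S$ (commutator subgroup of $\GL_n$ is $\SL_n$, plus Iwasawa decomposition).

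Your route via $R_{\omega}\ltimes V_{\omega}$ works because the extra root spaces you isolate in $R_{\omega}$ (the reductive blocks of $A_{\omega}^{\Delta}$ and the $U^{\dag}$-directions $\alpha_{i,n+i}$) all contribute trivially to the determinant on $T\cap S$, so you end up computing the same product. But this decomposition is only established for $\psi$-unvanishing orbits (Lemma~\ref{Lem: gp decomp}), whereas the first assertion of the lemma is stated without that hypothesis; and the two lemmas you cite concern the structure of $\CN_{\omega}$, which plays no role in a modular-character identity. The paper's direct enumeration is both shorter and more general.
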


\begin{proof}
    Since the commutator subgroup of $\GL_n(\BK)$ is $\SL_n(\BK)$ and considering the Iwasawa decomposition, the second assertion follows from the first one. 
    \par
    For a Lie group G, denote $\prescript{\circ}{}{\delta}_G(g) := \det(\Ad g|_{\g})$, thus $\delta_G = |\prescript{\circ}{}{\delta}_G|$.
    Denote
    \[
    t = 
    \begin{bmatrix}
    x_{1} & & \\ 
     & \ddots & \\
    && x_{2n}
    \end{bmatrix} 
    \in T \cap S, \text{ where }
    x_i = x_{n+i},\ \forall\ i \in \{1,\dots,n\}.
    \]
    \par
    Denote $\Lambda_1 := \{(i,j)|\ 1 \leq i < j \leq n,\ \omega(i) < \omega(j)\}$, $\Lambda_2 := \Lambda^{ma}_{P,\omega}$,  $\Lambda_3 := \{(i,j)|\ 1 \leq i < j \leq n,\ \omega(i) < \omega(n+j)\}$. Then we have
    $$
    \prescript{\circ}{}{\delta}_{P^{\omega} \cap S}(t) = \prod_{(i,j)\in \Lambda_1} \frac{x_i}{x_j} \cdot \prod_{i\in \Lambda_2} \left(\frac{x_i}{x_{i+1}}\right)^{-1} \cdot \prod_{(i,j)\in \Lambda_3} \frac{x_i}{x_j}.
    $$
    Denote $\Lambda_4 := \{1\leq i \leq 2n | \alpha_i \in \Delta_P\}$, then
    $$
    \prescript{\circ}{}{\delta}^{\omega}_{P} = \prod_{1 \leq i < j \leq 2n} \frac{x_{\omega^{-1}(i)}}{x_{\omega^{-1}(j)}} \cdot \prod_{i\in \Lambda_4} \left(\frac{x_{\omega^{-1}(i)}}{x_{\omega^{-1}(i+1)}}\right)^{-1}.
    $$
    By variable substitution, we obtain    
    $$
    \prescript{\circ}{}{\delta}^{\omega}_{P}\cdot \prescript{\circ}{}{\chi}_{\omega}(t) = \prod_{1 \leq \omega(i) < \omega(j) \leq 2n} \frac{x_i}{x_j} \cdot \prod_{i\in \Lambda_2} \left(\frac{x_i}{x_{i+1}}\right)^{-2}.
    $$
    \par
    Denote $\Lambda_5 := \{(i,j)| 1 \leq i, j \leq 2n, \omega(i) < \omega(j)\}$, and associate a partition to it by
    $$
    \begin{aligned}
    \Lambda_5 =&\quad \ \Lambda_5^{(1)} := \{(i,j)|\ 1 \leq i < j \leq n,\ \omega(i) < \omega(j)\} \\
    &\amalg \Lambda_5^{(2)} := \{(n+b,n+c)|\ 1 \leq b,c \leq n,\ \omega(n+b) < \omega(n+c)\} \\
    &\amalg \Lambda_5^{(3)} := \{(i,n+c)|\ 1 \leq i,c \leq n,\ \omega(i) < \omega(n+c)\}\\
    &\amalg \Lambda_5^{(4)} := \{(n+a,j)|\ 1 \leq a, j \leq n,\ \omega(n+a) < \omega(j)\}.
    \end{aligned}
    $$
    Then by direct calculation,
    $$
    \prod_{(i,j)\in \Lambda_5^{(1)} \amalg \Lambda_5^{(2)}} \frac{x_i}{x_j} = \prod_{(i,j)\in \Lambda_1} \left(\frac{x_i}{x_{j}}\right)^{2}, 
    \quad
    \prod_{(i,j)\in \Lambda_5^{(3)} \amalg \Lambda_5^{(4)}} \frac{x_i}{x_j} = \prod_{(i,j)\in \Lambda_3} \left(\frac{x_i}{x_{j}}\right)^2.
    $$
    Thus $(\prescript{\circ}{}{\delta}_{P^{\omega} \cap S})^2(t) = \prescript{\circ}{}{\delta}^{\omega}_{P} \cdot \prescript{\circ}{}{\chi}_{\omega}(t)$ for $t \in T\cap S$.
\end{proof}

\section{Homology of standard modules}\label{Sec: thmA}
In this section, we investigate the Schwartz homology of the standard module, and derive Theorem \ref{Main thmA} in Subsection \ref{Subsec: thmA}. In Subsection \ref{Subsec: Homological finiteness}, we prove Lemma \ref{Lem: unip orbit vanishing}, \ref{Lem: orbitwise homological finiteness}, which are crucial for our application of Borel's Lemma. In Subsection \ref{Subsec: Homological vanishing}, we prove Lemma \ref{Lem: der um vanish}, which serves as a key step in proving Theorem \ref{Main thmA}.
\par
We fix the following notation in this section. Let $S$ be the Shalika subgroup of $G_{2n}$, and let $P$ be a standard cuspidal parabolic subgroup of $G_{2n}$ corresponding to the partition $(n_1,\dots ,n_r)$, with Levi decomposition $P = L \ltimes N$. We are divided into the following two cases.\par
If $\BK = \BC$, then $n_i = 1$ for $i \in \{1,\dots,r=2n\}$. Take $\pi_i := \chi_{k_i,\lambda_i}$ to be a character of $\BC^{\times}$, where $k_i \in \BZ$, $\lambda_i \in \BC$.\par
If $\BK =\BR$, then $n_i \in \{1,2\}$, for $i \in \{1,\dots,r\}$. For $n_i = 1$, take $\pi_i := \chi_{k_i,\lambda_i}$ to be a character of $\BR^{\times}$, where $k_i \in \{0,1\}$, $\lambda_i \in \BC$. For $n_i = 2$, take $\pi_i := D_{k_i,\lambda_i}$ to be a relative discrete series of $\GL_2(\BR)$, where $k_i \in \BZ_{\geq 1}$, $\lambda_i \in \BC$. \par
In both cases, we ask $\exp(\pi_1) \geq \cdots \geq \exp(\pi_r)$ and denote $\pi := \pi_1 \widehat{\otimes} \cdots \widehat{\otimes} \pi_r$. Let $X:= P\backslash G_{2n}$ be the partial flag manifold and let $\CE$ be the tempered bundle on $X$ associated with the standard module $\pi_1 \dot{\times} \cdots \dot{\times} \pi_r$, i.e. $ \Gamma^{\CS}(X,\CE) \cong \pi_1 \dot{\times} \cdots \dot{\times} \pi_r$.

\subsection{Homological finiteness}\label{Subsec: Homological finiteness}
As explained in Subsection \ref{Subsec: outline}, we need to prove that for each orbit, the corresponding homology group is finite-dimensional.
In this section, we establish Lemma \ref{Lem: unip orbit vanishing}, \ref{Lem: orbitwise homological finiteness}, which will be used in the subsequent sections.
Firstly, we consider the case of $\psi$-vanishing orbits.
\begin{lemt}\label{Lem: unip orbit vanishing}
    For an $\psi$-vanishing $S$-orbit $\CO_{\omega}$, we have
    $$
    \oH^{\CS}_{i}(P^{\omega}\cap S , (\delta_P^{\frac{1}{2}}\pi)^{\omega} \otimes \Sym^k(\CN^{*}_{\omega,\BC}) \otimes \delta^{-1}_{P^{\omega}\cap S} \otimes \xi_{\eta,\psi}^{-1}) = 0, \quad \forall\ i \in \BZ,\ k \in \BZ_{\geq 0}.
    $$
\end{lemt}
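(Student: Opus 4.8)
The plan is to exploit that $\CO_\omega$ being $\psi$-vanishing means precisely that $\xi_{\eta,\psi}^{-1}$ restricts \emph{non-trivially} to the nilpotent normal subgroup $N^{\omega}\cap U$ of $P^{\omega}\cap S$, while over this subgroup every other tensor factor of the coefficient module is an iterated extension of trivial modules. One then collapses the Hochschild--Serre spectral sequence for this nilpotent normal subgroup, the input being the vanishing statement of Lemma~\ref{Lem: nilp vanish lem}.

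First I would set up the group theory. Put $H:=P^{\omega}\cap S$ and $N:=N^{\omega}\cap U$. Since $N^{\omega}$ is normal in $P^{\omega}$ and $U$ is normal in $Q\supseteq S\supseteq H$, the subgroup $N$ is normal in $H$, and it is unipotent, hence nilpotent. Using the identity $P^{\omega}\cap S=(P^{\omega}\cap G_n^{\Delta})\cdot(L^{\omega}\cap U)\cdot(N^{\omega}\cap U)$ recorded in Section~\ref{Sec: Orbit decomposition} (together with $G_n^{\Delta}\cap U=\{e\}$ and $L^{\omega}\cap N^{\omega}=\{e\}$), one checks that $H=C\ltimes N$ with $C:=(P^{\omega}\cap G_n^{\Delta})(L^{\omega}\cap U)$, so that the Hochschild--Serre spectral sequence for nilpotent normal subgroups (Corollary~\ref{Cor: spec seq nilp}) is available for the pair $(H,N)$.

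Next I would compute $\oH^{\CS}_{q}(N,V)$, where $V:=(\delta_P^{\frac{1}{2}}\pi)^{\omega}\otimes\Sym^{k}(\CN^{*}_{\omega,\BC})\otimes\delta^{-1}_{P^{\omega}\cap S}\otimes\xi_{\eta,\psi}^{-1}$. As $N$ is unipotent, Proposition~\ref{Prop: homology comparison} identifies this with the Lie algebra homology $\oH_{q}(\n_{\BC},V)$. Restricting the factors of $V$ to $\n_{\BC}$: the subgroup $N\subseteq N^{\omega}$ acts trivially on $(\delta_P^{\frac{1}{2}}\pi)^{\omega}$, since the unipotent radical $N$ acts trivially on $\delta_P^{\frac{1}{2}}\pi$; the modular character $\delta^{-1}_{P^{\omega}\cap S}$ is trivial on $N$ because a unipotent element has $\det(\Ad)=1$; the finite-dimensional module $\Sym^{k}(\CN^{*}_{\omega,\BC})$ is acted on unipotently, hence admits a finite filtration with trivial graded pieces; and $\xi_{\eta,\psi}^{-1}|_{N}$ is a non-trivial character, whose differential is a non-zero character $\chi$ of the nilpotent Lie algebra $\n_{\BC}$. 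Combining these, $V|_{\n_{\BC}}$ carries a finite filtration with graded pieces of the form $W\otimes\chi$, where $W$ is a (possibly infinite-dimensional) trivial $\n_{\BC}$-module. Lemma~\ref{Lem: nilp vanish lem} gives $\oH_{q}(\n_{\BC},W\otimes\chi)=0$ for all $q$, and the long exact sequences attached to the filtration then yield $\oH^{\CS}_{q}(N,V)=0$ for all $q$.

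Finally, in the spectral sequence $E^{2}_{p,q}=\oH^{\CS}_{p}(C,\oH^{\CS}_{q}(N,V))\Rightarrow\oH^{\CS}_{p+q}(H,V)$ the finiteness/regularity hypothesis $\oH^{\CS}_{q}(N,V)\in\CS\mathrm{mod}_{C}$ holds trivially, so every $E^{2}_{p,q}$ vanishes and hence $\oH^{\CS}_{i}(H,V)=0$ for all $i$, which is the assertion. I expect the only mildly delicate point to be the bookkeeping in the second step — exhibiting $N^{\omega}\cap U$ as the normal factor of a genuine semidirect decomposition of $P^{\omega}\cap S$ and checking that the non-character tensor factors restrict to it as iterated extensions of trivial modules; all the genuine homological content is already packaged in Lemma~\ref{Lem: nilp vanish lem} and the Hochschild--Serre spectral sequence.
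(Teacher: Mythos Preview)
Your proposal is correct and follows essentially the same argument as the paper: reduce via the Hochschild--Serre spectral sequence for the nilpotent normal subgroup $N^{\omega}\cap U\trianglelefteq P^{\omega}\cap S$, filter $\Sym^{k}(\CN^{*}_{\omega,\BC})$ so that each graded piece is $N^{\omega}\cap U$-trivial, and then kill the inner homology using Lemma~\ref{Lem: nilp vanish lem}. The paper's own proof is terser (it simply asserts the normality of $N^{\omega}\cap U$ and invokes ``the spectral sequence argument'' without writing down the complement $C$), but the content is identical; your more careful verification that the remaining tensor factors restrict trivially to $N^{\omega}\cap U$ is exactly the bookkeeping the paper leaves implicit.
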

\begin{proof}
    Note that $N^{\omega}\cap U$ is a normal subgroup of $P^{\omega}\cap S$. Using the spectral sequence argument, it suffices to prove
    $$
    \oH^{\CS}_{i}(N^{\omega}\cap U , (\delta_P^{\frac{1}{2}}\pi)^{\omega} \otimes \Sym^k(\CN^{*}_{\omega,\BC}) \otimes \delta^{-1}_{P^{\omega}\cap S} \otimes \xi_{\eta,\psi}^{-1}) = 0, \quad \forall\ i \in \BZ,\ k \in \BZ_{\geq 0}.
    $$
    Since $N^{\omega}\cap U$ acts on $\Sym^k(\CN^{*}_{\omega,\BC})$ algebraically, we may take a finite filtration on $\Sym^k(\CN^{*}_{\omega,\BC})$ such that each grading piece $F_j$ admits a trivial action of $N^{\omega}\cap U$. 
    Since $\psi |_{N^{\omega} \cap U} \neq Id $,
    using Lemma \ref{Lem: nilp vanish lem}, we obtain
    $$
    \oH^{\CS}_{i}(N^{\omega}\cap U , (\delta_P^{\frac{1}{2}}\pi)^{\omega} \otimes F_j \otimes \delta^{-1}_{P^{\omega}\cap S} \otimes \xi_{\eta,\psi}^{-1}) = 0, \quad \forall\ i \in \BZ.
    $$
    Thus the lemma follows from the standard long exact sequence argument.
\end{proof}

Denote $T_2(\BR)$ (resp. $N_2(\BR)$) to be the diagonal matrices (resp. the strictly upper triangular matrices) subgroup of $\GL_2(\BR)$, $B_2(\BR):= T_2(\BR) \ltimes N_2(\BR)$.
The following three lemmas concern the homology of the relative discrete series of $\GL_2(\BR)$.

\begin{lemt}\label{Lem: finiteness jacquet module}
Let $D_k\ (k \in \BZ_{\geq1})$ be the discrete series of $\GL_2(\BR)$ as in Subsection \ref{Subsec: LLC}. Then as $T_2(\BR) = \BR^{\times} \times \BR^{\times}$ modules, 
$$\oH^{\CS}_{0}(N_2(\BR), D_k) =(\epsilon|\cdot|^{\frac{k+1}{2}} \boxtimes |\cdot|^{-\frac{k+1}{2}} )\oplus (\epsilon \sgn |\cdot|^{\frac{k+1}{2}} \boxtimes \sgn |\cdot|^{-\frac{k+1}{2}}), $$
where $\epsilon$ is the sign character of $\BR^{\times}$ if $k$ is even and is the trivial character otherwise. Moreover,
$$\oH^{\CS}_{i}(N_2(\BR), D_k) = 0, \quad \forall\ i \in \BZ_{\geq 1}. $$ 
\end{lemt}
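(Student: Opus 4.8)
\emph{Proof strategy.} Since $N_2(\BR)$ is a one-dimensional abelian group with trivial maximal compact subgroup, Proposition~\ref{Prop: homology comparison} identifies $\oH^{\CS}_i(N_2(\BR),V)$ with the homology of the two-term complex $0\to (\n_2)_{\BC}\otimes V\to V\to 0$, the differential being the action of a generator $X\in\n_2$; in particular $\oH^{\CS}_i(N_2(\BR),V)=0$ for $i\geq 2$, while $\oH^{\CS}_1(N_2(\BR),V)=\ker(X|_V)$ and $\oH^{\CS}_0(N_2(\BR),V)=V/XV$. So I only need to identify these two spaces for $V=D_k$. The plan is to pass to $\GL_2^+(\BR)$: a comparison of minimal $K$-types and central characters shows that $D_k\cong\Ind_{\GL_2^+(\BR)}^{\GL_2(\BR)}(\tau_k)$, where $\tau_k$ is the holomorphic discrete series of $\GL_2^+(\BR)$ of weight $k+1$ — the summand of $D_k|_{\GL_2^+(\BR)}$ complementary to the antiholomorphic one. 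Since $N_2(\BR)\subset\GL_2^+(\BR)$, a Mackey argument over the two double cosets in $\GL_2^+(\BR)\backslash\GL_2(\BR)/N_2(\BR)$ gives $D_k|_{N_2(\BR)}\cong\tau_k|_{N_2(\BR)}\oplus(\tau_k|_{N_2(\BR)})^{\theta_0}$ with $\theta_0=\diag(1,-1)$; keeping track of the $T_2(\BR)$-action (which permutes the two summands, as $\theta_0\notin\GL_2^+(\BR)$) yields, with $T_2^+(\BR):=T_2(\BR)\cap\GL_2^+(\BR)$,
$$\oH^{\CS}_i(N_2(\BR),D_k)\ \cong\ \Ind_{T_2^+(\BR)}^{T_2(\BR)}\bigl(\oH^{\CS}_i(N_2(\BR),\tau_k)\bigr),\qquad i\in\BZ.$$

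So I would next compute $\oH^{\CS}_*(N_2(\BR),\tau_k)$ as a $T_2^+(\BR)$-module. Realizing $\tau_k$ on the Fr\'echet space of holomorphic functions on the upper half-plane $\mathbb H$ underlying the weight-$(k+1)$ discrete series, the subgroup $N_2(\BR)$ acts by $z\mapsto z+x$, so $X$ acts as $\partial_z$. Then $\oH^{\CS}_1(N_2(\BR),\tau_k)=\ker(\partial_z)$ is the space of constant functions lying in $\tau_k$, which is $\{0\}$ (a nonzero constant violates the growth condition, as $k+1\geq 2$; or simply, the infinite-dimensional irreducible $\tau_k$ has no finite-dimensional subrepresentation), so $\oH^{\CS}_1(N_2(\BR),D_k)=0$ and likewise $\oH^{\CS}_i(N_2(\BR),D_k)=0$ for all $i\geq 1$. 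For $\oH^{\CS}_0(N_2(\BR),\tau_k)=\tau_k/\partial_z\tau_k$ I would use the Paley--Wiener/Laplace description $f(z)=\int_0^{\infty}c(\xi)\,e^{2\pi i\xi z}\,d\xi$ of the vectors of $\tau_k$; under it $\partial_z$ becomes multiplication of the symbol by $2\pi i\xi$, which has closed range of codimension one, the cokernel being detected by the ``constant mode'' functional $f\mapsto c(0)=\lim_{\xi\to 0^+}c(\xi)$ (equivalently $f\mapsto\int_{\BR}f(x+iy_0)\,dx$). Computing how $T_2^+(\BR)$ acts on this functional — $\diag(y_1,y_2)$ acts on $\mathbb H$ by $z\mapsto(y_1/y_2)z$, with automorphy factor and determinant twist fixed by the central character $\sgn^{k+1}$ — then identifies $\oH^{\CS}_0(N_2(\BR),\tau_k)$ with the one-dimensional $T_2^+(\BR)$-character $\diag(y_1,y_2)\mapsto\epsilon(y_1)\,|y_1|^{(k+1)/2}|y_2|^{-(k+1)/2}$, with $\epsilon$ as in the statement.

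Combining, $\oH^{\CS}_0(N_2(\BR),D_k)$ is the induction of that character from $T_2^+(\BR)$ to $T_2(\BR)$, which is the direct sum of its two extensions; these differ by the nontrivial character $\sgn\boxtimes\sgn$ of $T_2(\BR)/T_2^+(\BR)\cong\{\pm1\}$, producing exactly $\bigl(\epsilon|\cdot|^{(k+1)/2}\boxtimes|\cdot|^{-(k+1)/2}\bigr)\oplus\bigl(\epsilon\sgn|\cdot|^{(k+1)/2}\boxtimes\sgn|\cdot|^{-(k+1)/2}\bigr)$, while $\oH^{\CS}_{\geq1}(N_2(\BR),D_k)=0$. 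I expect the main obstacle to be the analytic content of the second step: obtaining the correct description of the \emph{smooth} globalization of $\tau_k$ and proving that $\partial_z$ on that Fr\'echet space has closed range, one-dimensional cokernel, and trivial kernel — closedness of the range being what ensures $\oH^{\CS}_0$ is the honest $\BC$ and, being finite-dimensional, Hausdorff by Proposition~\ref{Prop: finite hausdorff} — along with the routine but delicate identification of the precise $T_2^+(\BR)$-character. Should one wish to avoid the explicit holomorphic model, an alternative is to realize $D_k$ as the submodule of a reducible principal series $\Ind_{B_2(\BR)}^{\GL_2(\BR)}(\mu_1\boxtimes\mu_2)$ with finite-dimensional quotient $F$, compute the Schwartz homology of the principal series from the Bruhat-cell filtration (Borel's Lemma and Shapiro's Lemma) and that of $F$ from the fact that $X$ acts on the irreducible $F$ as a single nilpotent Jordan block, and conclude from the long exact sequence — the analogous difficulty there being the vanishing of the degree-one Schwartz homology of the reducible principal series.
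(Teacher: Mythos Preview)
Your strategy is sound and would lead to a correct proof, but it takes a genuinely different and more labor-intensive route than the paper. The paper's argument is very short: it invokes the comparison theorem for Jacquet modules in the minimal-parabolic case (citing \cite{hecht1998remark} and \cite{li2021proof}) to replace $D_k$ by its underlying Harish-Chandra module, then quotes \cite[Corollary 2.6]{casselman1978restriction} and \cite[Corollary 3.6]{knapp2016cohomological} for the vanishing of $\oH^{\CS}_{\geq 1}$; for $\oH^{\CS}_0$ it observes that a direct $K$-finite calculation gives dimension two, and then reads off the two $T_2(\BR)$-characters by Frobenius reciprocity, i.e.\ from the known embeddings of $D_k$ into principal series.

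Your approach---inducing from $\GL_2^+(\BR)$, realizing $\tau_k$ on holomorphic functions, and analyzing $\partial_z$ on the Fr\'echet globalization---is a legitimate alternative, and you have correctly isolated the analytic crux (closed range and one-dimensional cokernel of $\partial_z$ on the smooth vectors). But this is exactly the kind of analysis the comparison theorem is designed to bypass: once one passes to $K$-finite vectors, $X$ acts on the explicit weight basis by a simple shift, and both the kernel and cokernel are read off immediately. Your suggested alternative via the short exact sequence with a principal series and a finite-dimensional piece is closer in spirit to what the paper actually does (indeed the paper uses the sequence $0\to F\to I\to D_k\to 0$ in the adjacent Lemma~\ref{Lem: finiteness whittaker}); pursued carefully it would also give the result without the holomorphic-model analysis, though one still needs to control $\oH^{\CS}_1(N_2,I)$, which again is cleanest after passing to $K$-finite vectors.
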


\begin{proof}
    Thanks to the comparison theorem for the minimal parabolic case (see \cite[Theorem 1]{hecht1998remark}, \cite[Theorem 5.2]{li2021proof}), we can calculate the homology using the corresponding $(\g,K)$-modules. Then the second equality follows from \cite[Corollary 2.6]{casselman1978restriction} and \cite[Corollary 3.6]{knapp2016cohomological}.
    As to the first equality, it's a direct calculation that $\dim \oH^{\CS}_{0}(N_2(\BR), D_k) = 2$, then the assertion follows from Frobenius reciprocity.
\end{proof}

\begin{lemt}\label{Lem: finiteness whittaker}
    Let $V$ be a relative discrete series of $\GL_2(\BR)$.  Denote by $\psi$ a non-trivial unitary character on $N_2(\BR)$. Then
    \[
    \oH^{\CS}_{i}(N_2(\BR), V \otimes \psi) =
    \left\{
    \begin{array}{ll} 
    \BC, & \text{if } i = 0; \\
    0, & \text{otherwise}.
    \end{array}
    \right.
    \]
\end{lemt}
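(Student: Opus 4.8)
The statement to prove is Lemma \ref{Lem: finiteness whittaker}: for $V$ a relative discrete series of $\GL_2(\BR)$ and $\psi$ a non-trivial unitary character of $N_2(\BR)$, one has $\oH^{\CS}_0(N_2(\BR), V\otimes\psi)\cong\BC$ and $\oH^{\CS}_i(N_2(\BR), V\otimes\psi)=0$ for $i\neq0$. Since $N_2(\BR)\cong\BR$ is a one-dimensional abelian (in particular nilpotent) group, I would compute the Schwartz homology directly from the two-term Koszul complex. By Proposition \ref{Prop: homology comparison}, or more simply by the explicit description of homology of $\BR$, the groups $\oH^{\CS}_i(N_2(\BR), V\otimes\psi)$ vanish for $i\geq2$ automatically, and for $i=0,1$ they are the cokernel and kernel of the map $v\mapsto (\dif\psi)(X)\cdot v + X\cdot v$ on $V$, where $X$ spans $\n_2=\Lie N_2(\BR)$. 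Thus it suffices to show that this operator (equivalently, $X$ acting on $V\otimes\psi$) is a topological isomorphism of the Fr\'echet space $V$.

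First I would reduce to the case where $V=D_{k,\lambda}$ with $\lambda$ irrelevant (twisting by $|\det|^\lambda$ does not change $N_2(\BR)$-homology, as the determinant is trivial on $N_2(\BR)$), so $V=D_k$. The key input is that $D_k$ is a generic representation: its space of Whittaker functionals $\Hom_{N_2(\BR)}(D_k,\psi)$ is one-dimensional. This already gives $\dim\oH^{\CS}_0(N_2(\BR), V\otimes\psi)=1$ as a vector space, since $\oH^{\CS}_0(N_2(\BR), V\otimes\psi)$ is (the dual statement to) the twisted coinvariants, and by the uniqueness of Whittaker models its continuous dual is one-dimensional; combined with Proposition \ref{Prop: finite hausdorff} this forces $\oH^{\CS}_0\cong\BC$ as topological vector spaces. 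For the vanishing of $\oH^{\CS}_1$, I would argue that the operator $X+(\dif\psi)(X)$ on $V$ is injective with closed image: injectivity follows because $D_k$, realized in its standard model (e.g. on the line or upper half plane via its $(\g,K)$-module, or as Schwartz functions), has no nonzero vectors annihilated by $X-c$ for the relevant scalar — this is where one uses that $D_k$ is infinite-dimensional and irreducible, so a common $X$-eigenvector would generate a proper subrepresentation or contradict the central/infinitesimal character. Closedness of the image then follows either from the open mapping theorem applied to $V\to\Im$, or from the general fact (Proposition \ref{Prop: homology comparison}) that the Schwartz homology agrees with $(\g,K)$-homology, where one can compute with the algebraic model and invoke finite-dimensionality of $\oH_i(\n_2,(D_k)^{K\text{-fin}}\otimes\psi)$, which is classical (the Jacquet functor / Whittaker functor on generic representations of $\GL_2(\BR)$ yields a one-dimensional space in degree $0$ and nothing else).

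Concretely, the cleanest route is: (i) apply Proposition \ref{Prop: homology comparison} to replace $\oH^{\CS}_i(N_2(\BR), V\otimes\psi)$ by the Lie algebra homology $\oH_i(\n_2, V^{K\text{-fin}}\otimes\psi)$ — here I should be careful, since $N_2(\BR)$ is not reductive, but the comparison theorem of Hecht--Taylor / Li cited in the proof of Lemma \ref{Lem: finiteness jacquet module} (\cite{hecht1998remark}, \cite{li2021proof}) applies to the minimal parabolic case and covers exactly this situation; (ii) compute $\oH_*(\n_2, V^{K\text{-fin}}\otimes\psi)$ algebraically: $\n_2$ is one-dimensional, $V^{K\text{-fin}}$ is the $(\g,K)$-module of $D_k$, and one checks that $X$ acts on $V^{K\text{-fin}}\otimes\psi$ with trivial kernel and cokernel of dimension one — this is the standard computation of the Whittaker (Jacquet) module of a discrete series, using the explicit action of $\g$ on the weight vectors of $D_k$; (iii) conclude via Proposition \ref{Prop: finite hausdorff} that the resulting one-dimensional space in degree zero is Hausdorff, hence $\cong\BC$ topologically, and that all higher homology vanishes.

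The main obstacle will be step (ii) made rigorous in the Schwartz (rather than purely algebraic) setting — i.e., verifying that $X+(\dif\psi)(X)$ is not merely injective on the $K$-finite vectors but a topological isomorphism on the full Fr\'echet completion $V$, so that $\oH^{\CS}_1$ genuinely vanishes rather than merely being non-Hausdorff-zero. I expect this is handled exactly as in the references cited for Lemma \ref{Lem: finiteness jacquet module}: once one knows the homology is finite-dimensional (which the algebraic computation provides), Proposition \ref{Prop: finite hausdorff} upgrades this to Hausdorffness, and the comparison theorem guarantees the Schwartz homology equals the algebraic homology on the nose. So the proof will be short, essentially quoting the comparison theorem plus the classical uniqueness and existence of Whittaker models for relative discrete series of $\GL_2(\BR)$.
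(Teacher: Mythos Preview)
Your approach differs from the paper's, and there is a genuine gap in your argument for $\oH^{\CS}_1=0$.

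The paper's proof does not attempt a direct computation on $V$. Instead it uses the standard short exact sequence
\[
0 \to F \to I \to V \to 0
\]
with $I$ a principal series and $F$ finite-dimensional. Since $N_2(\BR)$ acts unipotently on $F$, Lemma \ref{Lem: nilp vanish lem} gives $\oH^{\CS}_i(N_2(\BR), F\otimes\psi)=0$ for all $i$; the long exact sequence then reduces the statement to the principal series $I$, which is handled by \cite[Lemma 8.5]{Casselman2000Bruhat}. This bypasses entirely the question of analyzing the operator $X+d\psi(X)$ on the Fr\'echet space $V$.

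The gap in your proposal is the invocation of the Hecht--Taylor/Li comparison theorem. Those results concern the \emph{untwisted} $\n$-homology (the Jacquet module), and that is precisely how they are used in Lemma \ref{Lem: finiteness jacquet module}. They do not, as stated, cover the Whittaker-twisted case. Proposition \ref{Prop: homology comparison} applied to $G=N_2(\BR)$ only identifies $\oH^{\CS}_i(N_2,V\otimes\psi)$ with $\oH_i(\n_{2,\BC},V\otimes\psi)$ computed on the \emph{full} Fr\'echet space $V$ (since the maximal compact of $N_2(\BR)$ is trivial); it does not let you pass to $V^{\RO_2\text{-fin}}$. So your step (i) fails, and without it, step (ii) --- the algebraic computation on $K$-finite vectors --- does not compute the Schwartz homology. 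Your final paragraph acknowledges this obstacle but then resolves it circularly: you invoke finite-dimensionality of the Schwartz homology to apply Proposition \ref{Prop: finite hausdorff}, but that finite-dimensionality is exactly what is in question once the comparison with the algebraic homology is unavailable.

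Your $\oH^{\CS}_0$ argument via uniqueness of Whittaker models is fine. For $\oH^{\CS}_1$, either carry out the direct analysis of $\ker(X+d\psi(X))$ on a concrete Fr\'echet model of $D_k$ (possible but not a one-liner), or --- much more simply --- adopt the paper's reduction to principal series.
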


\begin{proof}
    Recall that we have a standard short exact sequence of representation of $\GL_2(\BR)$,
    $$
    0\rightarrow F \rightarrow I\rightarrow V\rightarrow 0
    $$
    where $I$ is a principal series representation and $F$ is finite-dimensional. Thus the lemma follows from \cite[Lemma 8.5]{Casselman2000Bruhat} and the fact that $\oH^{\CS}_{0}(N_2(\BR), F \otimes \psi) = 0.$
\end{proof}

\begin{lemt}\label{Lem: finiteness pair}
    Let $V_1, V_2$ be relative discrete series of $\GL_2(\BR)$, and let $E$ be a finite-dimensional representation of $\GL_2(\BR)$, then we have,
    $$\dim \oH^{\CS}_{i}(\GL_2(\BR) ,V_1 \widehat{\otimes} V_2 \otimes E) < \infty,\quad \forall\ i \in \BZ.$$
\end{lemt}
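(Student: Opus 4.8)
The plan is to replace the discrete series $V_1$ by a principal series of $\GL_2(\BR)$ and then exploit the induced structure. For a relative discrete series of $\GL_2(\BR)$ there is a standard short exact sequence $0 \to F_1 \to I_1 \to V_1 \to 0$ of Casselman--Wallach representations, with $I_1$ a principal series and $F_1$ finite-dimensional (this is the sequence used in the proof of Lemma~\ref{Lem: finiteness whittaker}, twisted by a power of $|\det|$). This sequence is strictly exact and consists of NF-spaces, and $V_2 \widehat{\otimes} E$ is an NF-space, so Lemma~\ref{Lem: NF tensor exact} gives a strictly exact sequence
\[
0 \to F_1 \widehat{\otimes} V_2 \otimes E \to I_1 \widehat{\otimes} V_2 \otimes E \to V_1 \widehat{\otimes} V_2 \otimes E \to 0
\]
in $\smod_{\GL_2(\BR)}$ for the diagonal action. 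Its long exact sequence in Schwartz homology reduces the assertion to the finite-dimensionality of $\oH^{\CS}_{i}(\GL_2(\BR), F_1 \widehat{\otimes} V_2 \otimes E)$ and $\oH^{\CS}_{i}(\GL_2(\BR), I_1 \widehat{\otimes} V_2 \otimes E)$ for all $i$.

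For the first representation, $F_1 \widehat{\otimes} V_2 \otimes E \cong V_2 \otimes (F_1 \otimes E)$ has finite length over $\GL_2(\BR)$, since tensoring an irreducible Casselman--Wallach representation by a finite-dimensional one yields a representation of finite length. By the long exact sequence in Schwartz homology along a composition series it suffices to treat an irreducible constituent $V'$; then Proposition~\ref{Prop: homology comparison} identifies $\oH^{\CS}_{i}(\GL_2(\BR), V')$ with the homology of the Koszul complex $\bigl(\wedge^{\bullet}(\gC/\k_{\BC}) \otimes (V')^{K\text{-fin}}\bigr)_K$ attached to $\GL_2(\BR)$ and its maximal compact $K$, each term of which is finite-dimensional because $V'$ is admissible and $K$ is compact. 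Hence this term is finite-dimensional.

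For the second representation the key observation is that $V_1 \widehat{\otimes} V_2$ is \emph{not} admissible under the diagonal action, so Proposition~\ref{Prop: homology comparison} cannot be applied directly; instead one passes to the induced realization. By the tensor identity for smooth induction, $I_1 \widehat{\otimes} V_2 \otimes E \cong \ind_{B_2(\BR)}^{\GL_2(\BR)}(\sigma)$ with $\sigma = \chi \,\widehat{\otimes}\, (V_2 \otimes E)|_{B_2(\BR)}$, where $\chi$ is the unnormalized inducing character of $I_1$, a character of $B_2(\BR)$ trivial on $N_2(\BR)$. As $\GL_2(\BR)/B_2(\BR)$ is compact, Schwartz induction coincides with ordinary induction, and Shapiro's lemma (Proposition~\ref{Prop: Shapiro Lem}, with $\delta_{\GL_2(\BR)} = 1$) gives $\oH^{\CS}_{i}(\GL_2(\BR), I_1 \widehat{\otimes} V_2 \otimes E) \cong \oH^{\CS}_{i}(B_2(\BR), \sigma \otimes \delta_{B_2(\BR)}^{-1})$ for all $i$. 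To compute the latter I would run the Hochschild--Serre spectral sequence for the nilpotent normal subgroup $N_2(\BR)$ of $B_2(\BR)$, i.e. $E^2_{p,q} = \oH^{\CS}_{p}\bigl(T_2(\BR), \oH^{\CS}_{q}(N_2(\BR), \sigma \otimes \delta_{B_2(\BR)}^{-1})\bigr) \implies \oH^{\CS}_{p+q}(B_2(\BR), \sigma \otimes \delta_{B_2(\BR)}^{-1})$. On $N_2(\BR)$ the factor $\chi \otimes \delta_{B_2(\BR)}^{-1}$ and the $|\det|$-twist inside $V_2$ are trivial, and $E|_{N_2(\BR)}$ is acted on through unipotent operators (being the restriction of a finite-dimensional representation of $\GL_2(\BR)$), hence admits a finite filtration with trivial subquotients; combining this with Lemma~\ref{Lem: finiteness jacquet module} ($\dim \oH^{\CS}_{0}(N_2(\BR), D_k) = 2$ and $\oH^{\CS}_{i}(N_2(\BR), D_k) = 0$ for $i \geq 1$) and the long exact sequence shows that $\oH^{\CS}_{q}(N_2(\BR), \sigma \otimes \delta_{B_2(\BR)}^{-1})$ is finite-dimensional for every $q$; in particular it is Hausdorff by Proposition~\ref{Prop: finite hausdorff}, so it lies in $\smod_{T_2(\BR)}$ and the spectral sequence is available. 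Since $T_2(\BR) \cong (\BR^{\times})^2$ and the Schwartz homology of $\BR^{\times}$ with finite-dimensional coefficients is finite-dimensional (for $\{\pm 1\}$ by Proposition~\ref{Prop: projective hausdorff}, for $\BR_{>0} \cong \BR$ via the two-term Koszul complex of Proposition~\ref{Prop: homology comparison}), the $E^2$-page is finite-dimensional, and hence so is $\oH^{\CS}_{p+q}(B_2(\BR), \sigma \otimes \delta_{B_2(\BR)}^{-1})$. Tracing back through the two long exact sequences completes the proof.

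The main obstacle is exactly the non-admissibility flagged above: the direct computation via relative Lie algebra homology breaks down because $V_1 \widehat{\otimes} V_2$ is not admissible under the diagonal $\GL_2(\BR)$, forcing the argument through the principal series and the Hochschild--Serre spectral sequence. The attendant bookkeeping -- checking at each stage that the homology spaces are finite-dimensional, hence Hausdorff by Proposition~\ref{Prop: finite hausdorff}, so that the long exact sequences and the spectral sequence genuinely apply in $\smod$ -- is routine but must be carried out with care.
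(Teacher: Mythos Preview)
Your proof is correct and follows essentially the same route as the paper: replace a discrete series factor by a principal series via the standard short exact sequence, then handle the resulting pieces by the long exact sequence. The paper invokes the exact sequences for both $V_1$ and $V_2$ and dismisses the rest as ``standard arguments of the long exact sequence'', whereas you use only the sequence for $V_1$ and spell out explicitly the tensor identity, Shapiro's lemma, the Hochschild--Serre spectral sequence for $B_2(\BR) = T_2(\BR) \ltimes N_2(\BR)$, and Lemma~\ref{Lem: finiteness jacquet module} for the Jacquet module of $V_2$. This is a minor economy and a welcome unpacking of what the paper leaves implicit; the strategies are the same.
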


\begin{proof}
    Consider the standard exact sequence for $V_j\ (j = 1,2)$ as in Lemma \ref{Lem: finiteness whittaker},
    $$0\rightarrow F_{j}\rightarrow I_{j}\rightarrow V_{j}\rightarrow 0.$$
    Then the lemma follows from the standard arguments of the long exact sequence. 
\end{proof}

Denote $I^{um}_{\omega} := \{i \in \{1,\dots,r\}| \alpha_{a_i} \in \Delta^{um}_{P,\omega}\}$. We shall split $\pi$ into two parts. Let $\pi_{um,\omega} := \widehat{\otimes}_{i\in I^{um}_{\omega}} \pi_i $ and let $\pi_{ma,\omega}$ be the remain part such that $\pi = \pi_{um,\omega} \widehat{\otimes} \pi_{ma,\omega}$. When the context is clear, we will omit the subscript $\omega$ for short.

\begin{lemt}\label{Lem: orbitwise homological finiteness}
    For an $\psi$-unvanishing $S$-orbit $\CO_{\omega}$, we have
    $$
    \dim \oH^{\CS}_{i}(P^{\omega}\cap S , (\delta_P^{\frac{1}{2}}\pi)^{\omega} \otimes \Sym^k(\CN^{*}_{\omega,\BC}) \otimes \delta^{-1}_{P^{\omega}\cap S} \otimes \xi_{\eta,\psi}^{-1}) < \infty, \quad \forall\ i \in \BZ,\ k \in \BZ_{\geq 0}.
    $$
\end{lemt}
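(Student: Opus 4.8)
The plan is to reduce the Schwartz homology of $P^\omega\cap S$ to a tensor product of homology groups over smaller groups, and then establish finite-dimensionality of each factor separately using the structural results of Section~\ref{Sec: Orbit decomposition}. First I would invoke Lemma~\ref{Lem: gp decomp}, which gives the semidirect product decomposition $P^\omega\cap S = R_\omega\ltimes V_\omega$. The group $V_\omega$ is unipotent, and, by inspection of the root data, the character $\xi_{\eta,\psi}$ restricted to $C_\omega\subset V_\omega$ is nontrivial on the ``Whittaker directions'' surviving in $\Psi^{wh}_{P,\omega}$ while it may be trivial on $B_\omega^\Delta$. So I would run a Hochschild--Serre spectral sequence (Corollary~\ref{Cor: spec seq nilp}) for the normal subgroup $V_\omega$, reducing the problem to finite-dimensionality of $\oH^\CS_p\bigl(R_\omega, \oH^\CS_q(V_\omega, \cdots)\bigr)$. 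Here $\Sym^k(\CN^*_{\omega,\BC})$ carries an algebraic action of $V_\omega$, so after passing to a finite filtration with trivial graded pieces (as in the proof of Lemma~\ref{Lem: unip orbit vanishing}), the computation of $\oH^\CS_*(V_\omega,\cdots)$ amounts to a relative Lie algebra homology computation over a nilpotent Lie algebra twisted by a character.

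The key point is that $R_\omega$ is essentially a product of copies of $\GL_1(\BK)$, $\GL_2(\BR)$ (coming from the $n_i=2$ blocks), and $\GL_1(\BK)$-type tori indexed by $\Lambda^{ma}_{P,\omega}$ and $\Lambda^{um}_{P,\omega}$, together with a unipotent part $L^\omega\cap U^\dagger$. After the spectral sequence reduction over $V_\omega$, I would apply the K\"unneth formula (Theorem~\ref{Thm: kunneth Schwartz homology}) to split $\oH^\CS_*(R_\omega,\cdots)$ into a tensor product over these factors; this requires checking the NF-space hypotheses, which hold because standard modules and their restrictions are NF-spaces and their Schwartz homology groups remain NF by the propositions cited in Section~\ref{Sec: schwartz homology}. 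For the $\GL_1$ and torus factors, the homology of a character twisted by another character is at most one-dimensional (it is $\BC$ or $0$ depending on whether the twisting character is trivial), hence finite. For the $\GL_2(\BR)$ factors paired up via $s_\omega$, the relevant homology is controlled by Lemma~\ref{Lem: finiteness pair} (for matched pairs $D_{k_i}\widehat\otimes D_{k_j}\otimes E$), Lemma~\ref{Lem: finiteness whittaker} (for a single $D_k$ carrying a nontrivial $\psi$-twist, arising from $\Psi^{wh}_{P,\omega}$), and Lemma~\ref{Lem: finiteness jacquet module} (for a $D_k$ with only a torus acting, i.e. a Jacquet-module computation); in every case the output is finite-dimensional.

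The main obstacle I anticipate is bookkeeping: one must correctly match each simple root in $\omega^{-1}(\Delta_P)$ --- sorted into $\Psi^{ma}_{P,\omega}$, $\Psi^{wh}_{P,\omega}$, $\Psi^{um}_{P,\omega}$ --- with the corresponding tensor factor of $\pi^\omega = \pi_{um,\omega}\widehat\otimes\pi_{ma,\omega}$ and the corresponding block of $R_\omega$, and verify that on each factor the representation appearing is of one of the three types handled by Lemmas~\ref{Lem: finiteness jacquet module}--\ref{Lem: finiteness pair} (possibly further twisted by a finite-dimensional representation coming from $\Sym^k(\CN^*_{\omega,\BC})$ and by the modular characters computed in Lemma~\ref{Lem: mod char}). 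Lemma~\ref{Lem: simple roots alive} is what guarantees that no ``$\psi$-direction'' is lost and that the pairing structure is as expected. Once the dictionary between roots, Levi blocks, and tensor factors is set up cleanly, the finiteness is a routine consequence of the four finiteness lemmas and the fact that finite-dimensionality is preserved by tensor products, extensions (long exact sequences), and the spectral sequences above, together with the observation (via Lemma~\ref{Lem: center vanish lem}) that factors with a nontrivial central-character obstruction contribute zero.
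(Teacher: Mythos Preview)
Your proposal is correct and follows essentially the same route as the paper's proof: reduce via the decomposition $P^\omega\cap S = R_\omega\ltimes V_\omega$ and the Hochschild--Serre spectral sequence, use Lemmas~\ref{Lem: simple roots alive} and~\ref{Lem: finiteness jacquet module} to see that $\oH^\CS_j(V_\omega,\pi^\omega_{um})$ is finite-dimensional, and then apply the K\"unneth formula together with Lemmas~\ref{Lem: finiteness whittaker} and~\ref{Lem: finiteness pair} over the product structure of $R_\omega$. Two minor remarks: the case $\BK=\BC$ is immediate (the representation in question is already finite-dimensional), and Lemma~\ref{Lem: center vanish lem} is not needed for the finiteness statement here---it enters only later in Lemma~\ref{Lem: der um vanish}.
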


\begin{proof}
    For $\BK = \BC$, the lemma is obvious, since the representation occurring in the homology is already finite-dimensional. We consider the case for $\BK = \BR$ in the following.\par
    Recall that in Lemma \ref{Lem: gp decomp} we have decomposition $P^{\omega}\cap S = R_{\omega} \ltimes V_{\omega}$. Following the spectral sequence, it suffices to prove that 
    $$
    \dim \oH^{\CS}_{i}(R_{\omega} , \oH^{\CS}_{j}(V_{\omega} ,(\delta_P^{\frac{1}{2}}\pi)^{\omega} \otimes \Sym^k(\CN^{*}_{\omega,\BC}) \otimes \delta^{-1}_{P^{\omega}\cap S} \otimes \xi_{\eta,\psi}^{-1})) < \infty, \quad \forall \ i,j \in \BZ.
    $$
    Since $V_{\omega}$ acts trivially on $\pi^{\omega}_{ma}$, we have
    $$
    \begin{aligned}
    &\oH^{\CS}_{i}(R_{\omega} , \oH^{\CS}_{j}(V_{\omega} ,(\delta_P^{\frac{1}{2}}\pi)^{\omega} \otimes \Sym^k(\CN^{*}_{\omega,\BC}) \otimes \delta^{-1}_{P^{\omega}\cap S} \otimes \xi_{\eta,\psi}^{-1}))\\
    = &\oH^{\CS}_{i}(R_{\omega}, \oH^{\CS}_{j}(V_{\omega}, \pi_{um}^{\omega}) \widehat{\otimes} \pi_{ma}^{\omega} \otimes \Sym^k(\CN^{*}_{\omega,\BC})
    \otimes (\delta_P^{\frac{1}{2}})^{\omega} \otimes \delta^{-1}_{P^{\omega}\cap S} \otimes \xi_{\eta,\psi}^{-1})
    \end{aligned}
    $$
    Note that $\v_{\omega}$ admits a filtration such that each grading piece isomorphic to $\n_2(\BR)$. Thus following from Lemma \ref{Lem: simple roots alive}, \ref{Lem: finiteness jacquet module}, we know that
    $\dim \oH^{\CS}_{j}(V_{\omega}, \pi_{um}^{\omega}) < \infty$. Note that $R_{\omega}$ is product of copies of $\GL_1(\BR)$, $\GL_2(\BR)$ (whose roots corresponding to $\Phi^{ma}_{P,\omega}$) and $\GL_1(\BR) \ltimes N_2(\BR) \subset B_2(\BR)$ (whose roots corresponding to $\Phi^{wh}_{P,\omega}$). Thus the lemma follows from the \kun formula and Lemma \ref{Lem: finiteness whittaker}, \ref{Lem: finiteness pair}.
\end{proof}

\subsection{Homological vanishing for unmatching orbits and normal derivative}\label{Subsec: Homological vanishing}
In this section, we prove Lemma \ref{Lem: der um vanish}, which is crucial for proving \ref{Main thmA}.
We further introduce some notations in order to provide proof. We fix a $\psi$-unvanishing orbit $\CO{\omega}$. \par

\begin{dfnt}
    For $1\leq i < j \leq n$, we say $i$ is \textbf{weightly related} to $j$, if $(i,j) \in \Lambda^{um}_{P,\omega}$. We say $i$ is \textbf{weightly associated} to $j$, if there is a chain $i = i_0 < i_1 < \cdots <i_l < i_{l+1} = j$, such that $i_k$ is weightly related to $i_{k+1}$, $\forall\ k \in \{0,\dots,l\}$, denoted by $i \rightsquigarrow j$.
\end{dfnt}

\begin{dfnt}
    For $1\leq i < j \leq n$, we say $i$ is \textbf{derivatively related} to $j$, if $E_{j,i}$ or $E_{n+j,i} \in \CN_{\omega}$. We say $i$ is \textbf{derivatively associated} to $j$, if there is a chain $i = i_0 < i_1 < \cdots <i_l < i_{l+1} = j$, such that $i_k$ is derivatively related to $i_{k+1}$, $\forall\ k \in \{0,\dots,l\}$, denoted by $i \rightarrowtail j$.
\end{dfnt}

For a character $\chi_{k,\lambda}$ of $\BK^{\times}$, we say it is \textbf{positive} (resp. \textbf{negative}, \textbf{non-negative}, \textbf{non-positive}), if $\Re \lambda > 0$ (resp. $\Re \lambda < 0$, $\Re \lambda \geq 0$, $\Re \lambda \leq 0$).

\begin{lemt}\label{Lem: non-neg exp}
    For $1 \leq i < j \leq n$.\par
    (i) If $i \rightsquigarrow j$, then 
    $
    \chi^{ex}_{\omega(i)} \cdot \chi^{ex}_{\omega(n+i)} \cdot (\chi^{ex}_{\omega(j)})^{-1} \cdot (\chi^{ex}_{\omega(n+j)})^{-1}
    $
    is non-negative.\par
    (ii) If $i \rightarrowtail j$, then 
    $
    \chi^{ex}_{\omega(i)} \cdot \chi^{ex}_{\omega(n+i)} \cdot (\chi^{ex}_{\omega(j)})^{-1} \cdot (\chi^{ex}_{\omega(n+j)})^{-1}
    $
    is non-negative.
\end{lemt}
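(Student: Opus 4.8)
The plan is to prove both (i) and (ii) simultaneously, since the two chain relations $\rightsquigarrow$ and $\rightarrowtail$ are built from the same kind of elementary step; it therefore suffices to establish the inequality for a single link $i \to j$ (i.e. when $i$ is weightly related, resp. derivatively related, to $j$) and then multiply along the chain, using that a product of non-negative characters is non-negative. So I would reduce immediately to the case of a single link.

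For the single-link case, the key is to identify the quantity $\chi^{ex}_{\omega(i)} \cdot \chi^{ex}_{\omega(n+i)} \cdot (\chi^{ex}_{\omega(j)})^{-1} \cdot (\chi^{ex}_{\omega(n+j)})^{-1}$ in terms of the exponents of the blocks of the standard module, and then to use the Langlands ordering hypothesis $\exp(\pi_1) \geq \cdots \geq \exp(\pi_r)$. Concretely: each index $k \in \{1,\dots,2n\}$ sits in some block $a_{i(k)}$ (or $b_{i(k)}$) of the partition, and $\chi^{ex}_k$ should be (up to the appropriate normalization) $|\cdot|$ raised to $\exp(\pi_{i(k)})$, or a shifted version thereof coming from the relative discrete series case as in Lemma \ref{Lem: finiteness jacquet module}. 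For the derivative link, Lemma \ref{Lem: Der associated positive exp} gives exactly $\omega(i) < \omega(j)$ and $\omega(n+i) < \omega(n+j)$; combined with the fact that $\gamma \in \prescript{P}{}{W_{2n}}^Q$ orders blocks compatibly (so that $\omega(i) < \omega(j)$ forces $\exp$ of the block containing $\omega(i)$ to be $\geq$ that of the block containing $\omega(j)$), each of the two ratios $\chi^{ex}_{\omega(i)}(\chi^{ex}_{\omega(j)})^{-1}$ and $\chi^{ex}_{\omega(n+i)}(\chi^{ex}_{\omega(n+j)})^{-1}$ is individually non-negative, and the product is too. For the weight link, I would argue that $(i,j) \in \Lambda^{um}_{P,\omega}$ means one of $\alpha_{i,j}$, $\alpha_{i,n+j}$, $\alpha_{n+i,n+j}$ lies in $\Psi^{um}_{P,\omega} = \omega^{-1}(\Delta^{um}_{P,\omega})$, hence its $\omega$-image is a simple root $\alpha_{a}$ of $P$ joining two \emph{consecutive} indices; this pins down the pair $(\omega(i),\omega(j))$ (or the corresponding shifted pair) to lie inside or adjacent to a single Levi block, from which the exponents again line up by the monotonicity hypothesis — but here I also need to handle the relative discrete series blocks, where the two "internal" exponents of a $\GL_2(\BR)$-block differ by $(k+1)/2$ and $-(k+1)/2$ as in Lemma \ref{Lem: finiteness jacquet module}, so that the real parts still match up and contribute zero to the exponent.

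I expect the main obstacle to be bookkeeping in the weight-link case when $n_i = 2$ for some blocks: one must check that the asymmetric shifts $\pm(k_i+1)/2$ coming from the Jacquet module of a discrete series cancel correctly across a link, rather than producing a genuinely negative contribution. The cleanest way to handle this is probably to observe that these shifts are purely real and, for an intra-block or block-adjacent pair dictated by a simple root of $P$, always appear paired as $+\tfrac{k+1}{2}$ at the "upper" index and $-\tfrac{k+1}{2}$ at the matching index within the same block, so their net contribution to $\Re\lambda$ of the product is zero; the only surviving contribution is the difference of block-exponents $\exp(\pi_\bullet) - \exp(\pi_\bullet)$, which is $\geq 0$ by the Langlands ordering. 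Once this cancellation is spelled out, both (i) and (ii) follow by chaining. I would also double-check the edge case where $i$ and $j$ lie in the \emph{same} block (so the block-exponent difference is exactly $0$), confirming the conclusion is "non-negative" rather than "positive", consistent with the statement.
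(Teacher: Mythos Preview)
Your reduction to a single link by telescoping is correct and matches the paper, and your treatment of (ii) via Lemma~\ref{Lem: Der associated positive exp} is exactly the paper's argument.

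However, your treatment of (i) has two problems. First, a misconception: the characters $\chi^{ex}_k$ are defined (in the proof of Lemma~\ref{Lem: der um vanish}) by $\chi^{ex}_{a_i} = \chi^{ex}_{b_i} = |\cdot|^{\lambda_i}$, i.e.\ both indices inside a $\GL_2$-block receive the \emph{same} exponent $\lambda_i$. The shifts $\pm\tfrac{k+1}{2}$ from the Jacquet module live in $\chi^{wt}$, not in $\chi^{ex}$. So your worry about internal cancellation of discrete-series shifts is a red herring; for $\chi^{ex}$ the only thing that matters is the block index, and hence the Langlands ordering.

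Second, and more seriously, for a single weight-link $(i,j)\in\Lambda^{um}_{P,\omega}$ you only pin down \emph{one} of the three roots $\alpha_{i,j}$, $\alpha_{i,n+j}$, $\alpha_{n+i,n+j}$ to map into $\Delta_P$. That controls one pair of images, but the lemma needs \emph{both} $\omega(i)<\omega(j)$ and $\omega(n+i)<\omega(n+j)$, and you do not explain how to get the other inequality. The paper handles this case-by-case using the structure of $\omega\in\Omega$: since $\omega=\gamma\sigma$ with $\gamma\in\prescript{P}{}{W_{2n}}^{Q}$ and $\sigma$ acting only on $\{1,\dots,n\}$, one has $\omega(n{+}k)=\gamma(n{+}k)$ increasing in $k$, so $\omega(n{+}i)<\omega(n{+}j)$ is automatic. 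When $\alpha_{n+i,n+j}\in\omega^{-1}(\Delta_P)$ one instead uses that $\sigma\in\prescript{P_{1,\gamma}}{}{W_n}^{P_{2,\gamma}}$ sends $\Delta_{P_{2,\gamma}}$ into positive roots to force $\sigma(i)<\sigma(j)$ and hence $\omega(i)<\omega(j)$. Finally, when $\alpha_{i,n+j}\in\omega^{-1}(\Delta_P)$ (so $\omega(n+j)=\omega(i)+1$ lie in the same size-$2$ block), the $\psi$-unvanishing condition is needed: it forces $\omega(j)>\omega(n+j)$ and $\omega(i)>\omega(n+i)$, yielding $\omega(n+i)<\omega(i)<\omega(n+j)<\omega(j)$. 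Once both inequalities are in hand, each ratio $\chi^{ex}_{\omega(i)}(\chi^{ex}_{\omega(j)})^{-1}$ and $\chi^{ex}_{\omega(n+i)}(\chi^{ex}_{\omega(n+j)})^{-1}$ is non-negative by the Langlands ordering, and the product is too.
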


\begin{proof}
    It suffices to prove for the case when $i, j$ is related. Thus the second assertion follows from Lemma \ref{Lem: Der associated positive exp}.\par
    As to the first assertion, for $1\leq i < j \leq n$, if $\alpha_{i,j}, \alpha_{n+i,n+j} \in \omega^{-1}(\Delta_P)$, since $\omega \in \Omega$, we obtain $\omega(i) < \omega(j)$, $\omega(n+i) < \omega(n+j)$. If $\alpha_{i,n+j} \in \omega^{-1}(\Delta_P)$, then $\omega(n+j) = \omega(i) +1$. Thus $\omega(j) > \omega(n+j)$, $\omega(i) > \omega(n+i)$ follows from the $\psi$-unvanishing condition, thus $\omega(n+i) <\omega(i) < \omega(n+j) < \omega(j)$.
\end{proof}

Denote $J^{ma}_{\omega} := \{i \in \{1,\dots,n-1\}| \alpha_{i} \in \Psi^{ma}_{P,\omega}\}$. The following lemma is crucial for the proof of Theorem \ref{Main thmA}. 

\begin{lemt}\label{Lem: der um vanish}
If $\CO{\omega}$ is an unmatching orbit or $k > 0$, then
    $$
    \oH^{\CS}_{0}(P^{\omega}\cap S , (\delta_P^{\frac{1}{2}}\pi)^{\omega} \otimes \Sym^k(\CN^{*}_{\omega,\BC}) \otimes \delta^{-1}_{P^{\omega}\cap S} \otimes \xi_{\eta,\psi}^{-1}) = 0.
    $$
\end{lemt}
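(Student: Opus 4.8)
The plan is to compute the zeroth Schwartz homology by first reducing along the nilpotent part $V_\omega$ of the decomposition $P^\omega \cap S = R_\omega \ltimes V_\omega$ from Lemma \ref{Lem: gp decomp}, then along $R_\omega$, using the Hochschild--Serre spectral sequence for nilpotent normal subgroups. Since $V_\omega$ acts trivially on $\pi_{ma}^\omega$ (by definition of the splitting $\pi = \pi_{um,\omega}\,\widehat\otimes\,\pi_{ma,\omega}$ via $I^{um}_\omega$) and algebraically on $\Sym^k(\CN^*_{\omega,\BC})$, I would first take a finite $V_\omega$-stable filtration of $\Sym^k(\CN^*_{\omega,\BC})$ and reduce to computing $\oH^\CS_*(V_\omega, \pi_{um}^\omega \otimes (\text{twist}))$; here $V_\omega$ admits a filtration with graded pieces isomorphic to $\n_2(\BR)$, and Lemmas \ref{Lem: finiteness whittaker} and \ref{Lem: finiteness jacquet module} (together with Lemma \ref{Lem: simple roots alive} to control the Whittaker-type pieces) describe these homologies as finite-dimensional modules over $R_\omega$. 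Then I would run the spectral sequence for $R_\omega$, which is a product of copies of $\GL_1(\BR)$, $\GL_2(\BR)$ (roots in $\Delta^{ma}_{P,\omega}$), and $\GL_1(\BR)\ltimes N_2(\BR)$ (roots in $\Delta^{wh}_{P,\omega}$), and apply the \kun formula to factor the homology across these blocks.

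The key point driving the vanishing is an exponent/positivity obstruction, and this is where Lemma \ref{Lem: non-neg exp} enters. In the zeroth homology over $R_\omega$, the semisimple part (the $\GL_1$ and $\GL_2$ Levi factors) contributes a character constraint: the homology can be nonzero only if a certain product of exponents $\chi^{ex}_{\omega(i)}\chi^{ex}_{\omega(n+i)}(\chi^{ex}_{\omega(j)})^{-1}(\chi^{ex}_{\omega(n+j)})^{-1}$ over the relevant index pairs is compatible with the character $\xi_{\eta,\psi}^{-1}$ twisted by $\delta_P^{1/2}$ and $\delta^{-1}_{P^\omega\cap S}$. Using Lemma \ref{Lem: mod char} to rewrite $\delta^{-1}_{P^\omega\cap S}$ in terms of $(\delta_P^\omega\cdot\chi_\omega)^{-1/2}$, the net twist on the relevant torus directions involves $\chi_\omega^{-1/2}$, i.e.\ the unmatching pairs $\Lambda^{um}_{P,\omega}$. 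When $\CO_\omega$ is unmatching, $\Psi^{um}_{P,\omega}\neq\varnothing$, so there is at least one unmatching pair $(i,j)$; combining the ordering $\exp(\pi_1)\geq\cdots\geq\exp(\pi_r)$ with Lemma \ref{Lem: non-neg exp}(i) applied along the weight-association chain $i\rightsquigarrow j$, I expect to force the relevant exponent to be strictly positive (not merely non-negative) for some coordinate, contradicting the vanishing of the corresponding $\GL_1$-homology $\oH^\CS_0(\GL_1(\BR), |\cdot|^s\otimes(\cdots))$, which is zero unless $s=0$. Similarly, when $k>0$, the factor $\Sym^k(\CN^*_{\omega,\BC})$ with $k>0$ forces at least one generator $E_{j,i}$ or $E_{n+j,i}\in\CN_\omega$ to appear, and Lemma \ref{Lem: non-neg exp}(ii) via $i\rightarrowtail j$ together with Lemma \ref{Lem: Der associated positive exp} produces a strictly positive exponent shift (the conormal generators strictly raise the $\t$-weight), again killing the zeroth homology.

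Concretely, I would: (1) set up the two-step spectral sequence and reduce to $\oH^\CS_0(R_\omega, \oH^\CS_0(V_\omega,\pi_{um}^\omega)\widehat\otimes \pi_{ma}^\omega\otimes\Sym^k(\CN^*_{\omega,\BC})\otimes(\delta_P^{1/2})^\omega\otimes\delta^{-1}_{P^\omega\cap S}\otimes\xi_{\eta,\psi}^{-1})$, noting finiteness from Lemma \ref{Lem: orbitwise homological finiteness} so the spectral sequence and the limits behave; (2) use \kun to write this as a tensor product over the $\GL_1$, $\GL_2$, and $\GL_1\ltimes N_2$ blocks of $R_\omega$; (3) on each $\GL_1$-block, use Lemma \ref{Lem: center vanish lem} — the central element acts by $|x|^s$ with $s\neq 0$ by the positivity argument above — to conclude the homology vanishes; on each $\GL_2$-block and each $\GL_1\ltimes N_2$-block, restrict to the center/split torus and again invoke Lemma \ref{Lem: center vanish lem} after checking the exponent is nonzero. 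The main obstacle is step (3): pinning down exactly which torus character appears after all the twists by $\delta_P^{1/2}$, $\delta^{-1}_{P^\omega\cap S}$ (via Lemma \ref{Lem: mod char}), the Jacquet-module exponents $\pm\frac{k_i+1}{2}$ from Lemma \ref{Lem: finiteness jacquet module}, and the Symmetric-power shifts, and then showing the resulting exponent is \emph{strictly} nonzero in at least one coordinate whenever $\Psi^{um}_{P,\omega}\neq\varnothing$ or $k>0$. This is a bookkeeping argument built on Lemmas \ref{Lem: non-neg exp}, \ref{Lem: Der associated positive exp}, and \ref{Lem: mod char}, but getting the chain of weight-associations to terminate at a coordinate where the ambient ordering $\exp(\pi_1)\geq\cdots\geq\exp(\pi_r)$ gives a strict inequality is the delicate part.
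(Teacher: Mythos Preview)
Your overall strategy matches the paper's: pass to $V_\omega$-coinvariants, then detect vanishing by showing the center of $R_\omega$ acts by a nontrivial scalar (Lemma~\ref{Lem: center vanish lem}). But the mechanism you propose for obtaining \emph{strict} nontriviality is wrong, and this is a genuine gap.

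You expect the strict inequality to come from the standard-module ordering $\exp(\pi_1)\geq\cdots\geq\exp(\pi_r)$. It cannot: that ordering is only weak, and in the tempered case all exponents are equal, so Lemma~\ref{Lem: non-neg exp} gives you exactly zero on the exponent part. The paper resolves this by separating the central character on $C(R_\omega)^+$ into three pieces: an \emph{exponent part} $\chi^{ex}$ (the $\lambda_i$, controlled non-negatively by Lemma~\ref{Lem: non-neg exp}), a \emph{weight part} $\chi^{wt}$ (the shifts $|\cdot|^{\pm k_i/2}$ coming from the Jacquet module in Lemma~\ref{Lem: finiteness jacquet module}, which are strictly positive since $k_i\geq 1$ on the unmatching indices), and for $k>0$ a \emph{derivative part} (the integer weights $r_i$ of $\Sym^k(\CN^*_{\omega,\BC})$, with $\sum r_i=0$). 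Strictness always comes from the weight or derivative part, never from the exponent ordering.

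The second gap is that one cannot simply find a single coordinate where the character is nontrivial: the derivative weights $r_i$ sum to zero, so positive $r_i$'s can be cancelled by negative weight parts at the same index, and conversely. The paper handles this by an infinite-descent argument that alternates between the derivative-association chains $\rightarrowtail$ and the weight-association chains $\rightsquigarrow$: starting from the minimal $a$ with $t_a>0$, one follows $\rightarrowtail$ to some $b$ with $t_b<0$; if the weight part there were non-positive one gets a contradiction, otherwise one follows $\rightsquigarrow$ further to $c$, and so on, producing an increasing sequence of indices that must terminate. You have identified the right ingredients (Lemmas~\ref{Lem: mod char}, \ref{Lem: non-neg exp}, \ref{Lem: Der associated positive exp}) but not the interplay between the two association relations that makes the argument close.
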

\begin{proof}
    We first prove the case for $\BK = \BR$. The case for $\BK = \BC$ is similar but simpler, we shall give a comment at last. Now we let $\BK = \BR$.\par
    We introduce the following notation in order to prove the theorem. Recall we have defined a sequence of number $\{a_i,b_i\}$ under Definition \ref{Def: matching orbit} such that $\{a_i\} \amalg \{b_i\} = \{1,\cdots,2n\}$. Define $\chi^{ex}_{a_i} := |\cdot|^{\lambda_i}$, $\chi^{ex}_{b_i} := |\cdot|^{\lambda_{i}}$. If $i \in I^{um}_{\omega}$, then $n_i = 2$, we define $\chi^{wt}_{a_i} := |\cdot|^{\frac{k_i}{2}}$, $\chi^{wt}_{b_i} := |\cdot|^{-\frac{k_i}{2}}$. For other $k \in \{1,\dots,2n\}$, define $\chi^{wt}_{k} := \BC$. Let $\chi_i := \chi^{ex}_{i} \cdot \chi^{wt}_{a_i}$, $i \in \{1,\dots,2n\}$. We shall call $\chi^{ex}_{i}$ (resp. $\chi^{ex}_{i}$) the exponent (resp. weight) part of $\chi_i$.
    \par
    Note that $P^{\omega} \cap S = R_{\omega} \ltimes V_{\omega}$, and $V_{\omega}$ acts algebraically on $\Sym^k(\CN^{*}_{\omega,\BC})$, we may take a finite filtration on $\Sym^k(\CN^{*}_{\omega,\BC})$ such that each grading piece $W$ is an irreducible $R_{\omega}$ module with trivial $V_{\omega}$ action. Thus we only need to show
    \begin{equation}\label{Eq: der un vanishing}
    \oH^{\CS}_{0}(P^{\omega}\cap S , (\delta_P^{\frac{1}{2}}\pi)^{\omega} \otimes W \otimes \delta^{-1}_{P^{\omega}\cap S} \otimes \xi_{\eta,\psi}^{-1}) = 0.
    \end{equation}
    \par
    We first take coinvariance with respect to $V_{\omega}$, then we get 
    $$
    \begin{array}{cl}
    &\oH^{\CS}_{0}(P^{\omega}\cap S , (\delta_P^{\frac{1}{2}}\pi)^{\omega} \otimes W \otimes \delta^{-1}_{P^{\omega}\cap S} \otimes \xi_{\eta,\psi}^{-1}) \\
    = &\oH^{\CS}_{0}(R_{\omega}, \oH^{\CS}_{0}(V_{\omega}, \pi_{um}^{\omega}) \widehat{\otimes} \pi_{ma}^{\omega} \otimes W 
    \otimes (\delta_P^{\frac{1}{2}})^{\omega} \otimes \delta^{-1}_{P^{\omega}\cap S} \otimes \xi_{\eta,\psi}^{-1}).
    \end{array}
    $$
    As in the proof of Lemma \ref{Lem: orbitwise homological finiteness}, $\oH^{\CS}_{0}(V_{\omega}, \pi_{um}^{\omega})$ is tensor product of the Jacquet module of each $\pi_i$ occuring in $\pi_{um}$. Write $E := \oH^{\CS}_{0}(V_{\omega}, \pi_{um}^{\omega})$ for short. 
    Denote by $C(R_{\omega})$ the center of $R_{\omega}$, and let $C(R_{\omega})^{+} \subset C(R_{\omega})$ be the subgroup of the matrices with each entry positive real number.
    Using Lemma \ref{Lem: finiteness jacquet module} and Lemma \ref{Lem: mod char},
    we know that $C(R_{\omega})^{+}$ acts on 
    $$
    E \otimes \pi_{ma}^{\omega} \otimes W 
    \otimes (\delta_P^{\frac{1}{2}})^{\omega} \otimes \delta^{-1}_{P^{\omega}\cap S} \otimes \xi_{\eta,\psi}^{-1}
    $$
    as the character 
    $$
    x = 
    \begin{bmatrix}
    \begin{smallmatrix}
    x_{1} & & & & &\\ 
     & \ddots & & & &\\
    && x_{n}& & &\\
    & & &x_{1} & & \\ 
    & & & & \ddots & \\
    & & & & & x_{n}
    \end{smallmatrix}
    \end{bmatrix} \mapsto
    \alpha(x) \cdot \prod_{i=1}^n \chi_{\omega(i)}(x_i) \cdot \chi_{\omega(n+i)}(x_i)  \cdot \eta^{-1}(x_i),
    $$
    where $\alpha$ is the central character of $W$. 
    In order to prove \eqref{Eq: der un vanishing},
    According to Lemma \ref{Lem: center vanish lem}, in order to prove \eqref{Eq: der un vanishing}, it suffices to show that this character is nontrivial. We use the method of infinite descent.
    \par
    We first deal with the case when $k > 0$.
    Let $\alpha$ be the central character of $W$ with
    \[
    \alpha \left( 
    \begin{bmatrix}
    \begin{smallmatrix}
    x_{1} & & & & &\\ 
     & \ddots & & & &\\
    && x_{n}& & &\\
    & & &x_{1} & & \\ 
    & & & & \ddots & \\
    & & & & & x_{n}
    \end{smallmatrix}
    \end{bmatrix} \right) 
    = \prod_{i=1}^nx_i^{r_i}.
    \]
    Then $\alpha$ is one of the characters of the adjoint action of $C(R_{\omega})^{+}$ on $\Sym^k(\CN^{*}_{\omega,\BC})$.
    Thus we have $\sum r_i = 0,\ r_i \in \BZ$. For $i \in J^{ma}_{\omega}$, let $t_{i} = t_{i+1} := \frac{r_i+r_{i+1}}{2}$. For other $i \in \{1,\dots,n\}$, let $t_{i} := r_{i}$.
    \par
    If the action of $C(R_{\omega})^{+}$ is trivial, then we have 
    \begin{equation}\label{Eq: der vanishing}
    \chi_{\omega(i)}(x) \cdot \chi_{\omega(n+i)}(x) \cdot \eta^{-1}(x) \cdot x^{t_i} = 1, \quad \forall\ x \in \BR^{\times},\ i \in \{1,\dots,n\} . 
    \end{equation}
    Denote $a := \min \{ i \in \{1, \dots, n\} | t_i \neq 0,\ r_i \neq 0\}$. Then $t_a > 0$ since the form of $\CN_{\omega}$. Denote $\Omega_a := \{ i = 1, \dots, n | a \rightarrowtail i \}$, and let $b := \max \{ i \in \Omega_a | t_i \neq 0\}$, then we can easily deduced that $t_b < 0$, also $a < b$.\par
    If either $\chi^{wt}_{\omega(a)} $ or $ \chi^{wt}_{\omega(n+a)}$ is negative, then by definition, there exist $a' \backsim a$ such that both $\chi^{wt}_{\omega(a')} $ and $ \chi^{wt}_{\omega(n+a')}$ are non-negative, with at least one positive, also we have $t_{a'} = 0$. If $\chi^{wt}_{\omega(a)} $ and $ \chi^{wt}_{\omega(n+a)}$ are both non-negative, then we let $a' = a$. We have two possible cases corresponding to these two: (i). $t_{a'} = 0$ and $\chi^{wt}_{\omega(a')} \cdot \chi^{wt}_{\omega(n+a')}$ is positive, (ii). $t_{a'} > 0$ and $\chi^{wt}_{\omega(a')} \cdot \chi^{wt}_{\omega(n+a')}$ is non-negative.
    \par
    Now, if $\chi^{wt}_{\omega(b)} $ and $ \chi^{wt}_{\omega(n+b)}$ are both non-positive, then $\chi_{\omega(a')} \cdot \chi_{\omega(n+a')}\cdot \chi^{-1}_{\omega(b)} \cdot \chi^{-1}_{\omega(n+b)} \cdot x^{t_{a'}-t_b}$ has non-negative exponent part (according to Lemma \ref{Lem: non-neg exp}), positive derivative part and non-negative weight part, contradict with the identity (\ref{Eq: der vanishing}). Thus at least one of $\chi^{wt}_{\omega(b)} $ and $ \chi^{wt}_{\omega(n+b)}$ is positive. Thus there exist $c > b,\ b \rightsquigarrow c$ such that both $\chi^{wt}_{\omega(c)} $ and $ \chi^{wt}_{\omega(n+c)}$ are non-positive, with at least one negative. If $t_c \leq 0$, then $\chi_{\omega(a')} \cdot \chi_{\omega(n+a')}\cdot \chi^{-1}_{\omega(c)} \cdot \chi^{-1}_{\omega(n+c)} \cdot x^{t_{a'}-t_c}$ has non-negative derivative part, positive weight part and non-negative exponent part, contradict with the identity (\ref{Eq: der vanishing}), thus $t_c > 0$. If $r_c \neq 0$, denote $c' = c$. If $r_c = 0$, then $(c-1)\in J^{ma}_{\omega}$ or $c\in J^{ma}_{\omega}$, thus there exist $c' \in \{c+1, c-1\}$ such that $r_{c'} > 0$.  
    In all the case we have $r_{c'} > 0$ and $\Omega_{c'} \neq \varnothing$. Take $d := \max \{ i \in \Omega_{c'} | t_i \neq 0\}$, then $t_d < 0$. If $\chi^{wt}_{\omega(d)} $ and $ \chi^{wt}_{\omega(n+d)}$ are both non-positive, then $\chi_{\omega(a')} \cdot \chi_{\omega(n+a')}\cdot \chi^{-1}_{\omega(d)} \cdot \chi^{-1}_{\omega(n+d)} \cdot x^{r_{a'}-r_d}$ has positive derivative part, non-negative weight part and non-negative exponent part, contradict with the identity (\ref{Eq: der vanishing}). Thus we find $d > c' \geq b$, such that at least one of $\chi^{wt}_{\omega(d)}$ and $ \chi^{wt}_{\omega(n+d)}$ is positive. Since $n$ is finite, using the method of infinite descent, we find the contradiction.\par
    Now we address the case where $k = 0$ and the orbits are unmatching.
    Similarly, we assume that the central character of $C(R_{\omega})^{+}$ is trivial, i.e. we have 
    \begin{equation}\label{Eq: um vanishing}
    \chi_{\omega(i)} \cdot \chi_{\omega(n+i)} \cdot \eta^{-1} = id,\ \forall\ i \in \{1,\dots,n\}.
    \end{equation}
    Take $j := \max \{i\in \{1,\dots,n\}|\text{ at least one of }\chi^{wt}_{\omega(j)} \text{ and }\chi^{wt}_{\omega(n+j)} \text{ is negative}\}$, which is well-defined since $\CO_{\omega}$ is an unmatching orbit. Thus both $\chi^{wt}_{\omega(j)}$ and $\chi^{wt}_{\omega(n+j)}$ are non-positive.    
    Let $a :=\min \{i\in \{1,\dots,n\}| i \rightsquigarrow j\}$, thus both $\chi^{wt}_{\omega(a)} $ and $\chi^{wt}_{\omega(n+a)}$ are non-negative. Thus $\chi_{\omega(a)} \cdot \chi_{\omega(n+a)}\cdot \chi^{-1}_{\omega(j)} \cdot \chi^{-1}_{\omega(n+j)}$ has positive weight part and non-negative exponent part, contradict with the identity (\ref{Eq: um vanishing}).
    \par
    As to the case where $\BK = \BC$, $P$ is just the Borel subgroup, and the standard module is a principal series representation. The main difference lies in the part concerning the complexified conormal bundle. In this case $\CN_{\omega,\BC} \cong \CN_{\omega} \oplus \CN_{\omega}$ as $\BC$-vector spaces, where $z \in \BC^{\times}$ will act by $z$ and $\bar{z}$ separately. However, since $|z| = |\bar{z}|$, the argument as above also works.
\end{proof}

\subsection{Proof of Theorem \ref{Main thmA}}\label{Subsec: thmA}

\begin{thm}\label{Thm: kill unmatching orbits}
The notation is as shown above. We have
$$
\dim \oH^{\CS}_0(S,\Gamma^{\varsigma}(X,\CE)\otimes \xi_{\eta,\psi}^{-1}) \leq \sum_{\omega \text{ matching orbits}} \dim \oH^{\CS}_0(S,\Gamma^{\varsigma}(\CO_{\omega},\CE)\otimes \xi_{\eta,\psi}^{-1}),
$$
where the right hand side is finite.
\end{thm}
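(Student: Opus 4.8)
The plan is to run, for the action of $S$ on $X = P\backslash G_{2n}$, the reduction sketched in Subsection~\ref{Subsec: outline}, feeding in the vanishing and finiteness statements of Lemmas~\ref{Lem: unip orbit vanishing}, \ref{Lem: orbitwise homological finiteness} and \ref{Lem: der um vanish}. First I would enumerate $\Omega = \{\omega_1,\dots,\omega_N\}$ in an order refining the closure partial order on $S$-orbits; since $\Omega$ is finite and the closure of an $S$-orbit is a union of $S$-orbits, the sets $U_i := \bigcup_{j\geq i}\CO_{\omega_j}$ are then open $S$-Nash submanifolds, with $X = U_1\supsetneq U_2\supsetneq\cdots\supsetneq U_N\supsetneq U_{N+1} = \varnothing$ and $O_i := U_i\setminus U_{i+1} = \CO_{\omega_i}$. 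Twisting the short exact sequences $0\to\Gamma^\varsigma(U_{i+1},\CE)\to\Gamma^\varsigma(U_i,\CE)\to\Gamma^\varsigma_{O_i}(U_i,\CE)\to 0$ by $\xi_{\eta,\psi}^{-1}$ and invoking the long exact sequence of Schwartz homology together with the right exactness of $\oH^\CS_0$, a descending induction on $i$ yields
\[
\dim\oH^\CS_0\bigl(S,\Gamma^\varsigma(X,\CE)\otimes\xi_{\eta,\psi}^{-1}\bigr)\ \leq\ \sum_{i=1}^{N}\dim\oH^\CS_0\bigl(S,\Gamma^\varsigma_{O_i}(U_i,\CE)\otimes\xi_{\eta,\psi}^{-1}\bigr).
\]

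Next I would analyze each summand. Fix $i$ and write $\CO := \CO_{\omega_i}$, $\omega := \omega_i$, and $V^{(k)} := \Gamma^\CS(\CO, \Sym^k(\CN^\ast_{\CO}(U_i))\otimes\CE)$. Borel's Lemma equips $\Gamma^\varsigma_{O_i}(U_i,\CE)$ with the decreasing filtration $\Gamma^\CS_{i,k}$ satisfying $\Gamma^\CS_{i,k}/\Gamma^\CS_{i,k+1}\cong V^{(k)}$ and $\Gamma^\varsigma_{O_i}(U_i,\CE)\cong\varprojlim_k \Gamma^\CS_{i,0}/\Gamma^\CS_{i,k}$, and Shapiro's Lemma (Proposition~\ref{Prop: Shapiro Lem}) identifies $\oH^\CS_\ast(S, V^{(k)}\otimes\xi_{\eta,\psi}^{-1})$ with $\oH^\CS_\ast\bigl(P^\omega\cap S,(\delta_P^{\frac{1}{2}}\pi)^\omega\otimes\Sym^k(\CN^{*}_{\omega,\BC})\otimes\delta^{-1}_{P^\omega\cap S}\otimes\xi_{\eta,\psi}^{-1}\bigr)$. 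When $\CO$ is $\psi$-vanishing, Lemma~\ref{Lem: unip orbit vanishing} shows all these groups vanish; when $\CO$ is $\psi$-unvanishing, Lemma~\ref{Lem: orbitwise homological finiteness} makes them finite-dimensional in every degree. In the latter case the finite quotients $\Gamma^\CS_{i,0}/\Gamma^\CS_{i,k}$, being finitely filtered with graded pieces among the $V^{(j)}$, have finite-dimensional Schwartz homology in every degree, so the hypothesis of Proposition~\ref{Prop: homology borel lem} is met and
\[
\oH^\CS_0\bigl(S,\Gamma^\varsigma_{O_i}(U_i,\CE)\otimes\xi_{\eta,\psi}^{-1}\bigr)\ \cong\ \varprojlim_k\,\oH^\CS_0\bigl(S,(\Gamma^\CS_{i,0}/\Gamma^\CS_{i,k})\otimes\xi_{\eta,\psi}^{-1}\bigr).
\]
Now Lemma~\ref{Lem: der um vanish} tells us that $\oH^\CS_0(S,V^{(k)}\otimes\xi_{\eta,\psi}^{-1}) = 0$ for every $k>0$, and also for $k=0$ when $\CO$ is $\psi$-unvanishing and unmatching (while for $\psi$-vanishing $\CO$ all $V^{(k)}$, $k\geq 0$, already have vanishing homology). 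Running the long exact sequences of the inner filtration on $\oH^\CS_0$ and inducting on $k$: if $\CO$ is $\psi$-vanishing, or $\psi$-unvanishing and unmatching, then $\oH^\CS_0(S,(\Gamma^\CS_{i,0}/\Gamma^\CS_{i,k})\otimes\xi_{\eta,\psi}^{-1}) = 0$ for all $k$, so the summand vanishes; if $\CO$ is a matching orbit, the transition maps in the inverse system are isomorphisms onto the $k=0$ term, so the summand equals $\oH^\CS_0(S,\Gamma^\varsigma(\CO,\CE)\otimes\xi_{\eta,\psi}^{-1})$.

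Combining these, only matching orbits contribute on the right, which gives the inequality; and the right-hand side is finite because each of its terms is, by Shapiro's Lemma, the finite-dimensional space $\oH^\CS_0\bigl(P^\omega\cap S,(\delta_P^{\frac{1}{2}}\pi)^\omega\otimes\delta^{-1}_{P^\omega\cap S}\otimes\xi_{\eta,\psi}^{-1}\bigr)$ furnished by Lemma~\ref{Lem: orbitwise homological finiteness} in the case $k=i=0$. The step I expect to require the most care is the second one: one must correctly interleave the outer orbit-by-orbit filtration of $X$ with the inner Borel filtration by order of transverse derivative along each orbit, and verify at each level that the finiteness hypothesis of Proposition~\ref{Prop: homology borel lem} holds so that the $\varprojlim$ descriptions are legitimate; by contrast, all the substantive representation-theoretic vanishing has already been isolated in Lemmas~\ref{Lem: unip orbit vanishing}, \ref{Lem: orbitwise homological finiteness} and \ref{Lem: der um vanish}.
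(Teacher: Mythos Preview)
Your proposal is correct and follows essentially the same route as the paper: filter $X$ by $S$-orbits, bound $\dim\oH^\CS_0$ by the sum over orbits via the long exact sequences, then on each orbit feed Borel's filtration through Proposition~\ref{Prop: homology borel lem} (using Lemmas~\ref{Lem: unip orbit vanishing} and \ref{Lem: orbitwise homological finiteness} to check the finiteness hypothesis) and invoke Lemma~\ref{Lem: der um vanish} to kill all graded pieces except the $k=0$ piece on matching orbits. The only cosmetic difference is that the paper attributes the finiteness of the right-hand side to Lemmas~\ref{Lem: finiteness whittaker} and \ref{Lem: finiteness pair} together with the definition of matching orbits, whereas you deduce it directly from Lemma~\ref{Lem: orbitwise homological finiteness} at $k=i=0$; these amount to the same thing.
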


\begin{proof}
    Following the $S$-orbit decomposition on $X$ as in Proposition \ref{Prop: orbit decomposition}, $X$ admits a finite decreasing sequence of open submanifolds
    $$U_1 := X \supsetneq U_2 \supsetneq ... \supsetneq U_{f} \supsetneq U_{f+1} := \varnothing,$$
    such that for $i \in \{1, \dots, f \}$, $U_i\backslash U_{i+1}$ is a $S$-orbit in $X$. When $U_i\backslash U_{i+1} = \CO_{\omega}$, we shall also denote $U_{\omega} := U_i$. 
    Consider the short exact sequence
    $$
    0 \to \Gamma^{\varsigma}(U_{i+1}, \CE) \otimes \xi_{\eta,\psi}^{-1} \to \Gamma^{\varsigma}(U_i, \CE) \otimes \xi_{\eta,\psi}^{-1} \to \Gamma^{\varsigma}_{O_{i}}(U_i,\CE) \otimes \xi_{\eta,\psi}^{-1} \to 0,
    $$
    then the associated long exact sequence shows that
    $$
    \dim\ \oH^{\CS}_0(S,\Gamma^{\varsigma}(X,\CE)\otimes \xi_{\eta,\psi}^{-1}) \leq \sum_{\omega \in \Omega} \dim\ \oH^{\CS}_0(S,\Gamma^{\varsigma}_{O_{\omega}}(U_{\omega},\CE)\otimes \xi_{\eta,\psi}^{-1}).
    $$
    According to Borel's lemma, Shapiro's lemma, and Proposition \ref{Prop: homology borel lem}, in order to address the right hand side of the inequality, it suffices to concern with
    $$
    \oH^{\CS}_{i}(P^{\omega}\cap S , (\delta_P^{\frac{1}{2}}\pi)^{\omega} \otimes \Sym^k(\CN^{*}_{\omega,\BC}) \otimes \delta^{-1}_{P^{\omega}\cap S} \otimes \xi_{\eta,\psi}^{-1}),
    $$
    which has been calculated in Lemma \ref{Lem: orbitwise homological finiteness}, \ref{Lem: unip orbit vanishing}, \ref{Lem: der um vanish}.
    By combining these three lemmas, we obtain that 
    \begin{equation*}
    \oH^\CS_{0}(S, \Gamma^{\varsigma}_{O_{\omega}}(U_{\omega},\CE) \otimes \xi_{\eta,\psi}^{-1})
    \cong
    \left\{ 
        \begin{array}{ll}
        \oH^\CS_{0}(S, \Gamma^{\varsigma}(\CO_{\omega},\CE) \otimes \xi_{\eta,\psi}^{-1}), & \textrm{if $\CO_{\omega}$ is a matching orbit};\\
        0, &  \textrm{otherwise},
        \end{array}
    \right.
    \end{equation*}
    and thus the theorem. The finiteness of the right hand side follows from Lemma \ref{Lem: finiteness whittaker}, \ref{Lem: finiteness pair} and the definition of the matching orbits.
\end{proof}

\begin{cort}\label{Cor: Shalika coinv finite} 
    For an irreducible Casselman-Wallach representation $\pi$ of $G_{2n}$, we have
    $$
    \dim\ \oH^{\CS}_0(S,\pi \otimes \xi_{\eta,\psi}^{-1}) < \infty.
    $$
\end{cort}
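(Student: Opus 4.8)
The plan is to reduce the statement to Theorem \ref{Thm: kill unmatching orbits} by realizing $\pi$ as a quotient of a standard module and invoking the right exactness of the coinvariant functor.

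First I would use the Langlands classification recalled in Subsection \ref{Subsec: LLC}: $\pi$ is the unique irreducible quotient of a standard module $\pi_1\dot{\times}\cdots\dot{\times}\pi_r$ with $\pi_i\in\Irr_{rd}^{\BK}$ and $\exp(\pi_1)\geq\cdots\geq\exp(\pi_r)$. The parabolic subgroup $P\subset G_{2n}$ associated to the partition of $2n$ by the sizes of the $\pi_i$ is then a standard cuspidal parabolic (all blocks of size $1$, except possibly size $2$ when $\BK=\BR$), so this standard module is precisely an object of the form $\Gamma^{\CS}(X,\CE)$ studied in this section, with $X=P\backslash G_{2n}$ and $\CE$ the tempered bundle attached to $\pi_1\dot{\times}\cdots\dot{\times}\pi_r$. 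This yields a short exact sequence
\[
0\longrightarrow\CK\longrightarrow\Gamma^{\CS}(X,\CE)\longrightarrow\pi\longrightarrow0
\]
in $\smod_{G_{2n}}$, with $\CK$ the kernel of the projection onto the Langlands quotient. Restricting to $S$ and twisting by $\xi_{\eta,\psi}^{-1}$ keeps this a short exact sequence in $\smod_S$.

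Next I would apply the long exact sequence of Schwartz homology (\cite[Corollary 7.8]{CHEN2021108817}) to this sequence. Since Schwartz homology vanishes in negative degrees, it terminates in a surjection
\[
\oH^{\CS}_0\bigl(S,\Gamma^{\CS}(X,\CE)\otimes\xi_{\eta,\psi}^{-1}\bigr)\longrightarrow\oH^{\CS}_0\bigl(S,\pi\otimes\xi_{\eta,\psi}^{-1}\bigr)\longrightarrow0,
\]
which is just the right exactness of $V\mapsto V_S$. By Theorem \ref{Thm: kill unmatching orbits} the source is finite-dimensional, hence so is the target, proving the corollary.

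Most of this is formal. The two points that need a word of care are the identification of the Langlands standard module with the geometric object $\Gamma^{\CS}(X,\CE)$ of this section --- which comes down to the equivalence of the two descriptions of generalized principal series in Subsection \ref{Subsec: LLC} together with the cuspidality of $P$ --- and the fact that restriction along the Nash subgroup $S\hookrightarrow G_{2n}$ carries $\smod_{G_{2n}}$ into $\smod_S$, so that the long exact sequence is available and $\CK$ remains in the right category. Neither is a genuine obstacle. I would also remark that one cannot bypass Theorem \ref{Thm: kill unmatching orbits} by appealing to the one-dimensionality of $\Hom_S(\pi,\xi_{\eta,\psi})$ from \cite{chen2020uniqueness}: a priori $\oH^{\CS}_0(S,\pi\otimes\xi_{\eta,\psi}^{-1})$ need not be Hausdorff, hence is not determined by its continuous dual, so finite-dimensionality genuinely requires the upper bound coming from the orbit analysis.
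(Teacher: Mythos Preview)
Your proposal is correct and follows exactly the paper's approach: reduce to the standard module via the Langlands quotient map and invoke Theorem \ref{Thm: kill unmatching orbits}. The paper's proof is a single sentence stating this reduction; your version simply spells out the right-exactness argument and the identification of the standard module with $\Gamma^{\CS}(X,\CE)$ in more detail.
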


\begin{proof}
    It suffices to show that $\dim\ \oH^{\CS}_0(S,\sigma \otimes \xi_{\eta,\psi}^{-1}) < \infty$ for each standard module $\sigma$, which follows from Theorem \ref{Thm: kill unmatching orbits}.
\end{proof}

Following Theorem \ref{Thm: kill unmatching orbits}, it is important to study the homology of the Schwartz section over the matching orbits, which will be shown in the Proposition \ref{Prop: symplectic L parameter and homology}.

\begin{prpt}\label{Prop: symplectic L parameter and homology}
    If there exists a matching orbit $\CO_{\omega}$ such that $ \dim \oH^{\CS}_0(S,\Gamma^{\varsigma}(\CO_{\omega},\CE)\otimes \xi_{\eta,\psi}^{-1}) \neq 0$, then the corresponding L-parameter is of $\eta$-symplectic type.
\end{prpt}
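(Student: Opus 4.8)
The plan is to make the homology on the left completely explicit via Shapiro's lemma and then read off the condition on the data $\pi_1,\dots,\pi_r$. Since $\CO_{\omega}$ is a single $S$-orbit, $\CO_{\omega}\cong (P^{\omega}\cap S)\backslash S$ and $\CE|_{\CO_{\omega}}$ is the $S$-bundle induced from the fibre $(\delta_P^{1/2}\pi)^{\omega}$. Using $\delta_S=1$, Shapiro's lemma (Proposition \ref{Prop: Shapiro Lem}) gives
\[
\oH^{\CS}_0\bigl(S, \Gamma^{\varsigma}(\CO_{\omega}, \CE)\otimes \xi_{\eta,\psi}^{-1}\bigr)\cong \oH^{\CS}_0\bigl(P^{\omega}\cap S,\ (\delta_P^{1/2}\pi)^{\omega}\otimes \delta_{P^{\omega}\cap S}^{-1}\otimes \xi_{\eta,\psi}^{-1}\bigr),
\]
which is exactly the $k=0$ instance of the expression already occurring in the proof of Theorem \ref{Thm: kill unmatching orbits}. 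I would then simplify the right-hand side using the decomposition $P^{\omega}\cap S=R_{\omega}\ltimes V_{\omega}$ of Lemma \ref{Lem: gp decomp}: because $\CO_{\omega}$ is matching, $I^{um}_{\omega}=\varnothing$, so $\pi=\pi_{ma}$ and $V_{\omega}$ acts trivially on $\pi^{\omega}$, and because $\CO_{\omega}$ is $\psi$-unvanishing, $\xi_{\eta,\psi}^{-1}$ restricts trivially to $V_{\omega}$; hence passing to $V_{\omega}$-coinvariants simply removes $V_{\omega}$, and by Lemma \ref{Lem: mod char} (the matching case, $\chi_{\omega}=1$) the two modular characters cancel on $R_{\omega}$. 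The homology therefore equals $\oH^{\CS}_0(R_{\omega}, \pi^{\omega}\otimes \xi_{\eta,\psi}^{-1})$.

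The combinatorial input from Section \ref{Sec: Orbit decomposition} is that $R_{\omega}$, together with the restrictions of $\pi^{\omega}$ and of $\xi_{\eta,\psi}$, splits as a direct product of factors indexed by the blocks of the permutation $s_{\omega}\in\fS_r$ attached to $\CO_{\omega}$. A singleton block $\{i\}$ (necessarily with $n_i=2$) contributes the factor $Z_2\times N_2\cong\BK^{\times}\times\BK$ carrying $D_{k_i,\lambda_i}$ and paired against $\eta\boxtimes\psi$; a pair block $\{i,j\}$ contributes the factor $\GL_{n_i}(\BK)$ (the copy diagonal in $M_i\times M_j$) carrying $\pi_i\otimes\pi_j$ and paired against $\eta\circ\det$. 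By the \kun formula (or simply compatibility of $\oH_0$ with direct products) the homology is the completed tensor product of the $\oH^{\CS}_0$ of these factors, so it is nonzero if and only if every factor is.

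It then remains to evaluate each factor. For a singleton $\{i\}$, $\oH^{\CS}_0(N_2, D_{k_i,\lambda_i}\otimes\psi^{-1})$ is one-dimensional by Lemma \ref{Lem: finiteness whittaker}, and $Z_2$ acts on it through the central character $\omega_{D_{k_i,\lambda_i}}=\sgn^{k_i+1}|\cdot|^{2\lambda_i}$; so this factor is nonzero iff $\omega_{D_{k_i,\lambda_i}}=\eta$, and since $\det L(\pi_i)=\det\sigma_{k_i,\lambda_i}$ corresponds to $\omega_{D_{k_i,\lambda_i}}$ under the local Langlands correspondence, this is exactly the condition that $L(\pi_i)=\sigma_{k_i,\lambda_i}$ be a $2$-dimensional L-parameter of $\eta$-symplectic type. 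For a pair $\{i,j\}$, the factor $\oH^{\CS}_0(\GL_{n_i}(\BK), \pi_i\otimes\pi_j\otimes(\eta\circ\det)^{-1})$ is finite-dimensional (Lemma \ref{Lem: finiteness pair} if $n_i=2$, trivially if $n_i=1$), hence Hausdorff, so it is nonzero iff its continuous dual $\Hom_{\GL_{n_i}(\BK)}(\pi_i,\pi_j^{\vee}\otimes\eta)$ is nonzero; as $\pi_i$ and $\pi_j^{\vee}\otimes\eta$ are irreducible Casselman--Wallach representations this happens iff $\pi_j\cong\pi_i^{\vee}\otimes\eta$, i.e. iff $L(\pi_j)\cong L(\pi_i)^{\vee}\otimes\eta$. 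Combining: if $\oH^{\CS}_0(S,\Gamma^{\varsigma}(\CO_{\omega},\CE)\otimes\xi_{\eta,\psi}^{-1})\neq0$, then grouping $\phi_{\pi}=\bigoplus_{i=1}^{r}L(\pi_i)$ according to the blocks of $s_{\omega}$ exhibits $\phi_{\pi}$ in the form $\sum(\text{$2$-dim $\eta$-symplectic})+\sum(\phi+\phi^{\vee}\cdot\eta)$, which by Lemma \ref{Lem: symplectic parameter} is precisely the assertion that $\phi_{\pi}$ is of $\eta$-symplectic type. The case $\BK=\BC$ is the same, with $P$ the Borel, all $n_i=1$, and only pair blocks occurring.

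The step I expect to cost the most is the product decomposition of $R_{\omega}$ compatible with $\pi^{\omega}$ and $\xi_{\eta,\psi}$. This requires tracking, from the combinatorics of $\omega$ (using the lemmas of Section \ref{Sec: Orbit decomposition} together with the $\psi$-unvanishing and matching hypotheses), which root spaces of $U$ and of the Levi blocks actually survive in $P^{\omega}\cap S$: one must check that for a pair block none of the unipotent coordinates linking $M_i$ and $M_j$ lie in $P^{\omega}$ (they fall into $\bar{\n}^{\omega}$), so the pair reduces to a bare diagonal $\GL_{n_i}$, whereas for a singleton block the single coordinate $E_{k,n+k}$ is a root of the Levi, survives in $L^{\omega}\cap U^{\dag}$, and yields the Whittaker factor $N_2$ on which $\psi$ is nontrivial; and that the central torus directions assemble into $R_{\omega}$ so that Lemma \ref{Lem: mod char} applies verbatim. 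This bookkeeping, rather than any deep homological input, is the crux.
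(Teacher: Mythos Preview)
Your proposal is correct and follows essentially the same route as the paper: Shapiro's lemma to reduce to $P^{\omega}\cap S$, the decomposition $P^{\omega}\cap S=R_{\omega}\ltimes V_{\omega}$ with $V_{\omega}$ acting trivially (since $I^{um}_{\omega}=\varnothing$ in the matching case), cancellation of the modular characters via Lemma \ref{Lem: mod char}, and finally the product decomposition of $R_{\omega}$ together with the K\"{u}nneth formula and Lemmas \ref{Lem: finiteness whittaker}, \ref{Lem: finiteness pair}. The only cosmetic difference is that the paper records the endpoint as $\pi_i\cong\pi_{s_{\omega}(i)}^{\vee}\cdot\eta$ and cites \cite[Lemma 3.9]{suzuki2023epsilon} for the pair-block step, whereas you obtain the same conclusion directly from Schur's lemma for irreducible Casselman--Wallach representations; both are valid.
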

\begin{proof}
    Recall we have decomposition $P^{\omega}\cap S = R_{\omega} \ltimes V_{\omega}$. When $\CO_{\omega}$ is a matching orbit, we know that $V_{\omega}$ acts trivially on the representation, thus we have
    $$
    \begin{aligned}
    \oH^{\CS}_0(S,\Gamma^{\varsigma}(\CO_{\omega},\CE)\otimes \xi_{\eta,\psi}^{-1}) & = \oH^{\CS}_{0}(P^{\omega}\cap S , (\delta_P^{\frac{1}{2}}\pi)^{\omega} \otimes \delta^{-1}_{P^{\omega}\cap S} \otimes \xi_{\eta,\psi}^{-1})\\
    & = \oH^{\CS}_{0}(R_{\omega} , (\delta_P^{\frac{1}{2}}\pi)^{\omega} \otimes \delta^{-1}_{P^{\omega}\cap S} \otimes \xi_{\eta,\psi}^{-1})\\
    & = \oH^{\CS}_{0}(R_{\omega} , \pi^{\omega} \otimes \xi_{\eta,\psi}^{-1})
    \end{aligned},
    $$
    where the first equality follows from Shapiro's lemma and the third follows from Lemma \ref{Lem: mod char}. Therefore, as discussed in the proof of Lemma \ref{Lem: orbitwise homological finiteness}, the theorem follows from \kun formula, Lemma \ref{Lem: finiteness whittaker}, \ref{Lem: finiteness pair} and \cite[Lemma 3.9]{suzuki2023epsilon}.
    In fact, recall that we have defined $s_{\omega}$ after Definition \ref{Def: matching orbit}. In this case we have $\pi_i \cong \pi_{s_{\omega}(i)}^{\vee} \cdot \eta$.
\end{proof}

\begin{proof}[Proof of Theorem \ref{Main thmA}]
    Let $\tau$ be an irreducible Casselman-Wallach representation of $G_{2n}$.
    If $\tau$ admits an $(\eta, \psi)$-twisted Shalika functional, then so is its standard module $\pi_1 \dot{\times} \cdots \dot{\times} \pi_r$. 
    Let $X$ be the corresponding partial flag manifold, and let $\CE$ be the corresponding tempered vector bundle on $X$, such that $ \Gamma^{\CS}(X,\CE) \cong \pi_1 \dot{\times} \cdots \dot{\times} \pi_r$.
    Thus $\dim\ \oH^{\CS}_0(S,\Gamma^{\varsigma}(X,\CE)\otimes \xi_{\eta,\psi}^{-1}) \neq 0$. Then Theorem \ref{Main thmA} follows from Theorem \ref{Thm: kill unmatching orbits} and Proposition \ref{Prop: symplectic L parameter and homology}. 
\end{proof}

\section{Proof of Theorem \ref{Main thmB}}\label{Sec: thmB}
We first recall the following theorem, which constructs twisted Shalika functionals for parabolic induced representations.
\begin{thm}[{\cite[Theorem 2.1]{chen2020archimedean}}]\label{Thm: shalika parabolic}
    For two even positive integers $n_1$ and $n_2$, take two Casselman-Wallach representations $\pi_1$ and $\pi_2$ of $\GL_{n_1}(\BK)$ and $\GL_{n_2}(\BK)$, respectively, and assume that both $\pi_1$ and $\pi_2$ have non-zero $(\eta, \psi)$-twisted Shalika periods. Then the normalized parabolic induction $\pi_1 \dot{\times} \pi_2$ also has a non-zero $(\eta, \psi)$-twisted Shalika periods.
\end{thm}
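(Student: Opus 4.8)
I would realize $\pi_1\dot{\times}\pi_2$ as the Schwartz sections $\Gamma^{\CS}(X,\CE)$ of a tempered bundle $\CE$ over $X:=P\backslash\GL_{2n}(\BK)$, where $2n=n_1+n_2$, $P$ is the standard $(n_1,n_2)$-parabolic, and $\CE$ is attached to $\pi_1\,\widehat{\otimes}\,\pi_2$ (with the half-density normalization built in). Writing $S=S_{2n}(\BK)$, the space $\Hom_{S}(\pi_1\dot{\times}\pi_2,\xi_{\eta,\psi})$ is the continuous dual of $\oH^{\CS}_0(S,\Gamma^{\CS}(X,\CE)\otimes\xi_{\eta,\psi}^{-1})$, so it suffices to prove this Schwartz homology is nonzero. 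I decompose $X$ into finitely many $S$-orbits via Proposition \ref{Prop: orbit decomposition}, arrange them into a filtration $X=U_1\supsetneq\cdots\supsetneq U_r=\CO_0$ with $\CO_0$ the distinguished open orbit (the one on which the stabilizer is a product of smaller Shalika groups), and show (a) $\CO_0$ contributes a nonzero class to degree $0$, and (b) every non-open orbit contributes $0$ in homological degrees $0$ and $1$. As in the ``second case'' of Subsection \ref{Subsec: outline}, the long exact sequences attached to the filtration then identify $\oH^{\CS}_0(S,\Gamma^{\CS}(X,\CE)\otimes\xi_{\eta,\psi}^{-1})$ with $\oH^{\CS}_0(S,\Gamma^{\CS}(\CO_0,\CE)\otimes\xi_{\eta,\psi}^{-1})$, which is nonzero by (a).

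For (a): a dimension count ($\dim S=2n^2$, $\dim X=n_1n_2$, $n_i=2m_i$) gives $\dim S-\dim X=2m_1^2+2m_2^2=\dim S_{n_1}(\BK)+\dim S_{n_2}(\BK)$, and for a suitable orbit representative $\omega$ one checks that the stabilizer $P^{\omega}\cap S$ is, up to conjugacy, the block-diagonal copy of $S_{n_1}(\BK)\times S_{n_2}(\BK)$ sitting inside the Levi-times-unipotent of $P$ and acting on the fiber $\CE_x\cong\pi_1\,\widehat{\otimes}\,\pi_2$ factorwise. The evenness of $n_1,n_2$ enters twice here: it is what makes $S_{n_i}$ the right group to be the stabilizer, and it is what makes the normalization twist $\delta_P^{1/2}$ combine with the modular character of the stabilizer into the product of the modular characters of $S_{n_1}$ and $S_{n_2}$ (a bookkeeping computation of the same flavour as Lemma \ref{Lem: mod char}), leaving no residual twist; one also checks $\xi_{\eta,\psi}|_{P^{\omega}\cap S}$ is the exterior tensor product of the twisted Shalika characters of $S_{n_1}(\BK)$ and $S_{n_2}(\BK)$. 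Then $\ell_1\otimes\ell_2$, built from the given $\ell_i\in\Hom_{S_{n_i}}(\pi_i,\xi_{\eta,\psi})$, is a nonzero element of $\Hom_{P^{\omega}\cap S}(\CE_x,\xi_{\eta,\psi})$, and Shapiro's lemma (Proposition \ref{Prop: Shapiro Lem}) yields $\oH^{\CS}_0(S,\Gamma^{\CS}(\CO_0,\CE)\otimes\xi_{\eta,\psi}^{-1})\neq 0$.

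For (b): for each non-open orbit $\CO_\omega$ I would first establish a homological-finiteness bound, $\dim\oH^{\CS}_{j}(S,\Gamma^{\CS}_{\CO_\omega}(U_\omega,\CE)\otimes\xi_{\eta,\psi}^{-1})<\infty$ for the relevant $j$, by the argument of Lemma \ref{Lem: orbitwise homological finiteness}, so that Borel's lemma together with Proposition \ref{Prop: homology borel lem} reduces everything to computing $\oH^{\CS}_{j}(P^{\omega}\cap S,(\delta_P^{\frac{1}{2}}\pi)^{\omega}\otimes\Sym^k(\CN^{*}_{\omega,\BC})\otimes\delta^{-1}_{P^{\omega}\cap S}\otimes\xi_{\eta,\psi}^{-1})$ for $j\in\{0,1\}$ and all $k\geq 0$. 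If $\xi_{\eta,\psi}$ is non-trivial on the relevant part of $N^{\omega}\cap U$, this vanishes in every degree by Lemma \ref{Lem: nilp vanish lem}, exactly as in Lemma \ref{Lem: unip orbit vanishing}. Otherwise I decompose $P^{\omega}\cap S=R_\omega\ltimes V_\omega$ as in Lemma \ref{Lem: gp decomp}, pass to a graded piece under the algebraic $V_\omega$-action, and observe that a central torus of $R_\omega$ acts on the remaining space by a character which must be non-trivial on its positive part: here I would use that, since $\pi_1,\pi_2$ carry $(\eta,\psi)$-twisted Shalika periods, Theorem \ref{Main thmA} forces their L-parameters to be of $\eta$-symplectic type, so by Lemma \ref{Lem: symplectic parameter} their central characters are the balanced characters $\eta^{n_1/2}$ and $\eta^{n_2/2}$; feeding this into an infinite-descent argument on the ordered exponents of the standard-module realizations of $\pi_1,\pi_2$ — the analogue of the proof of Lemma \ref{Lem: der um vanish} — yields the non-triviality, and Lemma \ref{Lem: center vanish lem} finishes.

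The crux is step (b). Unlike Section \ref{Sec: Orbit decomposition}, where $P$ is cuspidal, here $P$ is a general $(n_1,n_2)$-parabolic, so the orbit combinatorics (the analogues of the ``matching/unmatching'' dichotomy and of the numerical lemmas bounding exponents) must be set up afresh, and the central-character input has to be extracted before the conclusion is available, which is why the argument routes through Theorem \ref{Main thmA} rather than through twisted Shalika functionals directly. Should this route prove too delicate, the fallback is analytic: realize the candidate functional as a zeta integral $\int_{(P\cap S)\backslash S}(\ell_1\otimes\ell_2)\big(\pi(g)f\big)\,\xi_{\eta,\psi}(g)^{-1}\,dg$ over the open cell, prove absolute convergence for $f$ in a dense subspace and meromorphic continuation in an auxiliary complex parameter, and read off a nonzero $S$-invariant functional at the relevant point — at the cost of a separate convergence-and-continuation analysis.
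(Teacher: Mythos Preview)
The paper does not prove this theorem; it is quoted from \cite{chen2020archimedean}. The method there, visible in the proof of Lemma \ref{Lem: restrict parabolic ind} where the same argument is adapted to $\GL_{2n}^{+}(\BR)$, is the direct analytic construction you list as your ``fallback'': one writes down the explicit Weyl element $\omega_s' = \begin{bsmallmatrix} 1_{m_1} & & & \\ & 0 & 1_{m_2} & \\ & 1_{m_1} & 0 & \\ & & & 1_{m_2}\end{bsmallmatrix}$ (here $n_i=2m_i$), defines $\Phi(g;f)=\langle\mu_1\otimes\mu_2,\,f(\omega_s'^{-1}g)\rangle$ on the open cell, verifies by hand the transformation law under each piece of $S_{2n}$, and then produces the Shalika functional by an integral over a quotient of $S_{2n}$ with a convergence/continuation argument. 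Your fallback is the actual proof.

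Your primary route --- the homological one --- is the interesting direction, but step (b) as written has a real gap. The vanishing you need is in degrees $0$ \emph{and} $1$ on every non-open orbit, for \emph{arbitrary} Casselman--Wallach $\pi_1,\pi_2$ (the theorem does not assume irreducibility, let alone genericity). The tool you invoke, the central-character/infinite-descent mechanism of Lemma \ref{Lem: der um vanish}, is built for cuspidal standard modules: it uses the ordering of exponents $\exp(\pi_1)\ge\cdots\ge\exp(\pi_r)$ and the explicit Jacquet modules of relative discrete series, and even there it only yields degree-$0$ vanishing. For general $\pi_i$ there need be no central character at all, and even after passing to standard modules the exponent bookkeeping does not obviously survive the change from a cuspidal $P$ to the $(n_1,n_2)$-parabolic. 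Invoking Theorem \ref{Main thmA} to pin down the central characters of $\pi_i$ already presupposes irreducibility, and in any case gives you degree-$0$ information only; the degree-$1$ vanishing you need to splice the filtration together is where the paper, in the one place it does carry out such an argument (Theorem \ref{Thm: shalika GL4}), relies on a very concrete small-rank computation plus an irreducibility hypothesis to exclude integer coincidences. So your step (a) is fine and matches the open-cell input of the analytic proof, but step (b) would require substantially new work beyond what you sketch; absent that, the analytic construction is both correct and shorter.
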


Recall in Lemma \ref{Lem: symplectic parameter}, if an L-parameter is of $\eta$-symplectic type, then it has form 
$$\sum_{i} \phi_i + \sum_j (\phi_j + \phi_j^{\vee} \cdot \eta).$$
Note that for the $\GL_2(\BK)$ case, an irreducible representation $\pi$ has an $(\eta, \psi)$-Shalika period if and only if it has a Whittaker period and the central character of $\pi$ equals to $\eta$. Thus, according to Theorem \ref{Thm: shalika parabolic}, we can reduce the theorem to the following case in $\GL_4(\mathbb{R})$.

For a character $\eta$ of $\BK^{\times}$, we write $\eta_{\GL_m(\BK)} := \eta \circ \det_{\GL_m(\BK)}$ for the associated character of $\GL_m(\BK)$. Denote by $S_{2n}$ the Shalika subgroup of $G_{2n}$.

\begin{thm}\label{Thm: shalika GL4}
    Let $P \subset \GL_4(\BR)$ be the standard parabolic subgroup corresponding to the partition $(2,2)$. Let 
    $\eta : \BR^{\times} \to \BC^{\times},\ t \mapsto |t|^{z_0} (\sgn t)^{m_0},\  z_0 \in \BC^{\times},\ m_0 \in \{0,1\}$
    be a character of $\BR^{\times}$. Assume that $D_{k,\lambda}\ (k \in \BZ_{\geq 1},\ \lambda \in \BC)$ doesn't have a non-zero $(\eta,\psi)$-twisted Shalika peroids (thus so is $D_{k, z_0 - \lambda}$), and $D_{k,\lambda} \dot{\times} D_{k, z_0 - \lambda}$ is irreducible, then $D_{k,\lambda} \dot{\times} D_{k, z_0 - \lambda}$ has a non-zero $(\eta,\psi)$-twisted Shalika peroids. Moreover, we have
    $$
    \oH^{\CS}_{i}(S_4 ,D_{k,\lambda} \dot{\times} D_{k, z_0 - \lambda} \otimes \xi^{-1}_{\eta,\psi}) \cong \oH^{\CS}_{i}(\GL_2(\BR) ,D_{k,\lambda} \widehat{\otimes} D_{k, z_0 - \lambda} \otimes \eta_{\GL_2(\BR)}^{-1}),\quad i\in \BZ,
    $$
    and
    $$
    \Hom_{S_4}(D_{k,\lambda} \dot{\times} D_{k, z_0 - \lambda}, \xi_{\eta,\psi}) \cong \Hom_{\GL_2(\BR)}(D_{k,\lambda} \widehat{\otimes} D_{k, z_0 - \lambda}, \eta_{\GL_2(\BR)}).
    $$
\end{thm}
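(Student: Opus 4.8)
The plan is to run the ``second case'' of the framework of Subsection~\ref{Subsec: outline} with $X:=P\backslash\GL_4(\BR)$ the $(2,2)$--partial flag manifold and $\CE$ the tempered bundle with $\Gamma^{\varsigma}(X,\CE)\cong D_{k,\lambda}\dot{\times}D_{k,z_0-\lambda}$; after possibly interchanging the two factors (which leaves the \emph{irreducible} induced representation unchanged) we may assume $\Re\lambda\geq\Re(z_0-\lambda)$, so $\CE$ does come from a genuine standard module as in Section~\ref{Sec: thmA}. First I would write out the $S_4$--orbit decomposition of $X$ using Proposition~\ref{Prop: orbit decomposition}, with $P$ equal to the standard $(2,2)$--parabolic $Q$: one finds exactly four orbits, represented by $\{e,\ (1,3,2,4),\ (3,1,2,4),\ \omega_{\iota}=(3,4,1,2)\}$. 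A direct inspection of the stabilizers shows: $\CO_{\omega_{\iota}}$ is the unique open orbit, with $P^{\omega_{\iota}}\cap U=\{1\}$ and hence $P^{\omega_{\iota}}\cap S=\GL_2^{\Delta}(\BR)$; the orbits $\CO_{e}$ and $\CO_{(3,1,2,4)}$ are $\psi$--vanishing; and $\CO_{\omega_1}$, with $\omega_1:=(1,3,2,4)$, is $\psi$--unvanishing, matching, and of dimension $2$.

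Second, and this is the crux, I would show that each of the three non-open orbits contributes zero homology \emph{in every degree}: $\oH^{\CS}_{j}(S_4,\Gamma^{\varsigma}_{O_{\omega}}(U_{\omega},\CE)\otimes\xi^{-1}_{\eta,\psi})=0$ for all $j$ and all $\omega\neq\omega_{\iota}$. By Borel's Lemma, Shapiro's Lemma~\ref{Prop: Shapiro Lem} and Proposition~\ref{Prop: homology borel lem} (whose finiteness hypothesis is supplied by Lemmas~\ref{Lem: unip orbit vanishing} and~\ref{Lem: orbitwise homological finiteness}), it suffices to check $\oH^{\CS}_{j}(P^{\omega}\cap S,(\delta_P^{1/2}\pi)^{\omega}\otimes\Sym^k(\CN^{*}_{\omega,\BC})\otimes\delta^{-1}_{P^{\omega}\cap S}\otimes\xi^{-1}_{\eta,\psi})=0$ for all $j,k$. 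For $\omega\in\{e,(3,1,2,4)\}$ this is precisely Lemma~\ref{Lem: unip orbit vanishing}. For $\omega_1$, the decomposition of Lemma~\ref{Lem: gp decomp} gives $P^{\omega_1}\cap S=R_{\omega_1}\ltimes V_{\omega_1}$, where $R_{\omega_1}$ is the product of the diagonal torus $T_2(\BR)^{\Delta}$ and the abelian unipotent group $L^{\omega_1}\cap U^{\dag}=\{1+aE_{1,3}+bE_{2,4}\}\cong\BR^2$. The point I want to exploit is that $V_{\omega_1}$ acts \emph{trivially} on $(\delta_P^{1/2}\pi)^{\omega_1}\otimes\delta^{-1}_{P^{\omega_1}\cap S}\otimes\xi^{-1}_{\eta,\psi}$ (it maps into the unipotent radical of $P$), and that on $L^{\omega_1}\cap U^{\dag}$ all of $\pi^{\omega_1}$, the conormal symmetric powers (after a finite unipotent filtration into characters), and the modular characters are trivial, whereas $\xi_{\eta,\psi}$ restricts to the \emph{non-trivial} unitary character $(a,b)\mapsto\psi(a+b)$. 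Running the Hochschild--Serre spectral sequence through $V_{\omega_1}$ and then through $L^{\omega_1}\cap U^{\dag}$ and invoking Lemma~\ref{Lem: nilp vanish lem} forces all these homology groups to vanish, uniformly in $j$ and $k$. (Note that the standing hypotheses on $D_{k,\lambda}$ are not needed here; they only serve to make this the right base case for Theorem~\ref{Main thmB} via Theorem~\ref{Thm: shalika parabolic}.)

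Third, feeding these vanishings into the long exact sequences~\eqref{Eq: outline first LES} attached to the orbit stratification (ordered so the open orbit sits at the bottom of the filtration) shows that extension by zero induces, for every $i$, an isomorphism
\[
\oH^{\CS}_{i}(S_4,\Gamma^{\varsigma}(\CO_{\omega_{\iota}},\CE)\otimes\xi^{-1}_{\eta,\psi})\ \xrightarrow{\ \sim\ }\ \oH^{\CS}_{i}(S_4,\Gamma^{\varsigma}(X,\CE)\otimes\xi^{-1}_{\eta,\psi})=\oH^{\CS}_{i}(S_4,\,D_{k,\lambda}\dot{\times}D_{k,z_0-\lambda}\otimes\xi^{-1}_{\eta,\psi}).
\]
On the other hand, Shapiro's Lemma applied to $\CO_{\omega_{\iota}}\cong\GL_2^{\Delta}(\BR)\backslash S_4$ gives $\oH^{\CS}_{i}(S_4,\Gamma^{\varsigma}(\CO_{\omega_{\iota}},\CE)\otimes\xi^{-1}_{\eta,\psi})=\oH^{\CS}_{i}(\GL_2^{\Delta}(\BR),(\delta_P^{1/2}\pi)^{\omega_{\iota}}\otimes\delta^{-1}_{\GL_2^{\Delta}}\otimes\xi^{-1}_{\eta,\psi})$; here $\GL_2^{\Delta}(\BR)$ is reductive so $\delta_{\GL_2^{\Delta}}=1$, the modular character $\delta_P^{\omega_{\iota}}$ restricts trivially to $\GL_2^{\Delta}(\BR)$, $\pi^{\omega_{\iota}}|_{\GL_2^{\Delta}(\BR)}=D_{k,z_0-\lambda}\widehat{\otimes}D_{k,\lambda}\cong D_{k,\lambda}\widehat{\otimes}D_{k,z_0-\lambda}$, and $\xi_{\eta,\psi}|_{\GL_2^{\Delta}(\BR)}=\eta_{\GL_2(\BR)}$. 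Combining yields the asserted isomorphism $\oH^{\CS}_{i}(S_4,D_{k,\lambda}\dot{\times}D_{k,z_0-\lambda}\otimes\xi^{-1}_{\eta,\psi})\cong\oH^{\CS}_{i}(\GL_2(\BR),D_{k,\lambda}\widehat{\otimes}D_{k,z_0-\lambda}\otimes\eta^{-1}_{\GL_2(\BR)})$ for all $i$.

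Finally, at $i=0$ this reads $\Hom_{S_4}(D_{k,\lambda}\dot{\times}D_{k,z_0-\lambda},\xi_{\eta,\psi})\cong\Hom_{\GL_2(\BR)}(D_{k,\lambda}\widehat{\otimes}D_{k,z_0-\lambda},\eta_{\GL_2(\BR)})$, obtained by passing to continuous linear duals, both zeroth homology spaces being finite-dimensional (hence Hausdorff) by Corollary~\ref{Cor: Shalika coinv finite} and Lemma~\ref{Lem: finiteness pair}. To see non-vanishing I would compute $\Hom_{\GL_2(\BR)}(D_{k,\lambda}\widehat{\otimes}D_{k,z_0-\lambda},\eta_{\GL_2(\BR)})\cong\Hom_{\GL_2(\BR)}(D_{k,\lambda},D_{k,z_0-\lambda}^{\vee}\otimes\eta_{\GL_2(\BR)})$; since $D_{k,z_0-\lambda}^{\vee}=D_{k,\lambda-z_0}$ and $D_k\otimes(\sgn\circ\det)\cong D_k$ for $\GL_2(\BR)$, one gets $D_{k,z_0-\lambda}^{\vee}\otimes\eta_{\GL_2(\BR)}\cong D_{k,\lambda}$, so this space is one-dimensional; hence $D_{k,\lambda}\dot{\times}D_{k,z_0-\lambda}$ carries a non-zero $(\eta,\psi)$--twisted Shalika period. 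The one genuinely new ingredient, and the main obstacle, is the second step: Lemma~\ref{Lem: der um vanish} only gives vanishing in degree $0$ for the matching orbit $\CO_{\omega_1}$, whereas here one needs it in all degrees and for all conormal powers $\Sym^k$, and this is achieved precisely by the observation that $\xi_{\eta,\psi}$ restricts non-trivially to the ``Whittaker'' unipotent subgroup $L^{\omega_1}\cap U^{\dag}$ of the stabilizer, on which every other factor is trivial.
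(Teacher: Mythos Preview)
Your treatment of the matching orbit $\omega_1=\sigma_2=(1,3,2,4)$ contains a genuine error. You claim that $\pi^{\omega_1}$ is trivial on $L^{\omega_1}\cap U^{\dag}=\{1+aE_{1,3}+bE_{2,4}\}$, but conjugation by $\omega_1$ sends $1+aE_{1,3}+bE_{2,4}$ to $1+aE_{1,2}+bE_{3,4}\in L$, so
\[
\pi^{\omega_1}(1+aE_{1,3}+bE_{2,4})
=D_{k,\lambda}\begin{pmatrix}1&a\\0&1\end{pmatrix}\otimes D_{k,z_0-\lambda}\begin{pmatrix}1&b\\0&1\end{pmatrix},
\]
which is the (highly non-trivial) $N_2(\BR)$-action on each discrete series. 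Lemma~\ref{Lem: nilp vanish lem} therefore does not apply, and your vanishing argument for $\CO_{\omega_1}$ collapses; indeed, if one does take $(L^{\omega_1}\cap U^{\dag})$-homology of $\pi^{\omega_1}\otimes\xi^{-1}_{\eta,\psi}$, Lemma~\ref{Lem: finiteness whittaker} shows the degree-$0$ piece is one-dimensional, not zero. Your parenthetical remark that the standing hypotheses on $D_{k,\lambda}$ are not needed is likewise incorrect.

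The paper handles $\sigma_2$ differently, and both hypotheses are genuinely used. For $i=l=0$ one observes $R_{\sigma_2}\cong S_2\times S_2$, with each $S_2$ acting on the corresponding discrete series precisely as the Shalika subgroup of $\GL_2(\BR)$; by the K\"unneth formula the coinvariant factors as
$\oH^{\CS}_0(S_2,D_{k,\lambda}\otimes\xi_{\eta,\psi}^{-1})\widehat{\otimes}\oH^{\CS}_0(S_2,D_{k,z_0-\lambda}\otimes\xi_{\eta,\psi}^{-1})$,
and each tensor factor vanishes \emph{by the hypothesis} that $D_{k,\lambda}$ has no $(\eta,\psi)$-Shalika period. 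For $i>0$ or $l>0$ one first passes through $V_{\sigma_2}$ (trivial action, so $\oH^{\CS}_i$ contributes a copy of $\wedge^i\v_{\sigma_2,\BC}$) and then applies Lemma~\ref{Lem: center vanish lem}: the central element $\diag(a,d,a,d)\in R_{\sigma_2}$ acts by
$|a|^{2\lambda-z_0+l+i}|d|^{z_0-2\lambda-l-i}(\sgn(ad))^{k+1+m_0+l+i}$,
and the \emph{irreducibility hypothesis} (via \cite{Moeglin1997}) excludes $2\lambda-z_0\in\{-1,-2,\dots\}$, so that $2\lambda-z_0+l+i\neq 0$ whenever $l+i>0$. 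Your treatment of the remaining orbits and of the open-orbit identification agrees with the paper.
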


\begin{proof}
    According to Lemma \ref{Lem: finiteness pair}, the second equality follows from the first one (for $i=0$) by taking continuous dual. In the following, we aim to prove the first equality.
    \par
    A quick calculation shows that there are only 4 $S_4$-orbits on $X := P\backslash \GL_4(\BR)$, represented by $\{ \sigma_1 = (1,2,3,4), \sigma_2 = (1,3,2,4), \sigma_3 = (3,1,2,4), \sigma_4 = (3,4,1,2)\}$, where $\CO_{\sigma_4}$ is the unique open $S_4$-orbit. One can easily verify that $\CO_{\sigma_1}$ and $\CO_{\sigma_3}$ are $\psi$-vanishing, $\CO_{\sigma_2}$ and $\CO_{\sigma_4}$ are matching orbits. Note that $P^{\sigma_4} \cap S_4 \cong \GL_2(\BR)$. 
    Denote by $\CE$ the tempered bundle on $X$ associated  with $D_{k,\lambda} \dot{\times} D_{k, z_0 - \lambda}$. Following the Shapiro's lemma and Lemma \ref{Lem: mod char}, we know that 
    $$
    \oH^{\CS}_{i}(S_4 ,\Gamma^{\varsigma}(\CO_{\sigma_4}, \CE) \otimes \xi^{-1}_{\eta,\psi}) \cong \oH^{\CS}_{i}(\GL_2(\BR) ,D_{k,\lambda} \widehat{\otimes} D_{k, z_0 - \lambda} \otimes \eta_{\GL_2(\BR)}^{-1}),\quad i\in \BZ.
    $$
    \par
    Denote $\pi_{0} := D_{k,\lambda} \widehat{\otimes} D_{k, z_0 - \lambda}$. Using Borel's lemma, Shapiro's lemma, and Proposition \ref{Prop: homology borel lem}, in order to prove the theorem, it suffices to prove that
    \begin{equation}\label{Eq: ThmB}
    \oH^{\CS}_{i}(P^{\omega}\cap S_4 , (\delta_P^{\frac{1}{2}}\pi_{0})^{\omega} \otimes \Sym^l(\CN^{*}_{\omega,\BC}) \otimes \delta^{-1}_{P^{\omega}\cap S_4} \otimes \xi_{\eta,\psi}^{-1}) = 0, \quad \forall\ i,\ l \in \BZ_{\geq 0},
    \end{equation}
    where $\omega \in \{\sigma_1, \sigma_2, \sigma_3\}$.
    Since $\CO_{\sigma_1}$ and $\CO_{\sigma_3}$ are $\psi$-vanishing, (\ref{Eq: ThmB}) in these two cases follows from Lemma \ref{Lem: unip orbit vanishing}.
    \par
    Since $\sigma_2$ is matching, the terms of modular characters can be canceled. Thus it suffices to prove
    $$
    \oH^{\CS}_{i}(P^{\sigma_2}\cap S_4 , \pi_{0}^{\sigma_2} \otimes \Sym^l(\CN^{*}_{\sigma_2, \BC}) \otimes \xi_{\eta,\psi}^{-1}) = 0, \quad \forall\ i,\ l \in \BZ_{\geq 0}.
    $$
    We first prove the case when $i=l=0$. One can easily calculate that $R_{\sigma_2} \cong S_2 \times S_2$ and $V_{\sigma_2} = N^{\sigma_2}\cap S_4$, where $N$ is the unipotent radical of $P$. Thus 
    $$
    \oH^{\CS}_{0}(P^{\sigma_2}\cap S_4 , \pi_{0}^{\sigma_2} \otimes \xi_{\eta,\psi}^{-1})  = \oH^{\CS}_{0}(S_2 \times S_2 , (D_{k,\lambda}\otimes \xi_{\eta,\psi}^{-1}) \widehat{\otimes} (D_{k, z_0 - \lambda}\otimes \xi_{\eta,\psi}^{-1})) = 0,
    $$
    where the second equality follows from \kun formula and the condition that $D_{k,\lambda}$ doesn't admit a $(\eta,\psi)$-twisted Shalika peroids.
    \par
    Now we consider the case when $i>0$ or $l>0$. A center element in $P^{\sigma_2}\cap S_4$ has form 
    \[ 
    \begin{bmatrix}
    a & & &\\ 
    & d & &\\
    & & a & \\
    & &  & d
    \end{bmatrix},
    \]
    it acts on $\oH^{\CS}_{i}(V_{\sigma_2} , \pi_{0}^{\sigma_2} \otimes \Sym^{l}(\CN^{*}_{\sigma_2, \BC}) \otimes \xi_{\eta,\psi}^{-1})$ by 
    $$
    |a|^{2\lambda - z_0 + l +i} |d|^{z_0 - 2\lambda - l -i} (\sgn(ad))^{k+1+m_0+l+i}.
    $$
    If $\Re(2\lambda - z_0) \geq 0$ then $2\lambda - z_0 + l +i \neq 0$. For the case $\Re(2\lambda - z_0) \leq 0$, \cite[Theorem 10b]{Moeglin1997} asserts that $|2\lambda - z_0| \notin \{1,2,3,\dots\}$, thus $2\lambda - z_0 + l +i \neq 0$. Following Lem \ref{Lem: center vanish lem}, we have
    $$
    \oH^{\CS}_{j}(R_{\sigma_2} , \oH^{\CS}_{i}(V_{\sigma_2}, \pi_{0}^{\sigma_2} \otimes \Sym^k(\CN^{*}_{\sigma_2, \BC}) \otimes \xi_{\eta,\psi}^{-1})) = 0.
    $$
    Thus following the spectral sequence argument,
    $$
    \oH^{\CS}_{i}(P^{\sigma_2}\cap S_4 , \pi_{0}^{\sigma_2} \otimes \Sym^l(\CN^{*}_{\sigma_2, \BC}) \otimes \xi_{\eta,\psi}^{-1}) = 0, \quad \forall\ i,\ l \in \BZ_{\geq 0}.
    $$
\end{proof}

\begin{rremark}
    The above proof also applies to the case of the limit of relative discrete series.
\end{rremark}

\begin{proof}[Proof of Theorem \ref{Main thmB}]
    We first consider the case of $\GL_2(\BK)$. An irreducible representation $\pi$ of $\GL_2(\BK)$ has an $(\eta, \psi)$-Shalika period if and only if it has a Whittaker period and the central character of $\pi$ is $\eta$. For $\BK = \BC$, the $\eta$-symplectic L-parameter must has form $\chi + \chi^{\vee} \cdot \eta$. Assuming it is generic, then the corresponding irreducible principal series representation has central character $\eta$. 
    For $\BK = \BR$, there are two cases. In the irreducible principal series case, it coincides with that for $\GL_2(\BC)$. For the irreducible 2-dimensional representation $\sigma_{k,\lambda}$ of $W_{\BR}$, note that $\GL_2(\BC) = \GSp_2(\BC)$, thus the similitude character of $\sigma_{k,\lambda}$ is 
    $\det (\sigma_{k,\lambda}) = \sgn^{k+1} |\cdot|^{2\lambda}$, which is just the central character of $D_{k,\lambda}$.
    \par
    We now turn to the general case of $\GL_{2n}(\BK)$. 
    For $\BK = \BC$, the L-parameter of $\eta$-symplectic type has form
    $$\sum_i (\chi_i + \chi_i^{\vee} \cdot \eta),$$
    thus Theorem \ref{Main thmB} follows from the case of $\GL_2(\BC)$ and Theorem \ref{Thm: shalika parabolic}.
    \par
    For $\BK = \BR$, the L-parameter of $\eta$-symplectic type has form
    $$\sum_{i} \sigma_{k_i, \lambda_i} + \sum_j (\phi_j + \phi_j^{\vee} \cdot \eta),$$
    thus Theorem \ref{Main thmB} follows from the case of $\GL_2(\BR)$, Theorem \ref{Thm: shalika GL4} and Theorem \ref{Thm: shalika parabolic}.
\end{proof}

\section{Theta correspondence and linear periods}\label{Sec: linear model}
As proved in \cite{Gan2019}, Shalika periods are related to linear periods under theta correspondence of the dual pair $(\GL_{2n}(\BK), \GL_{2n}(\BK))$. Denote by $\Theta(\pi)$ the full theta lift of a representation $\pi$.

\begin{thm}[{\cite[Theorem 3.1]{Gan2019}}]\label{Thm: Gan theta shalika linear }
    Let $\BK$ be a non-archimedean local field of characteristic not 2. For any $\pi \in \Irr(\GL(W))$ and $\sigma \in \Irr(G_B)$, one has
    $$
    \Hom_{G_B\ltimes N(V_1)} (\Theta(\pi), \sigma \boxtimes \psi_B) \cong \Hom_{\GL(W_1)\times \GL(W_2)}(\pi^{\vee}, \sigma \boxtimes \BC)
    $$
    where we have regarded $\sigma$ naturally as a representation of $\GL(W_1)$. $($All the notations here will be explained in the following$)$.
\end{thm}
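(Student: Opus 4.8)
The plan is to run the standard \emph{unfolding} argument for the relative theta correspondence. The notation for $W=W_1\oplus W_2$, $V=V_1\oplus V_2$, the subgroups $G_B$ and $N(V_1)$, and the additive characters $\psi_B$ will be fixed in the body of the section, so here I only describe the shape of the proof.

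First I would realize the Weil representation $\omega_\psi$ of the dual pair $(\GL(W),\GL(V))$ in a mixed Schr\"{o}dinger model on a Bruhat--Schwartz space $\CS(\mathrm{M})$ of functions on a space of matrices attached to a complete polarization, with $\GL(W)$ and $\GL(V)$ acting by (suitably twisted) left and right translations. Since the big theta lift $\Theta(\pi)$ is by definition the quotient of $\omega_\psi$ cut out by the $\GL(W)$-action attached to $\pi$, a routine application of Frobenius reciprocity and tensor--hom adjunction turns the left-hand side into a $\Hom$-space out of $\omega_\psi$ regarded as a representation of $(G_B\ltimes N(V_1))\times\GL(W)$:
\[
\Hom_{G_B\ltimes N(V_1)}(\Theta(\pi),\sigma\boxtimes\psi_B)\;\cong\;\Hom_{(G_B\ltimes N(V_1))\times\GL(W)}\bigl(\omega_\psi,\ (\sigma\boxtimes\psi_B)\boxtimes\pi\bigr),
\]
the contragredient that eventually appears on $\pi$ reflecting that here $\pi$ is paired with $\omega_\psi$ through a $\GL(W)$-invariant form. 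The problem is thereby reduced to computing the \emph{$\psi_B$-twisted Jacquet module} $(\omega_\psi)_{N(V_1),\psi_B}$ as a module for $G_B\times\GL(W)$.

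The core step is this twisted-Jacquet-module computation. From the explicit Weil-representation formulas, $N(V_1)$ acts on $\CS(\mathrm{M})$ through a Heisenberg-type character $\phi\mapsto\psi(\langle q(x),\cdot\rangle)\,\phi$ with $q$ a bilinear ``moment map'' in the matrix variable $x$; hence $(\omega_\psi)_{N(V_1),\psi_B}$ is the space of $\psi_B$-equivariant functions concentrated on the subvariety $Z$ where $q(x)$ matches the functional dual to $\psi_B$. Because $N(V_1)$ is unipotent and $\psi_B$ is nontrivial, the orbit filtration of $\CS(\mathrm{M})$ along $Z$ shows that all non-open $G_B$-orbits contribute zero, leaving only the single open orbit $Z^{\circ}$; a stabilizer computation on $Z^{\circ}$ then identifies $(\omega_\psi)_{N(V_1),\psi_B}$ with $\CS(Z^{\circ})$, on which $G_B$ acts transitively and $\GL(W)$ acts through its Levi $\GL(W_1)\times\GL(W_2)$ by essentially the regular representation. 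Feeding this back into the previous display and collapsing the now-transitive $G_B$-induction by Frobenius reciprocity once more yields $\Hom_{\GL(W_1)\times\GL(W_2)}(\pi^{\vee},\sigma\boxtimes\BC)$, as claimed; the twist $\psi_B$ has been absorbed into the choice of $Z^{\circ}$ and disappears.

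I expect the twisted Jacquet module computation to be the only non-formal step. Over a $p$-adic field the vanishing of the boundary-orbit contributions is a routine consequence of the exactness of the twisted Jacquet functor together with Bernstein--Zelevinsky's geometric lemma for $\ell$-sheaves, and the open-orbit term is handled by a direct stabilizer calculation. The genuinely delicate point, and the one that forces extra work in the archimedean analogue (Theorem~\ref{thmD}), is precisely the computation of this coinvariance space of the Weil representation, since there Schwartz functions on real Nash manifolds replace Bruhat--Schwartz functions; there one invokes \cite[Lemma 6.2.2]{aizenbud2015twisted} to show that the answer has the same form as in the $p$-adic case.
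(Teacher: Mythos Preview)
This theorem is quoted from \cite{Gan2019} and not proved in the paper; the paper only proves the archimedean analogue (Theorem~\ref{Thm: archimedean theta shalika linear}), describing that argument as ``similar to the original one in \cite{Gan2019}.'' Your outline---unfold $\Theta(\pi)$ back to the Weil representation, compute the $(N(V_1),\psi_B)$-twisted Jacquet module as Schwartz functions on the level set $\CO_B=\{(T,X):XT=B\}$, recognize $\CS(\CO_B)$ as $\ind_{H_B}^{G_B\times\GL(W)}\BC$ since $G_B\times\GL(W)$ acts transitively, and collapse by Frobenius/Shapiro via the stabilizer $H_B$---matches that sketch exactly, and your remark that the archimedean case needs \cite[Lemma~6.2.2]{aizenbud2015twisted} is precisely how the paper handles it (Lemma~\ref{Lem: homology of weil rep}). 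One small imprecision: the twisted Jacquet step already yields $\CS(\CO_B)$ directly, and the relevant transitivity is that of $G_B\times\GL(W)$ on $\CO_B$; there is no separate ``orbit filtration along $Z$'' by $G_B$-orbits in either the $p$-adic argument or the paper's archimedean one.
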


In this section, we prove Theorem \ref{Thm: archimedean theta shalika linear}, which is an analogous result over the archimedean local field. The following Lemma \ref{Lem: AG homology} is essential to our proof. 

\begin{lemt}[{\cite[Lemma 6.2.2]{aizenbud2015twisted}}]\label{Lem: AG homology}
    Let X be a Nash manifold and let V be a real vector space. Let $\phi: X\to V^{\ast}$ be a Nash map. Suppose $0 \in V^{\ast}$ is a regular value of $\phi$. It gives a map $\chi: V \to \CT(X)$ given by $\chi(v)(x) = \theta(\phi(x)(v))$ (where $\CT(X)$ denotes the space of tempered functions on $X$). This gives an action of V on $S(X)$ by $\pi(v)(f) := \chi(v) \cdot f$. Then:
    \begin{enumerate}[label=(\roman*)]
        \item $\oH_i(\v,S(X)) = 0$ for $i > 0$.
        \item Let $X_0 := \phi^{-1}(0)$. Note that it is smooth. Let r denote the restriction map $r: S(X) \to S(X_0)$. Then r gives an isomorphism $\oH_0(\v,S(X)) \xrightarrow{\sim} S(X_0)$.
    \end{enumerate}
\end{lemt}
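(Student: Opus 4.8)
The plan is to identify $\oH_{*}(\v, S(X))$ with a Koszul complex of multiplication operators and then compute it by localising along $X_0=\phi^{-1}(0)$. Because $V$ acts on $S(X)$ through the unitary characters $\chi(v)$, the induced action of the abelian Lie algebra $\v$ is $v\cdot f=c\,\phi_v\cdot f$, where $c:=\theta'(0)\in\BC^{\times}$ and $\phi_v(x):=\phi(x)(v)$. Fixing a basis $v_1,\dots,v_k$ of $V$ and putting $\phi_j:=\phi_{v_j}$, the homology $\oH_{*}(\v, S(X))$ is, up to the harmless scalar $c$, the homology of the Koszul complex $(\wedge^{\bullet}\BC^k\otimes S(X),\partial)$ attached to the commuting operators ``multiplication by $\phi_j$''. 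The hypothesis that $0$ is a regular value of $\phi$ means exactly that $d\phi_1,\dots,d\phi_k$ are linearly independent along $X_0$; hence $X_0$ is a closed Nash submanifold of codimension $k$ and $\{d\phi_j|_{X_0}\}$ is a global Nash frame of $\CN^{*}_{X_0}(X)$, so in particular the normal bundle of $X_0$ in $X$ is Nash-trivial.

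Next I localise at $X_0$. On the Nash manifold $X\setminus X_0$ the Nash function $\sum_l\phi_l^2$ is nowhere zero, so $\psi:=(\sum_l\phi_l^2)^{-1}$ is Nash there and
\[
s(\omega\otimes f):=\sum_j(v_j\wedge\omega)\otimes(c^{-1}\psi\phi_j\cdot f)
\]
is a contracting homotopy of the Koszul complex of $S(X\setminus X_0)$, since $\partial s+s\partial$ is multiplication by $\sum_j\psi\phi_j^2=1$. Hence $\oH_{*}(\v, S(X\setminus X_0))=0$, and the same computation for the coordinate multiplications on $\BR^k$ gives $\oH_{*}(\v, S(\BR^k\setminus\{0\}))=0$. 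Applying the long exact sequence of $\oH_{*}(\v,-)$ to $0\to S(X\setminus X_0)\to S(X)\to S_{X_0}(X)\to 0$, with $S_{X_0}(X):=S(X)/S(X\setminus X_0)$, we obtain $\oH_{*}(\v, S(X))\cong\oH_{*}(\v, S_{X_0}(X))$, the degree-zero map being induced by $S(X)\twoheadrightarrow S_{X_0}(X)$. Since the normal bundle of $X_0$ is trivial, the Nash tubular neighbourhood theorem together with the Nash implicit function theorem identifies $\phi$, near $X_0$, with the projection $X_0\times\BR^k\to\BR^k$; so by the excision property of these Borel-type quotients $S_{X_0}(X)\cong S_{X_0\times\{0\}}(X_0\times\BR^k)$, compatibly with the $\phi_j$-actions, which become the coordinate multiplications $t_j$. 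Running the vanishing argument once more for $X_0\times(\BR^k\setminus\{0\})\subset X_0\times\BR^k$, and using that $S(X_0)\,\widehat{\otimes}\,(-)$ commutes with $\v$-homology here (Lemma \ref{Lem: NF tensor exact}, the homology of the $\BR^k$-factor being finite-dimensional), we arrive at
\[
\oH_{*}(\v, S(X))\cong\oH_{*}(\v, S_{X_0\times\{0\}}(X_0\times\BR^k))\cong S(X_0)\,\widehat{\otimes}\,\oH_{*}(\v, S(\BR^k)).
\]

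It therefore remains to show that $\oH_i(\v, S(\BR^k))=0$ for $i>0$ and $\oH_0(\v, S(\BR^k))\cong\BC$, realised by evaluation at the origin. I would prove this by induction on $k$ from the strictly exact sequence
\[
0\longrightarrow S(\BR^k)\xrightarrow{\,\cdot\,t_k\,}S(\BR^k)\xrightarrow{\,r\,}S(\BR^{k-1})\longrightarrow 0,
\]
in which injectivity of multiplication by $t_k$ is clear, surjectivity of restriction $r$ follows from the extension $h\mapsto h\otimes\rho$ with $\rho\in S(\BR)$ and $\rho(0)=1$, and exactness in the middle is Hadamard's lemma for Schwartz functions (the decay of the quotient $g$ with $f=t_k g$ coming from $g(t',t_k)=t_k^{-1}f(t',t_k/2)+(\text{rapidly decreasing})$ for $|t_k|$ large). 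Exhibiting $K_{\bullet}(t_1,\dots,t_k;S(\BR^k))$ as the mapping cone of multiplication by $t_k$ on $K_{\bullet}(t_1,\dots,t_{k-1};S(\BR^k))$ and feeding in the inductive hypothesis (with Lemma \ref{Lem: NF tensor exact} carrying the inert last coordinate through) collapses the long exact sequence of the cone to $\oH_0=S(\BR)/t_kS(\BR)=\BC$ and $\oH_i=0$ for $i>0$. Substituting back then gives $\oH_i(\v, S(X))=0$ for $i>0$ and an isomorphism $\oH_0(\v, S(X))\xrightarrow{\sim}S(X_0)$ which, by tracking the composite $S(X)\twoheadrightarrow S_{X_0}(X)\to\cdots\to S(X_0)$, is exactly the restriction map $r$.

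The Koszul bookkeeping above is routine; the real content is the Nash/analytic infrastructure invoked in the last two paragraphs: the excision isomorphism $S_{X_0}(X)\cong S_{X_0\times\{0\}}(X_0\times\BR^k)$ and the behaviour of these Borel-type quotients under $\widehat{\otimes}S(X_0)$; the identification of $\phi$ with a coordinate projection within the Nash category; and Hadamard's lemma with honest Schwartz estimates on the quotient function. These all belong to Aizenbud--Gourevitch's theory of Schwartz functions on Nash manifolds, and it is the careful verification of these points, rather than the homological algebra, that I expect to be the main obstacle.
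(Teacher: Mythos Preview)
The paper does not give its own proof of this lemma: it is quoted verbatim from \cite[Lemma 6.2.2]{aizenbud2015twisted} and used as a black box in the proof of Lemma~\ref{Lem: homology of weil rep}. So there is no in-paper argument to compare against.

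Your proposal is a self-contained proof of the cited result, and the strategy is the natural one: read $\oH_{*}(\v,S(X))$ as the Koszul homology of the commuting multiplication operators $\phi_j$, contract the complex on $X\setminus X_0$ using the Nash function $(\sum_j\phi_j^2)^{-1}$, then pass to a tubular model $X_0\times\BR^k$ and finish with the elementary computation on $\BR^k$. The only place I would tighten the write-up is the tubular step: rather than invoking ``the'' Nash tubular neighbourhood and then asserting that $\phi$ becomes the projection, build the tubular map directly as $\Psi=(r,\phi):U\to X_0\times\BR^k$ with $r$ any Nash retraction onto $X_0$; the regular-value hypothesis makes $d\Psi$ invertible along $X_0$, so after shrinking $U$ this is a Nash diffeomorphism onto an open neighbourhood of $X_0\times\{0\}$ under which $\phi$ is literally the second projection. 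With that choice the excision and $\widehat{\otimes}\,S(X_0)$ steps go through exactly as you describe, and your closing remarks correctly identify where the analytic work lies.
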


We introduce the following notations for the Schr\"{o}dinger model as in \cite[Section 3]{Gan2019}. Let $\BK$ be an archimedean local field. Let $V$ and $W$ be $2n$-dimensional vector spaces over $\BK$. Write 
$$ V = V_1 + V_2 \text{ with } \dim V_i = n.$$ 
Denote by $P(V_1) = (\GL(V_1) \times \GL(V_2)) \ltimes N(V_1)$ the parabolic subgroup of $\GL(V)$ stabilizing $V_1$, where $N(V_1) \cong \Hom(V_2, V_1)$. We work on the Schr\"{o}dinger model of the Weil representation 
$$\Omega \cong \CS((W\otimes V_1^{*}) \times (W\otimes V_2^{*})).$$
The action of $P(V_1) \times \GL(W)$ is given by
    $$
    \left\{
    \begin{aligned}
        &(h \cdot f)(T,X) = f(h^{-1} \circ T, X \circ h) \quad\quad\quad\quad\quad\quad\quad\quad\quad\quad\ \ \text{for}\ h\in \GL(W);\\
        &((g_1,g_2) \cdot f)(T,X) = (\det(g_1)/\det(g_2))^{n} \cdot f(T \circ g_1, g_2^{-1} \circ X)  \\
        &\quad \quad \quad \quad \quad \quad \quad \quad \quad\quad\quad\quad\quad\quad\quad\quad\quad\quad\text{for}\ (g_1,g_2)\in \GL(V_1) \times \GL(V_2);\\
        &(n(A) \cdot f)(T,X) = \psi(\Tr_{V_2}(XTA)) \cdot f(T, X ) \quad \quad \quad \quad \text{for}\ A \in \Hom(V_2, V_1).\\
    \end{aligned}
    \right.
    $$
Given $B \in \mathrm{Isom}(V_1,V_2)$, define the character of $N(V_1)$ by $$\psi_B : n(A) \mapsto \psi(\Tr_{V_2}(BA)).$$
The stabilizer of $\psi_B$ in $\GL(V_1) \times \GL(V_2)$ is the diagonally embedded subgroup 
$$
G_B := \{(g,BgB^{-1})|\ g\in \GL(V_1)\} \subset \GL(V_1) \times \GL(V_2).
$$
\par
The following lemma describes the coinvariance space of the Weil representation with respect to $(N(V_1), \psi_B)$.
\begin{lemt}\label{Lem: homology of weil rep}
    \[
    \oH^{\CS}_i(N(V_1), \Omega \otimes \psi_B^{-1}) =
    \left\{
    \begin{array}{ll} 
    S(\CO_B) & \text{if } i = 0; \\
    0 & \text{otherwise},
    \end{array}
    \right.
    \]
    where $$\CO_B := \{(T,X) \in \Hom(V_1, W) \times \Hom(W, V_2) |\ XT=B\}.$$
\end{lemt}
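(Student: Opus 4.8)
The plan is to apply Lemma \ref{Lem: AG homology} with $X := (W\otimes V_1^{*}) \times (W\otimes V_2^{*}) \cong \Hom(V_1, W) \times \Hom(W, V_2)$, $V := N(V_1) \cong \Hom(V_2, V_1)$, and an appropriate Nash map $\phi : X \to V^{\ast}$ whose kernel $X_0 = \phi^{-1}(0)$ is exactly $\CO_B$. First I would identify the action of $N(V_1)$ on $\Omega \otimes \psi_B^{-1}$: by the Schrödinger model formulas above, $n(A)$ acts on a section $f(T,X)$ by multiplication by $\psi(\Tr_{V_2}(XTA))\cdot\psi(\Tr_{V_2}(BA))^{-1} = \psi(\Tr_{V_2}((XT - B)A))$. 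So the action is through the character $A \mapsto \psi(\Tr_{V_2}((XT-B)A))$ of $N(V_1)$, which depends on the point $(T,X)\in X$. Passing to the Lie algebra, the infinitesimal action of $A \in \n(V_1)$ is multiplication by the tempered function $(T,X) \mapsto \langle \phi(T,X), A\rangle$ where $\phi(T,X) := XT - B$, viewed as an element of $\Hom(V_2,V_1)^{\ast}$ via the trace pairing $\langle Y, A\rangle = \Tr_{V_2}(YA)$. (Here I use Proposition \ref{Prop: homology comparison} to replace Schwartz homology $\oH^{\CS}_*(N(V_1), -)$ by Lie algebra homology $\oH_*(\n(V_1), -)$, which is legitimate since $N(V_1)$ is unipotent, hence its maximal compact is trivial and $\n(V_1)_\BC = \n(V_1)\otimes\BC$ acts as described; one must be slightly careful to pass to the complexification, but the pairing is $\BC$-bilinear so the same $\phi$ works.)

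Next I would check the regularity hypothesis: $0 \in V^{\ast} = \Hom(V_2,V_1)^{\ast}$ must be a regular value of $\phi(T,X) = XT - B$. Concretely $\phi^{-1}(0) = \{(T,X) \mid XT = B\}$, which is nonempty since $B$ is an isomorphism (take any $X$ surjective and $T$ the appropriate right inverse composed with $B$, etc.), and at such a point the differential $d\phi_{(T,X)}(\dot T, \dot X) = X\dot T + \dot X T$ must be surjective onto $\Hom(V_2,V_1)$. Since $XT = B$ is invertible, both $X$ (which is $n\times 2n$, surjective) and $T$ (which is $2n\times n$, injective) have full rank; then given any $Y \in \Hom(V_2,V_1)$ one can solve $X\dot T = Y$ (because $X$ is surjective) with $\dot X = 0$, so $d\phi$ is indeed surjective. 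This verifies that $0$ is a regular value, so Lemma \ref{Lem: AG homology} applies verbatim: $\oH_i(\n(V_1), \Omega\otimes\psi_B^{-1}) = 0$ for $i > 0$, and the restriction map induces $\oH_0(\n(V_1), \Omega\otimes\psi_B^{-1}) \xrightarrow{\sim} S(X_0) = S(\CO_B)$.

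The only remaining point is bookkeeping about the determinant twist $(\det(g_1)/\det(g_2))^n$ appearing in the $(g_1,g_2)$-action; this is irrelevant for the $N(V_1)$-homology computation since it does not involve the unipotent variables, but it would matter if one later tracked the residual $G_B$-action on $S(\CO_B)$. For the statement as given, I expect no genuine obstacle — the content is entirely in recognizing that the twisted $N(V_1)$-action is exactly of the "multiplication by $\langle\phi, A\rangle$" form required by Lemma \ref{Lem: AG homology}, and the main (minor) subtlety to watch is the passage from Schwartz homology to Lie algebra homology and the complexification of $\n(V_1)$, together with the identification $\Omega \cong \CS(X)$ as $N(V_1)$-modules being compatible with the identification of $S(X)$ used in Lemma \ref{Lem: AG homology}.
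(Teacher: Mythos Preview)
Your proposal is correct and follows essentially the same route as the paper's own proof: both reduce to Lemma~\ref{Lem: AG homology} by identifying the twisted $N(V_1)$-action with multiplication by $\psi\circ\phi$ for $\phi(T,X)=XT-B$ (via the trace pairing), and then verify that $0$ is a regular value by computing the differential $d\phi_{(T_0,X_0)}(\dot T,\dot X)=X_0\dot T+\dot X\,T_0$. The only cosmetic difference is in the surjectivity check: the paper uses that $T_0$ has full rank to produce $X_{i,j}$ with $X_{i,j}T_0=E_{i,j}$, whereas you use that $X_0$ is surjective to solve $X_0\dot T=Y$ directly --- these are dual versions of the same observation.
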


\begin{proof}
    Note that $N(V_1)$ is an abelian group.
    Denote by $\n(V_1)$ the Lie algebra of $N(V_1)$.
    Consider the following map
    \[
    \begin{array}{rccc}
    F: &\Hom(V_1, W) \times \Hom(W, V_2)&\rightarrow &\n^{*}(V_1)\\
    &(T,X)&\mapsto &\left(A\mapsto \Tr((XT-B)A) \right).\\
    \end{array}
    \]
    Note that  $F^{-1}(0) = \CO_B$, the lemma follows from Lemma \ref{Lem: AG homology} once we verify that $0$ is a regular value of $F$. Take $(T_0, X_0) \in \CO_B$, then 
    $$
    \frac{\partial F}{\partial (T_1, X_1)} (T_0, X_0) = (A \mapsto \Tr((X_1 T_0 + x_0 T_1)A) ).
    $$
    We may fix a basis of the above vector space so that the linear map can be expressed by matrices. 
    Since     $T_0$ is a $n \times 2n$ matrix with rank $n$, there exist $X_{i,j} \in \Hom(W,V_2)$ such that $X_{i,j}T_0 = E_{i,j}$, $\forall\ 1\leq i,j\leq n$, where $E_{i,j}$ is the elementary matrix of size $n \times n$. Then 
    $$
    \frac{\partial F}{\partial (0, X_{i,j})} (T_0, X_0) = (A \mapsto a_{j,i} ).
    $$
    Thus $0$ is a regular value of $F$.
\end{proof}

Note that $G_B \times \GL(W)$ acts transitively on $\CO_B$. Take $(T_0, X_0) \in \CO_B$, so that 
$$
W = \Im(T_0) \oplus \Ker(X_0) =: W_1 \oplus W_2,
$$
Then the stabilizer of $(T_0, X_0)$ in $G_B \times \GL(W)$ is 
$$
H_B = \{((g,Bgb^{-1}),(T_0gT_0^{-1}|_{W_1}, h))|\ g \in \GL(V_1),\ h \in \GL(W_2)\}. 
$$
\par
As another ingredient of the proof of Theorem \ref{Thm: archimedean theta shalika linear}, the full theta lift of generic representation is studied in \cite{fang2018godement}.
\begin{lemt}[{\cite[Theorem 1.7]{fang2018godement}}]\label{Lem: theta generic}
    Let $\BK$ be a local field. Assume $\sigma$ is generic representations of $\GL_n(\BK)$, then $\Theta(\sigma) \cong \sigma^{\vee}$.
\end{lemt}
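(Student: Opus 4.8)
Since this statement is quoted from \cite[Theorem 1.7]{fang2018godement}, in the paper it is simply cited; for the reader's orientation I outline how I would reconstruct it. The plan is to use the Godement-Jacquet realization of the Weil representation $\Omega$ for the pair $(\GL_n(\BK),\GL_n(\BK))$ on $\CS(\Mat_n(\BK))$, where the two copies of $\GL_n(\BK)$ act — up to a common unitary twist and the normalizing power of $|\det|$ appearing in the formula of \cite[Section 3]{Gan2019} — by $(g_1,g_2)\cdot f(X)=f(g_1^{-1}Xg_2)$, and $\Theta(\sigma)$ is the maximal moderate-growth quotient of $\Omega\,\widehat{\otimes}\,\sigma^{\vee}$ on which the first $\GL_n(\BK)$ acts trivially, regarded as a representation of the second factor. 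Two complementary routes are available.

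The first route is a stratification argument of the kind used throughout Section \ref{Sec: thmA}: one filters $\CS(\Mat_n(\BK))$ by matrix rank, so that the open stratum is $\CS(\GL_n(\BK))$ and the rank-$r$ graded piece is the Schwartz sections of a tempered bundle over a single $\GL_n(\BK)\times\GL_n(\BK)$-orbit. On the open stratum $\CS(\GL_n(\BK))$ is the regular representation, so its $\sigma$-coinvariants in the first variable are $\sigma^{\vee}$ in the second (integration against a matrix coefficient of $\sigma$). For the lower strata one applies Borel's lemma and Shapiro's lemma (as recalled in Section \ref{Sec: schwartz homology}) to reduce to relative homology over the stabilizer — built from $\GL_r(\BK)\times\GL_{n-r}(\BK)$ together with a unipotent part — and shows this contributes nothing once paired with a generic $\sigma$, whence $\Theta(\sigma)\cong\sigma^{\vee}$ and in particular $\Theta(\sigma)$ is irreducible. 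The second, more robust route uses the Godement-Jacquet local zeta integral directly: evaluated at the value of the parameter matching the normalization, it produces a nonzero $\GL_n(\BK)$-equivariant map realizing $\sigma^{\vee}$ as a quotient of $\Theta(\sigma)$ (so the lift is nonzero, as is automatic for this equal-rank pair); one then invokes the compatibility of the theta correspondence with Whittaker functionals to see that $\Theta(\sigma)$ is generic, and controls its length — using Shalika's multiplicity-one for the Whittaker model of the generic $\sigma^{\vee}$ — to conclude $\Theta(\sigma)=\sigma^{\vee}$.

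The step I expect to be the main obstacle is pinning $\Theta(\sigma)$ down exactly rather than merely up to the quotient $\sigma^{\vee}$: in the first route this is the vanishing over the rank-$r$ boundary, which in the Archimedean setting is a relative Lie algebra (Jacquet-module) computation rather than a finite-orbit count and must be carried out with care about moderate growth and Casselman-Wallach globalization; in the second route it is the genericity-plus-length control of $\Theta(\sigma)$. Both depend on the uniqueness of the Whittaker model for $\sigma$, which is exactly the hypothesis being used.
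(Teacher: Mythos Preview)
You correctly identify that the paper does not prove this lemma at all: it is simply quoted from \cite[Theorem 1.7]{fang2018godement} and used as a black box in the proof of Theorem \ref{Thm: archimedean theta shalika linear}. Your two sketched routes are both reasonable reconstructions; the second (via Godement--Jacquet zeta integrals and Whittaker uniqueness) is closer in spirit to what the cited reference actually does, given its title and scope.
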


Now we are ready to prove the main theorem in this section.

\begin{thm}\label{Thm: archimedean theta shalika linear}
     The notation is as shown above. Let $\BK$ be an archimedean local field. For any $\pi \in \Irr(\GL(W))$ and $\chi$ a character of $G_B$, one has
    $$
    \Hom_{G_B\ltimes N(V_1)} (\Theta(\pi), \chi \boxtimes \psi_B) \cong \Hom_{\GL(W_1)\times \GL(W_2)}(\pi^{\vee}, \chi \boxtimes \BC)
    $$
    where we regard $\chi$ naturally as a character of $\GL(W_1)$. In particular, for generic $\pi$ we have
    $$
    \Hom_{G_B\ltimes N(V_1)} (\pi, \chi \boxtimes \psi) \cong \Hom_{\GL(W_1)\times \GL(W_2)}(\pi, \chi \boxtimes \BC)
    $$
\end{thm}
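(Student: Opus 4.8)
The strategy is to adapt the proof of \cite[Theorem 3.1]{Gan2019} to the archimedean setting, the one genuinely new ingredient being the computation of the twisted $N(V_1)$-coinvariants of the Weil representation, which is supplied by Lemma \ref{Lem: homology of weil rep} (itself a consequence of the regular-value criterion Lemma \ref{Lem: AG homology}). First I would rewrite the left-hand side through $\Omega$. Realizing the big theta lift as the $\GL(W)$-coinvariants $\Theta(\pi)=(\Omega\,\widehat{\otimes}\,\pi^{\vee})_{\GL(W)}$, and using that $\GL(W)$ acts trivially on $\chi\boxtimes\psi_{B}$, the adjunction defining Schwartz coinvariants (so that one works with $\oH^{\CS}_{0}$) gives
\[
\Hom_{G_{B}\ltimes N(V_{1})}\big(\Theta(\pi),\chi\boxtimes\psi_{B}\big)\;\cong\;\Hom_{(G_{B}\ltimes N(V_{1}))\times\GL(W)}\big(\Omega\,\widehat{\otimes}\,\pi^{\vee},(\chi\boxtimes\psi_{B})\boxtimes\BC\big).
\]

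Since $N(V_{1})$ acts trivially on $\pi^{\vee}$, I would then take $(N(V_{1}),\psi_{B})$-coinvariants. Lemma \ref{Lem: homology of weil rep} gives $\oH^{\CS}_{0}(N(V_{1}),\Omega\otimes\psi_{B}^{-1})\cong\CS(\CO_{B})$ together with the vanishing of the higher Schwartz homology; combined with exactness of $-\,\widehat{\otimes}\,\pi^{\vee}$ on NF-spaces (Lemma \ref{Lem: NF tensor exact}, equivalently the Künneth formula of Theorem \ref{Thm: kunneth Schwartz homology}) this allows $-\,\widehat{\otimes}\,\pi^{\vee}$ to be moved past the coinvariants functor, producing
\[
\Hom_{G_{B}\times\GL(W)}\big(\CS(\CO_{B})\,\widehat{\otimes}\,\pi^{\vee},\chi\boxtimes\BC\big).
\]
Because $G_{B}\times\GL(W)$ acts transitively on $\CO_{B}$ with stabilizer $H_{B}$ (as recorded before the statement), $\CS(\CO_{B})$ is, up to a modular character, the Schwartz induction $\ind_{H_{B}}^{G_{B}\times\GL(W)}(\BC)$, and by the projection formula it absorbs $-\,\widehat{\otimes}\,\pi^{\vee}$. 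Applying Proposition \ref{Prop: Shapiro Lem} (Frobenius reciprocity for Schwartz induction) reduces the Hom to one over $H_{B}$, twisted by explicit modular characters.

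It remains to unwind $H_{B}$. Using the isomorphism $g\mapsto T_{0}gT_{0}^{-1}|_{W_{1}}$ one identifies $H_{B}$ with the block-diagonal subgroup $\GL(W_{1})\times\GL(W_{2})\subset\GL(W)$; under this identification $\pi^{\vee}|_{H_{B}}$ is the honest restriction to this Levi, $\chi$ becomes the given character of $\GL(W_{1})$ and the $\GL(W_{2})$-factor carries the trivial character, while — exactly as in the $p$-adic computation of \cite{Gan2019} — the modular characters introduced by Shapiro's lemma cancel. This yields the first asserted isomorphism. For the final clause, I would apply this isomorphism with $\pi$ replaced by $\pi^{\vee}$: when $\pi$ is generic so is $\pi^{\vee}$, whence $\Theta(\pi^{\vee})\cong(\pi^{\vee})^{\vee}\cong\pi$ by Lemma \ref{Lem: theta generic}, and the isomorphism becomes $\Hom_{G_{B}\ltimes N(V_{1})}(\pi,\chi\boxtimes\psi_{B})\cong\Hom_{\GL(W_{1})\times\GL(W_{2})}(\pi,\chi\boxtimes\BC)$, which — after matching $G_{B}\ltimes N(V_{1})$ and $\psi_{B}$ with the Shalika subgroup and its character for a suitable choice of $B$ — is the statement used in Theorem \ref{thmD}.

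The only conceptually substantial step is the computation of $\oH^{\CS}_{*}(N(V_{1}),\Omega\otimes\psi_{B}^{-1})$ in the second paragraph; this is precisely where the archimedean case departs from \cite{Gan2019}, and it is handled by Lemma \ref{Lem: homology of weil rep} via Lemma \ref{Lem: AG homology}. The remaining difficulties are topological: one must make sure coinvariants commute with completed tensor products (hence the appeal to the vanishing of the higher $\oH^{\CS}_{i}$ and to exactness of $\widehat{\otimes}$ on NF-spaces), that all coinvariant spaces in play are Hausdorff (so that the Hom-identifications are valid on the nose), and that the modular-character bookkeeping in Shapiro's lemma comes out as in the non-archimedean case.
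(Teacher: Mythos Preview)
Your proposal is correct and follows essentially the same route as the paper: realize $\Theta(\pi)$ as $\oH^{\CS}_{0}(\GL(W),\Omega\,\widehat{\otimes}\,\pi^{\vee})$, compute the $(N(V_{1}),\psi_{B})$-coinvariants of $\Omega$ via Lemma~\ref{Lem: homology of weil rep}, recognize $\CS(\CO_{B})$ as a Schwartz induction from $H_{B}$, and apply Shapiro's lemma. One point you flag but do not resolve: the paper secures the Hausdorffness needed to pass from $\oH^{\CS}_{0}$ to $\Hom$ by invoking Corollary~\ref{Cor: Shalika coinv finite}, which shows the relevant coinvariant space is finite-dimensional (hence Hausdorff by Proposition~\ref{Prop: finite hausdorff}); you should cite this explicitly rather than leave it as a caveat.
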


\begin{proof}
    The second assertion follows from the first one and Lemma \ref{Lem: theta generic}.
    As to the first equality, the proof is similar to the original one in \cite{Gan2019}, We sketch the proof for the convenience of the readers.
    Note that
    $$
    \begin{aligned}
    &\oH^{\CS}_0(G_B \ltimes N(V_1),\Theta(\pi) \otimes (\chi^{-1} \boxtimes \psi_B^{-1})) \\
    =&  \oH^{\CS}_0(G_B \ltimes N(V_1),\oH^{\CS}_0(\GL(W), \Omega\ \widehat{\otimes} \pi^{\vee}) \otimes (\chi^{-1} \boxtimes \psi_B^{-1}))\\
     =& \oH^{\CS}_0(G_B \times \GL(W),\oH^{\CS}_0(N(V_1), \Omega \otimes \psi_B^{-1}) \otimes \chi^{-1}\ \widehat{\otimes} \pi^{\vee})\\
     =& \oH^{\CS}_0(G_B \times \GL(W),(\ind^{G_B \times \GL(W)}_{H_B} \BC)\ \otimes \chi^{-1}\ \widehat{\otimes} \pi^{\vee})\\
     =& \oH^{\CS}_0(\GL(W_1) \times \GL(W_2), \pi^{\vee} \otimes (\chi^{-1} \boxtimes \BC) )
    \end{aligned}
    $$
    where the third equality follows from Lemma \ref{Lem: homology of weil rep} and the discussion after it, and the fourth equality follows from Shapiro's lemma. The first term occurring in the equality is finite-dimensional according to Corollary \ref{Cor: Shalika coinv finite}, thus the theorem follows by taking continuous dual.
\end{proof}

\section{Restriction to \texorpdfstring{$\GL_{2n}^{+}(\BR)$}{TEXT}}\label{Sec: restrict to GL+}
In this section, we prove Theorem \ref{thmE}.
We fix the following notations in this section. For $m \in \BZ_{\geq 1}$, let $G_m := \GL_m(\BR)$ be the real general linear group of rank $m$, and let $G_m^{+} := \GL_m^{+}(\BR)$ be the identity component of $G_m$. For $a \in \BR^{\times}$, let 
$$\psi_a : \BR \to \BC^{\times},\ x \mapsto \exp(2\pi ax\sqrt{-1}).$$
\par
Let $K_{2n} := \RO_{2n}(\BR) \supset K^{+}_{2n} := \SO_{2n}(\BR)$. We first review the classification of the irreducible representations of $K_{2n}$ and $K_{2n}^{+}$. 
\par
Fix the identification
    \[
    \begin{array}{ccc}
    \SO_2(\BR)&\xrightarrow{\sim} &\RU(1)\\
    \begin{bmatrix}
    \cos \theta & \sin \theta \\ 
    -\sin \theta& \cos \theta 
    \end{bmatrix}&\mapsto &e^{i\theta}.\\
    \end{array}
    \]
According to the highest weight theory, the irreducible representations of $\SO_{2n}(\BR)$ are parametrized by $(a_1,\dots,a_n) \in \BZ^{n}$ such that
$$
a_1 \geq \cdots \geq a_{n-1} \geq |a_n|.
$$
Let $\sigma$ be the irreducible representation of $\SO_{2n}(\BR)$ with highest weight $(a_1,\dots,a_n)$. When $a_n = 0$, there are exactly two non-isomorphic irreducible representations $\tau_1,\tau_2$ of $\RO_{2n}(\BR)$, such that 
$$\tau_i |_{\RO_{2n}(\BR)} \cong \sigma, \quad i = 1,\ 2.$$
When $a_n > 0$, denote by $\sigma'$ the representation corresponding to $(a_1,\dots,-a_n)$, then there is a unique irreducible representation $\tau$ of $\RO_{2n}(\BR)$, such that $\tau |_{\RO_{2n}(\BR)} \cong \sigma \oplus \sigma'$. These give a parametrization of the representation of $\RO_{2n}(\BR)$. When the second case happens, we denote $\tau^{+} := \sigma$ and $\tau^{-} := \sigma'$, thus 
$$\tau |_{\RO_{2n}(\BR)} \cong \tau^{+} \oplus \tau^{-}.$$
\par
Let $\pi$ be an irreducible representation of $G_{2n}$. According to Clifford theory, $\pi |_{G_{2n}^{+}}$ is either an irreducible representation of $G_{2n}^{+}$, or it is reducible and isomorphic to a direct sum of two non-isomorphic irreducible representations of $G_{2n}^{+}$. The following lemma describes the condition for the second case.

\begin{lemt}
    The notation is as shown in Subsection \ref{Subsec: LLC}. Let $\pi$ be an irreducible representation of $G_{2n}$ with its L-parameter 
    $$
    \phi_{\pi} = \sum_i \chi_{0,t_i} + \sum_j \chi_{1,s_j} + \sum_k \sigma_{l,r_k}, \quad t_i, s_j, r_k \in \BC,
    $$
    then $\pi|_{G^{+}_{2n}}$ is reducible if and only if $\chi_{0,t}$ and $\chi_{1,t}$ appear in pairs.
\end{lemt}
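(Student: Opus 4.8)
The plan is to reduce the assertion to the twisting criterion
\[
\pi|_{G_{2n}^{+}} \text{ is reducible} \iff \pi \cong \pi \otimes (\sgn\circ\det),
\]
and then to read off the right-hand condition from the L-parameter $\phi_{\pi}$. For the criterion I would argue directly from the dichotomy recalled just before the lemma (irreducible restriction, or a sum of two non-isomorphic irreducibles). If $\pi|_{G_{2n}^{+}}$ is irreducible, then any nonzero $G_{2n}$-homomorphism $T\colon \pi \to \pi\otimes(\sgn\circ\det)$ restricts to a $G_{2n}^{+}$-endomorphism of $\pi|_{G_{2n}^{+}}$, hence is a scalar by Schur's lemma; substituting $T = c\cdot\mathrm{id}$ into the intertwining relation $T\circ\pi(g) = \sgn(\det g)\cdot\pi(g)\circ T$ forces $\sgn(\det g)=1$ for all $g$, which is impossible, so $\pi\not\cong\pi\otimes(\sgn\circ\det)$. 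Conversely, if $\pi|_{G_{2n}^{+}} = \rho_1\oplus\rho_2$ with $\rho_1\not\cong\rho_2$ irreducible, then irreducibility of $\pi$ forces $\pi(g_0)$ to interchange these two canonical summands for $g_0\in G_{2n}\setminus G_{2n}^{+}$, and the (continuous) operator acting by $+1$ on $\rho_1$ and $-1$ on $\rho_2$ then intertwines $\pi$ with $\pi\otimes(\sgn\circ\det)$, so $\pi\cong\pi\otimes(\sgn\circ\det)$.

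Next I would transport the condition $\pi\cong\pi\otimes(\sgn\circ\det)$ to the dual side. By the compatibility of the local Langlands correspondence with twists by characters, it is equivalent to $\phi_{\pi}\cong\phi_{\pi}\otimes(\sgn\circ r)$, where $r\colon W_{\BR}\to\BR^{\times}$ is the reciprocity map. The key structural point is that $r$ sends $\BC^{\times}\subset W_{\BR}$ into the positive reals (one has $r(z)=z\bar z$), so $\sgn\circ r$ is trivial on $\BC^{\times}$; consequently each $\sigma_{l,r} = \Ind_{\BC^{\times}}^{W_{\BR}}\chi^{\BC}_{l,2r}$ is fixed by tensoring with $\sgn\circ r$, whereas on one-dimensional parameters this twist interchanges $\chi_{0,t}$ with $\chi_{1,t}$. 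Decomposing $\phi_{\pi}$ into irreducibles, $\phi_{\pi}\cong\phi_{\pi}\otimes(\sgn\circ r)$ therefore holds exactly when the multiset of one-dimensional constituents of $\phi_{\pi}$ is invariant under $\chi_{0,t}\leftrightarrow\chi_{1,t}$, i.e. when $\chi_{0,t}$ and $\chi_{1,t}$ occur with the same multiplicity for each $t$ — that is, when they appear in pairs. One may also carry out this second step via the Langlands classification rather than quoting the twisting compatibility: $\pi\otimes(\sgn\circ\det)$ is the Langlands quotient of $(\pi_1\otimes\sgn)\dot{\times}\cdots\dot{\times}(\pi_r\otimes\sgn)$, again a standard module since the $\sgn$-twist preserves exponents, and $\chi_{k,\lambda}\otimes\sgn = \chi_{1-k,\lambda}$ while $D_{k,\lambda}\otimes(\sgn\circ\det)\cong D_{k,\lambda}$ (checked on central characters and minimal $\RO(2)$-types, or through the correspondence), so uniqueness of the standard module gives the same conclusion.

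I do not expect a genuine obstacle. The two inputs that deserve attention are the dichotomy for $\pi|_{G_{2n}^{+}}$, which is granted from Clifford theory and underlies the reducibility criterion, and the triviality of $\sgn\circ r$ on $\BC^{\times}$ — this is precisely what makes the two-dimensional parameters $\sigma_{l,r}$ invariant and leaves only the one-dimensional constituents to keep track of; everything else is bookkeeping with multiplicities.
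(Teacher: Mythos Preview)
Your proposal is correct and follows essentially the same approach as the paper's proof, which simply states that by Clifford theory $\pi|_{G^{+}_{2n}}$ is reducible if and only if $\pi \cong \pi \otimes \sgn$, and then invokes the local Langlands correspondence. You have filled in the details of both steps that the paper leaves implicit.
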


\begin{proof}
    Following the Clifford theory, $\pi|_{G^{+}_{2n}}$ is reducible if and only if $\pi \cong \pi \otimes \sgn$. Then the lemma follows from the local Langlands correspondence.
\end{proof}

Denote the limit of relative discrete series of $\GL_2(\BR)$ by 
$$D_{0,\lambda} := |\cdot|^{\lambda} \dot{\times} \sgn|\cdot|^{\lambda},\ \lambda \in \BC.$$
Using induction by stage, the above lemma says that, when $\pi|_{G^{+}_{2n}}$ is reducible, the corresponding standard module has form 
    $$
    D_{k_1, \lambda_1} \dot{\times} \cdots \dot{\times} D_{k_n, \lambda_n},\ k_i \in \BZ_{\geq 0},\ \lambda_i \in \BC.
    $$
Note that the minimal $K_{2n}$-type of this standard module (thus the irreducible quotient) has extremal weight $(k_1+1, \dots, k_n+1)$. Denote by $\tau$ the minimal $K_{2n}$-type of $\pi$. Following the classification of the representations of $K_{2n}$, we have $\tau |_{\SO_{2n}} = \tau^{+} \oplus \tau^{-}$. We shall denote $$\pi|_{G^{+}_{2n}} = \pi^{+} \oplus \pi^{-}$$ 
such that $\tau^{+} \subset \pi^{+}$, $\tau^{-} \subset \pi^{-}$. Note that $\pi^{+}$ and $\pi^{-}$ can be gotten from each other by twisting a matrix $g$ with $\det g = -1$. 
\par
Denote by $S_{2n}$ the Shalika subgroup of $G_{2n}$.
Note that $S_{2n} \subset G^{+}_{2n}$. For an irreducible representation $\pi$ of $G_{2n}$ such that $\pi|_{G^{+}_{2n}} = \pi^{+} \oplus \pi^{-}$, we have the following equation,
    $$
    \Hom_{S_{2n}}(\pi, \xi_{\eta, \psi_a}) = \Hom_{S_{2n}}(\pi^+, \xi_{\eta, \psi_a}) \oplus \Hom_{S_{2n}}(\pi^-, \xi_{\eta, \psi_a}),
    $$
where the left hand side of the equation is at most one-dimensional. 
Assume further that $\pi$ has a non-zero $(\eta, \psi_a)$-twisted Shalika period $\mu$, we define
    \begin{equation}\label{Def: epsilon pi}
    \epsilon_{\pi} :=
    \left\{
    \begin{array}{rl} 
     1, & \text{if } \mu|_{\pi^{+}} \neq 0; \\
    -1, & \text{if } \mu|_{\pi^{-}} \neq 0.
    \end{array}
    \right.
    \end{equation}
\par
In order to calculate $\epsilon_{\pi}$, we first study the behavior of the parabolic induced representations of $G_{2n}$ when restricted to $G_{2n}^{+}$. Let $P$ be a standard parabolic subgroup of $\GL_{2n}(\BR)$ corresponding to the even partition $(n_1,\dots,n_r)$ (i.e. all $n_i$ are even), with Levi decomposition $P = L\ltimes N_P$. Let $P^{+} := P \cap \GL^{+}_{2n}(\BR)$, $L^{+} := L \cap \GL^{+}_{2n}(\BR)$, and let $P^{\circ}$ (resp. $L^{\circ}$) be the identity component of $P$ (resp. $L$), then we have $P^{+} = L^{+} \ltimes N_P$, $P^{\circ} = L^{\circ} \ltimes N_P$. Note that $L^{\circ} = G_{n_1}^{+} \times \cdots \times G_{n_r}^{+}$. 

\begin{lemt}
    Let $P$ be as above, and let $\pi_i$ be a irreducible representation of $G_{n_i}$ such that $\pi_i |_{G_{n_i}^{+}} = \pi_i^{+} \oplus \pi_i^{-}$, then we have
    $$
    (\Ind^{G_{2n}}_P \pi_1 \widehat{\otimes} \cdots \widehat{\otimes} \pi_r) |_{G^{+}_{2n}} \cong (\Ind^{G^{+}_{2n}}_{P^{\circ}} \pi_1^{+} \widehat{\otimes} \cdots \widehat{\otimes} \pi_{r-1}^{+} \widehat{\otimes} \pi_r^{+} )
    \oplus (\Ind^{G^{+}_{2n}}_{P^{\circ}} \pi_1^{+} \widehat{\otimes} \cdots \widehat{\otimes} \pi_{r-1}^{+} \widehat{\otimes} \pi_r^{-}).
    $$
\end{lemt}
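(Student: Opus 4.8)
The plan is to prove this by induction in stages combined with Clifford theory for the index-two inclusions $G_{n_i}^{+}\subset G_{n_i}$. Write $\rho := \pi_1\widehat{\otimes}\cdots\widehat{\otimes}\pi_r$, regarded as a representation of $P = L\ltimes N_P$ inflated from $L = G_{n_1}\times\cdots\times G_{n_r}$, and abbreviate $\pi^{\vec\epsilon} := \pi_1^{\epsilon_1}\widehat{\otimes}\cdots\widehat{\otimes}\pi_r^{\epsilon_r}$ for $\vec\epsilon\in\{+,-\}^{r}$. First I would note that $L$, hence $P$, contains elements of negative determinant, so $G_{2n} = P\cdot G_{2n}^{+}$ and $P\backslash G_{2n} = P^{+}\backslash G_{2n}^{+}$ as $G_{2n}^{+}$-manifolds, where $P^{+}=P\cap G_{2n}^{+}$. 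Since $P\backslash G_{2n}$ is compact, parabolic induction is the space of smooth moderate-growth sections of the associated $G_{2n}$-bundle, and restricting this bundle to $G_{2n}^{+}$ gives the Mackey identification
\[
(\Ind_{P}^{G_{2n}}\rho)\big|_{G_{2n}^{+}}\;\cong\;\Ind_{P^{+}}^{G_{2n}^{+}}\big(\rho|_{P^{+}}\big).
\]
A routine check (the Lie algebras of $P$, $P^{+}$, $P^{\circ}$ all coincide, so the relevant modular characters restrict correctly) shows the inductions may be taken normalized and are compatible with induction in stages.

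The main step is to decompose $\rho|_{P^{+}}$. As $N_P$ lies in the kernel of $\rho$ and is contained in both $P^{\circ}$ and $P^{+}$, this reduces to decomposing $\rho|_{L^{+}}$, where $L^{\circ}=\prod_i G_{n_i}^{+}$ and $L^{+} := \ker\mu$ for $\mu : L\to\{\pm1\}$, $\mu(g_1,\dots,g_r):=\prod_i\sgn(\det g_i)$. Restricting all the way down, $\rho|_{L^{\circ}}=\bigoplus_{\vec\epsilon}\pi^{\vec\epsilon}$ is a multiplicity-free sum of $2^{r}$ pairwise non-isomorphic irreducibles (Clifford theory for index two forces $\pi_i^{+}\not\cong\pi_i^{-}$, and external tensor products of distinct irreducibles stay distinct). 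The finite quotient $L^{+}/L^{\circ}\cong\{\vec\delta\in(\BZ/2)^{r}:\textstyle\sum_i\delta_i=0\}$ acts on these summands by conjugation: the class flipping the determinant signs exactly in the coordinates indexed by a set $T$ (necessarily $|T|$ even) sends $\pi^{\vec\epsilon}$ to $\pi^{\vec\epsilon'}$ with $\vec\epsilon'$ obtained from $\vec\epsilon$ by flipping the signs on $T$. Hence the action is free with precisely two orbits, the sets of $\vec\epsilon$ with $\prod_i\epsilon_i=+1$, resp.\ $-1$. Writing $\rho|_{L^{+}}=\rho^{+}\oplus\rho^{-}$ for the corresponding $L^{+}$-stable summands, Frobenius reciprocity yields a nonzero $L^{+}$-map $\Ind_{L^{\circ}}^{L^{+}}\pi^{(+,\dots,+)}\to\rho^{+}$ (the target contains $\pi^{(+,\dots,+)}$ with multiplicity one); its kernel is an $L^{+}$-submodule whose $L^{\circ}$-restriction is a proper sub-sum of a multiplicity-free semisimple module on which $L^{+}$ permutes the summands transitively, hence vanishes, and its image then has full $L^{\circ}$-restriction, so the map is an isomorphism. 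The same argument gives $\rho^{-}\cong\Ind_{L^{\circ}}^{L^{+}}\pi^{(+,\dots,+,-)}$, as $(+,\dots,+,-)$ lies in the second orbit.

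Finally, inflating back to $P^{+}$ (inflation commutes with these finite inductions since $N_P$ acts trivially throughout) gives $\rho|_{P^{+}}\cong\Ind_{P^{\circ}}^{P^{+}}(\pi_1^{+}\widehat{\otimes}\cdots\widehat{\otimes}\pi_r^{+})\oplus\Ind_{P^{\circ}}^{P^{+}}(\pi_1^{+}\widehat{\otimes}\cdots\widehat{\otimes}\pi_{r-1}^{+}\widehat{\otimes}\pi_r^{-})$, the two modules now viewed as representations of $P^{\circ}=L^{\circ}\ltimes N_P$. Plugging this into the display of the first paragraph and applying induction in stages $\Ind_{P^{\circ}}^{G_{2n}^{+}}=\Ind_{P^{+}}^{G_{2n}^{+}}\circ\Ind_{P^{\circ}}^{P^{+}}$ produces the asserted decomposition. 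Everything is formal once the Clifford-theoretic set-up is in place; the only point requiring care — and the step I would single out as the crux — is the orbit analysis of $L^{+}/L^{\circ}$ on the characters $\vec\epsilon$ together with the verification that $\rho^{+}$ and $\rho^{-}$ are isomorphic to the claimed induced modules \emph{as $L^{+}$-modules}, not merely after restriction to $L^{\circ}$. I do not expect a genuine obstacle beyond this bookkeeping.
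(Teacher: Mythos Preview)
Your argument is correct and follows the same global outline as the paper: first use $P\backslash G_{2n}/G_{2n}^{+}=\{1\}$ to reduce to decomposing $\rho|_{L^{+}}$, then apply induction in stages $\Ind_{P^{\circ}}^{G_{2n}^{+}}=\Ind_{P^{+}}^{G_{2n}^{+}}\circ\Ind_{P^{\circ}}^{P^{+}}$.

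The one genuine difference lies in how you carry out the middle step. The paper observes at the outset that the hypothesis $\pi_i|_{G_{n_i}^{+}}=\pi_i^{+}\oplus\pi_i^{-}$ is equivalent, by Clifford theory, to $\pi_i\cong\Ind_{G_{n_i}^{+}}^{G_{n_i}}\pi_i^{+}$; hence $\rho\cong\Ind_{L^{\circ}}^{L}(\pi_1^{+}\widehat{\otimes}\cdots\widehat{\otimes}\pi_r^{+})$, and a single Mackey decomposition over the two-element double coset space $L^{\circ}\backslash L/L^{+}=\{1,g'\}$ (with $g'=\diag(1,\dots,1,-1)$) immediately gives
\[
\rho|_{L^{+}}\cong\Ind_{L^{\circ}}^{L^{+}}(\pi_1^{+}\widehat{\otimes}\cdots\widehat{\otimes}\pi_r^{+})\;\oplus\;\Ind_{L^{\circ}}^{L^{+}}(\pi_1^{+}\widehat{\otimes}\cdots\widehat{\otimes}\pi_{r-1}^{+}\widehat{\otimes}\pi_r^{-}).
\]
You instead restrict all the way to $L^{\circ}$, list the $2^{r}$ summands, analyze the $L^{+}/L^{\circ}$-orbits on them, and then climb back up via Frobenius reciprocity and a multiplicity-one argument. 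Both routes are standard Clifford--Mackey theory; the paper's is shorter because it never unpacks the full $2^{r}$-term decomposition, while yours makes the orbit structure (and hence the reason only two inductions survive) completely explicit. Your remark that the crux is the identification \emph{as $L^{+}$-modules} is exactly the content of the double-coset step in the paper's version.
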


\begin{proof}
    Note that $\pi_i \cong \Ind^{G_{n_i}}_{G^{+}_{n_i}} \pi^{+}$, thus 
    $\pi_1 \widehat{\otimes} \cdots \widehat{\otimes} \pi_r \cong \Ind^{L}_{L^{\circ}} \pi_1^{+} \widehat{\otimes} \cdots \widehat{\otimes} \pi_{r-1}^{+} \widehat{\otimes} \pi_r^{+}.
    $
    Denote 
    $
    g' := 
    \begin{bmatrix}
    \begin{smallmatrix}
    1 & & &\\ 
    & \ddots & &\\
    & & 1 & \\
    & &  & -1
    \end{smallmatrix}
    \end{bmatrix}
    \in L \subset G_{2n},
    $
    then 
    $
     L^{\circ} \backslash L / L^{+} = \{ 1, g'\}.
    $
    Thus 
    $$
    (\pi_1 \widehat{\otimes} \cdots \widehat{\otimes} \pi_r) |_{L^{+}} \cong (\Ind^{L^{+}}_{L^{\circ}} \pi_1^{+} \widehat{\otimes} \cdots \widehat{\otimes} \pi_{r-1}^{+} \widehat{\otimes} \pi_r^{+})
    \oplus (\Ind^{L^{+}}_{L^{\circ}} \pi_1^{+} \widehat{\otimes} \cdots \widehat{\otimes} \pi_{r-1}^{+} \widehat{\otimes} \pi_r^{-}).
    $$
    Since $P \backslash G_{2n} / G_{2n}^{+} = \{ 1\}$, 
    we have
    $$
    \begin{aligned}
    \pi |_{G^{+}_{2n}} \cong & \Ind_{P^{+}}^{G_{2n}^{+}} (\pi_1 \widehat{\otimes} \cdots \widehat{\otimes} \pi_r |_{L^{+}}) \\
    \cong & (\Ind^{G^{+}_{2n}}_{P^{\circ}} \pi_1^{+} \widehat{\otimes} \cdots \widehat{\otimes} \pi_{r-1}^{+} \widehat{\otimes} \pi_r^{+})
    \oplus (\Ind^{G^{+}_{2n}}_{P^{\circ}} \pi_1^{+} \widehat{\otimes} \cdots \widehat{\otimes} \pi_{r-1}^{+} \widehat{\otimes} \pi_r^{-}).
    \end{aligned}
    $$
\end{proof}

For $\GL_2(\BR)$, as we have mentioned below Theorem \ref{Thm: shalika parabolic}, the Shalika period is closely related to the Whittaker period, which has been thoroughly studied in \cite{godement2018notes}. We have the following lemma.

\begin{lemt}\label{Lem: restrict GL2}
    Let $\pi := D_{k,\lambda}$, $k \in \BZ_{\geq 0},\ \lambda \in \BC$, then $\epsilon_{\pi} = \sgn a$.
\end{lemt}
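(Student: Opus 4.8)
The plan is to restrict everything to $\GL_2^{+}(\BR)$ and to combine the multiplicity‑one theorem of \cite{chen2020uniqueness} with the classical description of the Whittaker functions of the (limits of) discrete series of $\GL_2(\BR)$. First I would record the formal preliminaries. Since $\epsilon_{\pi}$ is defined, $\pi=D_{k,\lambda}$ carries a non‑zero $(\eta,\psi_a)$‑twisted Shalika period $\mu$, so the central character of $D_{k,\lambda}$, namely $\sgn^{k+1}|\cdot|^{2\lambda}$, must equal $\eta$ (the diagonal torus of $S_2$ is the center of $\GL_2$, on which $\xi_{\eta,\psi_a}$ restricts to $\eta$). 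As $S_2\subset\GL_2^{+}(\BR)$, the functional $\mu$ restricts to $\pi^{+}$ and to $\pi^{-}$; since $\pi^{+}\not\cong\pi^{-}$ and $\dim\Hom_{S_2}(\pi,\xi_{\eta,\psi_a})=1$ (using the displayed decomposition of $\Hom_{S_2}(\pi,\xi_{\eta,\psi_a})$ recalled just before \eqref{Def: epsilon pi}), exactly one of $\mu|_{\pi^{+}}$, $\mu|_{\pi^{-}}$ is non‑zero, and by definition
$$\epsilon_{\pi}=1\quad\Longleftrightarrow\quad\Hom_{S_2}(\pi^{+},\xi_{\eta,\psi_a})\neq 0.$$
Here $\pi^{+}$ is, by the convention of \S\ref{Subsec: LLC} and \S\ref{Sec: restrict to GL+}, the summand of $D_{k,\lambda}|_{\GL_2^{+}(\BR)}$ containing the line $\tau^{+}$ of $\SO_2(\BR)$‑weight $k+1$ inside the minimal $K_2$‑type; with the standard identifications this is the holomorphic (limit of) discrete series and $\pi^{-}$ the antiholomorphic one.

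It therefore remains to show that the holomorphic summand $\pi^{+}$ admits a non‑zero $\psi_a$‑Whittaker functional if and only if $a>0$. By Frobenius reciprocity and uniqueness of the Whittaker model such a functional is the same datum as a Whittaker function $W$ on $\GL_2^{+}(\BR)$ in the corresponding model; the explicit formula for the discrete series (see \cite{godement2018notes}) shows that the one attached to the holomorphic series restricts on the diagonal torus to a function behaving like $y\mapsto \mathrm{const}\cdot y^{(k+1)/2}e^{-2\pi a y}$ as $y\to+\infty$, which lies in the moderate‑growth model precisely when $a>0$; for the antiholomorphic summand the exponential factor is $e^{+2\pi a y}$ and the condition is $a<0$. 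Hence $\epsilon_{\pi}=\sgn a$. The twist $|\det|^{\lambda}$ is harmless, being trivial on $N_2(\BR)$ and on $\SO_2(\BR)$, and the case $k=0$ (limit of relative discrete series) is identical, as already noted in the remark after Theorem \ref{Thm: shalika GL4}.

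As an independent check of the sign one can use the reflection $w_0:=\diag(-1,1)\in\RO_2(\BR)\setminus\GL_2^{+}(\BR)$: conjugation by $w_0$ normalizes $\GL_2^{+}(\BR)$ and $S_2$, sends $\xi_{\eta,\psi_a}$ to $\xi_{\eta,\psi_{-a}}$, and acts on $\SO_2(\BR)\cong\RU(1)$ by inversion, so $\pi(w_0)$ interchanges $\pi^{+}$ and $\pi^{-}$; this gives $\Hom_{S_2}(\pi^{+},\xi_{\eta,\psi_a})\cong\Hom_{S_2}(\pi^{-},\xi_{\eta,\psi_{-a}})$, and combined with the first paragraph it shows that $\epsilon_{D_{k,\lambda}}$ relative to $\psi_{-a}$ equals $-\epsilon_{D_{k,\lambda}}$ relative to $\psi_a$, i.e. $\epsilon_{\pi}$ depends only on, and is odd in, $\sgn a$. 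The main obstacle is the last step of the second paragraph: one must match carefully the paper's convention for $\tau^{+}$ (the $\SO_2(\BR)$‑weight $k+1$ piece) with the holomorphic series and then quote, with the normalization of $\psi_a$ fixed in \S\ref{Sec: restrict to GL+}, the sign in the asymptotics of the associated Whittaker function; everything else is formal bookkeeping with multiplicity one and the restriction to $\GL_2^{+}(\BR)$.
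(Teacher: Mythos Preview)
Your proposal is correct and follows essentially the same route as the paper's proof: the paper simply cites \cite[Section 2.5]{godement2018notes} and \cite[Section 3.1]{Chen2019Deligne} for the case $a>0$, then obtains the case $a<0$ by twisting with $\begin{bmatrix}-1&0\\0&1\end{bmatrix}$, which is exactly your $w_0$. You have unpacked the Godement computation (the exponential asymptotics of the Whittaker function) rather than leaving it as a citation, and you present the $w_0$-twist as a consistency check rather than as the main argument for $a<0$, but the underlying ingredients are identical.
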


\begin{proof}
    The case when $a>0$ has been calculated in \cite[Section 2.5]{godement2018notes} and \cite[Section 3.1]{Chen2019Deligne}. Then the case when $a<0$ can be obtained by twisting  
    $\begin{bmatrix}
    -1 & 0\\
    0 & 1
    \end{bmatrix}.$
\end{proof}

For $\GL_4(\BR)$, we have the following lemma, which is similar to Theorem \ref{Thm: shalika GL4}.

\begin{lemt}\label{Lem: restrict GL4}
    Let $P \subset \GL_4(\BR)$ be the standard parabolic subgroup corresponding to the partition $(2,2)$. Denote by $\eta : \BR^{\times} \to \BC^{\times},\ t \mapsto |t|^{z_0} (\sgn t)^{m_0}$ a character of $\BR^{\times}$. Assume $D_{k,\lambda}\ (k \in \BZ_{\geq 0},\ \lambda \in \BC)$ doesn't admit a $\eta$-twisted Shalika periods (thus so is $D_{k, z_0 - \lambda}$), and $\pi := D_{k,\lambda} \dot{\times} D_{k, z_0 - \lambda}$ is irreducible, then 
    $$D^{+}_{k,\lambda} \dot{\times} D^{-}_{k, z_0 - \lambda} =\Ind^{\GL^{+}_{4}(\BR)}_{P^{\circ}} D^{+}_{k,\lambda} \widehat{\otimes} D^{-}_{k, z_0 - \lambda}$$
    admits a $(\eta,\psi)$-twisted Shalika peroids. Moreover, we have
    $$
    \oH^{\CS}_{i}(S_4 ,D^{+}_{k,\lambda} \dot{\times} D^{-}_{k, z_0 - \lambda} \otimes \xi^{-1}_{\eta,\psi}) \cong \oH^{\CS}_{i}(\GL^{+}_2(\BR) ,D^{+}_{k,\lambda} \widehat{\otimes} D^{-}_{k, z_0 - \lambda} \otimes \eta_{\GL^{+}_2(\BR)}^{-1}),\quad i\in \BZ,
    $$
    and
    $$
    \Hom_{S_4}(D^{+}_{k,\lambda} \dot{\times} D^{-}_{k, z_0 - \lambda}, \xi_{\eta,\psi}) \cong \Hom_{\GL^{+}_2(\BR)}(D^{+}_{k,\lambda} \widehat{\otimes} D^{-}_{k, z_0 - \lambda}, \eta_{\GL^{+}_2(\BR)}).
    $$
    Thus $\epsilon_{\pi} = -1$.
\end{lemt}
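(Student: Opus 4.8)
The plan is to transcribe the proof of Theorem~\ref{Thm: shalika GL4}, working with $\GL_4^+(\BR)$, $P^{\circ}$ and the tempered bundle $\CE^-$ on $X^{\circ}:=P^{\circ}\backslash\GL_4^+(\BR)$ associated with the standard module $D^+_{k,\lambda}\dot{\times}D^-_{k,z_0-\lambda}=\Ind_{P^{\circ}}^{\GL_4^+(\BR)}D^+_{k,\lambda}\widehat{\otimes}D^-_{k,z_0-\lambda}$, in place of $\GL_4(\BR)$, $P$ and $\CE$. (Since $D_{k,\mu}\otimes\sgn\cong D_{k,\mu}$, the restriction $D_{k,\mu}|_{\GL_2^+(\BR)}$ is reducible, so $D^{\pm}_{k,\mu}$ are defined and the restriction lemma for parabolic induction established above applies.) First I would record that $D^+_{k,\lambda}\dot{\times}D^-_{k,z_0-\lambda}=\pi^-$: that restriction lemma gives $\pi|_{\GL_4^+(\BR)}\cong(D^+_{k,\lambda}\dot{\times}D^+_{k,z_0-\lambda})\oplus(D^+_{k,\lambda}\dot{\times}D^-_{k,z_0-\lambda})$, and since the weight $(k+1,k+1)$ is $\SO_4$-dominant, Frobenius reciprocity shows that the first summand contains the minimal $\SO_4$-type $\tau^+$ of $\pi$ (of highest weight $(k+1,k+1)$) with multiplicity one, while the second does not contain $\tau^+$; hence $D^+_{k,\lambda}\dot{\times}D^+_{k,z_0-\lambda}=\pi^+$ and $D^+_{k,\lambda}\dot{\times}D^-_{k,z_0-\lambda}=\pi^-$.

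Next I would describe the $S_4$-orbits on $X^{\circ}$ through the two-to-one covering $X^{\circ}\to X=P\backslash\GL_4(\BR)$: each of the four $S_4$-orbits $\CO_{\sigma_1},\dots,\CO_{\sigma_4}$ of Theorem~\ref{Thm: shalika GL4} lifts to one or two $S_4$-orbits of $X^{\circ}$. The open orbit $\CO_{\sigma_4}$ has $S_4$-stabilizer $P^{\sigma_4}\cap S_4\cong\GL_2(\BR)$, which is not contained in the identity component of $P^{\sigma_4}$, so it lifts to a single open orbit whose stabilizer is $\cong\GL_2^+(\BR)$; the $\psi$-vanishing orbits $\CO_{\sigma_1},\CO_{\sigma_3}$ lift to $\psi$-vanishing orbit(s), since the unipotent part of the stabilizer, hence the restriction of $\xi_{\eta,\psi}$ to it, is unchanged; and the matching orbit $\CO_{\sigma_2}$, with stabilizer $R_{\sigma_2}\ltimes V_{\sigma_2}$ where $R_{\sigma_2}\cong S_2\times S_2$ lies inside $\GL_2^+(\BR)\times\GL_2^+(\BR)$ and $V_{\sigma_2}$ is unipotent, lifts to orbit(s) whose stabilizers keep the same shape $(S_2\times S_2)\ltimes V_{\sigma_2}$.

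With this in hand, exactly as in Theorem~\ref{Thm: shalika GL4}, Borel's lemma, Shapiro's lemma and Proposition~\ref{Prop: homology borel lem} reduce the computation of $\oH^{\CS}_i(S_4,\Gamma^{\varsigma}(X^{\circ},\CE^-)\otimes\xi^{-1}_{\eta,\psi})$ to the orbitwise contributions. The open orbit contributes $\oH^{\CS}_i(\GL_2^+(\BR),D^+_{k,\lambda}\widehat{\otimes}D^-_{k,z_0-\lambda}\otimes\eta^{-1}_{\GL_2^+(\BR)})$ once the modular characters are cancelled by Lemma~\ref{Lem: mod char} (it is a matching orbit); the $\psi$-vanishing orbits contribute $0$ in every degree by the proof of Lemma~\ref{Lem: unip orbit vanishing}, which only uses the nontriviality of $\psi$ on a unipotent subgroup and is insensitive to passing from $\GL_4(\BR)$ to $\GL_4^+(\BR)$; and the lifts of $\CO_{\sigma_2}$ contribute $0$ in every degree, for the following reasons. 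In bidegree $(0,0)$ the Künneth formula identifies the contribution with $\oH^{\CS}_0(S_2,D^{\epsilon}_{k,\lambda}\otimes\xi^{-1}_{\eta,\psi})\widehat{\otimes}\oH^{\CS}_0(S_2,D^{\epsilon'}_{k,z_0-\lambda}\otimes\xi^{-1}_{\eta,\psi})$ for suitable signs $\epsilon,\epsilon'\in\{+,-\}$, and $\oH^{\CS}_0(S_2,D^{\pm}_{k,\lambda}\otimes\xi^{-1}_{\eta,\psi})=0$ since it is a direct summand of the finite-dimensional space $\oH^{\CS}_0(S_2,D_{k,\lambda}\otimes\xi^{-1}_{\eta,\psi})$ (Corollary~\ref{Cor: Shalika coinv finite}) whose dual $\Hom_{S_2}(D_{k,\lambda},\xi_{\eta,\psi})$ vanishes by hypothesis. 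In the remaining bidegrees the central element $\diag(a,d,a,d)$ acts on the $V_{\sigma_2}$-coinvariants through a character whose restriction to positive scalars involves $|a|^{2\lambda-z_0+l+i}$, which is nontrivial because $|2\lambda-z_0|\notin\{1,2,3,\dots\}$ by the irreducibility of $\pi$ together with \cite[Theorem~10b]{Moeglin1997}, so Lemma~\ref{Lem: center vanish lem} applies. This gives the asserted homology isomorphism for all $i$, and the $\Hom$-isomorphism follows by dualizing at $i=0$, the left-hand space being finite-dimensional as a direct summand of $\oH^{\CS}_0(S_4,\pi\otimes\xi^{-1}_{\eta,\psi})$.

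Finally, $D_{k,z_0-\lambda}\cong D^{\vee}_{k,\lambda}\otimes\eta_{\GL_2(\BR)}$ by the local Langlands correspondence, and comparing $\SO_2$-types gives $D^-_{k,z_0-\lambda}\cong(D^+_{k,\lambda})^{\vee}\otimes\eta_{\GL_2^+(\BR)}$; hence $\Hom_{\GL_2^+(\BR)}(D^+_{k,\lambda}\widehat{\otimes}D^-_{k,z_0-\lambda},\eta_{\GL_2^+(\BR)})\cong\Hom_{\GL_2^+(\BR)}\bigl(D^+_{k,\lambda}\widehat{\otimes}(D^+_{k,\lambda})^{\vee},\BC\bigr)$ is one-dimensional by Schur's lemma, and combined with the $\Hom$-isomorphism above this proves that $\pi^-=D^+_{k,\lambda}\dot{\times}D^-_{k,z_0-\lambda}$ carries a nonzero $(\eta,\psi)$-twisted Shalika period. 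Since $\dim\Hom_{S_4}(\pi,\xi_{\eta,\psi})=1$ by \cite{chen2020uniqueness} and $\Hom_{S_4}(\pi,\xi_{\eta,\psi})=\Hom_{S_4}(\pi^+,\xi_{\eta,\psi})\oplus\Hom_{S_4}(\pi^-,\xi_{\eta,\psi})$, we conclude $\Hom_{S_4}(\pi^+,\xi_{\eta,\psi})=0$, so every nonzero Shalika functional on $\pi$ restricts nontrivially to $\pi^-$; that is, $\epsilon_{\pi}=-1$. I expect the main obstacle to be the orbit bookkeeping on $X^{\circ}$—verifying that the stabilizers and modular characters of the lifted orbits behave precisely as in Theorem~\ref{Thm: shalika GL4}, so that no lifted non-open orbit spoils the vanishing—whereas all the genuinely homological inputs (the vanishing lemmas, the Künneth formula, and Moeglin's irreducibility bound) transfer verbatim.
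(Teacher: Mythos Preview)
Your proposal is correct and follows essentially the same approach as the paper: both transcribe the argument of Theorem~\ref{Thm: shalika GL4} to $X^{\circ}=P^{\circ}\backslash\GL_4^+(\BR)$, verify that the non-open orbits contribute nothing (via $\psi$-vanishing, K\"unneth, and the center-action Lemma~\ref{Lem: center vanish lem} with Moeglin's bound), and identify the open-orbit contribution with $\oH^{\CS}_i(\GL_2^+(\BR),D^+_{k,\lambda}\widehat{\otimes}D^-_{k,z_0-\lambda}\otimes\eta^{-1})$. The paper simply lists the six double-coset representatives $\{\sigma_1,\,g'\sigma_2,\,g_0g'\sigma_2,\,\sigma_3,\,g_0\sigma_3,\,\sigma_4\}$ and asserts the rest; you instead describe the orbits via the two-to-one cover $X^{\circ}\to X$, which is equivalent. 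Your proof is actually more complete than the paper's in two places the paper leaves implicit: you justify $D^+_{k,\lambda}\dot{\times}D^-_{k,z_0-\lambda}=\pi^-$ via minimal $\SO_4$-types, and you explain via $D^-_{k,z_0-\lambda}\cong(D^+_{k,\lambda})^\vee\otimes\eta_{\GL_2^+}$ and Schur why the open-orbit Hom space is one-dimensional, whereas the paper just writes ``the right term has dimension one for $i=0$'' and ``$\epsilon_\pi=-1$''.
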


\begin{proof}
    The proof is similar to the one of Theorem \ref{Thm: shalika GL4}, we only sketch it here for the convenience of the readers. Let 
    $$
    g' := 
    \begin{bmatrix}
    \begin{smallmatrix}
    1 & & &\\ 
    & 1& &\\
    & & 1 & \\
    & &  & -1
    \end{smallmatrix}
    \end{bmatrix}
    ,
    g_0 := 
    \begin{bmatrix}
    \begin{smallmatrix}
    1 & & &\\ 
    & -1& &\\
    & & 1 & \\
    & &  & -1
    \end{smallmatrix}
    \end{bmatrix}.
    $$
    Recall that in Theorem \ref{Thm: shalika GL4}, we have the following representative
    $\{\sigma_1 = (1,2,3,4), \sigma_2 = (1,3,2,4), \sigma_3 = (3,1,2,4), \sigma_4 = (3,4,1,2)\}.    $
    In this case, we have
    $$
    P^{\circ} \backslash G^{+}_{2n} / S_4 = \{ \sigma_1, g'\sigma_2, g_0g'\sigma_2, \sigma_3, g_0\sigma_3, \sigma_4\}.
    $$
    One can use the same argument as in Theorem \ref{Thm: shalika GL4} to show that 
    $$
    \oH^{\CS}_{i}(S_4 ,D^{+}_{k,\lambda} \dot{\times} D^{-}_{k, z_0 - \lambda} \otimes \xi^{-1}_{\eta,\psi}) \cong \oH^{\CS}_{i}(\GL^{+}_2(\BR) ,D^{+}_{k,\lambda} \widehat{\otimes} D^{-}_{k, z_0 - \lambda} \otimes \eta_{\GL^{+}_2(\BR)}^{-1}),\quad i\in \BZ,
    $$
    where the right term has dimension one for $i=0$.
    Thus 
    $$
    \dim \Hom_{S_4}(D^{+}_{k,\lambda} \dot{\times} D^{-}_{k, z_0 - \lambda}, \xi_{\eta,\psi}) = 1,
    $$
    and $\epsilon_{\pi} = -1$
\end{proof}

The following lemma is an analog of Theorem \ref{Thm: shalika parabolic} for $G^{+}_{2n}$.

\begin{lemt}\label{Lem: restrict parabolic ind}
     For two even positive integers $n_1$ and $n_2$, take two Casselman-Wallach representations $\pi_1$ and $\pi_2$ of $\GL^{+}_{n_1}(\BR)$ and $\GL^{+}_{n_2}(\BR)$, respectively. Then the normalized parabolic induction $\pi := \Ind^{\GL^{+}_{n_1+n_2}(\BR)}_{P^{\circ}_{n_1,n_2}} \pi_1 \widehat{\otimes} \pi_2$ has a non-zero $(\eta, \psi)$-Shalika periods if one of the following condition holds:
     \begin{itemize}
        \item [(1)]
        $n_1 \equiv 0$ or $n_2 \equiv 0 \mod 4$, and both $\pi_1$ and $\pi_2$ have $(\eta, \psi)$-Shalika periods.
        \item [(2)]
        $n_1 \equiv 2$ and $n_2 \equiv 2 \mod 4$, $\pi_1$ has $(\eta, \psi^{-1})$-Shalika period and $\pi_2$ has $(\eta, \psi)$-Shalika period.
    \end{itemize}
\end{lemt}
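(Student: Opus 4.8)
The plan is to follow, in the Schwartz-homology framework of Section~\ref{Sec: thmB}, the argument proving the $\GL$-analog Theorem~\ref{Thm: shalika parabolic} (\cite[Theorem~2.1]{chen2020archimedean}), while tracking the determinant-sign bookkeeping that separates cases (1) and (2). Write $m_i := n_i/2$. I would realize $\pi = \Gamma^{\CS}(X,\CE)$ with $X := P^{\circ}_{n_1,n_2}\backslash \GL^{+}_{n_1+n_2}(\BR)$ and $\CE$ the tempered bundle attached to $\pi_1\widehat{\otimes}\pi_2$, and aim to prove $\oH^{\CS}_0(S_{n_1+n_2},\pi\otimes\xi_{\eta,\psi}^{-1})\neq 0$. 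The first step is to decompose $X$ into $S_{n_1+n_2}$-orbits --- an analog of Proposition~\ref{Prop: orbit decomposition} for $\GL^{+}$, which (exactly as in Lemma~\ref{Lem: restrict GL4}) refines the $\GL$-orbit picture by splitting certain orbits in two --- and then to run the long exact sequence~\eqref{Eq: outline first LES} together with Borel's Lemma and Proposition~\ref{Prop: homology borel lem}. The proof then reduces to two points: (a) isolating one distinguished orbit whose contribution to $\oH^{\CS}_0$ is the expected nonzero product, and (b) showing that every other orbit contributes $0$ in degree $0$.

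For (a), the distinguished orbit should be $\CO_{w}$, where $w\in\GL^{+}_{n_1+n_2}(\BR)$ is (a representative built from) the permutation that swaps the two middle blocks of sizes $m_2$ and $m_1$; this is the element simultaneously aligning the Shalika parabolic of $\GL_{n_1+n_2}$ with the product of the Shalika parabolics of $\GL_{n_1}$ and $\GL_{n_2}$ sitting in the Levi $\GL_{n_1}\times\GL_{n_2}$. Its determinant is $(-1)^{m_1m_2}$, so in case (1) ($m_1m_2$ even) $w$ is already in $\GL^{+}$, whereas in case (2) (both $m_i$ odd) one must instead take $w':=w\cdot\diag(I_{m_1},-I_{m_1},I_{2m_2})$ to land in $\GL^{+}$. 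I expect this orbit to be a matching orbit, so that by Lemma~\ref{Lem: mod char} the modular-character contributions cancel and the stabilizer $P^{w}\cap S_{n_1+n_2}$ (resp. $P^{w'}\cap S_{n_1+n_2}$), modulo its unipotent radical which acts trivially on the fiber, is isomorphic to $S_{n_1}\times S_{n_2}$; the extra factor $\diag(I_{m_1},-I_{m_1})$ present in case (2) conjugates the Shalika character of the first factor from $\xi_{\eta,\psi}$ to $\xi_{\eta,\psi^{-1}}$. By Shapiro's Lemma (Proposition~\ref{Prop: Shapiro Lem}) and the \kun formula, the contribution of this orbit to $\oH^{\CS}_0$ is then $\oH^{\CS}_0(S_{n_1},\pi_1\otimes\xi_{\eta,\psi'}^{-1})\,\widehat{\otimes}\,\oH^{\CS}_0(S_{n_2},\pi_2\otimes\xi_{\eta,\psi}^{-1})$, with $\psi'=\psi$ in case (1) and $\psi'=\psi^{-1}$ in case (2), which is nonzero by the hypotheses on $\pi_1$ and $\pi_2$.

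For (b), the remaining orbits fall, as in Section~\ref{Sec: thmB}, into $\psi$-vanishing ones --- killed by Lemma~\ref{Lem: unip orbit vanishing} --- and $\psi$-unvanishing non-distinguished ones. For the latter I would exhibit a central element of the stabilizer acting on the relevant fiber by a scalar $\neq 1$ and apply Lemma~\ref{Lem: center vanish lem}; since $\pi_1,\pi_2$ may be taken to be (summands of) standard modules, the scalars in question are monomials in the exponents of the constituent $D_{k,\lambda}$'s, and their nonvanishing follows from the Langlands ordering together with the irreducibility constraint of \cite[Theorem~10b]{Moeglin1997}, exactly as in the $\GL_4$ case of Theorem~\ref{Thm: shalika GL4}. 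Granting (a) and (b), the long exact sequence~\eqref{Eq: outline first LES} yields $\oH^{\CS}_0(S_{n_1+n_2},\pi\otimes\xi_{\eta,\psi}^{-1})\cong\oH^{\CS}_0(S_{n_1},\pi_1\otimes\xi_{\eta,\psi'}^{-1})\,\widehat{\otimes}\,\oH^{\CS}_0(S_{n_2},\pi_2\otimes\xi_{\eta,\psi}^{-1})\neq 0$, which proves the lemma (and, as a byproduct, an isomorphism refining it, in the spirit of Lemma~\ref{Lem: restrict GL4}). The part I expect to be the main obstacle is exactly this sign bookkeeping: checking that the block-swap representative has determinant $(-1)^{m_1m_2}$, that the correcting diagonal element can be chosen to twist $\psi$ on precisely the $\pi_1$-factor, and --- more laboriously --- carrying out the $\GL^{+}$ orbit enumeration and verifying that none of the (roughly twice as many) non-distinguished orbits contributes in degree $0$.
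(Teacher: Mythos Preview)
Your part (a) is essentially what the paper does: it singles out the block-swap element
\[
\omega_s' = \begin{bmatrix} 1_{m_1} & & & \\ & 0 & 1_{m_2} & \\ & 1_{m_1} & 0 & \\ & & & 1_{m_2} \end{bmatrix}, \qquad \det\omega_s' = (-1)^{m_1m_2},
\]
and, in case (2), replaces the upper-left $1_{m_1}$ by $-1_{m_1}$ to land in $\GL^{+}$; the conjugation by this extra sign is exactly what flips $\psi$ to $\psi^{-1}$ on the $\pi_1$ factor. So your sign bookkeeping is right.

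Where you diverge from the paper is in everything else. The paper does \emph{not} run a Schwartz-homology/orbit-vanishing argument here. It follows the direct analytic construction of \cite[Theorem~2.1]{chen2020archimedean}: given nonzero functionals $\mu_1,\mu_2$ one sets $\Phi(g;f):=\langle\mu_1\otimes\mu_2,f(\omega_s^{-1}g)\rangle$, checks by hand that $\Phi$ transforms under the various unipotent pieces by $\psi(\Tr(X)+\Tr(Y))$, and then feeds $\Phi$ into the integral/continuation machinery of \cite{chen2020archimedean} to produce a nonzero Shalika functional on $\pi$. This is a pure existence argument and uses nothing about the internal structure of $\pi_1,\pi_2$ beyond the hypothesis that they carry Shalika periods.

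Your part (b) is where the gap lies. The lemma is stated for \emph{arbitrary} Casselman-Wallach representations $\pi_i$ of $\GL^{+}_{n_i}(\BR)$; you cannot ``take $\pi_1,\pi_2$ to be summands of standard modules'' with controlled exponents, and there is no Langlands ordering or irreducibility constraint of the \cite{Moeglin1997} type to invoke. For generic $\pi_i$ the non-distinguished orbits will \emph{not} vanish in degree $0$ (nor in degree $1$, which you would also need for the injectivity of the map from the open orbit), so your claimed isomorphism $\oH^{\CS}_0(S_{n_1+n_2},\pi\otimes\xi^{-1})\cong\oH^{\CS}_0(S_{n_1},\pi_1\otimes\xi^{-1})\widehat{\otimes}\oH^{\CS}_0(S_{n_2},\pi_2\otimes\xi^{-1})$ is in general false. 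The paper's direct construction sidesteps all of this: it only needs to exhibit one nonzero element of $\Hom_{S_{n_1+n_2}}(\pi,\xi_{\eta,\psi})$, and does so explicitly.
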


\begin{proof}
    Let $n_i = 2 m_i$,
    $
    \omega_s^{'} := 
    \begin{bmatrix}
    \begin{smallmatrix}
    1_{m_1} & & &\\ 
    & 0& 1_{m_2} &\\
    & 1_{m_1}& 0 & \\
    & &  & 1_{m_2}
    \end{smallmatrix}
    \end{bmatrix},
    $
    then $\det \omega_s^{'} = (-1)^{m_1m_2}$. Thus for the first case, $\omega_s^{'} \in G^{+}_{2n}$ and the proof is exactly the same as the original one in \cite[Theorem 2.1]{chen2020archimedean}. We only treat the remaining case and point out the difference.
    For the second case, take
    $
    \omega_s := 
    \begin{bmatrix}
    \begin{smallmatrix}
    -1_{m_1} & & &\\ 
    & 0& 1_{m_2} &\\
    & 1_{m_1}& 0 & \\
    & &  & 1_{m_2}
    \end{smallmatrix}
    \end{bmatrix}
    \in G_{2n}^{+}.
    $
    Then we have
    $$
    \omega_s^{-1} 
    \begin{bmatrix}
    \begin{smallmatrix}
    1_{m_1} &0 & X & Z\\ 
    & 1_{m_2}& 0 & Y\\
    & & 1_{m_1} & 0\\
    & &  & 1_{m_2}
    \end{smallmatrix}
    \end{bmatrix}
    \omega_s = 
    \begin{bmatrix}
    \begin{smallmatrix}
    1_{m_1} &-X & 0 & -Z\\ 
    & 1_{m_2}& 0 & 0\\
    & & 1_{m_1} & Y\\
    & &  & 1_{m_2}
    \end{smallmatrix}
    \end{bmatrix},
    $$
    and
    $$
    \omega_s^{-1} 
    \begin{bmatrix}
    \begin{smallmatrix}
    1_{m_1} &X & 0 & 0\\ 
    & 1_{m_2}& 0 & 0\\
    & & 1_{m_1} & Y\\
    & &  & 1_{m_2}
    \end{smallmatrix}
    \end{bmatrix}
    \omega_s = 
    \begin{bmatrix}
    \begin{smallmatrix}
    1_{m_1} &0 & -X & 0\\ 
    & 1_{m_2}& 0 & Y\\
    & & 1_{m_1} & 0\\
    & &  & 1_{m_2}
    \end{smallmatrix}
    \end{bmatrix}.
    $$
    For $f \in \Ind^{\GL^{+}_{n_1+n_2}(\BR)}_{P^{\circ}_{n_1,n_2}} \pi_1 \widehat{\otimes} \pi_2$, we define a function on $G_{2n}^+$ by
    $
    \Phi(g;f) := <\mu_1 \otimes \mu_2, f(\omega_s^{-1}g)>
    $,
    where $\mu_i$ is the non-zero twisted Shalika functional on $\pi_i$. Then we obtain
    $$
    \begin{aligned}
    \Phi \left(
    \begin{bmatrix}
    \begin{smallmatrix}
    1_{m_1} &0 & X & Z\\ 
    & 1_{m_2}& 0 & Y\\
    & & 1_{m_1} & 0\\
    & &  & 1_{m_2}
    \end{smallmatrix}
    \end{bmatrix} 
    g ; f
    \right)
    = & \psi^{-1}(-\Tr(X)) \cdot \psi(\Tr(Y)) \cdot \Phi(g;f)\\
    = & \psi(\Tr(X)+\Tr(Y)) \cdot \Phi(g;f)
    \end{aligned},
    $$
    and
    $$
    \Phi \left(
    \begin{bmatrix}
    \begin{smallmatrix}
    1_{m_1} &X & 0 & 0\\ 
    & 1_{m_2}& 0 & 0\\
    & & 1_{m_1} & Y\\
    & &  & 1_{m_2}
    \end{smallmatrix}
    \end{bmatrix} 
    g ; f
    \right)
    = \Phi(g;f).
    $$
    The subsequent construction of the twisted Shalika functional on $\pi$ follows the same process as in \cite[Theorem 2.1]{chen2020archimedean}.
\end{proof}

We are now prepared to state the main theorem in this section.

\begin{thm}\label{Thm: restrict Shalika support}
    Let $\pi$ be an irreducible generic representation of $G_{2n}$. Assume $\pi$ admits a non-zero $(\eta,\psi_a)$-twisted Shalika period and $\pi|_{G^{+}_{2n}}$ is reducible, then $\pi$ has form
    $$
    D_{k_1, \lambda_1} \dot{\times} \cdots \dot{\times} D_{k_n, \lambda_n},\ k_i \in \BZ_{\geq 0},\ \lambda_i \in \BC.
    $$
    Let
    $
    p := \#\{1 \leq i \leq n |D_{a_i, \lambda_i} \text{ has } (\eta,\psi_a)\text{-twisted Shalika period} \}
    $
    and $q := \frac{n-p}{2}$, which are both integers since $\phi_{\pi}$ is of $\eta$-symplectic type. Then
        $$
        \epsilon_{\pi} = (\sgn\ a)^p \cdot (-1)^{\frac{p(p-1)}{2}+q}.
        $$
\end{thm}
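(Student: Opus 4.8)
The plan is to reduce the computation to rank at most $4$ and then propagate the invariant $\epsilon$ through parabolic induction. Following the discussion preceding the statement (Clifford theory together with the lemma on $\pi|_{G^+_{2n}}$), the hypotheses that $\pi$ is generic, carries a $(\eta,\psi_a)$-twisted Shalika period, and has reducible restriction to $G^+_{2n}$ already force $\pi\cong D_{k_1,\lambda_1}\dot\times\cdots\dot\times D_{k_n,\lambda_n}$, all factors relative discrete series of $\GL_2(\BR)$ (limits allowed). By Theorem \ref{Main thmA} the parameter $\phi_\pi$ is of $\eta$-symplectic type, so by Lemma \ref{Lem: symplectic parameter} the factors regroup, after reindexing, into $p$ blocks $\sigma_1,\dots,\sigma_p$ that are individually of $\eta$-symplectic type — equivalently, by Theorem \ref{thmC}, individually carry a $(\eta,\psi_a)$-twisted Shalika period (these have $\lambda_i=z_0/2$) — together with $q=(n-p)/2$ pairs $\{D_{m_j,\mu_j},D_{m_j,z_0-\mu_j}\}$ in which $D_{m_j,\mu_j}$ is not of $\eta$-symplectic type. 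Set $\rho_j:=D_{m_j,\mu_j}\dot\times D_{m_j,z_0-\mu_j}$. Each $\rho_j$ is irreducible (otherwise this subproduct, and hence $\pi$, would be reducible, cf. \cite{Moeglin1997}), and, reordering factors, $\pi\cong\rho_1\dot\times\cdots\dot\times\rho_q\dot\times\sigma_1\dot\times\cdots\dot\times\sigma_p$.

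Next I would collect the base-case signs: Lemma \ref{Lem: restrict GL2} gives $\epsilon_{\sigma_i}=\sgn a$, and Lemma \ref{Lem: restrict GL4} gives $\epsilon_{\rho_j}=-1$. I would also record a parity relation: for any $\tau$ on $\GL_{2m}(\BR)$ carrying a $(\eta,\psi_a)$-period, conjugating a $(\eta,\psi_a)$-Shalika functional by $\diag(-1_m,1_m)$ produces a $(\eta,\psi_{-a})$-functional, and this element lies in $\GL^+_{2m}(\BR)$ exactly when $m$ is even; hence, writing $\epsilon^{(a)}_\tau$ for $\epsilon_\tau$ computed with $\psi_a$,
$$
\epsilon^{(-a)}_\tau=(-1)^m\,\epsilon^{(a)}_\tau .
$$
In particular $\epsilon^{(-a)}_{\sigma_i}=-\sgn a$, consistent with the $\GL_2$ formula.

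Then I would build $\pi$ up block by block: $\pi^{(1)}:=\rho_1$, $\pi^{(j)}:=\pi^{(j-1)}\dot\times\rho_j$ for $2\le j\le q$, and $\pi^{(q+i)}:=\pi^{(q+i-1)}\dot\times\sigma_i$ for $1\le i\le p$. Each $\pi^{(l)}$ is again an irreducible product of $\GL_2(\BR)$-blocks, hence has reducible restriction to $G^+$ and a well-defined $\epsilon$. At every step Lemma \ref{Lem: restrict parabolic ind} applies and constructs the twisted Shalika functional on the larger group: the $\rho$-steps, and those $\sigma$-steps performed when the current rank is $\equiv 0\pmod4$ (i.e. after an even number of $\sigma$'s), are of case (1); the remaining $\sigma$-steps, performed when the current rank is $\equiv 2\pmod4$, are of case (2). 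The main obstacle is to decide, at each step, on which of the two $G^+_{2n}$-irreducible summands the freshly constructed functional lives. I would settle this by matching the minimal-$K$-type decomposition $(\pi'\dot\times\tau)^{\pm}$, read off from the lemma describing the restriction of a parabolic induction to $\GL^+_{2n}(\BR)$, against the support of the functional $\Phi(g;f)=\langle\mu_1\otimes\mu_2,f(\omega_s^{-1}g)\rangle$ from the proof of Lemma \ref{Lem: restrict parabolic ind}; one uses here that $\det\omega_s'=\det\omega_s=1$ in every step that occurs (so these Weyl elements lie in $\GL^+_{2n}(\BR)$) and tracks the $\psi\mapsto\psi^{-1}$ twist imposed in case (2). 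The resulting rule is: a case-(1) step multiplies the $\epsilon$'s of its two factors, whereas a case-(2) step multiplies $\epsilon^{(-a)}$ of the current block by $\epsilon^{(a)}$ of the incoming $\sigma_i$, which via the parity relation inserts an extra $(-1)^{m'}=-1$ since the current rank $2m'$ is then $\equiv 2\pmod4$.

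Finally, assembling the factors: the $q$ blocks $\rho_j$ contribute $(-1)^q$; the $p$ blocks $\sigma_i$ contribute $(\sgn a)^p$; and the case-(2) steps, which occur precisely on the $2$nd, $4$th, $\dots$ block $\sigma_i$ — that is, $\lfloor p/2\rfloor$ times — each contribute an additional $-1$. Using the elementary congruence $\lfloor p/2\rfloor\equiv\binom p2\pmod2$, these combine to
$$
\epsilon_\pi=(\sgn a)^p\,(-1)^{\lfloor p/2\rfloor}\,(-1)^q=(\sgn a)^p\,(-1)^{\frac{p(p-1)}{2}+q},
$$
as claimed. As noted, the genuinely delicate point is the minimal-$K$-type bookkeeping that pins down the $G^+$-component at each induction step; everything else is a routine propagation of Lemmas \ref{Lem: restrict GL2}, \ref{Lem: restrict GL4} and \ref{Lem: restrict parabolic ind}.
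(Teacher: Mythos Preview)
Your proposal is correct and follows exactly the approach the paper indicates: the paper's own proof is the single sentence ``Applying Lemma \ref{Lem: restrict GL2}, \ref{Lem: restrict GL4}, \ref{Lem: restrict parabolic ind}, the theorem is derived through an induction argument,'' and what you wrote is precisely that induction spelled out, with the base cases and the case (1)/(2) dichotomy of Lemma \ref{Lem: restrict parabolic ind} tracked explicitly. Your honest flag that the $K$-type bookkeeping (matching $\Ind^{G^+_{2n}}_{P^\circ}\pi_1^\pm\widehat\otimes\pi_2^\pm$ against $(\pi_1\dot\times\pi_2)^\pm$ via the highest weight of the minimal $K$-type) is the only substantive step is accurate, and the paper itself supplies no further detail there; your parity relation $\epsilon^{(-a)}_\tau=(-1)^m\epsilon^{(a)}_\tau$ and the count $\lfloor p/2\rfloor\equiv\binom{p}{2}\pmod 2$ are the right bookkeeping devices.
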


\begin{proof}
    Applying Lemma \ref{Lem: restrict GL2}, \ref{Lem: restrict GL4}, \ref{Lem: restrict parabolic ind}, the theorem is derived through an induction argument.
\end{proof}

\appendix

\section{\kun formula and spectral sequence for nilpotent normal subgroups}\label{Sec: Appendix A}

\subsection{\kun formula}\label{Subsec: kunneth}
In this subsection, we prove a \kun formula for Schwartz homology.
Since $\CS\mathrm{mod}_{G}$ is not an Abelian category, we first prove the following two basic lemma about homological algebra in the topological setting.

\begin{lemt}[Topological snake lemma]\label{Lem: snake lem}
Consider a commutative diagram in the category of (not necessarily Hausdorff) topological vector spaces
\[
\begin{tikzcd}
   & V_1 \arrow[r, "\alpha_1"] \arrow[d, "\phi_1"] & V_2 \arrow[r, "\alpha_2"] \arrow[d, "\phi_2"] & V_3 \arrow[r] \arrow[d, "\phi_3"] & 0 \\
  0 \arrow[r] & W_1 \arrow[r, "\beta_1"]  & W_2 \arrow[r, "\beta_2"]  & W_3  &  
\end{tikzcd}
\]
where the rows are weakly exact sequences. If $\alpha_2$ is an open map, $\beta_1 : W_1 \xrightarrow{\sim} \Ima \beta_1$ is a topological isomorphism, then we have a weakly short exact sequence
\[
\Ker(\phi_1) \xrightarrow{} \Ker(\phi_2) \xrightarrow{} \Ker(\phi_3) \xrightarrow{\delta} \operatorname{coker}(\phi_1) \xrightarrow{} \operatorname{coker}(\phi_2) \xrightarrow{} \operatorname{coker}(\phi_3)
\]
where $\Ker(\phi_i)$ are equipped with subspace topology of $V_i$, $\operatorname{coker}(\phi_i)$ are equipped with quotient topology of $W_i$.
\end{lemt}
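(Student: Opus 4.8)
The plan is to reduce to the purely algebraic snake lemma and then upgrade it to the topological setting by checking continuity of every map in the six-term sequence, the only genuinely new point being the continuity of the connecting homomorphism $\delta$.

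First I would invoke the classical snake lemma: since the two rows are exact as sequences of vector spaces and the diagram commutes, the six-term sequence $\Ker(\phi_1)\to\Ker(\phi_2)\to\Ker(\phi_3)\xrightarrow{\delta}\operatorname{coker}(\phi_1)\to\operatorname{coker}(\phi_2)\to\operatorname{coker}(\phi_3)$ exists and is exact as a sequence of vector spaces, with $\delta$ defined by the usual diagram chase. Continuity of the four maps flanking $\delta$ is then routine: $\Ker(\phi_1)\to\Ker(\phi_2)$ and $\Ker(\phi_2)\to\Ker(\phi_3)$ are the restrictions of $\alpha_1,\alpha_2$ (which carry kernels into kernels by commutativity), continuous for the subspace topologies; dually $\operatorname{coker}(\phi_1)\to\operatorname{coker}(\phi_2)$ and $\operatorname{coker}(\phi_2)\to\operatorname{coker}(\phi_3)$ are the maps induced by $\beta_1,\beta_2$ on the quotients (which carry $\Ima\phi_i$ into $\Ima\phi_{i+1}$), continuous because $W_i\twoheadrightarrow\operatorname{coker}(\phi_i)$ is a quotient map.

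The heart of the argument is continuity of $\delta\colon\Ker(\phi_3)\to\operatorname{coker}(\phi_1)$. I would set $Y:=\alpha_2^{-1}(\Ker(\phi_3))\subseteq V_2$ with the subspace topology and consider the map $\tilde\delta\colon Y\to\operatorname{coker}(\phi_1)$ sending $v$ to $q_1\bigl(\beta_1^{-1}(\phi_2(v))\bigr)$, where $q_1\colon W_1\twoheadrightarrow\operatorname{coker}(\phi_1)$ is the quotient map. This is well defined because $\beta_2(\phi_2(v))=\phi_3(\alpha_2(v))=0$ forces $\phi_2(v)\in\Ker(\beta_2)=\Ima(\beta_1)$, and $\beta_1^{-1}$ makes sense there by injectivity of $\beta_1$; it is continuous, being the composite $Y\xrightarrow{\phi_2}\Ima(\beta_1)\xrightarrow{\beta_1^{-1}}W_1\xrightarrow{q_1}\operatorname{coker}(\phi_1)$, in which the middle arrow is continuous precisely because $\beta_1$ is a topological isomorphism onto $\Ima(\beta_1)$. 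By construction of $\delta$ one has $\tilde\delta=\delta\circ(\alpha_2|_Y)$, where $\alpha_2|_Y\colon Y\to\Ker(\phi_3)$ is the corestriction of $\alpha_2$; this map is surjective since $\alpha_2$ is (the top row is weakly exact with a terminal zero), and it is open because for $U'$ open in $V_2$ one checks $\alpha_2(U'\cap Y)=\alpha_2(U')\cap\Ker(\phi_3)$, which is open in $\Ker(\phi_3)$ as $\alpha_2$ is an open map. Hence $\alpha_2|_Y$ is a topological quotient map, and continuity of $\tilde\delta=\delta\circ(\alpha_2|_Y)$ forces continuity of $\delta$. Combined with the algebraic exactness, this yields the weakly exact six-term sequence.

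I expect the main obstacle to be the bookkeeping in this last step: one must verify carefully that the restriction of an open surjection to the preimage of a subspace is again an open surjection onto that subspace, and remember that $\beta_1^{-1}$ is continuous only on $\Ima(\beta_1)$. These are exactly the places where the two topological hypotheses — openness of $\alpha_2$ and $\beta_1$ being a homeomorphism onto its image — enter, and the conclusion would fail without them.
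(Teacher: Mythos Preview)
Your proposal is correct and follows the standard argument for the topological snake lemma: algebraic exactness from the classical lemma, routine continuity of the four flanking maps, and continuity of $\delta$ via the factorization through the open surjection $\alpha_2|_Y$ together with the homeomorphism $\beta_1^{-1}$ on $\Ima(\beta_1)$. The paper does not write out a proof at all, simply noting that the argument is identical to that of \cite[Proposition~4]{schochet1999topological}; your write-up is precisely that argument, so the approaches coincide.
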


\begin{proof}
    The conditions of this lemma are slightly different from those of \cite[Proposition 4]{schochet1999topological}, but the proof is identical. We refer the readers to \cite{schochet1999topological} for details.
\end{proof}

\begin{lemt}\label{Lem: short long seq}
    Let 
    \[
    0 \xrightarrow{} A_.  \xrightarrow{\phi_.} B_.  \xrightarrow{\eta_.} C_. \xrightarrow{} 0
    \]
    be a short exact sequence of chain complexes of \Fre spaces, where each $\phi_i$ is a strict map. Then we have a weakly exact sequence
    \[
    \cdots \xrightarrow{} \oH_{i+1}(C_.) \xrightarrow{} \oH_{i}(A_.)  \xrightarrow{} \oH_{i}(B_.)  \xrightarrow{} \oH_{i}(C_.) \xrightarrow{} \oH_{i-1}(A_.) \xrightarrow{} \cdots
    \]
\end{lemt}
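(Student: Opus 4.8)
The plan is to run the classical derivation of the long exact sequence from the snake lemma, taking care of the topology via Lemma~\ref{Lem: snake lem}. Write $d$ for all the differentials, $Z_\bullet(X):=\ker d\subseteq X_\bullet$ for cycles and $B_\bullet(X):=\Ima d\subseteq X_\bullet$ for boundaries, so $\oH_\bullet(X)=Z_\bullet(X)/B_\bullet(X)$. First I would record the only topological inputs: since each $0\to A_i\to B_i\to C_i\to 0$ is exact, $\eta_i$ is a continuous surjection of \Fre spaces, hence open (open mapping theorem) and in particular strict; together with the hypothesis that each $\phi_i$ is strict, and that a strict injection of \Fre spaces is a topological isomorphism onto its (closed) image, this is all we need. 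For each $n$ I would then form the commutative diagram
\[
\begin{tikzcd}[column sep=normal]
& A_n/B_n(A) \arrow[r] \arrow[d, "\bar{d}"] & B_n/B_n(B) \arrow[r, "\alpha_2"] \arrow[d, "\bar{d}"] & C_n/B_n(C) \arrow[r] \arrow[d, "\bar{d}"] & 0 \\
0 \arrow[r] & Z_{n-1}(A) \arrow[r, "\beta_1"] & Z_{n-1}(B) \arrow[r] & Z_{n-1}(C) &
\end{tikzcd}
\]
in which the horizontal maps are induced by $\phi_\bullet$ and $\eta_\bullet$ (they descend because $\phi,\eta$ are chain maps, so $\phi(B_n(A))\subseteq B_n(B)$, $\eta(B_n(B))=B_n(C)$, and they preserve cycles) and the vertical maps are induced by $d$. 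Here $\ker\bigl(\bar{d}\colon A_n/B_n(A)\to Z_{n-1}(A)\bigr)=Z_n(A)/B_n(A)=\oH_n(A)$ and $\operatorname{coker}(\bar d)=Z_{n-1}(A)/B_{n-1}(A)=\oH_{n-1}(A)$, and likewise for $B$ and $C$.

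Next I would verify the hypotheses of Lemma~\ref{Lem: snake lem} for this diagram. Vector-space exactness of the two rows is the usual diagram chase using $\ker\eta=\Ima\phi$, the surjectivity of $\eta$, and injectivity of $\phi$; all maps are continuous, so the rows are weakly exact. For openness of $\alpha_2\colon B_n/B_n(B)\to C_n/B_n(C)$: the composite $B_n\xrightarrow{\eta_n}C_n\to C_n/B_n(C)$ is open (an open map followed by a quotient map), it factors through the quotient map $B_n\to B_n/B_n(B)$, and a map whose precomposition with a quotient map is open is itself open. For $\beta_1\colon Z_{n-1}(A)\to Z_{n-1}(B)$: since $\phi_{n-1}$ is a strict injection it is a topological isomorphism onto its closed image, and restricting a topological isomorphism to the closed subspace $Z_{n-1}(A)$ keeps it a topological isomorphism onto its image; hence $\beta_1$ is a topological isomorphism onto $\Ima\beta_1$. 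Lemma~\ref{Lem: snake lem} then gives, for each $n$, a weakly exact six-term sequence
\[
\oH_n(A)\to \oH_n(B)\to \oH_n(C)\xrightarrow{\ \delta\ }\oH_{n-1}(A)\to \oH_{n-1}(B)\to \oH_{n-1}(C),
\]
with $\delta$ the snake connecting map, all maps continuous.

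Finally I would splice these six-term sequences: the three-term tail $\oH_{n-1}(A)\to \oH_{n-1}(B)\to \oH_{n-1}(C)$ of the level-$n$ sequence coincides as maps with the three-term head of the level-$(n-1)$ sequence (both are induced by $\phi,\eta$), and exactness at each $\oH_{n-1}(\cdot)$ follows by combining what the two overlapping sequences assert there; this yields the claimed weakly exact sequence $\cdots\to \oH_{i+1}(C_\bullet)\to \oH_i(A_\bullet)\to \oH_i(B_\bullet)\to \oH_i(C_\bullet)\to \oH_{i-1}(A_\bullet)\to\cdots$. The main obstacle is really just the verification of the two topological conditions of the snake lemma — that $\alpha_2$ is open and that $\beta_1$ is a topological embedding — since that is the sole place the \Fre and strictness hypotheses enter; the rest is the standard bookkeeping. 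It is precisely strictness of the $\phi_i$ that makes $\beta_1$ a topological embedding, and the open mapping theorem for \Fre spaces that makes $\alpha_2$ open.
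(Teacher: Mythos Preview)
Your proof is correct and follows precisely the approach the paper intends: the paper's own proof reads in its entirety ``This follows the standard argument based on Lemma~\ref{Lem: snake lem}. We omit the details,'' and you have supplied exactly those details, including the key verifications that $\alpha_2$ is open (via the open mapping theorem for Fr\'echet spaces) and that $\beta_1$ is a topological embedding (via the strictness hypothesis on $\phi_\bullet$).
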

\begin{proof}
    This follows the standard argument based on Lemma \ref{Lem: snake lem}. We omit the details.
\end{proof}

We also need the following lemma since we work in the topological setting.

\begin{lemt}[{\cite[Lemma 8.6]{CHEN2021108817}}]\label{Lem: homology open map}
    Let $\phi : \CC_{\bullet}\rightarrow\CC^{\prime}_{\bullet}$ be a morphism of chain complexes of \Fre spaces. Let $i\in \BZ$. If the induced morphism
    \[
    \phi: \oH_{i}(\CC_{\bullet})\rightarrow\oH_{i}(\CC^{\prime}_{\bullet})
    \]
    is surjective, then it must be an open map.
\end{lemt}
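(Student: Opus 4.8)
The statement to prove is Lemma~\ref{Lem: homology open map}: for a morphism $\phi\colon \CC_\bullet \to \CC'_\bullet$ of chain complexes of \fre spaces, if the induced map on $i$-th homology is surjective, then it is automatically open. The plan is to reduce everything to the open mapping theorem for \fre spaces, together with the bookkeeping that homology of \fre complexes need not be Hausdorff but is still a quotient of a \fre space by a (not necessarily closed) subspace, so the quotient topology is well-behaved. First I would fix notation: write $Z_i = \ker(d_i\colon \CC_i \to \CC_{i-1})$, which is a closed subspace of $\CC_i$ and hence a \fre space; write $B_i = \Ima(d_{i+1}\colon \CC_{i+1}\to \CC_i) \subset Z_i$, which may fail to be closed; then $\oH_i(\CC_\bullet) = Z_i/B_i$ with the quotient topology. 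The same for the primed complex. The map $\phi$ restricts to a continuous map $Z_i \to Z'_i$ carrying $B_i$ into $B'_i$, inducing the continuous map $\bar\phi\colon Z_i/B_i \to Z'_i/B'_i$ on homology.

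The core step is this: consider the continuous linear map
\[
\Psi\colon Z_i \oplus \CC'_{i+1} \longrightarrow Z'_i, \qquad (z, c') \mapsto \phi(z) + d'_{i+1}(c').
\]
The domain is a \fre space (a direct sum, equivalently product, of two \fre spaces), and the codomain $Z'_i$ is a \fre space. The hypothesis that $\bar\phi$ is surjective on homology says exactly that every class in $Z'_i/B'_i$ is hit by some $\phi(z)$, i.e. every element of $Z'_i$ differs from some $\phi(z)$ by an element of $B'_i = \Ima(d'_{i+1})$; equivalently, $\Psi$ is surjective. By the open mapping theorem for \fre spaces (surjective continuous linear maps between \fre spaces are open), $\Psi$ is an open map. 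Now I would compose with the quotient map $q'\colon Z'_i \to Z'_i/B'_i$, which is open by definition of the quotient topology; hence $q'\circ\Psi$ is open. But $q'\circ\Psi(z,c') = \bar\phi(q(z))$ where $q\colon Z_i \to Z_i/B_i$ is the quotient map — in particular $q'\circ\Psi$ factors through the open surjection $\mathrm{id}\oplus 0$-style projection onto $Z_i/B_i$, composed with $\bar\phi$. To make the conclusion about $\bar\phi$ itself, I would argue: a subset $V \subseteq Z_i/B_i$ is open iff $q^{-1}(V)$ is open in $Z_i$ iff $q^{-1}(V)\oplus \CC'_{i+1}$ is open in the \fre domain; applying openness of $q'\circ\Psi$ to this saturated open set (it is saturated because adding $B'_i$ to $\phi(q^{-1}(V))$ lands back in $\phi(q^{-1}(V)) + B'_i$, whose $q$-preimage is $q^{-1}(V)$ by the fact that $\bar\phi$ is well-defined and... here one must be slightly careful) shows $\bar\phi(V)$ is open. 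The clean way to phrase this: $\bar\phi$ is open because it equals $q'\circ\Psi$ precomposed with a continuous \emph{section-up-to-$B_i$} — more precisely, for open $V\subseteq Z_i/B_i$, $\bar\phi(V) = (q'\circ\Psi)(q^{-1}(V)\times\CC'_{i+1})$, and the right-hand side is open since $q'\circ\Psi$ is open and $q^{-1}(V)\times\CC'_{i+1}$ is open.

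I would then note the one subtlety that needs checking: the identity $\bar\phi(V) = (q'\circ\Psi)(q^{-1}(V)\times\CC'_{i+1})$. The inclusion $\subseteq$ is clear (take $c'=0$). For $\supseteq$, given $z\in q^{-1}(V)$ and $c'\in \CC'_{i+1}$, we have $q'\Psi(z,c') = q'(\phi(z)+d'c') = q'(\phi(z)) = \bar\phi(q(z)) \in \bar\phi(V)$ since $q(z)\in V$. Both inclusions hold with no extra hypothesis, so the argument is complete. The only genuine input is the open mapping theorem for \fre spaces applied to $\Psi$, which requires $\Psi$ surjective (the hypothesis) and both source and target \fre (true: $Z_i, Z'_i$ are closed subspaces of the \fre spaces $\CC_i,\CC'_i$, hence \fre, and a finite product of \fre spaces is \fre).

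\textbf{Main obstacle.} I expect no serious obstacle — this is a soft functional-analytic argument. The one place to be careful is that $B_i, B'_i$ need \emph{not} be closed, so $\oH_i$ need not be Hausdorff; the argument must therefore avoid any step that implicitly assumes Hausdorffness (e.g. one cannot invoke the open mapping theorem directly for $\bar\phi$ between the homology groups, since those are not \fre spaces). The trick of lifting to the auxiliary \fre map $\Psi\colon Z_i\oplus\CC'_{i+1}\to Z'_i$ sidesteps exactly this issue, because $\Psi$ is a map between honest \fre spaces, and openness of the quotient maps $q, q'$ is automatic and requires no separation axiom. So the real content is recognizing that surjectivity of $\bar\phi$ on homology is equivalent to surjectivity of $\Psi$, after which the classical open mapping theorem does all the work.
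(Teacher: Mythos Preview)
Your argument is correct. The paper does not supply its own proof of this lemma; it simply quotes it from \cite[Lemma~8.6]{CHEN2021108817}, so there is no in-paper proof to compare against. Your reduction to the open mapping theorem via the auxiliary surjection $\Psi\colon Z_i\oplus\CC'_{i+1}\to Z'_i$ between genuine \Fre spaces, followed by the identity $\bar\phi(V)=(q'\circ\Psi)\bigl(q^{-1}(V)\times\CC'_{i+1}\bigr)$, is exactly the standard way to establish this kind of statement and handles the non-Hausdorff homology correctly.
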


We first define the completed tensor product of two bounded below chain complexes of topological vector spaces. For simplicity, we shall assume the complex is started from $0$. 
\begin{dfnt}
    Let $X_., Y_.$ be two bounded below chain complexes of topological vector spaces, both starting from 0. We define the completed tensor product of $X_.$ and $ Y_.$ by
    $$
    (X_. \widehat{\otimes} Y_. )_m = \bigoplus_{p+q=m} X_p \widehat{\otimes} Y_q, \quad \forall\ m \in \BZ
    $$
    with the direct sum topology. The chain maps are the same as the algebraic tensor product of chain complexes.
\end{dfnt}

\begin{thm}[\kun formula for chain complex]\label{Thm: kunneth chain}
    Let $X_., Y_.$ be two bounded below chain complexes of NF-spaces, both starting from 0. Assume $\forall\ k \in \BZ$, $\oH_k(X_.),\ \oH_k(Y_.)$ are NF-spaces. Then we have
    $$
    \oH_m(X_. \widehat{\otimes} Y_. ) \cong \bigoplus_{p+q=m} \oH_p(X_.) \widehat{\otimes} \oH_q(Y_.), \quad \forall\ m \in \BZ,
    $$
    as topological vector spaces. In particular, $\oH_m(X_. \widehat{\otimes} Y_. )$ are NF-spaces. 
\end{thm}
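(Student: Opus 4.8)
The plan is to prove the Künneth formula for chain complexes of NF-spaces by mimicking the classical algebraic argument, but being careful at every step about topology (strictness of maps, Hausdorffness, and openness). The classical proof splits a complex into short exact sequences of cycles and boundaries; the topological analogue requires that these subspaces be closed (so they are again NF-spaces) and that the relevant maps be strict, so that $\widehat\otimes$ remains exact by Lemma \ref{Lem: NF tensor exact}. The key structural input is that $\oH_k(X_\bullet)$ and $\oH_k(Y_\bullet)$ are assumed to be NF-spaces; by Lemma \ref{Lem: homology open map} (applied to the identity-induced map, or rather to the surjections below) the differentials will then automatically be strict, since a continuous surjection of Fréchet spaces is open.

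\textbf{Step 1: Reduce to cycles and boundaries.} For each $p$, let $Z_p \subseteq X_p$ denote the kernel of the differential $d^X_p$ and $B_p \subseteq X_p$ its image in $X_p$ (i.e. $B_p = d^X_{p+1}(X_{p+1})$). Since $d^X_p$ is continuous, $Z_p$ is closed, hence an NF-space. The subtle point is that $B_p$ is closed: because $\oH_p(X_\bullet) = Z_p/B_p$ is assumed Hausdorff (indeed NF), $B_p$ must be closed in $Z_p$, hence in $X_p$. Thus $B_p$ is also an NF-space, and the three short exact sequences
\[
0 \to Z_p \to X_p \xrightarrow{d^X_p} B_{p-1} \to 0, \qquad
0 \to B_p \to Z_p \to \oH_p(X_\bullet) \to 0
\]
are strictly exact (the surjections are open by the open mapping theorem for Fréchet spaces). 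Do the same for $Y_\bullet$, obtaining $Z'_q, B'_q$.

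\textbf{Step 2: Tensor and assemble.} Regard $Z_\bullet$ and $B_\bullet$ as chain complexes with zero differential. Then $0 \to Z_\bullet \to X_\bullet \to B_{\bullet-1} \to 0$ is a short exact sequence of complexes of NF-spaces with strict maps. Tensoring with $Y_\bullet$: by Lemma \ref{Lem: NF tensor exact} each degree stays strictly exact, so we get a short exact sequence of complexes
\[
0 \to Z_\bullet \widehat\otimes Y_\bullet \to X_\bullet \widehat\otimes Y_\bullet \to B_{\bullet-1} \widehat\otimes Y_\bullet \to 0,
\]
and hence, by Lemma \ref{Lem: short long seq}, a weakly exact homology sequence. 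Now $Z_\bullet$ has zero differential, so $Z_\bullet \widehat\otimes Y_\bullet$ has differential $1 \widehat\otimes d^Y$, and since $\widehat\otimes$ with a fixed NF-space is exact on strict sequences, $\oH_m(Z_\bullet \widehat\otimes Y_\bullet) \cong \bigoplus_{p+q=m} Z_p \widehat\otimes \oH_q(Y_\bullet)$; similarly for $B_\bullet$. Feeding these into the long exact sequence, the connecting map is identified (exactly as in the algebraic case) with the map induced by $B_p \hookrightarrow Z_p$ tensored with $\oH_q(Y_\bullet)$; its kernel is $\oH_p(X_\bullet) \widehat\otimes \oH_q(Y_\bullet)$ and it is surjective onto $B_{p-1}\widehat\otimes\oH_q(Y_\bullet)\cap(\text{image})$, which yields the claimed isomorphism of vector spaces. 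One then checks the short exact sequences splitting off these pieces are strict (using Lemma \ref{Lem: homology open map} to upgrade the algebraic surjectivity to openness), so the isomorphism is topological and the result is an NF-space as a finite direct sum of $\widehat\otimes$-products of NF-spaces.

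\textbf{The main obstacle} will be Step 1 --- verifying that $B_p$ is closed and that all the resulting short exact sequences are \emph{strict}, not merely exact as vector spaces with continuous maps. This is where the NF-space hypothesis on the homology groups is essential: without it $B_p$ need not be closed, $\oH_p$ need not be Hausdorff, and $\widehat\otimes$ would fail to preserve exactness. A secondary technical point is that Lemma \ref{Lem: short long seq} only gives a \emph{weakly} exact long sequence; to conclude the final isomorphism is topological rather than merely algebraic, I will invoke Lemma \ref{Lem: homology open map} to see that the surjections appearing are open, hence the relevant quotients carry the right topology. Once these topological bookkeeping issues are settled, the combinatorics of the long exact sequence is identical to the classical Künneth argument and requires no new idea.
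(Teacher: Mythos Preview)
Your proposal is correct and follows essentially the same route as the paper: split $X_\bullet$ via $0 \to Z_\bullet \to X_\bullet \to \Sigma B_\bullet \to 0$, tensor with $Y_\bullet$, apply Lemma~\ref{Lem: short long seq}, compute the outer terms using that $Z_\bullet$ and $B_\bullet$ have zero differential, identify the connecting map with the inclusion $B_p \hookrightarrow Z_p$ tensored with $\oH_q(Y_\bullet)$, and finish with Lemma~\ref{Lem: homology open map}. The paper organizes the argument slightly differently by first isolating two preliminary cases (a single nonzero $Y_p$, then $Y_\bullet$ with zero differential) before the general step, whereas you fold the zero-differential computation directly into Step~2; your explicit verification in Step~1 that $B_p$ is closed (using Hausdorffness of $\oH_p$) is a point the paper passes over more quickly.
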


\begin{proof}
    The proof is similar to the classical case, except for certain topological issues (where we use the nuclear \fre condition). We sketch the proof for the convenience of the readers.
    \par
    We first consider the case that $Y_i = 0$ for $i \neq p$, for some $p \in \BZ$. For $m \in \BZ$, according to Lemma \ref{Lem: NF tensor exact}, we have short exact sequences of NF-spaces
    \[
     0 \xrightarrow{} \Ker\ d_{m} \widehat{\otimes} Y_p \xrightarrow{} X_{m} \widehat{\otimes} Y_p  \xrightarrow{d_{m}\widehat{\otimes} 1_{Y_p}} \Im\ d_{m} \widehat{\otimes} Y_p \xrightarrow{} 0,
    \]
    \[
     0 \xrightarrow{} \Im\ d_{m+1} \widehat{\otimes} Y_p  \xrightarrow{} \Ker\ d_{m} \widehat{\otimes} Y_p \xrightarrow{} \oH_{m}(X_.) \widehat{\otimes} Y_p \xrightarrow{} 0.
    \]
    Then we know that $(\Im\ d_{m+1}) \widehat{\otimes} Y_p = \Im\ (d_{m+1} \widehat{\otimes} 1_{Y_p})$, $(\Ker\ d_{m}) \widehat{\otimes} Y_p = \Ker\ (d_{m} \widehat{\otimes} 1_{Y_p})$. 
    Thus $\oH_{m+p}(X_. \widehat{\otimes} Y_. ) = \oH_{m}(X_.) \widehat{\otimes} Y_p = \oH_{m}(X_.) \widehat{\otimes} \oH_p(Y_.)$.
    \par
    Secondly, we consider the case that the differential map $d^Y_{.}$ in $Y_.$ are all $0$. Note that $(X_. \widehat{\otimes} Y_. )_{m} = \bigoplus_{p+q=m} X_p \widehat{\otimes} Y_q$. Since $d^Y_{.} = 0$, the differential map $(X_. \widehat{\otimes} Y_. )_{m+1} \to (X_. \widehat{\otimes} Y_. )_{m}$ is just direct sum of family of maps $d^X_{p+1} \widehat{\otimes} 1_{Y_q} : X_{p+1} \widehat{\otimes} Y_q \to X_p \widehat{\otimes} Y_q$. Since all the occurring spaces are NF-spaces, we have $(\Im\ d^X_{p+1}) \widehat{\otimes} Y_q = \Im\ (d^X_{p+1} \widehat{\otimes} 1_{Y_q})$, $(\Ker\ d^X_{p}) \widehat{\otimes} Y_q = \Ker\ (d^X_{p} \widehat{\otimes} 1_{Y_q})$. Thus we obtain 
    $$
    \oH_{m}(X_. \widehat{\otimes} Y_. ) 
    = \frac{\bigoplus_{p+q=m} (\Ker\ d^X_{p}) \widehat{\otimes} Y_q}{\bigoplus_{p+q=m} (\Im\ d^X_{p+1}) \widehat{\otimes} Y_q}
    = \bigoplus_{p+q=m} \oH_p(X_.) \widehat{\otimes} Y_q
    = \bigoplus_{p+q=m} \oH_p(X_.) \widehat{\otimes} \oH_q(Y_.).
    $$
    \par
    For the general case, denote $Z(X)_m := \Ker\ d^X_m$, $B(X)_m := \Im\ d^X_{m+1}$. Let $Z(X)_.$ and $B(X)_.$ be the corresponding chain complexes with zero differential maps. Define the chain complex $\Sigma B(X)$ by setting $(\Sigma B(X))_{n+1}:= B(X)_n $. 
    We have a short exact sequence of chain complexes
    \[
     0 \xrightarrow{} Z(X)_. \xrightarrow{} X_.  \xrightarrow{d^X_.} \Sigma B(X)_. \xrightarrow{} 0.
    \]
    According to the condition, all these spaces are NF-spaces, thus we have
    \[
     0 \xrightarrow{} Z(X)_. \widehat{\otimes} Y_. \xrightarrow{} X_. \widehat{\otimes} Y_. \xrightarrow{d^X_.} \Sigma B(X)_. \widehat{\otimes} Y_. \xrightarrow{} 0,
    \]
    with the first maps being strict. Following Lemma \ref{Lem: short long seq}, we obtain
    \begin{multline*}
    \cdots \to \oH_{m+1}(\Sigma B(X)_. \widehat{\otimes} Y_.) \xrightarrow{} \oH_{m}(Z(X)_. \widehat{\otimes} Y_.)  \xrightarrow{} \oH_{m}(X_. \widehat{\otimes} Y_.) \\ 
    \xrightarrow{} \oH_{m}(\Sigma B(X)_. \widehat{\otimes} Y_.) \xrightarrow{} \oH_{m-1}(Z(X)_. \widehat{\otimes} Y_.) \to \cdots.
    \end{multline*}
    The first two terms, as well as the last two, can be computed using the second case, then we get the exact sequence
    \begin{multline*}
    \cdots \to \bigoplus_{p+q=m} B(X)_p \widehat{\otimes} \oH_{q}(Y_.) \xrightarrow{} \bigoplus_{p+q=m} Z(X)_p \widehat{\otimes} \oH_{q}(Y_.)  \xrightarrow{} \oH_{m}(X_. \widehat{\otimes} Y_.)  \\ 
    \xrightarrow{} \bigoplus_{p+q=m-1} B(X)_p \widehat{\otimes} \oH_{q}(Y_.) \xrightarrow{} \bigoplus_{p+q=m-1} Z(X)_p \widehat{\otimes} \oH_{q}(Y_.) \to \cdots,
    \end{multline*}
    where the connecting homomorphism is induced by the inclusion 
    $$ B(X)_p \widehat{\otimes} \oH_{q}(Y_.) \hookrightarrow Z(X)_p \widehat{\otimes} \oH_{q}(Y_.).$$
    Thus we have the short exact sequence
    \[
    0 \xrightarrow{} \bigoplus_{p+q=m} B(X)_p \widehat{\otimes} \oH_{q}(Y_.) \xrightarrow{} \bigoplus_{p+q=m} Z(X)_p \widehat{\otimes} \oH_{q}(Y_.)  \xrightarrow{} \oH_{m}(X_. \widehat{\otimes} Y_.)  \xrightarrow{} 0.
    \]
    Following Lemma \ref{Lem: homology open map}, the surjective map above is an open map. Also, we know
    \[
    0 \xrightarrow{} B(X)_p \widehat{\otimes} \oH_{q}(Y_.) \xrightarrow{}  Z(X)_p \widehat{\otimes} \oH_{q}(Y_.)  \xrightarrow{} \oH_{p}(X_.) \widehat{\otimes} \oH_{q}(Y_.)  \xrightarrow{} 0,
    \]
    thus we get 
    $$
    \oH_{m}(X_. \widehat{\otimes} Y_. ) \cong \bigoplus_{p+q=l} \oH_p(X_.) \widehat{\otimes} \oH_q(Y_.)
    $$
    as topological vector spaces.
\end{proof}

Now we focus on the Schwartz homology.

\begin{lemt}\label{Lem: kunneth trivial}
    Let $G$ be an almost linear Nash group.
    Assume that $V,\ W \in \CS\mathrm{mod}_{G}$ are both NF-spaces. If $\forall\ m \in \BZ$, $\oH^{\CS}_{m}(G, V)$ are NF-spaces, then
    $$
    \oH^{\CS}_m(G, V \widehat{\otimes} W) = \oH^{\CS}_m(G, V) \widehat{\otimes} W, \quad \forall m \in \BZ,
    $$
    as topological vector spaces.
\end{lemt}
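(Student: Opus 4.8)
The plan is to reduce $\oH^{\CS}_m(G, V\widehat{\otimes} W)$ to the computation in Theorem~\ref{Thm: kunneth chain} by using the Koszul-complex description of Schwartz homology from Proposition~\ref{Prop: homology comparison}, treating $W$ as a complex concentrated in degree $0$ whose differentials all vanish. First I would recall that, by Proposition~\ref{Prop: homology comparison}, $\oH^{\CS}_*(G, V) = \oH_*(\gC, K; V)$, the homology of the Koszul complex $\CC_\bullet(V)$ with terms $(\wedge^l(\gC/\k_{\BC})\otimes V)_K$. The key observation is that $\CC_\bullet(V\widehat{\otimes} W)$ is naturally isomorphic, as a chain complex of topological vector spaces, to $\CC_\bullet(V)\widehat{\otimes} (W)$, where $(W)$ is the one-term complex in degree $0$: indeed $(\wedge^l(\gC/\k_{\BC})\otimes V\widehat{\otimes} W)_K \cong (\wedge^l(\gC/\k_{\BC})\otimes V)_K\widehat{\otimes} W$ because the $K$-action on $W$ (which is the restriction to $K\subset G$ of the $G$-action) can be ``absorbed'': more precisely, $W$ being an NF-space in $\CS\mathrm{mod}_G$, one has the standard untwisting isomorphism of $K$-modules, and taking $K$-coinvariants of a completed projective tensor product with the nuclearity hypothesis commutes with $\widehat{\otimes} W$. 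Here I would invoke Lemma~\ref{Lem: NF tensor exact} to justify that $(-)_K\,\widehat{\otimes}\,W$ is exact on the relevant strictly exact sequences of NF-spaces.

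Next I would check the hypotheses of Theorem~\ref{Thm: kunneth chain}. Both $\CC_\bullet(V)$ and $(W)$ are bounded-below chain complexes of NF-spaces (the Koszul terms are NF since $\gC/\k_{\BC}$ is finite-dimensional and $V$ is an NF-space, and closed subspaces, quotients, and finite completed tensor products of NF-spaces are NF). The homology of $\CC_\bullet(V)$ is $\oH^{\CS}_*(G,V)$, which is assumed to consist of NF-spaces; the homology of $(W)$ is just $W$ in degree $0$, an NF-space. Therefore Theorem~\ref{Thm: kunneth chain} applies and gives
$$
\oH_m\bigl(\CC_\bullet(V)\widehat{\otimes}(W)\bigr) \cong \bigoplus_{p+q=m}\oH_p(\CC_\bullet(V))\widehat{\otimes}\oH_q((W)) = \oH^{\CS}_m(G,V)\widehat{\otimes} W,
$$
since the only nonzero term on the right is $q=0$. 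Combining this with the identification $\CC_\bullet(V\widehat{\otimes} W)\cong \CC_\bullet(V)\widehat{\otimes}(W)$ from the previous paragraph yields the claim.

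I expect the main obstacle to be the careful verification that the Koszul complex of $V\widehat{\otimes} W$ really is the completed tensor product $\CC_\bullet(V)\widehat{\otimes}(W)$ as \emph{topological} chain complexes — that is, that taking $K$-coinvariants commutes with $\widehat{\otimes} W$ in the topological category. The algebraic identity is the usual ``projection formula'' $(M\otimes W)_K \cong M_K\otimes W$ for $W$ carrying the ambient $K$-action, but topologically one must know that the quotient by $\sum_{k\in K}(k-1)\cdot(-)$ behaves well under $\widehat{\otimes} W$; this is where nuclearity and Lemma~\ref{Lem: NF tensor exact} (applied to the strictly exact sequence presenting the coinvariants, using that $K$ is compact so the relevant spaces are closed and the maps strict, cf.\ Proposition~\ref{Prop: projective hausdorff}) do the real work. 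Once that compatibility is in place, the rest is a direct appeal to Theorem~\ref{Thm: kunneth chain} and Proposition~\ref{Prop: homology comparison}, with no further analytic subtleties.
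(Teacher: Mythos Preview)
Your approach is correct but takes a different route from the paper. The paper does not invoke the Koszul complex or Theorem~\ref{Thm: kunneth chain}; instead it works directly with a strong relative projective resolution $P_\bullet\twoheadrightarrow V$ by NF-spaces, observes that $P_\bullet\widehat{\otimes} W\twoheadrightarrow V\widehat{\otimes} W$ is again such a resolution, proves $(P_k\widehat{\otimes} W)_G=(P_k)_G\widehat{\otimes} W$ by a density-plus-closedness argument, and then applies Lemma~\ref{Lem: NF tensor exact} to the resulting chain complex of NF-spaces to conclude. Your plan replaces the arbitrary projective resolution by the concrete Koszul complex, reduces the crucial ``coinvariants commute with $\widehat{\otimes} W$'' step from $G$ to the compact group $K$, and then delegates the homological algebra to the already-proved Theorem~\ref{Thm: kunneth chain}. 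The paper's argument is more self-contained (it does not need Theorem~\ref{Thm: kunneth chain}), while yours is shorter once that theorem is available.

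One caution: your phrasing that the $K$-action on $W$ ``can be absorbed'' via an ``untwisting isomorphism'' is misleading. The identity $(M\widehat{\otimes} W)_K\cong M_K\widehat{\otimes} W$ you need holds because the lemma is meant to be applied with $G$ (and hence $K$) acting \emph{trivially} on $W$; this is implicit in the paper's own proof (the line $\sum(g-1)(P_k\otimes W)=(\sum(g-1)P_k)\otimes W$ fails for a genuine diagonal action) and is how the lemma is used in Theorem~\ref{Thm: kunneth Schwartz homology}. Once you state that hypothesis explicitly, your argument goes through without further difficulty.
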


\begin{proof}
    Take $P_. \twoheadrightarrow V$ to be a strong relative projective resolution of $V$, with $P_.$ all NF-spaces, 
    then $P_. \widehat{\otimes} W \twoheadrightarrow V \widehat{\otimes} W$ is a strong relative projective resolution of $V \widehat{\otimes} W$. 
    \par
    We first compare $\sum(g-1)(P_k \widehat{\otimes} W)$ and $(\sum(g-1)P_k) \widehat{\otimes} W$, which are both closed subspace in $P_k \widehat{\otimes} W$. Note that $\sum(g-1)(P_k \otimes W) = (\sum(g-1)P_k) \otimes W$, $\sum(g-1)(P_k \otimes W)$ is dense in $\sum(g-1)(P_k \widehat{\otimes} W)$, $(\sum(g-1)P_k) \otimes W$ is dense in $(\sum(g-1)P_k) \widehat{\otimes} W$. Thus we have $\sum(g-1)(P_k \widehat{\otimes} W) = (\sum(g-1)P_k) \widehat{\otimes} W$, and $(P_k \widehat{\otimes} W)_G = (P_k)_G \widehat{\otimes} W$.
    \par
    Consider the following sequence of NF-spaces
    \[
     \cdots \xrightarrow{} (P_{m+1})_G \xrightarrow{d_{m+1}} (P_m)_G  \xrightarrow{d_{m}} (P_{m-1})_G \xrightarrow{} \cdots.
    \]
    According to Lemma \ref{Lem: NF tensor exact}, we have $(\Im\ d_{m+1}) \widehat{\otimes} W = \Im\ (d_{m+1} \widehat{\otimes} 1_{W})$, $(\Ker\ d_{m}) \widehat{\otimes} W = \Ker\ (d_{m} \widehat{\otimes} 1_{W})$. Note that
    \[
     0 \xrightarrow{} (\Im\ d_{m+1}) \widehat{\otimes} W  \xrightarrow{} (\Ker\ d_m) \widehat{\otimes} W \xrightarrow{} \oH^{\CS}_{m}(G, V) \widehat{\otimes} W \xrightarrow{} 0,
    \]
    thus 
    $
    \oH^{\CS}_m(G, V \widehat{\otimes} W) = \oH^{\CS}_m(G, V) \widehat{\otimes} W, \quad \forall m \in \BZ.$
\end{proof}

\begin{thm}[\kun formula for Schwartz homology]\label{Thm: kunneth Schwartz homology}
    Let $G_1$ and $G_2$ be two almost linear Nash groups.
    Assume $V_i \in \CS\mathrm{mod}_{G_i}$, $i = 1,2$, are both NF-spaces. If $\forall\ j \in \BZ$, $\oH^{\CS}_{j}(G_i, V_i)$ are NF-spaces, then 
    $$
    \oH^{\CS}_m(G_1 \times G_2, V_1 \widehat{\otimes} V_2) \cong \bigoplus_{p+q=m} \oH^{\CS}_p(G_1, V_1) \widehat{\otimes} \oH^{\CS}_q(G_2, V_2), \quad \forall\ m \in \BZ,
    $$
    as topological vector spaces. In particular, $\oH^{\CS}_m(G_1 \times G_2, V_1 \widehat{\otimes} V_2)$ are NF-spaces. 
\end{thm}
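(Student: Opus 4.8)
The plan is to reduce the Künneth formula for Schwartz homology to the already-established Künneth formula for chain complexes (Theorem \ref{Thm: kunneth chain}), via an explicit choice of projective resolutions on each factor and a comparison of the resulting coinvariant complexes. The key structural fact is that if $P_\bullet \twoheadrightarrow V_1$ is a strong relative projective resolution in $\CS\mathrm{mod}_{G_1}$ and $Q_\bullet \twoheadrightarrow V_2$ is one in $\CS\mathrm{mod}_{G_2}$, then the completed tensor product $P_\bullet \widehat{\otimes} Q_\bullet \twoheadrightarrow V_1 \widehat{\otimes} V_2$ should be a strong relative projective resolution in $\CS\mathrm{mod}_{G_1 \times G_2}$. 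Here one may take all the $P_i$, $Q_i$ to be NF-spaces (for instance the standard bar-type resolutions built from Schwartz-induced modules, which are relatively projective by Proposition \ref{Prop: projective hausdorff}-type constructions in \cite{CHEN2021108817}), so that every space in sight is an NF-space and Lemma \ref{Lem: NF tensor exact} applies.

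First I would verify the resolution claim: exactness of $P_\bullet \widehat{\otimes} Q_\bullet$ follows from Lemma \ref{Lem: NF tensor exact} applied repeatedly (tensoring the strictly exact resolution $P_\bullet \to V_1 \to 0$ with each $Q_j$, then assembling; one should phrase this as a double-complex argument or simply invoke the classical proof that a tensor product of resolutions is a resolution, which carries over verbatim once strictness is in hand). For relative projectivity of $P_i \widehat{\otimes} Q_j$ over $G_1 \times G_2$, I would use that $\ind_{H_1}^{G_1}(\cdot) \widehat{\otimes} \ind_{H_2}^{G_2}(\cdot) \cong \ind_{H_1 \times H_2}^{G_1 \times G_2}(\cdot \widehat{\otimes} \cdot)$ for Schwartz induction, reducing to the case of induced modules, which are relatively projective. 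Next, I would identify the coinvariants: as in the proof of Lemma \ref{Lem: kunneth trivial}, one checks $\sum_{(g_1,g_2)}((g_1,g_2)-1)(P_i \widehat{\otimes} Q_j) = \big(\sum_{g_1}(g_1-1)P_i\big)\widehat{\otimes} Q_j + P_i \widehat{\otimes}\big(\sum_{g_2}(g_2-1)Q_j\big)$ using density of the algebraic tensor product and closedness of these subspaces in the NF setting, giving $(P_i \widehat{\otimes} Q_j)_{G_1\times G_2} \cong (P_i)_{G_1} \widehat{\otimes} (Q_j)_{G_2}$. Hence the coinvariant complex computing $\oH^{\CS}_\bullet(G_1\times G_2, V_1\widehat{\otimes}V_2)$ is exactly the completed tensor product of the complex $((P_\bullet)_{G_1})$ with the complex $((Q_\bullet)_{G_2})$.

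Then I would invoke Theorem \ref{Thm: kunneth chain}: its hypotheses require that $(P_\bullet)_{G_1}$ and $(Q_\bullet)_{G_2}$ be bounded-below complexes of NF-spaces with NF-space homology, which holds since by assumption $\oH^{\CS}_j(G_i, V_i)$ are NF-spaces, and the coinvariants of NF relatively projective modules are NF-spaces (Proposition \ref{Prop: projective hausdorff}). This yields
$$
\oH^{\CS}_m(G_1 \times G_2, V_1 \widehat{\otimes} V_2) \cong \bigoplus_{p+q=m} \oH^{\CS}_p(G_1, V_1) \widehat{\otimes} \oH^{\CS}_q(G_2, V_2)
$$
as topological vector spaces, and the NF-property of the left side is part of the conclusion of Theorem \ref{Thm: kunneth chain}.

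The main obstacle I anticipate is the verification that $P_\bullet \widehat{\otimes} Q_\bullet$ is genuinely a \emph{strong} relative projective resolution of $V_1 \widehat{\otimes} V_2$ in $\CS\mathrm{mod}_{G_1 \times G_2}$ — that is, both the exactness (which needs strictness of all differentials so that Lemma \ref{Lem: NF tensor exact} can be iterated, and needs the existence of a strong contracting homotopy after forgetting the group action) and the relative projectivity of the tensor-product terms. The compatibility of Schwartz induction with products and the behaviour of the relative-injectivity/projectivity notion under $\widehat{\otimes}$ is the delicate point; I would handle it by choosing the $P_i$, $Q_j$ to be explicitly of the form $\ind_{\{e\}}^{G_i}(W)$ (or finite sums thereof) for NF-spaces $W$, where the relevant isomorphism $\ind_{\{e\}}^{G_1}(W_1)\widehat{\otimes}\ind_{\{e\}}^{G_2}(W_2)\cong \ind_{\{e\}}^{G_1\times G_2}(W_1\widehat{\otimes}W_2)$ is a statement about Schwartz functions on a product of Nash manifolds, and the contracting homotopy for the bar resolution is manifestly strong and survives $\widehat{\otimes}$ by Lemma \ref{Lem: NF tensor exact}. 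Everything else is then bookkeeping with NF-spaces of the kind already carried out in Lemma \ref{Lem: kunneth trivial}.
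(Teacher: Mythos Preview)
Your proposal is correct but takes a different route from the paper. You work with arbitrary strong relative projective resolutions $P_\bullet$, $Q_\bullet$ and take coinvariants over the full groups $G_1$, $G_2$; the paper instead invokes Proposition \ref{Prop: homology comparison} to replace Schwartz homology by relative Lie algebra homology, so that the relevant complexes are the explicit Koszul complexes $(\wedge^{\bullet}(\g_i/\k_i)\otimes V_i)_{K_i}$, and the coinvariants are taken only over the \emph{compact} groups $K_i$. The paper then observes that $\wedge^n((\g_1\oplus\g_2)/(\k_1\oplus\k_2))\cong\bigoplus_{p+q=n}\wedge^p(\g_1/\k_1)\otimes\wedge^q(\g_2/\k_2)$ and applies Lemma \ref{Lem: kunneth trivial} twice (with the compact groups $K_1$, $K_2$) to factor the $K_1\times K_2$-coinvariants, after which Theorem \ref{Thm: kunneth chain} finishes the job.

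The paper's approach is shorter: the Koszul complex is bounded and explicit, the tensor decomposition of exterior powers is elementary, and coinvariants over compact groups are automatically NF (Proposition \ref{Prop: projective hausdorff}), so there is nothing to verify about projectivity of tensor products or Hausdorffness over non-compact groups. Your approach is closer to the derived-functor definition and does not rely on the comparison with $(\g,K)$-homology; its cost is the extra verification you correctly flagged --- that $\ind_{\{e\}}^{G_1}(W_1)\widehat{\otimes}\ind_{\{e\}}^{G_2}(W_2)$ is relatively projective over $G_1\times G_2$, that the strong contracting homotopies tensor, and that $(P_i\widehat{\otimes}Q_j)_{G_1\times G_2}\cong (P_i)_{G_1}\widehat{\otimes}(Q_j)_{G_2}$ (which, as you note, follows by iterating the density argument in Lemma \ref{Lem: kunneth trivial}, using Proposition \ref{Prop: projective hausdorff} to ensure each factor coinvariant is Fr\'echet). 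Both routes are valid; the paper's is the more economical one here.
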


\begin{proof}
    Using Theorem \ref{Thm: kunneth chain}, it suffices to prove that the Koszul complex for computing the Schwartz homology of $V_1 \widehat{\otimes} V_2$ in $\smod_{G_1 \times G_2}$ is just the completed tensor product of the Koszul complexes for $V_i$ in $\smod_{G_i}\ (i = 1,2)$. Denote by $K_i\ (i=1,2)$ the maximal compact subgroup of $G_i\ (i=1,2)$.
    \par
    Since $V_i\ (i=1,2)$ are both NF-spaces and $K_i\ (i=1,2)$ are both compact, we have
    \[
     (\wedge^{n}(\g_1 \oplus \g_2 / \k_1 \oplus \k_2 )\otimes V_1 \widehat{\otimes} V_2)_{K_1 \times K_2} =
     \bigoplus_{p+q=n }((\wedge^{p}(\g_1 / \k_1)\otimes V_1) \widehat{\otimes} (\wedge^{q}(\g_2 /\k_2 )\otimes V_2))_{K_1 \times K_2}.
    \]
    Thus we only need to consider $((\wedge^{p}(\g_1 / \k_1)\otimes V_1) \widehat{\otimes} (\wedge^{q}(\g_2 /\k_2 )\otimes V_2))_{K_1 \times K_2}$. Note that $K_i\ (i=1,2)$ are both compact, thus $\oH^{\CS}_{0}(K_i, \wedge^{p}(\g_i / \k_i)\otimes V_i)\ (i=1,2)$ are both NF-spaces according to Proposition \ref{Prop: projective hausdorff}. Following Lemma \ref{Lem: kunneth trivial}, we have
    $$
    \oH^{\CS}_{0}(K_1 \times K_2, (\wedge^{p}(\g_1 / \k_1)\otimes V_1) \widehat{\otimes} (\wedge^{q}(\g_2 /\k_2 )\otimes V_2))
    =\oH^{\CS}_{0}(K_1 , \wedge^{p}(\g_1 / \k_1)\otimes V_1) \widehat{\otimes} \oH^{\CS}_{0}(K_2 , \wedge^{q}(\g_2 /\k_2 )\otimes V_2).
    $$
\end{proof}

\subsection{Hochschild-Serre spectral sequence}\label{Subsec: spectral sequence}
In this subsection, we aim to prove a Hochschild-Serre spectral sequence for nilpotent normal subgroups.
Firstly, we recall the following spectral sequences in the category $\CC_{\g_{\BC}, K}$ of $(\g_{\BC}, K)$-modules.
\begin{thm}[{\cite[Theorem 6.5]{borel2000continuous}}, {\cite[Corollary 3.6]{knapp2016cohomological}}]\label{Thm: spec seq for lie alg}
    Let $\h$ be an ideal of $\g$ stable under $K$, $\l = \k \cap \h$, $L$ a closed normal subgroup of $K$ with Lie algebra $\l$, and $W \in \CC_{\g_{\BC},K}$. Then there exists convergent first quadrant spectral sequences:
    $$
    E^2_{p,q} = \oH_p(\g_{\BC}/\h_{\BC},K/L; H_q(\h_{\BC},L ; W)) \implies H_{p+q}(\g_{\BC},K ;W)
    $$
\end{thm}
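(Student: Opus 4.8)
This is a standard Hochschild--Serre statement for relative Lie algebra homology, so one legitimate option is simply to invoke the cited references \cite[Theorem 6.5]{borel2000continuous}, \cite[Corollary 3.6]{knapp2016cohomological}; for completeness I would reproduce the textbook filtered-complex argument. The plan is to build the spectral sequence by filtering the relative Chevalley--Eilenberg chain complex computing $\oH_\bullet(\g_\C,K;W)$. By the description recalled in Proposition \ref{Prop: homology comparison}, this complex is $C_\bullet=(\wedge^\bullet(\g_\C/\k_\C)\otimes W)_K$ equipped with the relative Chevalley--Eilenberg differential $\partial$. Since $K$ is compact, every object of $\CC_{\g_\C,K}$ decomposes into $K$-isotypic components, so the functor of $K$-coinvariants is exact and commutes with the homological manipulations below; moreover $(-)_K=((-)_L)_{K/L}$ with both factors exact. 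The hypothesis $\l=\k\cap\h$ gives a short exact sequence of $K$-modules $0\to\h_\C/\l_\C\to\g_\C/\k_\C\to\q_\C/\ov\k_\C\to 0$, where $\q:=\g/\h$ and $\ov\k:=\k/\l=\Lie(K/L)$; choosing a $K$-stable complement splits $\wedge^n(\g_\C/\k_\C)$ into the pieces $\wedge^p(\q_\C/\ov\k_\C)\otimes\wedge^{n-p}(\h_\C/\l_\C)$.

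Next I would take the increasing filtration $F_p C_\bullet$ generated (after passing to $K$-coinvariants) by the wedge tensors with at most $p$ legs transverse to $\h$, and check that $\partial(F_pC_n)\subseteq F_pC_{n-1}$. This is the only place the ideal hypothesis $[\g,\h]\subseteq\h$ is used: in the Chevalley--Eilenberg formula any bracket term involving a leg in $\h$ again lies in $\h$, while a bracket of two transverse legs produces at most one new transverse leg, so the transverse-leg count never increases. On the associated graded, $\gr^F_p C_\bullet$ is, up to the inert tensor factor $\wedge^p(\q_\C/\ov\k_\C)$, the relative Chevalley--Eilenberg complex of the pair $(\h_\C,L)$ with coefficients in $W$ (here one uses that $\l$, hence $L$, acts trivially on $\g/\h$, so the $\q$-legs really are inert). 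Hence $E^1_{p,q}=\bigl(\wedge^p(\q_\C/\ov\k_\C)\otimes\oH_q(\h_\C,L;W)\bigr)_{K/L}$, and inspecting the part of $\partial$ that drops the transverse count by exactly one identifies the $E^1$-differential with the relative Chevalley--Eilenberg differential of $(\q_\C,K/L)$ acting on $\oH_q(\h_\C,L;W)$; therefore $E^2_{p,q}=\oH_p(\g_\C/\h_\C,K/L;\oH_q(\h_\C,L;W))$.

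Finally, because $\dim_\C(\g_\C/\k_\C)<\infty$ the filtration is finite ($F_{-1}C_\bullet=0$ and $F_pC_n=C_n$ for $p\ge n$), so the associated spectral sequence lies in the first quadrant and converges to $\oH_{p+q}(\g_\C,K;W)$ with the induced filtration, which is the assertion. There is no genuine obstacle here; the only point demanding care is the bookkeeping around the compact group, namely keeping straight which coinvariants ($L$, $K$, or $K/L$) are taken on each page and using compactness of $K$ to commute them past homology, together with the verification that $L$ operates trivially on $\g/\h$ so that the $\q$-directions behave as a passive tensor factor on the $E^0$-page.
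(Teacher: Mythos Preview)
The paper does not prove this statement at all: it is quoted verbatim from the cited references \cite[Theorem 6.5]{borel2000continuous} and \cite[Corollary 3.6]{knapp2016cohomological} and used as a black box in the proof of Corollary~\ref{Cor: spec seq nilp}. Your proposal goes further than the paper by actually sketching the standard filtered Chevalley--Eilenberg argument, and the sketch is essentially correct. One small caveat: the step ``$\l$, hence $L$, acts trivially on $\g/\h$'' only follows if $L$ is connected; in general one needs the additional hypothesis (present in the cited references, and automatic in the paper's intended application where $L$ is trivial) that the adjoint action of $L$ on $\g/\h$ is trivial, so that the $(\q_\C,K/L)$-module structure on the $E^1$-page is well defined.
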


Since we work in $\smod_G$, it is desirable to obtain a similar spectral sequence applicable to this category. The following lemma is crucial.

\begin{lemt}\label{Lem: nilp homology comparison}
    Let $H = L \ltimes N$ be an almost linear Nash group with nilpotent normal subgroup $N$, denote by $M$ the maximal compact subgroup of $L$ (and hence also the maximal compact subgroup in $H$). Consider $V \in \CS\mathrm{mod}_H$, for a fixed $j \in \BZ$, if $\oH^{\CS}_{j}(N, V) \in \CS\mathrm{mod}_L$, then $\oH_{j}(\n_{\BC}, V^{M\text{-fin}}) = \oH^{\CS}_{j}(N, V)^{M\text{-fin}}$
\end{lemt}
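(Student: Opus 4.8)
The plan is to reduce both sides to the Chevalley--Eilenberg complex of $\n_\BC$ and then interchange the operation "take $M$-finite vectors" with the formation of homology, exploiting that $M$ is compact.

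First I would note that the maximal compact subgroup of $N$ is trivial: any compact subgroup of $N$ lies, after conjugation in $H$, inside a maximal compact subgroup of $H$, which by hypothesis is $M$; since $N$ is normal in $H$ and $L\cap N=\{e\}$, such a conjugate must land in $N\cap M=\{e\}$. Hence Proposition~\ref{Prop: homology comparison} applied to $N$ identifies $\oH^{\CS}_{j}(N,V)$ with the $j$-th homology of the Chevalley--Eilenberg complex $C_\bullet=\bigl(\wedge^{\bullet}\n_\BC\otimes V,\ \partial_\bullet\bigr)$ as a topological vector space, for every $j$. Because $M\subseteq L$ normalizes $N$, it acts on $\n_\BC$ through $\Ad$ and on each $\wedge^{l}\n_\BC$; combined with its action on $V$ by restriction, this makes $C_\bullet$ a complex in $\CS\mathrm{mod}_{M}$ with $M$-equivariant differentials, the equivariance being the relation $m\cdot(X\cdot v)=(\Ad(m)X)\cdot(m\cdot v)$ which holds because $n\mapsto mnm^{-1}$ preserves $N$. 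Since each $\wedge^{l}\n_\BC$ is finite-dimensional, one checks the general fact $(\wedge^{l}\n_\BC\otimes V)^{M\text{-fin}}=\wedge^{l}\n_\BC\otimes V^{M\text{-fin}}$, so that $C_\bullet^{M\text{-fin}}$ is exactly the Chevalley--Eilenberg complex computing $\oH_{\bullet}(\n_\BC, V^{M\text{-fin}})$.

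The heart of the matter is then the identity $\oH_{j}(C_\bullet^{M\text{-fin}})=\oH_{j}(C_\bullet)^{M\text{-fin}}$. For each irreducible $M$-type $\tau$ the operator $e_\tau=(\dim\tau)\int_{M}\overline{\chi_\tau(m)}\,\rho(m)\,dm$ is a continuous idempotent on every $C_l$ commuting with the $\partial_\bullet$; it therefore splits $C_\bullet$ topologically as $e_\tau C_\bullet\oplus(1-e_\tau)C_\bullet$, so that $\oH_{j}(e_\tau C_\bullet)=e_\tau\oH_{j}(C_\bullet)$ is the $\tau$-isotypic part of the homology. Summing over $\tau$ and using that homology commutes with algebraic direct sums gives $\oH_{j}(C_\bullet^{M\text{-fin}})=\oH_{j}\bigl(\bigoplus_\tau e_\tau C_\bullet\bigr)=\bigoplus_\tau e_\tau\oH_{j}(C_\bullet)=\oH_{j}(C_\bullet)^{M\text{-fin}}$; here the hypothesis $\oH^{\CS}_{j}(N,V)\in\CS\mathrm{mod}_{L}$ is what guarantees that $\oH_{j}(C_\bullet)=\oH^{\CS}_{j}(N,V)$ is a genuine smooth Fr\'echet $M$-module, so that the $e_\tau$ descend to the usual isotypic projectors and $\bigoplus_\tau e_\tau\oH_{j}(C_\bullet)$ is indeed its $M$-finite part. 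Chaining the two displays yields $\oH_{j}(\n_\BC, V^{M\text{-fin}})=\oH^{\CS}_{j}(N,V)^{M\text{-fin}}$.

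The only delicate point — and the step I expect to require the most care — is precisely this interchange of $(-)^{M\text{-fin}}$ with homology in the topological setting: one must verify that $e_\tau$ is compatible with passing to cycles and boundaries (which is immediate from $e_\tau\partial=\partial e_\tau$) and that the resulting decomposition of $\oH_{j}(C_\bullet)$ matches the abstract notion of $M$-finite vectors, for which the Fr\'echet property of $\oH^{\CS}_{j}(N,V)$ supplied by the hypothesis is exactly what is needed. Everything else is routine Chevalley--Eilenberg bookkeeping together with Proposition~\ref{Prop: homology comparison}.
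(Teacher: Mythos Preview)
Your proposal is correct and follows essentially the same route as the paper: both arguments identify $\oH^{\CS}_j(N,V)$ with the homology of the Chevalley--Eilenberg complex $C_\bullet=\wedge^\bullet\n_\BC\otimes V$ via Proposition~\ref{Prop: homology comparison}, observe that $C_\bullet^{M\text{-fin}}$ computes $\oH_j(\n_\BC,V^{M\text{-fin}})$, and then show that $(-)^{M\text{-fin}}$ commutes with taking homology. The only presentational difference is that the paper packages the last step as ``$(-)^{M\text{-fin}}$ is an exact functor from $\CS\mathrm{mod}_L$ to $(\l_\BC,M)$-modules'' and applies it to the two short exact sequences $0\to\Ker d_{j+1}\to C_{j+1}\to\Im d_{j+1}\to0$ and $0\to\Im d_{j+1}\to\Ker d_j\to\oH_j\to0$, whereas you unpack that exactness by hand via the isotypic projectors $e_\tau$; the underlying mechanism (compactness of $M$) is identical.
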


\begin{proof}
    Consider the Koszul complex for $\oH_{j}(\n_{\BC}, V)$:
    \[
     \cdots \xrightarrow{} \wedge^{j+1}\n_{\BC} \otimes V  \xrightarrow{d_{j+1}} \wedge^{j}\n_{\BC} \otimes V \xrightarrow{d_j} \wedge^{j-1}\n_{\BC} \otimes V \xrightarrow{} \cdots.
    \]
    Since $\oH_{j}(\n_{\BC}, V) = \oH^{\CS}_{j}(N, V) \in \CS\mathrm{mod}_L$, we have the following short exact sequence in $\CS\mathrm{mod}_L$
    \[
     0 \xrightarrow{} \Im\ d_{j+1}  \xrightarrow{} \Ker\ d_j  \xrightarrow{} \oH_{j}(\n_{\BC}, V) \xrightarrow{} 0.
    \]
    Note that taking $M$-finite vectors $-^{M\text{-fin}}$ is an exact functor from $\CS\mathrm{mod}_L$ to $(\l_{\BC},M)$-modules, thus we have
    \[
     0 \xrightarrow{} (\Im\ d_{j+1})^{M\text{-fin}}  \xrightarrow{} (\Ker\ d_j)^{M\text{-fin}}  \xrightarrow{} \oH_{j}(\n_{\BC}, V)^{M\text{-fin}} \xrightarrow{} 0.
    \]
    We also have the following Koszul complex for $\oH_{j}(\n_{\BC}, V^{M\text{-fin}})$:
    \[
     \cdots \xrightarrow{} \wedge^{j+1}\n_{\BC} \otimes V^{M\text{-fin}}  \xrightarrow{d_{j+1}|_{M\text{-fin}}} \wedge^{j}\n_{\BC} \otimes V^{M\text{-fin}} \xrightarrow{d_j|_{M\text{-fin}}} \wedge^{j-1}\n_{\BC} \otimes V^{M\text{-fin}} \xrightarrow{} \cdots.
    \]
    Then we obtain
    \[
     0 \xrightarrow{} \Im\ (d_{j+1}|_{M\text{-fin}})  \xrightarrow{} \Ker\ (d_j|_{M\text{-fin}})  \xrightarrow{} \oH_{j}(\n_{\BC}, V^{M\text{-fin}}) \xrightarrow{} 0.
    \]
    Thus it suffices to prove 
    $$\Im (d_{j+1}|_{M\text{-fin}}) = (\Im d_{j+1})^{M\text{-fin}},\ \Ker (d_j|_{M\text{-fin}}) = (\Ker d_j)^{M\text{-fin}}.$$ 
    The first equality follows from the following short exact sequence
    \[
     0 \xrightarrow{} (\Ker\ d_{j+1})^{M\text{-fin}} \xrightarrow{} (\wedge^{j+1}\n_{\BC} \otimes V)^{M\text{-fin}}  \xrightarrow{ d_{j+1}|_{M\text{-fin}}} (\Im\ d_{j+1})^{M\text{-fin}} \xrightarrow{} 0,   \]
     and the second follows from 
    $\Ker\ (d_j|_{M\text{-fin}}) = (\Ker\ d_j) \cap (\wedge^{j}\n_{\BC} \otimes V^{M\text{-fin}}) = (\Ker\ d_j)^{M\text{-fin}}.$
\end{proof}

\begin{rremark}
    This Lemma is unrelated to Casselman's conjectured comparison theorem, since they are under different conditions.
\end{rremark}

\begin{cort}[Hochschild-Serre spectral sequence for nilpotent normal subgroups]\label{Cor: spec seq nilp}
    The notation is as shown in Lemma \ref{Lem: nilp homology comparison}.
    Consider $V \in \CS\mathrm{mod}_H$, if $\forall\ j \in \BZ$, $\oH^{\CS}_{j}(N, V) \in \CS\mathrm{mod}_L$, then we have 
    $$\oH_{i}(\l_{\BC}, M; \oH_{j}(\n_{\BC} ; V^{M\text{-fin}})) = \oH^{\CS}_{i}(L, \oH^{\CS}_{j}(N, V)),\ \forall\ i,\ j \in \BZ.$$
    Moreover, there exist convergent first quadrant spectral sequences:
    $$
    E^2_{p,q} = \oH^{\CS}_{p}(L, \oH^{\CS}_{q}(N, V)) \implies \oH^{\CS}_{p+q}(H ,V).
    $$
\end{cort}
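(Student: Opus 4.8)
The plan is to deduce both assertions by combining the relative Lie algebra Hochschild--Serre spectral sequence of Theorem \ref{Thm: spec seq for lie alg} with the comparison isomorphisms of Proposition \ref{Prop: homology comparison} and Lemma \ref{Lem: nilp homology comparison}. Throughout write $\h = \Lie H$, $\l = \Lie L$, $\n = \Lie N$ and $\m = \Lie M$, so that $\h_{\BC} = \l_{\BC} \oplus \n_{\BC}$ with $\n_{\BC}$ an $M$-stable ideal and $\h_{\BC}/\n_{\BC} \cong \l_{\BC}$ as $M$-modules.

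First I would prove the displayed identity. Since $\oH^{\CS}_j(N, V) \in \CS\mathrm{mod}_L$ by hypothesis, Proposition \ref{Prop: homology comparison} applied to the almost linear Nash group $L$ gives
\[
\oH^{\CS}_i\big(L,\ \oH^{\CS}_j(N, V)\big) = \oH_i\big(\l_{\BC}, M;\ \oH^{\CS}_j(N, V)^{M\text{-fin}}\big),\qquad \forall\, i,j \in \BZ.
\]
By Lemma \ref{Lem: nilp homology comparison} we have $\oH^{\CS}_j(N, V)^{M\text{-fin}} = \oH_j(\n_{\BC}, V^{M\text{-fin}})$; here one should check that the isomorphism produced in that lemma is an isomorphism of $(\l_{\BC}, M)$-modules (it is assembled from the $(\l_{\BC},M)$-equivariant Koszul complex $\wedge^{\bullet}\n_{\BC}\otimes V^{M\text{-fin}}$ and from the short exact sequences in $\CS\mathrm{mod}_L$ appearing in its proof, to which the exact functor $(-)^{M\text{-fin}}$ is applied), so that the residual $\l_{\BC}$- and $M$-actions on the two sides correspond. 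Substituting yields exactly
\[
\oH_i\big(\l_{\BC}, M;\ \oH_j(\n_{\BC};\ V^{M\text{-fin}})\big) = \oH^{\CS}_i\big(L,\ \oH^{\CS}_j(N, V)\big).
\]

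Next I would feed this into Theorem \ref{Thm: spec seq for lie alg}, applied with ambient pair $(\h_{\BC}, M)$, the $M$-stable ideal $\n_{\BC}\subset \h_{\BC}$ (it is $M$-stable because $M$ normalizes $N$), and coefficient module $W = V^{M\text{-fin}}$, which is a legitimate object for relative Lie algebra homology by Proposition \ref{Prop: homology comparison}. Since $N$ is nilpotent it contains no nontrivial compact subgroup, so $M\cap N = \{1\}$ and $\m\cap\n = 0$; hence the closed normal subgroup of $M$ with Lie algebra $\m\cap\n$ appearing in Theorem \ref{Thm: spec seq for lie alg} is trivial. That theorem then produces a convergent first quadrant spectral sequence
\[
E^2_{p,q} = \oH_p\big(\h_{\BC}/\n_{\BC}, M;\ \oH_q(\n_{\BC};\ V^{M\text{-fin}})\big) \implies \oH_{p+q}\big(\h_{\BC}, M;\ V^{M\text{-fin}}\big).
\]
Using $\h_{\BC}/\n_{\BC}\cong\l_{\BC}$, the $E^2$ page becomes $\oH_p(\l_{\BC}, M;\ \oH_q(\n_{\BC};\ V^{M\text{-fin}}))$, which equals $\oH^{\CS}_p(L, \oH^{\CS}_q(N, V))$ by the first step; and the abutment is $\oH_{p+q}(\h_{\BC}, M;\ V^{M\text{-fin}}) = \oH^{\CS}_{p+q}(H, V)$ by Proposition \ref{Prop: homology comparison}, since $M$ is the maximal compact subgroup of $H$. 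This is precisely the asserted spectral sequence.

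I expect the only real point requiring care is the compatibility bookkeeping in the first step, namely verifying that the isomorphism of Lemma \ref{Lem: nilp homology comparison} respects the $(\l_{\BC}, M)$-module structures (rather than being merely a linear isomorphism), so that taking $\oH_i(\l_{\BC}, M; -)$ on both sides is meaningful and agrees; together with the routine but necessary verification that the hypotheses of Theorem \ref{Thm: spec seq for lie alg} hold here (the $M$-stability of $\n_{\BC}$ and the triviality of the relevant compact normal subgroup). Once these are in place, the rest is a formal assembly of the cited results.
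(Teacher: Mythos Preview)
Your proposal is correct and follows essentially the same route as the paper: apply Lemma \ref{Lem: nilp homology comparison} and Proposition \ref{Prop: homology comparison} to identify the $E^2$ page, then invoke Theorem \ref{Thm: spec seq for lie alg} for the pair $(\h_{\BC},M)$ with ideal $\n_{\BC}$ and use Proposition \ref{Prop: homology comparison} again for the abutment. You are in fact more careful than the paper in flagging the $(\l_{\BC},M)$-equivariance of the comparison in Lemma \ref{Lem: nilp homology comparison} and the triviality of $M\cap N$, both of which the paper leaves implicit.
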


\begin{proof}
    Following Lemma \ref{Lem: nilp homology comparison} and Proposition \ref{Prop: homology comparison}, 
    $$\oH_{i}(\l_{\BC}, M; \oH_{j}(\n_{\BC} ; V^{M\text{-fin}})) = \oH_{i}(\l_{\BC}, M; \oH^{\CS}_{j}(N, V)^{M\text{-fin}}) = \oH^{\CS}_{i}(L, \oH^{\CS}_{j}(N, V)).$$
    As to the second assertion, according to Theorem \ref{Thm: spec seq for lie alg}, we have 
    $$
    E^2_{p,q} = \oH_p(\l_{\BC},M; H_q(\n_{\BC} ; V^{M\text{-fin}})) \implies H_{p+q}(\h_{\BC},M ;V^{M\text{-fin}})
    $$
    Then the corollary follows from the first assertion and Proposition \ref{Prop: homology comparison}.
\end{proof}

\bibliographystyle{alpha}
\bibliography{ref}

\newcommand{\etalchar}[1]{$^{#1}$}
\begin{thebibliography}{ALM{\etalchar{+}}24}

\bibitem[AGS15]{aizenbud2015twisted}
Avraham Aizenbud, Dmitry Gourevitch, and Siddhartha Sahi.
\newblock Twisted homology for the mirabolic nilradical.
\newblock {\em Israel Journal of Mathematics}, 206:39--88, 2015.

\bibitem[ALM{\etalchar{+}}24]{anandavardhanan2024sign}
U.~K. Anandavardhanan, Hengfei Lu, Nadir Matringe, Vincent Sécherre, and Chang Yang.
\newblock The sign of linear periods, 2024.

\bibitem[BK14]{Bernstein2014Smooth}
Joseph Bernstein and Bernhard Kr\"otz.
\newblock Smooth {F}r\'echet globalizations of {H}arish-{C}handra modules.
\newblock {\em Israel J. Math.}, 199(1):45--111, 2014.

\bibitem[BW80]{borel2000continuous}
Armand Borel and Nolan~R. Wallach.
\newblock {\em Continuous cohomology, discrete subgroups, and representations of reductive groups}, volume No. 94 of {\em Annals of Mathematics Studies}.
\newblock Princeton University Press, Princeton, NJ; University of Tokyo Press, Tokyo, 1980.

\bibitem[CC19]{Chen2019Deligne}
Shih-Yu Chen and Yao Cheng.
\newblock On {D}eligne's conjecture for certain automorphic {$L$}-functions for {$\rm GL(3)\times GL(2)$} and {$\rm GL(4)$}.
\newblock {\em Doc. Math.}, 24:2241--2297, 2019.

\bibitem[CHM00]{Casselman2000Bruhat}
William Casselman, Henryk Hecht, and Dragan Milicic.
\newblock Bruhat filtrations and {W}hittaker vectors for real groups.
\newblock In {\em The mathematical legacy of {H}arish-{C}handra ({B}altimore, {MD}, 1998)}, volume~68 of {\em Proc. Sympos. Pure Math.}, pages 151--190. Amer. Math. Soc., Providence, RI, 2000.

\bibitem[CJLT20]{chen2020archimedean}
Cheng Chen, Dihua Jiang, Bingchen Lin, and Fangyang Tian.
\newblock Archimedean non-vanishing, cohomological test vectors, and standard l-functions of $gl_{2n}$: real case.
\newblock {\em Mathematische Zeitschrift}, 296(1):479--509, 2020.

\bibitem[CO78]{casselman1978restriction}
William Casselman and M~Scott Osborne.
\newblock The restriction of admissible representations to n.
\newblock {\em Mathematische Annalen}, 233:193--198, 1978.

\bibitem[CS20]{chen2020uniqueness}
Fulin Chen and Binyong Sun.
\newblock Uniqueness of twisted linear periods and twisted shalika periods.
\newblock {\em Science China Mathematics}, 63:1--22, 2020.

\bibitem[CS21]{CHEN2021108817}
Yangyang Chen and Binyong Sun.
\newblock Schwartz homologies of representations of almost linear nash groups.
\newblock {\em Journal of Functional Analysis}, 280(7):108817, 2021.

\bibitem[FSX18]{fang2018godement}
Yingjue Fang, Binyong Sun, and Huajian Xue.
\newblock Godement--jacquet l-functions and full theta lifts.
\newblock {\em Mathematische Zeitschrift}, 289:593--604, 2018.

\bibitem[Gan19]{Gan2019}
Wee~Teck Gan.
\newblock Periods and theta correspondence.
\newblock In {\em Representations of reductive groups}, volume 101 of {\em Proc. Sympos. Pure Math.}, pages 113--132. Amer. Math. Soc., Providence, RI, 2019.

\bibitem[God18]{godement2018notes}
Roger Godement.
\newblock {\em Notes on {J}acquet-{L}anglands' theory}, volume~8 of {\em CTM. Classical Topics in Mathematics}.
\newblock Higher Education Press, Beijing, 2018.
\newblock With commentaries by Robert Langlands and Herve Jacquet.

\bibitem[HT98]{hecht1998remark}
Henryk Hecht and Joseph~L Taylor.
\newblock A remark on casselman’s comparison theorem.
\newblock In {\em Geometry and Representation Theory of Real and p-adic groups}, pages 139--146. Springer, 1998.

\bibitem[Jac09]{Jacquet2009RankinSelberg}
Herv\'e Jacquet.
\newblock Archimedean {R}ankin-{S}elberg integrals.
\newblock In {\em Automorphic forms and {$L$}-functions {II}. {L}ocal aspects}, volume 489 of {\em Contemp. Math.}, pages 57--172. Amer. Math. Soc., Providence, RI, 2009.

\bibitem[JLST24]{Jiang2024twisted}
Dihua Jiang, Dongwen Liu, Binyong Sun, and Fangyang Tian.
\newblock On twisted shalika functionals on the principal series representations of gl(2n).
\newblock preprint, 2024.

\bibitem[JST24]{jiang2024period}
Dihua Jiang, Binyong Sun, and Fangyang Tian.
\newblock Period relations for standard $l$-functions of symplectic type, 2024.

\bibitem[Kna94]{Knapp1994}
A.~W. Knapp.
\newblock Local {L}anglands correspondence: the {A}rchimedean case.
\newblock In {\em Motives ({S}eattle, {WA}, 1991)}, volume 55, Part 2 of {\em Proc. Sympos. Pure Math.}, pages 393--410. Amer. Math. Soc., Providence, RI, 1994.

\bibitem[KVJ16]{knapp2016cohomological}
Anthony~W Knapp and David~A Vogan~Jr.
\newblock {\em Cohomological Induction and Unitary Representations (PMS-45), Volume 45}, volume~28.
\newblock Princeton University Press, 2016.

\bibitem[LLY21]{li2021proof}
Ning Li, Gang Liu, and Jun Yu.
\newblock A proof of casselman’s comparison theorem.
\newblock {\em Representation Theory of the American Mathematical Society}, 25(35):994--1020, 2021.

\bibitem[LT20]{lin2020archimedean}
Bingchen Lin and Fangyang Tian.
\newblock Archimedean non-vanishing, cohomological test vectors, and standard l-functions of $gl_{2n}$: Complex case.
\newblock {\em Advances in Mathematics}, 369:107189, 2020.

\bibitem[Mat17]{matringe2017shalika}
Nadir Matringe.
\newblock Shalika periods and parabolic induction for gl (n) over a non-archimedean local field.
\newblock {\em Bulletin of the London Mathematical Society}, 49(3):417--427, 2017.

\bibitem[Moe97]{Moeglin1997}
C.~Moeglin.
\newblock Representations of {${\rm GL}(n)$} over the real field.
\newblock In {\em Representation theory and automorphic forms ({E}dinburgh, 1996)}, volume~61 of {\em Proc. Sympos. Pure Math.}, pages 157--166. Amer. Math. Soc., Providence, RI, 1997.

\bibitem[Sch99]{schochet1999topological}
Claude Schochet.
\newblock The topological snake lemma and corona algebras.
\newblock {\em New York Journal of Mathematics}, 5:131--137, 1999.

\bibitem[ST23]{suzuki2023epsilon}
Miyu Suzuki and Hiroyoshi Tamori.
\newblock Epsilon dichotomy for linear models: the archimedean case.
\newblock {\em International Mathematics Research Notices}, 2023(20):17853--17891, 2023.

\bibitem[Vog78]{Vogan1978gelfand}
David~A. Vogan, Jr.
\newblock Gelfand-{K}irillov dimension for {H}arish-{C}handra modules.
\newblock {\em Invent. Math.}, 48(1):75--98, 1978.

\bibitem[Wal92]{Wallach1992Real}
Nolan~R. Wallach.
\newblock {\em Real reductive groups. {II}}, volume 132-II of {\em Pure and Applied Mathematics}.
\newblock Academic Press, Inc., Boston, MA, 1992.

\bibitem[Xue20]{Xue20Bessel}
Hang Xue.
\newblock Bessel models for unitary groups and schwartz homology.
\newblock preprint, 2020.

\end{thebibliography}

\end{document}